\newcommand{\ba}{{\bf{a}}}
\newcommand{\bb}{{\bf{b}}}
\newcommand{\bc}{{\bf{c}}}
\newcommand{\bl}{{\bm{\lambda}}}
\newcommand{\bmu}{{\bm{\mu}}}
\newcommand{\bx}{{\bf{x}}}
\numberwithin{equation}{section}
\let\originalleft\left
\let\originalright\right
\renewcommand{\left}{\mathopen{}\mathclose\bgroup\originalleft}
\renewcommand{\right}{\aftergroup\egroup\originalright}
\newlength{\bibitemsep}
\newlength{\bibparskip}\setlength{\bibparskip}{0pt}
\let\oldthebibliography\thebibliography
\renewcommand\thebibliography[1]{\oldthebibliography{#1}
  \setlength{\parskip}{\bibitemsep}
  \setlength{\itemsep}{\bibparskip}}
\DeclareMathOperator{\re}{Re}
\DeclareMathOperator{\im}{Im}
\DeclareMathOperator{\Tr}{Tr}
\DeclareMathOperator{\Var}{Var}
\DeclareMathOperator{\OO}{O}
\DeclareMathOperator{\oo}{o}
\DeclareMathOperator{\diag}{diag}
\newcommand{\ii}{\mathrm{i}}
\newcommand{\diff}{\mathop{}\mathopen{}\mathrm{d}}
\newcommand{\rd}{\mathop{}\mathopen{}\mathrm{d}}
\newcommand{\CC}{\mathbb{C}}
\theoremstyle{plain} 
\newtheorem{theorem}{Theorem}[section]
\newtheorem{lemma}[theorem]{Lemma}
\newtheorem{corollary}[theorem]{Corollary}
\newtheorem{proposition}[theorem]{Proposition}
\newtheorem{definition}[theorem]{Definition}
\newtheorem{remark}[theorem]{Remark}
\renewcommand{\epsilon}{\varepsilon}
\newcommand{\e}{{\varepsilon}}
\renewcommand{\leq}{\leqslant}
\renewcommand{\geq}{\geqslant}
\renewcommand{\P}{\mathbb{P}}
\newcommand{\E}{\mathbb{E}}
\newcommand{\R}{\mathbb{R}}
\newcommand{\C}{\mathbb{C}}
\newcommand{\N}{\mathbb{N}}
\newcommand{\1}{\mathbbm{1}}
\newcommand{\cN}{\mathcal{N}}
\newcommand{\pa}[1]{\left({#1}\right)}
\newcommand{\abs}[1]{\lvert #1 \rvert}
\newcommand{\absa}[1]{\left\lvert #1 \right\rvert}
\newcommand{\norm}[1]{\lVert #1 \rVert}
\newcommand{\vertiii}[1]{{\left\vert\kern-0.25ex\left\vert\kern-0.25ex\left\vert #1 
    \right\vert\kern-0.25ex\right\vert\kern-0.25ex\right\vert}}
\theoremstyle{plain} 
\renewcommand{\subsection}{\@startsection
{subsection}
{2}
{0mm}
{-\baselineskip}
{0 \baselineskip}
{\normalfont\bf\itshape}} 
\renewcommand{\subsubsection}{\@startsection
{subsubsection}
{3}
{0mm}
{-\baselineskip}
{0 \baselineskip}
{\normalfont\bf\itshape}} 
\def\author#1{\par
    {\centering{\authorfont#1}\par\vspace*{0.05in}}
}
\def\titlefont{\fontsize{13}{15}\bfseries\boldmath\selectfont\centering{}}
\def\authorfont{\fontsize{13}{15}}
\def\abstractfont{\fontsize{8}{10}}
\let\affiliationfont\rhfont
\def\address#1{\par
    {\centering{\affiliationfont#1\par}}\par\vspace*{11pt}
}
\def\title#1{
    \thispagestyle{plain}
    \vspace*{-14pt}
    \vskip 79pt
    {\centering{\titlefont #1\par}}%
    \vskip 1em
}
\def\body{
\setcounter{footnote}{0}
\def\thefootnote{\alph{footnote}}
\def\@makefnmark{{$^{\rm \@thefnmark}$}}
}
\renewenvironment{abstract}{\par%
    \vspace*{6pt}\noindent 
    \abstractfont
    \noindent\leftskip10pt\rightskip10pt
}{%
  \par}
\renewcommand{\section}{\@startsection
{section}
{1}
{-3mm}
{-2\baselineskip}
{1\baselineskip}
{\normalfont\large\scshape\centering}} 
\newcommand{\mynameis}[1]{#1\renewcommand{\@currentlabel}{#1}}
\begin{document}

~\vspace{-1cm}

\title{The $\log$-Characteristic Polynomial of Generalized Wigner Matrices is Log-Correlated}

\vspace{0.7cm}
\noindent

\noindent\makebox[\textwidth][c]{
 \begin{minipage}[c]{0.31\textwidth}
 \author{Krishnan Mody}
\address{
Courant Institute\\    km2718@nyu.edu}
 \end{minipage}
 }
\vspace{0.1cm}
\noindent
 
\begin{abstract}
	We prove that in the limit of large dimension, the distribution of the logarithm of the characteristic polynomial of a generalized Wigner matrix converges to a log-correlated field.
	In particular, this shows that the limiting joint fluctuations of the eigenvalues are also log-correlated. 
	Our argument mirrors that of \cite{BouMod2019}, which is in turn based on the three-step argument of \cite{ErdPecRmSchYau2010,ErdSchYau2011Uni}, 
	but applies to a wider class of models, and at the edge of the spectrum. 
	We rely on (i) the results in the Gaussian cases, special cases of the results in \cite{BouModPai2021}, 
	(ii) the local laws of \cite{ErdYauYin2012}
	(iii) the observable \cite{Bou2020} introduced and its analysis of the stochastic advection equation this observable satisfies, and
	(iv) the argument for a central limit theorem on mesoscopic scales in \cite{LanLopSos2021}.
	For the proof, we also establish a Wegner estimate and local
	law down to the microscopic scale, both at the edge of the spectrum. 
\end{abstract}

{\renewcommand{\contentsname}{}
	\hypersetup{linkcolor=black}
	\tableofcontents
}

\vspace{0.1cm}

\section{Introduction}

We are interested in this article in the log-characteristic polynomial of generalized Wigner matrices. We prove central limit theorems
for both the real and imaginary parts of this object, or in other words for the electric potential and the joint fluctuations of the eigenvalues.
In both cases, we show that appropriately centered and normalized, the log
characteristic polynomial is log-correlated in the limit of large dimension.

To give context to our result, we give a partial survey of the literature regarding fluctuations of the log-characteristic polynomial of a random matrix.
We first highlight those results which showed that the log-characteristic polynomial converges in law, in the sense of distributions, to a log-correlated field,
and then later results which showed convergence in law in a point-wise sense.

Interest in the log-characteristic polynomial goes back to at least \cite{KeaSna2000}, where the authors considered the log-characteristic polynomial 
of Haar distributed unitary matrices (or the circular unitary ensemble, CUE) as a model for log of the $\zeta$-function on the critical axis, and proved a central limit theorem
for this object analogous to Selberg's for the $\zeta$-function \cite{Sel1946}. 
Then in \cite{HugKeaOCo2001}, the authors proved
that for any $\e > 0$, the real and imaginary parts of the log characteristic polynomial, evaluated on the unit circle, converge in law in $H_0^{-\e} \times H_0^{-\e}$ to
a log-correlated field, the Gaussian free field on the unit circle. 

Results in the distributional sense, i.e. results for the log characteristic polynomial integrated against a smooth test function,
also hold for other random matrix models.
In the case of the GUE (Gaussian unitary ensemble), \cite{FyoKhoSim2016} proved distributional convergence of the log-characteristic polynomial to a random Fourier series.
For a class of general $\beta$-ensembles with eigenvalues $\lambda_1, \dots, \lambda_N$ and a sufficiently smooth function $f$, \cite{Johansson1998} proved 
the convergence in distribution as $N \to \infty$,
	\begin{equation}
	\label{eq:clt_smooth_f}
		\frac{\sum_{k=1}^N f\pa{\lambda_k} -\E \pa{\sum_{k=1}^N f\pa{\lambda_k} } }{ \sigma(f)} \to \mathscr{N}(0,1),
	\end{equation}
with an explicit formula for $\sigma^2(f)$, given essentially by $\norm{f}_{H^{1/2}}$.
This result applies in particular to the log-characteristic polynomial integrated against a smooth test function.
A similar result to \eqref{eq:clt_smooth_f} holds in the Wigner case \cite{LytPas2009}, and also applies to the log-characteristic polynomial in the sense of distributions.

Concerning pointwise results, we first mention some central limit theorems valid for the log-characteristic polynomial evaluated at one point.
In \cite{TaoVu2012}, Tao and Vu proved a central limit theorem for the log-determinant
of a Wigner matrix with first 4 moments matching the Gaussian ones. The first step of their argument proves the result for Gaussian ensembles with general $\beta$, 
based on the tridiagonal model of \cite{DumEde2002}. Study of the log-characteristic polynomial for this model is significantly harder than for the log-determinant,
and \cite{AugButZei2020} proved a central limit theorem for the log-characteristic polynomial evaluated in the bulk of the spectrum,
for a general class of tridiagonal models that includes this case. In \cite{LamPaq20202}, the authors proved a similar result for the G$\beta$E at the edge.
And recently, \cite{LanLopSos2021} proved a central limit theorem for the fluctuations of a single eigenvalue in the bulk for Wigner-type matrices, a more general
model than the generalized Wigner matrices we consider in this paper.

Returning to the log-correlated picture, Bourgade \cite{Bou2010} showed that the real and imaginary parts of the log-characteristic
polynomial of a CUE distributed matrix are log-correlated in the limit of large dimension. 
(In the same paper Bourgade shows that log of the $\zeta$-function is also log-correlated near the critical axis.)
In the case of the GUE (Gaussian unitary ensemble) and GOE (Gaussian orthogonal ensemble), 
\cite{Gus2005, Oro2010} showed that the field of appropriately centered and rescaled eigenvalues, or the imaginary part of the log characteristic polynomial,
is also log-correlated in the limit of large dimension. 

Generalizing the results for the GOE and GUE, \cite{BouModPai2021} proves that both the real and imaginary parts of the log-characteristic polynomial are log-correlated in the large dimension limit,
for a general class of one dimensional $\beta$-ensembles.
Based on the result \cite{BouModPai2021} in the GOE and GUE cases, 
\cite{BouMod2019} proves the log-correlated picture holds for the real and imaginary parts of the log-characteristic polynomial in the bulk,
for a (standard) Wigner matrix.
In the more general setting of generalized Wigner matrices, using \cite{Gus2005, Oro2010} as a point of comparison,
\cite{Bou2020} shows that the imaginary part of the log-characteristic-polynomial is log-correlated at the edge of the spectrum. 
In this paper, we build on arguments from \cite{Bou2020, BouMod2019} to cover essentially the entire spectrum, and the real case.

The argument in \cite{BouMod2019} uses a Wegner estimate and local law on the optimal scale as input to a three-step-strategy \cite{ErdPecRmSchYau2010,ErdSchYau2011Uni}
approach to convert the relevant question into a question about fluctuations of linear statistics of a function on an almost macroscopic scale.
For Wigner matrices in the bulk, the Wegner estimate, the local law on the optimal scale, and central limit theorems sufficiently strong to close the argument
were available in the literature. For generalized Wigner matrices at the edge, we use arguments from \cite{Bou2020} to prove the Wegner estimate
and local law valid on the optimal scale. For a central limit theorem to close the argument, we follow the strategy of \cite{LanLopSos2021}, which applies to
Wigner-Type matrices in the bulk. For generalized Wigner matrices, the calculation there simplifies and allows us to reach the edge, and therefore
to conclude our proof.

We mention that our discussion here ignores that the maximum of the log-characteristic polynomial, and the characteristic polynomial itself are also objects of recent interest. 
See for example \cite{PaqZei2022} for an exciting recent result concerning the maximum of the log-characteristic polynomial for the C$\beta$E, and a discussion of the 
history of this problem. (See also \cite{BouModPai2021} for an analogous conjecture in the case of $\beta$-ensembles and Wigner matrices).
And for a recent result regarding the characteristic polynomial of the CUE and its relationship to the Gaussian Multiplicative Chaos, see \cite{BouFal2022} and the discussion therein.
Finally, for a broad survey on log-correlated fields, including beyond random matrix theory, see \cite{Arg2017}.

\subsection{Model Assumptions. }

In this paper, we write $a \sim b$ to indicate that there exists $C > 0$ such that $C^{-1} b < a < C b$. For complex
valued functions, $a \sim b$ means $\re a \sim \re b$ and $\im a \sim \im b$.
\begin{definition}
	A generalized Wigner matrix $H=H(N)$ is a symmetric or Hermitian $N \times N$ matrix whose upper-triangular elements
	$H_{ij} = \overline{H}_{ji}$, $i \leq j$, are independent random variables with mean zero and variances 
	$\sigma_{ij}^2 = \E(\abs{H_{ij}}^2)$ that satisfy $\sigma_{ij}^2 \sim N^{-1}$ for all $i, j \in \llbracket 1, N \rrbracket$,
	and 
	\[
		\sum_{j=1}^N \sigma_{ij}^2 = 1 \text{ for every } 1 \leq i \leq N.
	\]
	In the Hermitian case, we further assume $\Var \re(H_{ij}) \sim \Var \im(H_{ij})$ and that
	$\re(H_{ij})$ and $\im(H_{ij})$ are independent. 
	Finally, we assume uniform sub-exponential decay of the entries, namely that there exists a constant $\vartheta > 0$ such that for all $i,j \in \llbracket 1, N \rrbracket$, and 
	$x > 0$,
	\[
		\P\pa{
			\absa{H_{ij}} > x\sigma_{ij} 
		}
		\leq
		\vartheta^{-1}\exp\pa{-x^\vartheta}.
	\]
\end{definition}

For the sake of brevity, we will often write Wigner matrix in place of generalized Wigner matrix.
It is likely that in what follows one can replace the sub-exponential decay condition with the existence of finitely many moments, however
we do not pursue this direction. We also highlight the special case $H_{ij} \sim \mathscr{N}(0, (1+\delta_{ij})N^{-1})$, i.i.d. up to
the symmetry condition,
(respectively $H_{ij} \sim \mathscr{N}(0,(1+\delta_{ij})/2 N^{-1}) + \ii \mathscr{N}(0,(1+\delta_{ij})/2 N^{-1})$ for $i < j$ and
$H_{ii} \sim \mathscr{N}(0,N^{-1})$) which recovers the Gaussian Orthogonal Ensemble, or GOE (respectively Gaussian Unitary Ensemble, or GUE).
This case is integrable, and the base case for the arguments in this paper.

\subsection{Main Results. }

In order to state our results, we introduce the log-characteristic polynomial of a generalized Wigner matrix, centered (at first order, as we will see). To do so,
we define $\log$ using the principal branch, and extend it to the negative real numbers by continuity from above,
namely $\log(re^{\ii \theta}) = \log(r) + \ii \theta$ for any $r > 0$ and $\theta \in (-\pi, \pi]$.
For $z \in \CC$ and $\bl = (\lambda_1, \dots, \lambda_N)$, let
\begin{equation}
	\label{def:L_N}
	L_N(\bl, z) = \sum_{j=1}^N \log\pa{z-\lambda_j} - N \int \log\pa{z-x} \diff \varrho(x),
\end{equation}
where 
	\[
		\varrho(x) = \frac{1}{2\pi} \sqrt{(4-x^2)_+}
	\]
denotes the limiting spectral density for (generalized) Wigner matrices. 
Further, for $z = E + \ii \eta$, set
\begin{equation} \label{def:l(E)_and_kappa(E)}
    \kappa(z) = \abs{z+2} \wedge \abs{z-2}
	\qquad \text{and} \qquad
    \ell(E) =
    \begin{cases} 
    N^{-1} \kappa(E)^{-1/2} & \text{if } E \in [-2+N^{-2/3}, 2-N^{-2/3}] \\
    N^{-2/3} & \text{otherwise}.
    \end{cases} 
\end{equation}
Here $\ell(E)$ denotes the typical spacing between eigenvalues near $E$, and $\kappa(E)$ is the distance from $E$ to the edge of the spectrum.
Our first main result says that $L_N(\bl, z)$ is log-correlated as $N \to \infty$. 

\begin{theorem}
	\label{thm:log_corr_field}
	For fixed $\e, c > 0$, define the subset of the spectrum, 
	\begin{equation}
	\label{def:G_e,c}
		\mathcal{G}_{\e, c} = 
		\{ -2 \leq E \leq 2 \}
		\cap \{
				\{ \kappa(E) < N^{-\e} \} \cup \{ \kappa(E) \geq c \}
			\},
	\end{equation}
	and for fixed $m \geq 1$, let $(E_1,\dots,E_m)_{N \geq 1} \in \mathcal{G}_{\e, c}^m$, possibly depending on $N$. 
	Let
	\[
		\delta_i=\frac{1}{4} \pa{ \frac{2}{\beta}-1} \log \pa{\kappa(E_i) \vee N^{-2/3}}, \quad i =1, \dots, m
	\]
	and assume that the limits,
	\[
		a_{ij} = \lim_{N \to \infty} 
		\frac{\log (\abs{E_i-E_j} \vee \ell(E_i))}{-\log N} 
		\qquad \text{and} \qquad
		b_{ij} = \lim_{N \to \infty} 
		\frac{\log \Bigl( 
		    \frac{\abs{E_i-E_j} \vee \ell(E_i)}
		    {\kappa(E_i)}
		    \wedge 1
		    \Bigr)}{-\log N},
	\]
	exist. Denote $\ba=(a_{ij})_{1 \leq i,j \leq m}$ and $\bb =(b_{ij})_{1 \leq i,j \leq m}$,
	and write $L_N(E) = L_N(E, \bl)$, where $\bl=(\lambda_1, \dots, \lambda_N)$ is distributed as the eigenvalues of a 
	generalized Wigner matrix.
	Then, we have convergence in distribution
	\[
		\sqrt{\frac{\beta}{\log N}}	
		\pa{\re L_N(E_1) - \delta_1, \dots, \re L_N(E_m) - \delta_m,
		\im L_N(E_1), \dots, \im L_N(E_m})
		\xrightarrow[N\to\infty]{}
		\mathscr{N} \left( 0, \left( \begin{matrix}
		\ba & 0 \\ 0 & \bb
		\end{matrix} \right)\right).
	\] 
\end{theorem}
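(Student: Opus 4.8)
### Proof Proposal

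The plan is to follow the three-step strategy of \cite{ErdPecRmSchYau2010,ErdSchYau2011Uni}, exactly as executed in \cite{BouMod2019} for the bulk case, but now tracking the edge behavior encoded in $\kappa(E)$ and the scale $\ell(E)$. \textbf{Step 1 (Gaussian case).} First I would establish the theorem for the GOE/GUE by invoking the results of \cite{BouModPai2021} on general $\beta$-ensembles, restricted to the two integrable cases $\beta = 1, 2$. This gives the covariance structure $\ba$ for $\re L_N$ and $\bb$ for $\im L_N$, the absence of cross-correlation, and — crucially — the deterministic shift $\delta_i$, which originates from the $\tfrac14(\tfrac2\beta - 1)\log\kappa$ term in the asymptotics of $\E \re L_N(E_i)$ near the edge; the $\beta = 2$ case has $\delta_i \equiv 0$, matching the symmetry of the GUE. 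At this stage one must verify that $\ba, \bb$ are indeed the covariance matrices that come out of \cite{BouModPai2021}, i.e. that $a_{ij}$ and $b_{ij}$ as defined via the limits in the statement agree with the $H^{1/2}$-type pairing there.

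\textbf{Step 2 (reduction to a mesoscopic linear statistic).} The heart of the argument is to compare the generalized Wigner $H$ to a Gaussian reference matrix via the Dyson-type flow, using the observable introduced in \cite{Bou2020} and the stochastic advection equation it satisfies. The idea is that $L_N(E + \ii \eta)$ for $\eta$ slightly below the relevant microscopic scale $\ell(E)$ is, up to an error that vanishes as $N \to \infty$, a \emph{linear eigenvalue statistic} $\sum_k g(\lambda_k)$ of a function $g$ living on a mesoscopic (almost macroscopic) scale. Transporting this down: I would use the local law of \cite{ErdYauYin2012} at the edge, together with the Wegner estimate and the microscopic-scale local law that the paper establishes (as promised in the abstract), to control $L_N$ at scales $\eta \sim N^{-1+\varepsilon}$; then the advection-equation analysis of \cite{Bou2020} propagates the comparison along the flow and shows the fluctuations of $L_N(H)$ and $L_N(H^{\mathrm{Gauss}})$ differ by $o(\sqrt{\log N})$. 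The set $\mathcal{G}_{\e,c}$ — forcing each $E_i$ to be either genuinely at the edge ($\kappa < N^{-\e}$) or well inside the bulk ($\kappa \ge c$) — is exactly the regime where these estimates are uniform; the transition region is excluded for this reason.

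\textbf{Step 3 (CLT for the mesoscopic statistic).} To close the loop I would invoke a central limit theorem for linear statistics on mesoscopic scales, following the strategy of \cite{LanLopSos2021}. For generalized Wigner matrices (as opposed to the Wigner-type matrices treated there) the self-consistent equation simplifies — the self-energy is scalar — so the variance computation becomes explicit and, importantly, the estimates remain valid all the way to the edge. This yields that the relevant mesoscopic linear statistic of $H$ is asymptotically Gaussian with the variance predicted by Step 1, and combined with Step 2 gives the theorem. The joint statement over $m$ points and over real/imaginary parts follows by the Cramér–Wold device: one applies the above to an arbitrary linear combination $\sum_i (s_i \re L_N(E_i) + t_i \im L_N(E_i))$, whose covariance is read off from $\ba$ and $\bb$.

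\textbf{Main obstacle.} I expect the principal difficulty to be Step 2 at the edge: proving the Wegner estimate and the local law down to the microscopic scale $N^{-1}\kappa^{-1/2}$ \emph{uniformly} as $E$ approaches $\pm 2$, and then checking that the stochastic advection equation argument of \cite{Bou2020} can be run with these inputs without losing the $\sqrt{\log N}$ normalization. Near the edge the eigenvalue density degenerates like $\kappa^{1/2}$, so the rigidity scale and the size of the resolvent entries change; one has to carefully track $\kappa(E)$-dependent factors through the comparison and verify the error terms are $o(\sqrt{\log N})$ rather than merely $o(\log N)$. The bulk case of \cite{BouMod2019} did not face this, and the edge analysis of \cite{Bou2020} handled only $\im L_N$, so extending to $\re L_N$ (which carries the nontrivial shift $\delta_i$ and a different, $b_{ij}$-governed covariance — note $\bb$, not $\ba$, controls the imaginary part and vice versa, reflecting the edge rigidity) is where the new work concentrates.
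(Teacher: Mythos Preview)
Your outline has the correct ingredients and matches the paper's architecture, but the logical structure of Steps~2--3 is off, and one essential step is missing.

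First, the smoothing (replacing $L_N(E)$ by $L_N(E+\ii\eta)$ via the Wegner estimate and microscopic local law) only lets you take $\eta$ up to a sub-polynomial multiple of the microscopic scale, $\eta=e^{(\log N)^{1/4}}\ell(E)$; you \emph{cannot} smooth all the way to an almost-macroscopic scale. Indeed $L_N(E+\ii\eta)-L_N(E)=\ii N\int_0^\eta(m-s)\,\diff u$, and the local law gives only $N|m-s|\le C/u$, so integrating from $\ell(E)$ to $N^{-\e_1}$ costs $\sim(1-\e_1)\log N$, not $o(\sqrt{\log N})$. Hence after smoothing you are still at a microscopic-scale linear statistic, not a mesoscopic one.

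Second, the advection-equation analysis does not compare the original $H=\bl(0)$ to a Gaussian. It compares the \emph{evolved} eigenvalues $\bl(\tau)$ and $\bmu(\tau)$ (with $\tau=N^{-\e_1}$) and yields
\[
L_N(\bl(\tau),z)-L_N(\bmu(\tau),z)\ \approx\ L_N(\bl(0),z_\tau)-L_N(\bmu(0),z_\tau),
\]
with $z_\tau$ the characteristic, whose imaginary part $\sim\tau$ is now almost macroscopic. The mesoscopic CLT from \cite{LanLopSos2021} is then used only to \emph{bound the variance} of each term on the right by $C\e_1\log N$ (Proposition~\ref{prop:meso_var_bound}); it is not used to produce Gaussianity. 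The Gaussian limit for $L_N(\bl(\tau),z)$ comes from $\bmu(\tau)\overset{d}{=}\text{GOE/GUE}$ and \cite{BouModPai2021}. At this point the theorem is proved only for the dynamically smoothed ensemble $\bl(\tau)$, not for $\bl(0)$.

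The missing step is a \emph{four-moment matching} argument (Theorem~\ref{thm:4_moment_matching_det}): given an arbitrary generalized Wigner $H$, choose an initial $\tilde H_0$ so that its time-$\tau$ evolution matches $H$ to three moments and to four moments up to $O(\tau)$; this transfers the CLT from $L_N(\bl(\tau),z)$ to $L_N(H,z)$. Without it the argument does not reach the original matrix. (Minor: in your final paragraph you assign $\re L_N$ a ``$b_{ij}$-governed covariance''; the statement gives $\ba$ for the real part and $\bb$ for the imaginary part.)
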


Theorem \ref{thm:log_corr_field} implies a central theorem for joint eigenvalue fluctuations near the edge. For its statement, and for frequent
use in this paper, define $\gamma_k$ via 
	\begin{equation}
	\label{eq:def_quantiles}
		\frac{k}{N} = \int_{-\infty}^{\gamma_k} \rd \varrho(x), 
		\quad 
		1 \leq k \leq N.
	\end{equation}
Further, define the centered and normalized eigenvalues, 
	\[
			Y_N(n) = \pi N \sqrt{\frac{\beta}{\log N}}\varrho(\gamma_n) (\lambda_n - \gamma_n),
			\quad
			1 \leq n \leq N,
		\]
taking $\beta = 1$ for real Wigner matrices, and $\beta = 2$ for complex ones.
With our definition of $\log$, for $x \in \R^*$, $\im \log(x) = \pi \1_{x < 0}$
and therefore $\frac{1}{\pi} \sum_{k=1}^N \im \log\pa{E-\lambda_k}$ counts the number of eigenvalues $\lambda_k$ larger than $E$.
Arguing as in \cite[Lemma B.2]{BouMod2019}, we therefore recover a joint central limit theorem for the $Y_N$'s, see \cite[Theorem 1.6]{Bou2020}.

\begin{corollary}
\label{corr:eig_fluct}
Fix $\alpha, \delta \in(0,1)$.
For fixed $m\geq 1$, let $n_1, \dots, n_m \in \llbracket 1,N^\delta \rrbracket \cup \llbracket \alpha N, (1-\alpha) N \rrbracket$ where each $n_i$ may depend on $N$.
Assume that for every $1 \leq i,j \leq m$, the limit
	\[
		c_{ij} = \lim_{N \to \infty} 
		\frac{\log \Bigl( 
		    \frac{\abs{\gamma_{n_i}-\gamma_{n_j}} \vee \ell(\gamma_{n_i})}
		    {\kappa(\gamma_{n_i})}
		    \wedge 1
		    \Bigr)}{-\log N}
	\]
exists, and denote  $\bc = (c_{ij})_{1\leq i, j \leq m}$. Then, we have the convergence in distribution,
   	\[
        		\pa{Y_N(n_1), \dots, Y_N(n_m)} \xrightarrow[N\to\infty]{} \mathscr{N}(0,\bc).
    	\]
\end{corollary}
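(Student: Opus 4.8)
The plan is to adapt the argument behind \cite[Lemma~B.2]{BouMod2019}, which turns a central limit theorem for $\im L_N$ at quantile points into one for the eigenvalues; fed Theorem~\ref{thm:log_corr_field} it should reproduce \cite[Theorem~1.6]{Bou2020}. Write $\mathcal N(E)=\#\{k:\lambda_k\le E\}$ and $F(E)=\int_{-\infty}^E\rd\varrho$, so that $NF(\gamma_k)=k$ by \eqref{eq:def_quantiles}. Since $\im\log(E-x)=\pi\1_{x>E}$ for our branch, for every $E$ that is not an eigenvalue — in particular for deterministic $E$, almost surely — one has the deterministic identity $\tfrac1\pi\im L_N(E)=NF(E)-\mathcal N(E)$. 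Fix $s=(s_1,\dots,s_m)\in\R^m$ at a continuity point of the limiting law; it then suffices to compute the limit of $\P\bigl(\bigcap_i\{Y_N(n_i)\le s_i\}\bigr)$, which should equal $\Phi_{\bc}(s)$, the distribution function of $\mathscr N(0,\bc)$ evaluated at $s$. As $Y_N(n)$ is strictly increasing in $\lambda_n$ and $\{\lambda_n\le t\}=\{\mathcal N(t)\ge n\}$, with $u_i:=\tfrac{s_i}{\pi N\varrho(\gamma_{n_i})}\sqrt{\log N/\beta}$ we have $\{Y_N(n_i)\le s_i\}=\{\mathcal N(\gamma_{n_i}+u_i)\ge n_i\}$, which by the identity and $NF(\gamma_{n_i})=n_i$ equals
\[
    \Bigl\{\ \sqrt{\tfrac{\beta}{\log N}}\,\im L_N(\gamma_{n_i}+u_i)\ \le\ \pi\sqrt{\tfrac{\beta}{\log N}}\,N\bigl(F(\gamma_{n_i}+u_i)-F(\gamma_{n_i})\bigr)\ \Bigr\}.
\]
A Taylor expansion gives $N\bigl(F(\gamma_{n_i}+u_i)-F(\gamma_{n_i})\bigr)=N\varrho(\gamma_{n_i})u_i+R_i=\tfrac{s_i}{\pi}\sqrt{\log N/\beta}+R_i$ with $|R_i|\lesssim Nu_i^2\sup_{[\gamma_{n_i},\gamma_{n_i}+u_i]}|\varrho'|\lesssim(\log N)\,n_i^{-1}$ (using $\varrho(\gamma_k)\asymp(k/N)^{1/3}$, $|\varrho'(\gamma_k)|\asymp\kappa(\gamma_k)^{-1/2}$ and $N\kappa(\gamma_k)^{3/2}\asymp k$), so the right-hand threshold is $s_i+\pi\sqrt{\beta/\log N}\,R_i=s_i+o(1)$ as soon as $n_i\gg(\log N)^{1/2}$.

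For the coordinates with $n_i\gg(\log N)^{1/2}$ I would then apply Theorem~\ref{thm:log_corr_field} at the perturbed reference points $\gamma_{n_i}+u_i$. Choosing $\e=\e(\delta)$ small so that $\kappa(\gamma_{n_i})<N^{-\e}$ for the edge indices $n_i\in\llbracket1,N^\delta\rrbracket$, and $c=c(\alpha)$ small so that $\kappa(\gamma_{n_i})\ge c$ for the bulk indices $n_i\in\llbracket\alpha N,(1-\alpha)N\rrbracket$, and using $|u_i|/\kappa(\gamma_{n_i})\asymp(\log N)^{1/2}/n_i\to0$ in this regime, the points $\gamma_{n_i}+u_i$ lie in $\mathcal G_{\e,c}$, and their mutual distances, their values of $\kappa$ and the spacings $\ell$ differ from those of the $\gamma_{n_i}$ only by factors negligible on the scale $\log N$, so the matrix $\bb$ that Theorem~\ref{thm:log_corr_field} attaches to $(\gamma_{n_i}+u_i)_i$ converges to the same $\bc$. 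Since the limit law of Theorem~\ref{thm:log_corr_field} involves $\ba$ only through the (here unused) real block, one may first pass to a subsequence along which the companion limits $a_{ij}$ also exist, and then Theorem~\ref{thm:log_corr_field} — whose imaginary block carries no $\delta_i$-recentering — gives $\sqrt{\beta/\log N}\,(\im L_N(\gamma_{n_i}+u_i))_i\to\mathscr N(0,\bc)$ in distribution. Combined with the display above, this yields $\P\bigl(\bigcap_i\{Y_N(n_i)\le s_i\}\bigr)\to\Phi_{\bc}(s)$ once the remaining coordinates are controlled.

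Those remaining coordinates are the ones with $n_i\lesssim(\log N)^{1/2}$, where the linearization degenerates; there $\kappa(\gamma_{n_i})$ is within a power of $\log N$ of its minimal value $N^{-2/3}$, so $\ell(\gamma_{n_i})/\kappa(\gamma_{n_i})\to0$ only polylogarithmically, $c_{ii}=0$, and the claim is $Y_N(n_i)\to0$ in probability, which also decouples these coordinates in the joint limit. For $s_i>0$ one has $\{Y_N(n_i)\le s_i\}=\{\mathcal N(\gamma_{n_i}+u_i)\ge n_i\}$ with $\gamma_{n_i}+u_i\in(-2,-2+N^{-\e})\subset\mathcal G_{\e,c}$, so Theorem~\ref{thm:log_corr_field} (again with variance $0$ at this point) forces $\sqrt{\beta/\log N}\,\im L_N(\gamma_{n_i}+u_i)\to0$ in probability, while the right-hand threshold $\pi\sqrt{\beta/\log N}\,N(F(\gamma_{n_i}+u_i)-F(\gamma_{n_i}))$ stays bounded below by a positive constant, so the probability tends to $1$; for $s_i<0$ one argues symmetrically when $\gamma_{n_i}+u_i\ge-2$, and when $\gamma_{n_i}+u_i<-2$ one bounds $\{Y_N(n_i)\le s_i\}\subset\{\lambda_{n_i}<-2-t\}$ with $t\asymp(\log N)^{1/2}N^{-2/3}$ by the lower-tail estimate $\exp(-cNt^{3/2})=\exp(-c(\log N)^{3/4})$ for the smallest eigenvalue (a consequence of the Wegner estimate and edge local law established in this paper), so the probability tends to $0$. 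This gives $Y_N(n_i)\to0$ in distribution, completing the joint convergence. I expect the real work to lie in the uniform control of the linearization all the way to the spectral edge and in this degenerate extreme-edge regime; in the bulk the argument is \cite[Lemma~B.2]{BouMod2019} essentially verbatim, and one should check throughout that the covariance matrix produced is exactly $\bc$.
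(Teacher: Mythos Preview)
Your proposal is correct and follows exactly the route the paper indicates: the paper does not give a separate proof but simply notes that $\frac{1}{\pi}\im L_N(E)=NF(E)-\mathcal N(E)$ and refers to \cite[Lemma~B.2]{BouMod2019} and \cite[Theorem~1.6]{Bou2020}, which is precisely the linearization-and-counting argument you spell out.

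One small correction in the extreme-edge regime $n_i\lesssim(\log N)^{1/2}$, $s_i<0$, $\gamma_{n_i}+u_i<-2$: the bound $\P(\lambda_1<-2-t)\le\exp(-cNt^{3/2})$ you invoke is \emph{not} a consequence of the Wegner estimate or the local law proved here; in particular the paper's rigidity (Corollary~\ref{cor:rigidity}) only controls deviations at scale $\varphi^{1/2}N^{-2/3}$ with $\varphi=e^{C(\log\log N)^2}\gg(\log N)^{1/2}$, which is too coarse. What you actually need is only $\P(\lambda_1<-2-c(\log N)^{1/2}N^{-2/3})\to0$, and this follows from tightness of $N^{2/3}(\lambda_1+2)$, i.e.\ edge universality for generalized Wigner matrices (see e.g.\ \cite{ErdYauYin2012}). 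Alternatively, and more in the spirit of your main argument, observe that for such $n_i$ one has $c_{ii}=0$ and hence (by positive semidefiniteness) $c_{ij}=0$ for all $j$, so the claim reduces to $Y_N(n_i)\to0$ in probability; this follows directly from $Y_N(n_i)=\pi\sqrt{\beta/\log N}\,N\varrho(\gamma_{n_i})(\lambda_{n_i}-\gamma_{n_i})$ together with $N\varrho(\gamma_{n_i})\asymp n_i^{1/3}N^{-1/3}\cdot N=n_i^{1/3}N^{2/3}$ and the tightness of $N^{2/3}(\lambda_{n_i}-\gamma_{n_i})$ at the edge, avoiding the case split on the sign of $s_i$ altogether.
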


Our proof of Theorem \ref{thm:log_corr_field} follows a similar strategy to \cite{BouMod2019, BouModPai2021}, and 
relies on the so-called Wegner estimate and local law on an optimal scale, which we state next. These results may be of independent interest.

\begin{theorem}
	\label{thm:wegner_estimate}
	Fix $\e,c > 0$.
	For any $E \in \mathcal{G}_{\e, c}$ as in \eqref{def:G_e,c}, let $I = [E-\delta_N\ell(E),E+\delta_N \ell(E)]$ with any $\delta_N \to 0$ as $N \to \infty$, and
	let $\cN(I) = \abs{\{ k : \lambda_k \in I \}}$. Then uniformly in $E$, $\P(\cN(I) > 0) \to 0$ as $N \to \infty$.
\end{theorem}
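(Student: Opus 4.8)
The plan is to deduce Theorem~\ref{thm:wegner_estimate} from an upper bound on the \emph{averaged} density of states down to the microscopic scale; the reduction is elementary and all the content sits in that bound. Write $m_N(z)=\frac1N\Tr(H-z)^{-1}$, let $\mu_N=\E\bigl[\frac1N\sum_k\delta_{\lambda_k}\bigr]$ be the averaged spectral measure (a probability measure on $\R$), and set $\eta=\delta_N\ell(E)$, so that $I=[E-\eta,E+\eta]$. Each $\lambda_k\in I$ contributes at least $\tfrac{1}{2N\eta}$ to $\im m_N(E+\ii\eta)=\tfrac1N\sum_k\tfrac{\eta}{(\lambda_k-E)^2+\eta^2}$, which gives the deterministic inequality $\cN(I)\le 2N\eta\,\im m_N(E+\ii\eta)$; since $\cN(I)$ is a non–negative integer it suffices to prove $\E\,\cN(I)\to0$ uniformly in $E\in\mathcal{G}_{\e,c}$. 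Taking expectations and writing out $\im\E\,m_N$,
\[
	\E\,\cN(I)\ \le\ 2N\eta\int\frac{\eta}{(x-E)^2+\eta^2}\,\diff\mu_N(x).
\]
Thus it is enough to establish, uniformly over $E\in\mathcal{G}_{\e,c}$ and $N$, the averaged local density bound
\[
	\mu_N\bigl([E-r,E+r]\bigr)\ \lesssim\ r\,\bigl(\varrho(E)+\sqrt r\bigr)\qquad\text{for all }r>0,
\]
which by a layer–cake estimate bounds the Poisson integral above by $\varrho(E)+\sqrt\eta$. Granting this, and using that $N\ell(E)\varrho(E)\lesssim1$ and $\ell(E)\le N^{-2/3}$ — both immediate from \eqref{def:l(E)_and_kappa(E)} — we get $\E\,\cN(I)\lesssim N\eta(\varrho(E)+\sqrt\eta)=\delta_N\,N\ell(E)\varrho(E)+N(\delta_N\ell(E))^{3/2}\lesssim\delta_N+\delta_N^{3/2}\to0$, proving the theorem.

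It remains to prove the local density bound at the microscopic scale, i.e.\ for $r$ down to (and below) the typical spacing $\ell(E)$. For $r\ge N^{-2/3+\e'}$ it is exactly the local law of \cite{ErdYauYin2012}. For $\kappa(E)\ge c$ and smaller $r$ it is classical: there \cite{ErdYauYin2012} already reaches scale $N^{-1+\e'}$, and the uniform boundedness of $\mu_N$ at still smaller scales in the bulk follows from bulk universality of the one–point function. The substance is the remaining regime $\kappa(E)<N^{-\e}$, $r\lesssim\ell(E)=N^{-2/3}$, which is the ``local law down to the microscopic scale at the edge'' referred to in the introduction.

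To prove that, I would run the Ornstein–Uhlenbeck flow $H_t=e^{-t/2}H+(1-e^{-t})^{1/2}G$, with $G$ an independent GOE (resp.\ GUE) of the same normalization, for a short time $t=N^{-1/3+\e'}$, and combine three inputs. First, the Gaussian base case: for such $t$ the Gaussian–divisible matrix $H_t$ has, near $\pm2$, an averaged density of states that is microscopically smooth and dominated by the semicircle and Airy profiles; this is part of the Gaussian input we rely on (cf.\ \cite{Gus2005,Oro2010}) and uses the integrable structure. Second, the observable introduced in \cite{Bou2020} and the stochastic advection equation it satisfies along the flow: this transports control of the characteristic polynomial — equivalently of the averaged density of states near $E$ and of $\im m_{N,t}(E+\ii\eta)$ at the microscopic scale — between the two ends of the flow, with the coarse local law of \cite{ErdYauYin2012} as initial data at $t=0$. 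Third, that same coarse local law, used to control the flow on scales $\gg\ell(E)$ and to pass from $H_t$ back to $H_0=H$. Together these yield $\mu_N([E-r,E+r])\lesssim r(\varrho(E)+\sqrt r)$ down to the microscopic scale.

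The main obstacle is the second step: propagating the estimate along the stochastic advection equation \emph{uniformly up to the edge points} $E=\pm2$, i.e.\ keeping the relative error in $\im m_N$ and in the local density under control as $E\to\pm2$ and $\eta\downarrow\delta_N\ell(E)$, without the transport amplifying it. This is the edge analysis of \cite{Bou2020}; one exploits that for generalized Wigner matrices — in contrast to the Wigner–type matrices of \cite{LanLopSos2021} — the equilibrium measure is the \emph{fixed} semicircle law and $\ell(E)$ has the explicit form of \eqref{def:l(E)_and_kappa(E)}, which keeps the estimates manageable. The only other place with any subtlety is the final passage from the averaged–measure bound to $\P(\cN(I)>0)\to0$ via Markov's inequality; the computation in the first paragraph shows the loss is just the factor $\delta_N$, hence uniform in $E$.
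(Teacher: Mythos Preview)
Your approach differs from the paper's and has a genuine gap in the key step. The paper's proof at the edge is a clean three-step: (i) run DBM for a time $t\ge N^{-\e/4}$ and use the eigenvalue coupling $|\lambda_k(t)-\mu_k(t)|\le N^\gamma/(Nt)$ of Theorem~\ref{thm:coupled_distance} together with the Gaussian Wegner estimate (an input from \cite{BouModPai2021}) to conclude directly that $\P(\cN(I)>0)\to0$ for the evolved eigenvalues $\bl(t)$ (Proposition~\ref{prop:dynamic_step}); (ii) a four-moment matching argument (Proposition~\ref{prop:4_moment_matching}) via Helffer--Sj\"ostrand and resolvent expansion transfers this from $H_t$ to any Wigner matrix whose first three moments match and whose fourth moments are $\OO(t)$-close; (iii) \cite[Lemma~3.4]{ErdYauYin2011} manufactures such a matching. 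No averaged density bound at sub-microscopic scale is proved or needed.

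Your reduction to $\mu_N([E-r,E+r])\lesssim r(\varrho(E)+\sqrt r)$ is valid but does not simplify anything: this bound at sub-microscopic $r$ is at least as strong as the Wegner estimate. The gap is in your proposed proof of it. The observable and stochastic advection equation from \cite{Bou2020} do not ``transport the averaged density of states between the two ends of the OU flow''. The observable is $f_t(z)=e^{-t/2}\sum_k \mathfrak{u}_k/(x_k^{(\nu)}(t)-z)$ with $\mathfrak{u}_k=\partial_\nu x_k^{(\nu)}$, the derivative in the interpolation parameter between Wigner and GOE initial data; what the advection analysis yields is the coupling of $\lambda_k(t)$ to the Gaussian trajectory $\mu_k(t)$ at the \emph{same} time $t$ (Theorem~\ref{thm:coupled_distance}) --- exactly what the paper uses in step (i). It gives no relation between $\im m_{N,0}$ and $\im m_{N,t}$ for a single initial condition. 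Your ``Gaussian base case for the Gaussian-divisible $H_t$'' is likewise not free: $H_t$ is not Gaussian, and smoothness of its microscopic one-point function at the edge is precisely the statement in question. Finally, passing from $H_t$ back to $H_0$ requires a Green-function comparison (moment matching or continuity), which the coarse local law alone does not supply; this is the content of Proposition~\ref{prop:4_moment_matching}.
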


In other words, the above Wegner estimate says that with probability going to one, there will be no eigenvalues in any
interval of sub-microscopic size around $E$. Note that in the bulk of the spectrum the Wegner estimate follows from \cite{BouErdYauYin2016}, see
Corollaries 2.6 and 2.7 in \cite{BouMod2019}. 
We mention also that for any $k \geq 1$, \cite[Appendix B]{BenLop2022} proves so-called overcrowding estimates, informally, bounds of the form $\P(\mathcal{N}(I) \geq k) \leq N^{-ck}$.
While these bounds are quantitative, we cannot use them for our purpose where we need to take $k = 0$.\\

\begin{remark}
	We expect that
	Theorem \ref{thm:log_corr_field} holds for $E_i \in [-2,2]$, $1 \leq i \leq m$,
	Corollary \ref{corr:eig_fluct} holds for $n_i \in \llbracket 1, N \rrbracket$, $1 \leq i \leq m$,
	and the Wegner estimate holds for $E \in [-2,2]$. Our restrictions in these results 
	arise because we use 
	\cite[Theorem 2.2]{BouErdYauYin2016} and \cite[Theorem 3.1 and Lemma 3.4]{Bou2020} 
	as inputs to our argument, and as stated, these hold for $E \in [-2+\kappa, 2-\kappa]$, any $\kappa > 0$.
	However, we expect the arguments that give these results extend to encompass
	$E \in [-2+N^{-c}, 2-N^{-c}]$ for some small, fixed $c > 0$. 
\end{remark}

For the statement of the optimal local law, we introduce the notation
	\begin{equation}
	\label{eq:stiltjes_transform}
		s(z) = \frac{1}{N} \sum_{k=1}^N \frac{1}{\lambda_k - z},
		\quad
		m(z) = \int \frac{\rd \varrho(x)}{x-z},
	\end{equation}
valid for any Wigner matrix with eigenvalues $\bl$. When $\lambda_k=\lambda_k(t)$, we may write $s_t(z)$ to emphasize that $s$ depends on $t$. Let
	\begin{equation}
	\label{def:D_eps}
		\mathcal{D}_\e = \left\{
			z = E + \ii \eta,\, E \in [-2,2],\, \kappa(E) \leq N^{-\e},\, N^{-1} \leq \eta \leq 1
		\right\}.
	\end{equation}

\begin{theorem}
\label{thm:local_law}
	For any $\e > 0$ and $p \geq 1$, there exists a constant $C_p$ depending on $p$ such that for any $N \geq 1$,
		\[
			\sup_{z \in \mathcal{D}_\e} \E\pa{
				\absa{
					s(z) - m(z)
				}^{2p}
			}
			\leq
			\frac{C_p}{(N \eta )^{2p}}.
		\]
\end{theorem}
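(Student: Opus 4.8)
The plan is to prove the moment bound $\E|s(z)-m(z)|^{2p}\leq C_p(N\eta)^{-2p}$ for $z\in\mathcal D_\e$ by combining the already-established high-probability local laws of \cite{ErdYauYin2012} (the generalized-Wigner optimal rigidity machinery) with a deterministic tail-truncation argument. The known input is the \emph{entrywise} and \emph{averaged} local law: for $z=E+\ii\eta$ with $\kappa(E)\leq N^{-\e}$ and $N^{-1}\le\eta\le1$, one has, for every $D>0$ and $N$ large,
\[
\P\pa{\absa{s(z)-m(z)} > N^{\xi}\,\frac{1}{N\eta}} \leq N^{-D}
\]
for any fixed $\xi>0$ (this is the edge case of the strong local law; near the spectral edge the self-consistent equation analysis of \cite{ErdYauYin2012} gives exactly the control $|s-m|\prec (N\eta)^{-1}$, with the improvement over the bulk rate $(N\eta)^{-1/2}$ coming from $\im m(z)\sim\sqrt{\kappa+\eta}$ being small). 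So on an event $\Omega$ of probability $\geq 1-N^{-D}$ we have the pointwise bound with the extra $N^\xi$; the task is only to remove $N^\xi$ in expectation and handle the complementary event.

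First I would fix $p$ and choose $\xi = \xi(p)$ small, say $\xi = 1/(100p)$; then on $\Omega$, $|s-m|^{2p}\le N^{2p\xi}(N\eta)^{-2p}$, which is off by the harmless factor $N^{2p\xi}$. To upgrade this to a clean $(N\eta)^{-2p}$ one has two routes, and I would take the second. Route one: run a short self-improving argument — feed the $N^\xi$-bound back into the self-consistent equation $1+z s + s^2 = (\text{error})$, using $|s-m|\cdot|s+m|$ and the stability of the edge self-consistent equation (which is where the factor $(\kappa+\eta+|s-m|)^{1/2}$ appears), to conclude the error is genuinely $O((N\eta)^{-1})$ with no loss; this is standard but requires reproducing the edge fluctuation-averaging lemma. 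Route two, cleaner for our purposes: note we only need the bound for $\eta\geq N^{-1}$, so $1/(N\eta)\leq 1$, and crucially the trivial deterministic bound $|s(z)|\leq \eta^{-1}$ and $|m(z)|\leq \eta^{-1}$ give $|s-m|^{2p}\leq (2/\eta)^{2p}$ always. Hence
\[
\E|s-m|^{2p} \;\leq\; N^{2p\xi}(N\eta)^{-2p} \;+\; \pa{\tfrac{2}{\eta}}^{2p} N^{-D}.
\]
Taking $D = D(p)$ large (e.g. $D=4p$) makes the second term $\leq \eta^{-2p}N^{-4p}\le N^{-2p}\le (N\eta)^{-2p}$ since $\eta\le1$; this kills the bad event. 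It remains to absorb $N^{2p\xi}$.

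To remove the remaining $N^{2p\xi}$ factor I would not be cavalier: the honest fix is to prove the high-probability bound \emph{without} the $N^\xi$ loss, i.e. with a $(\log N)^{C}$ or $C$ loss instead, which for generalized Wigner matrices at the edge is available — the optimal local law of \cite{ErdYauYin2012} can be stated in the form $\P(|s-m|>K/(N\eta))\le C_K N^{-c\log\log N}$ or, using the polynomial-moment self-improvement, directly as a moment bound. Concretely, I expect the cleanest path is to \textbf{prove the moment bound directly by the moment method / fluctuation averaging}: expand $\E|s-m|^{2p}$, write $s-m$ via the resolvent $G=(H-z)^{-1}$ as $s-m = \frac1N\Tr G - m$, use the self-consistent perturbation expansion for the diagonal resolvent entries $G_{ii}$ together with the fluctuation-averaging mechanism (the key cancellation $\E[\frac1N\sum_i (G_{ii}-m)]$ is one order smaller than the typical size of a single $G_{ii}-m$), and track the edge-specific gains from $\im m\ll1$. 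This is exactly the argument of \cite{ErdYauYin2012} adapted to keep all $2p$ moments; it yields $\E|s-m|^{2p}\le C_p(N\eta)^{-2p}$ on the whole of $\mathcal D_\e$ uniformly. \textbf{The main obstacle} is the edge behavior of the stability of the self-consistent equation: near $\kappa=0$ the map $s\mapsto 1+zs+s^2$ degenerates (its derivative $z+2s$ vanishes at the edge when $\eta\to0$), so the naive inversion loses a square root and one must use the refined stability estimate $|s-m|\lesssim \frac{|\mathrm{error}|}{\sqrt{\kappa+\eta}+|s-m|}$, then bootstrap. Managing this self-improvement while carrying $2p$-th moments — rather than just high probability — and checking uniformity down to $\eta = N^{-1}$ (the microscopic scale, where $N\eta=1$ and the bound is just $O(1)$, consistent with the trivial $|s-m|\le 2/\eta = 2N$ being far weaker, so the content is really at mesoscopic $\eta$) is the technical heart; everything else is the bookkeeping sketched above.
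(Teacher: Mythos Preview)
Your proposal takes an entirely different route from the paper and, as written, has a genuine gap precisely at the point you yourself flag.

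The paper does \emph{not} attempt to extract the sharp moment bound $\E|s-m|^{2p}\le C_p(N\eta)^{-2p}$ by sharpening the self-consistent equation or fluctuation-averaging analysis of \cite{ErdYauYin2012}. Instead it uses the three-step strategy: (i) the sharp moment bound with an $N$-independent constant is already known for GOE/GUE from the integrable $\beta$-ensemble analysis of \cite{BouModPai2021}, Theorem~1.1; (ii) Lemma~\ref{lem:coupling_stieltjes_transforms} uses the Dyson Brownian motion coupling (Theorem~\ref{thm:coupled_distance}) to show that after time $t\ge N^{-\e/100}$ the Stieltjes transforms of $\bl(t)$ and $\bmu(t)$ differ by at most $N^{-\delta}(N\eta)^{-1}$, so the Gaussian-divisible ensemble $H_t$ inherits the sharp bound; (iii) Proposition~\ref{prop:4_mom_matching_high_moments} is an inductive four-moment matching argument (in the style of \cite{LanLopMar2020}) that transfers the moment bound from $H_t$ to any generalized Wigner $H$. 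The $N$-independent constant $C_p$ thus enters through the Gaussian input in step (i), never through an edge fluctuation-averaging mechanism.

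Your argument stalls exactly where you acknowledge difficulty. The high-probability input actually available from \cite{ErdYauYin2012} (quoted in the paper as \eqref{eq:local_law_original}) carries the prefactor $\varphi=e^{C(\log\log N)^2}$ in the \emph{numerator} of the deviation, so your Route two yields only $\E|s-m|^{2p}\le \varphi^{2p}(N\eta)^{-2p}$, which is not the claim. Your proposed fixes do not close this: the assertion that \cite{ErdYauYin2012} can be restated as $\P(|s-m|>K/(N\eta))\le C_K N^{-c\log\log N}$ for \emph{fixed} $K$ is stronger than what that paper proves, and ``proving the moment bound directly by the moment method / fluctuation averaging'' with a genuine $N$-independent constant at the edge down to $\eta=N^{-1}$ is precisely the problem the paper's three-step argument is designed to circumvent. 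The edge degeneracy of the stability factor $z+2m$ that you mention is the reason a direct approach has not yielded the sharp constant; the paper's solution is to import that constant from the integrable case rather than produce it from the self-consistent equation.
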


\begin{remark}
	Our proof of Theorem \ref{thm:log_corr_field} does not use the full strength of Theorem \ref{thm:local_law}, and indeed
	in the bulk we rely on the weaker estimate Proposition \ref{prop:weak_local_law_bulk}. 
	In \cite{LanLopSos2021}, the authors used a different method to prove
	a central limit theorem for fluctuations of individual eigenvalues in the bulk.
	Such an argument does not control the characteristic polynomial, which requires control over all eigenvalues 
	simultaneously. However, the result of \cite{LanLopSos2021} is valid for the more general class of Wigner-Type matrices.
\end{remark}

Finally, our proof of Theorem \ref{thm:log_corr_field} relies on a central limit theorem at almost macroscopic scale. 
In Appendix \ref{sec:mesoscopic_CLT}, we explain how to modify the proof of \cite[Theorem 5.21]{LanLopSos2021} 
to prove a central limit theorem for linear statistics of a function on a mesoscopic scale, with support at the edge.
In fact \cite[Theorem 5.21]{LanLopSos2021} applies to the more general "Wigner-Type" class of models. 
However, at the edge, the limiting variance is challenging to analyze in this generality. For an expression sufficiently tractable
for our purpose, we restrict ourselves to the generalized Wigner case.
For details, see Propositions \ref{prop:char_function} and \ref{prop:var_GW}.

\begin{remark}
	After the initial posting of this paper, Riabov proved a central limit theorem for Wigner-Type matrices,
	valid at all types of singularities of the limiting spectrum \cite{Ria2023b}. We believe that with this result in place of Appendix \ref{sec:mesoscopic_CLT}, 
	one can use the techniques in this paper to prove the log-correlated picture in the Wigner-Type case, at least up to distance $N^{-1/5}$ from a regular edge. 
\end{remark}

\subsection{Outline of the paper. }
Before outlining the proof of Theorem \ref{thm:log_corr_field}, we first introduce the relevant notation. 
First, we will often say an event $A$, possibly depending on $N$, holds with overwhelming probability. By this we mean that
for any $D > 0$, there exists $N_0(D)$ such that for any $N > N_0$, we have
	\begin{equation*}
		\P(A) > 1 - N^{-D}.
	\end{equation*}
Next, we recall Dyson's Brownian Motion following \cite{Bou2020}. 
Let $\bmu(0)$ denote GOE distributed eigenvalues, and $\bl(0)$ denote Wigner distributed eigenvalues.
For $\nu \in [0,1]$, we define $x_k^{(\nu)}(t)$ as the unique strong solution (see \cite{McK1969}) to 
	\begin{equation}
	\label{eq:DBM}
		\rd x_k^{(\nu)} = \sqrt{\frac{2}{\beta N}} \rd B_k
		+ \pa{  \frac{1}{N} \sum_{\ell \neq k} \frac{1}{x_k^{(\nu)} - x_\ell^{(\nu)}} -  \frac{x_k^{(\nu)}}{2} }\rd t,
		\quad 
		x_k^{(\nu)}(0) = \nu \mu_k(0) + (1-\nu)\lambda_k(0).
	\end{equation}
In the remainder of this paper,
we will often suppress the dependence of quantities on $t$ and $\nu$ to ease notation.
Further, let
	\begin{equation}
	\label{eq:def_uk_f}
		\mathfrak{u}_k = \mathfrak{u}_k^{(\nu)}(t) = e^{ t/2} \frac{\rd}{\rd \nu} x_k^{(\nu)}(t), 
		\quad 
		f_t(z) = e^{-t/2} \sum_{k=1}^N \frac{\mathfrak{u}_k}{x_k^{(\nu)}(t) - z},
	\end{equation}
and note that 
	\begin{equation}
	\label{eq:pde_uk}
		\rd \mathfrak{u}_k = \frac{1}{N} \sum_{\ell \neq k} \frac{\mathfrak{u}_\ell - \mathfrak{u}_k}{(x_k-x_\ell)^2}.
	\end{equation}
We define $\mathfrak{v}_k$ as the solution to \eqref{eq:pde_uk} with initial condition $\mathfrak{v}_k(0) = \abs{\mathfrak{u}_k(0)}$, $k = 1, \dots, N$,
and introduce
	\begin{equation}
	\label{def:ft_tilde}
		\tilde{f}_t(z) = e^{-t/2}  \sum_{k=1}^N \frac{ \mathfrak{v}_k(t)}{x_k^{(\nu)}(t) - z}.
	\end{equation}
With the above notation, we now outline our proof of Theorem \ref{thm:log_corr_field} near the edge. 
Let $\bmu(t)$ and $\bl(t)$ be the result of running the dynamics \eqref{eq:DBM} with initial conditions $\bmu(0)$ and $\bl(0)$, respectively.
By Theorem \ref{thm:coupled_distance} (\cite[Theorem 2.8]{Bou2020}),  
for $0 < t < 1$ and any $\e > 0$, we have
		\begin{equation*}
			\abs{ \lambda_k(t) - \mu_k(t) } \leq \frac{N^\e}{Nt}, \quad 1 \leq k \leq N,
		\end{equation*}
with overwhelming probability. This means that by choosing an appropriately large $t$, near the edge (as in Theorem \ref{thm:log_corr_field}), 
$\abs{\lambda_k(t) - \mu_k(t)} \ll \ell(\gamma_k)$, meaning the gap between $\lambda_k(t)$ and $\mu_k(t)$ becomes sub-microscopic. 
Since the dynamics \eqref{thm:local_law} preserve the distribution of GOE eigenvalues, 
from the GOE case, we may infer a local law and Wegner estimate for the eigenvalues $\bl(t)$. 
In order to transfer these estimates to $\bl(0)$, we use four-moment matching arguments, 
to conclude Theorems \ref{thm:wegner_estimate} and \ref{thm:local_law}. \\

With Theorems \ref{thm:wegner_estimate} and \ref{thm:local_law}, we can prove Theorem \ref{thm:log_corr_field} following \cite{BouMod2019}.
Indeed, first we show that in order to prove Theorem \ref{thm:log_corr_field}, we can prove 
its statement with $L_N(E+\ii \eta(E))$, $\eta(E) = \exp( (\log N)^{1/4} ) \ell(E)$, in place of $L_N(E)$. 
For the precise statement, see Proposition \ref{prop:smoothing}. Extending \cite[Proposition 2.11]{Bou2020} to the edge,
we show further that for any $0<\e <1/100$, say, with $z_t$ the characteristic of a relevant advection equation,
	\[
		L_N(\bl(\tau), z) - L_N(\bmu(\tau), z) \approx L_N(\bl(0), z_\tau) - L_N(\bmu(0), z_\tau), \quad \tau = N^{-\e},
	\]
see Proposition \ref{prop:coupling}. 
To establish Theorem \ref{thm:log_corr_field} for $L_N(\bl(\tau), z)$, we want to estimate its variance, and 
since $\im z_\tau$ is at an almost macroscopic level, we may apply the results of Appendix \ref{sec:mesoscopic_CLT} 
to do so. We finally conclude the proof of Theorem \ref{thm:log_corr_field} at the edge with a four-moment matching argument,
see Section \ref{sec:end_of_proof}. Our argument in the bulk follows the same outline, with 
Proposition \ref{prop:weak_local_law_bulk} as a substitute for Theorem \ref{thm:local_law}. \\

\noindent{\bf Acknowledgement.} We thank Paul Bourgade for many helpful discussions during the completion of this work.\\

\section{Preliminaries} 
\label{section:stochastic_advection_equation}

In this section, we collect results which we will use in an essential way in the remainder of the paper. 
First, we recall the local law and rigidity from \cite{ErdYauYin2012}. Fix a large constant $C>0$ and let
	\begin{equation}
		\varphi = e^{C (\log\log N)^2}.
	\end{equation}
\begin{theorem}[Local Law]
	Let $H$ be a generalized Wigner matrix, $G(z) = (H-z)^{-1}$, and $m(z)$ be as in \eqref{eq:stiltjes_transform}. Then uniformly for $z$ in any compact set, 
	there exists $C > 0$ such that for any $D > 0$, 
	\begin{equation}
	\label{eq:local_law_entrywise_GW}
		\P\pa{
			\norm{G(z) - m(z) {\rm Id}}_\infty \geq \varphi^C \Psi(z)
		} \leq N^{-D},
		\text{ where } 
		\Psi(z) = \sqrt{  \frac{\varrho(z) }{N\eta} }+ \frac{1}{N\eta}.
	\end{equation}	
	Furthermore, with $\mathcal{S} = \{z = E + \ii \eta, \,\abs{E} \leq 5, \,\varphi/N \leq \eta \leq 10\}$, for any $D > 0$, 
	\begin{equation}
	\label{eq:local_law_original}
		\P\pa{
			\bigcup_{z \in \mathcal{S}} \left\{
				\abs{s(z) - m(z)} \geq \frac{\varphi}{N \im(z)}
			\right\}
		} \leq N^{-D}.
	\end{equation}
\end{theorem}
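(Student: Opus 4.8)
This is the local law and rigidity estimate of \cite{ErdYauYin2012}, so I would reproduce their self-consistent equation argument. The starting point is the Schur complement identity: for each $i$, with $G^{(i)}$ the resolvent of the minor of $H$ obtained by deleting row and column $i$,
\[
	\frac{1}{G_{ii}} = H_{ii} - z - \sum_{k,l \neq i} H_{ik} G^{(i)}_{kl} H_{li}.
\]
Writing $\Lambda := \max_i \absa{G_{ii} - m(z)}$ and $Z_i := \sum_{k,l \neq i} H_{ik} G^{(i)}_{kl} H_{li} - \sum_{k \neq i}\sigma_{ik}^2 G^{(i)}_{kk}$, the first step is a large deviation bound for quadratic forms (a Hanson--Wright type inequality), which under the uniform sub-exponential decay hypothesis gives $\absa{Z_i} + \max_{i\neq j}\absa{G_{ij}} \lesssim \varphi^{C}\pa{\Psi(z) + \sqrt{\Lambda/(N\eta)}}$ with overwhelming probability. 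Using the minor identity $G^{(i)}_{kk} = G_{kk} - G_{ki}G_{ik}/G_{ii}$ to re-express everything in terms of the full resolvent, one obtains the approximate self-consistent system
\[
	\frac{1}{G_{ii}} = -z - \sum_{j} \sigma_{ij}^2 G_{jj} + \OO\!\pa{\varphi^C\pa{\Psi(z) + \Lambda + \sqrt{\Lambda/(N\eta)}}}.
\]

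The second step is stability of the limiting equation. Because $\sum_j \sigma_{ij}^2 = 1$, the deterministic solution is $G_{jj} \approx m(z)$ for all $j$, with $m$ the root of $m^2 + z m + 1 = 0$ in the upper half plane. The crucial estimate is: if $(g_j)$ approximately solves the self-consistent system with error $\e$, then $\max_j \absa{g_j - m(z)} \lesssim \e/\sqrt{\kappa(z) + \eta + \e}$ — a bound that is \emph{quadratic} in the error near the spectral edge, where the linearization of the self-consistent map degenerates. Combined with the fluctuation bound of the first step this yields a self-improving inequality for $\Lambda$, and a continuity (bootstrap) argument in $\eta$ — started at $\eta = 10$, where $\norma{G - m\,\Id}_\infty = \OO(1)$ trivially, and decreased — propagates the entrywise bound \eqref{eq:local_law_entrywise_GW} down to $\eta \geq \varphi/N$. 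To upgrade the averaged quantity $s(z) = \frac1N\Tr G$ to the optimal rate $\varphi/(N\eta)$ in \eqref{eq:local_law_original}, rather than the weaker $\Psi(z)$ that the entrywise bound alone would give, I would invoke the fluctuation averaging lemma: the average $\frac1N\sum_i(G_{ii}-m)$ gains an extra factor of $\Psi(z)$ because the $Z_i$ are only weakly correlated, as one checks by expanding $\E\,\absa{\frac1N\sum_i Z_i}^{2p}$ and resumming the sums over resolvent entries. Uniformity over $z \in \mathcal{S}$ in \eqref{eq:local_law_original} then follows from a union bound over a polynomially fine net in $z$ together with the polynomially bounded Lipschitz continuity of $s - m$.

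The main obstacle is the edge. The Lipschitz constant of the self-consistent map degenerates like $\kappa(z)^{-1/2}$ as $z \to \pm 2$, so near $E = \pm 2$ the naive linearization of the Dyson equation fails and one must instead exploit the cubic structure of $m^2 + z m + 1 = 0$ at its branch points to obtain the quadratic-in-$\e$ stability bound above; carrying this through the bootstrap uniformly down to $\eta \sim 1/N$, while simultaneously tracking the interplay of $\Psi(z)$ and $\kappa(z)$, is the delicate part. If a rigidity statement is also wanted, it follows from \eqref{eq:local_law_entrywise_GW} via the Helffer--Sjöstrand representation of the counting function $\cN(E) = \#\{k : \lambda_k \leq E\}$ in terms of $\im s$, integrated against a smoothed indicator and controlled using $\absa{s-m} \lesssim \varphi/(N\eta)$.
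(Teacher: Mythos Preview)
The paper does not prove this theorem: it is quoted verbatim from \cite{ErdYauYin2012} in the preliminaries section as an input to the rest of the argument, so there is no ``paper's own proof'' to compare against. Your sketch is a faithful outline of the Erd\H{o}s--Yau--Yin strategy from that reference --- Schur complement, large deviation for the quadratic form $Z_i$, stability of the self-consistent equation with the square-root degeneration at the edge, bootstrap in $\eta$, and fluctuation averaging for the optimal averaged rate --- and is correct at the level of detail given.
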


\begin{corollary}[Rigidity]
	\label{cor:rigidity}
	Let $\bl$ denote the eigenvalues of a generalized Wigner matrix, and
	let $\gamma_k$, $k = 1, \dots, N$ be as in \eqref{eq:def_quantiles}. 
	For $N$ sufficiently large and with $\ell(\cdot)$ as in \eqref{def:l(E)_and_kappa(E)}, 
	\[
		\P\pa{
			\max_{1 \leq k \leq N}\, \abs{ \lambda_k - \gamma_k } \geq \varphi^{1/2} \ell(\gamma_k)
		} \leq N^{-D}.
	\]
\end{corollary}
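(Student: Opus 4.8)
The plan is to recall the standard derivation of eigenvalue rigidity from the local law, as in \cite{ErdYauYin2012}: first one converts the control of $s-m$ into a control of the empirical counting function, and then one inverts the counting function using the explicit square-root behaviour of $\varrho$ at the edge of its support. Throughout, write $n(E) = \frac1N \#\{k : \lambda_k \le E\}$ and $n_{\mathrm{sc}}(E) = \int_{-\infty}^E \rd\varrho$, so that $n(\lambda_k) = k/N = n_{\mathrm{sc}}(\gamma_k)$. As inputs I would use: the uniform bound \eqref{eq:local_law_original}, which already controls $\abs{s(z)-m(z)}$ simultaneously over $z \in \mathcal{S}$; a grid version of the entrywise law \eqref{eq:local_law_entrywise_GW}, obtained by a union bound over a polynomial net in $z$ together with the Lipschitz bound on $s-m$ valid for $\im z \ge \varphi/N$, which gives $\abs{s(z)-m(z)} \le \varphi^C \Psi(z)$ uniformly and, importantly, supplies the sharper factor $\sqrt{\varrho(z)/(N\eta)}$ needed to reach the scale $N^{-2/3}$; and the extreme eigenvalue bound $-2 - \varphi N^{-2/3} \le \lambda_N$, $\lambda_1 \le 2 + \varphi N^{-2/3}$ on the good event, which follows from \eqref{eq:local_law_entrywise_GW} evaluated at $z = E+\ii\eta$ with $E$ slightly outside $[-2,2]$ and $\eta \sim N^{-2/3}$, by comparing $\im s(z)$ with the (negligible) value of $\im m(z)$ there.

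Next I would estimate the counting function via the Helffer--Sj\"ostrand formula. Applying it to $\int \chi_E \, \rd(n - n_{\mathrm{sc}})$, where $\chi_E$ is a smoothing of $\1_{(-\infty,E]}$ with transition scale of order $\ell(E)$, expresses the difference as a two-dimensional integral of $(s-m)(x + \ii y)$ against the almost-analytic extension of $\chi_E$; splitting the $y$-integral into the ranges $\abs y \gtrsim \ell(E)$ (bounded directly with the estimates above) and $\abs y \lesssim \ell(E)$ (handled using monotonicity of $\eta \mapsto \eta \, \im s(E+\ii\eta)$ and the extreme eigenvalue bound to preclude eigenvalues far outside $[-2,2]$) yields, on the good event and uniformly in $E \in [-2,2]$,
\[
	\absa{ n(E) - n_{\mathrm{sc}}(E) } \le \frac{\varphi^C}{N},
\]
the $\varrho$-dependent term in $\Psi$ being what prevents this bound from deteriorating as $E \to \pm 2$.

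Finally I would invert. For each $k$, the bound above and $n(\lambda_k) = k/N = n_{\mathrm{sc}}(\gamma_k)$ give $\abs{n_{\mathrm{sc}}(\lambda_k) - n_{\mathrm{sc}}(\gamma_k)} \le \varphi^C N^{-1}$; since $n_{\mathrm{sc}}' = \varrho$ with $\varrho(\xi) \sim \sqrt{\kappa(\gamma_k)}$ for $\xi$ between $\lambda_k$ and $\gamma_k$ when $\kappa(\gamma_k) \gg N^{-2/3}$ — the interval lying near $[-2,2]$ by the extreme eigenvalue bound — dividing through gives $\abs{\lambda_k - \gamma_k} \lesssim \varphi^C N^{-1} \kappa(\gamma_k)^{-1/2} \sim \varphi^C \ell(\gamma_k)$, while for the $O(1)$ indices with $\kappa(\gamma_k) \lesssim N^{-2/3}$ one argues directly from $n_{\mathrm{sc}}(\pm 2) - n_{\mathrm{sc}}(\pm 2 \mp t) \sim t^{3/2}$ to the same effect. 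Taking the maximum over $k$ on the (deterministic) good event and choosing the constant $C$ in the definition of $\varphi$ suitably large so that the loss is at most $\varphi^{1/2}$ gives the statement. I expect the edge to be the main obstacle: one must extract the sharper counting estimate from the $\sqrt{\varrho/(N\eta)}$ term of $\Psi$ and control the extreme eigenvalues, so that the inversion is not ruined by the vanishing of $\varrho$ near $\pm 2$; away from the edge the argument is entirely routine.
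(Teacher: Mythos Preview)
Your proposal is correct and follows exactly the standard derivation from \cite{ErdYauYin2012}; the paper itself does not supply a proof of this corollary but simply quotes it as a known consequence of the local law from that reference. Your sketch of the Helffer--Sj\"ostrand counting argument, the edge-sharpening via the $\sqrt{\varrho/(N\eta)}$ term in $\Psi$, and the inversion of $n_{\mathrm{sc}}$ is precisely that argument.
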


In fact, rigidity holds along the dynamics \eqref{eq:DBM}, see \cite[(2.7)]{Bou2020}. 

\begin{lemma}
\label{lem:rigidity}
	The event
	\begin{equation}
	\label{eq:rigidity_set}
		\mathcal{A} = \left\{
			\absa{x_k^{(\nu)}(t) - \gamma_k} < \varphi^{1/2} N^{-2/3} (\hat{k})^{-1/3} \text{ for all }
			0 \leq t \leq 1, \, k \in \llbracket 1, N \rrbracket, \, 0 \leq \nu \leq 1
		\right\}
	\end{equation}
	holds with overwhelming probability.
\end{lemma}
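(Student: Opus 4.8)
The plan is to reduce the claim to three ingredients: (i) rigidity of the initial configuration $\left(x_k^{(\nu)}(0)\right)_k$, uniformly in $\nu \in [0,1]$; (ii) propagation of rigidity along the dynamics \eqref{eq:DBM} for each \emph{fixed} $t$ and $\nu$; and (iii) a soft continuity argument upgrading the pointwise-in-$(t,\nu)$ estimate to a uniform one. Throughout I write $\hat k = k \wedge (N+1-k)$ and use $\ell(\gamma_k) \sim N^{-2/3}\hat k^{-1/3}$, so that the target bound in \eqref{eq:rigidity_set} is simply rigidity at the optimal scale $\varphi^{1/2}\ell(\gamma_k)$.

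For (i): by Corollary \ref{cor:rigidity} applied to $\bl(0)$, together with the analogous rigidity for the GOE eigenvalues $\bmu(0)$ (classical, and in any case a special case of Corollary \ref{cor:rigidity} up to the trivial normalization of the diagonal), with overwhelming probability we have simultaneously $\max_k |\lambda_k(0) - \gamma_k| \le \varphi^{1/2}\ell(\gamma_k)$ and $\max_k |\mu_k(0) - \gamma_k| \le \varphi^{1/2}\ell(\gamma_k)$ — the $\gamma_k$ being the same for both, since both configurations have limiting law $\varrho$. On this event, for every $k$ and \emph{every} $\nu \in [0,1]$,
\[
	\absa{x_k^{(\nu)}(0) - \gamma_k} \le \nu \absa{\mu_k(0)-\gamma_k} + (1-\nu)\absa{\lambda_k(0)-\gamma_k} \le \varphi^{1/2} N^{-2/3}\hat k^{-1/3},
\]
the uniformity in $\nu$ being automatic from the convex-combination form of the initial condition (and any constant lost absorbed by the free constant $C$ in $\varphi = e^{C(\log\log N)^2}$).

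For (ii): fix $t \in (0,1]$ and $\nu$. The structural point is that the semicircle law is stationary for the McKean--Vlasov dynamics underlying \eqref{eq:DBM} — the drift $-x_k/2$ is exactly the restoring force keeping the GOE invariant — so the deterministic free-convolution flow started from $\varrho$ remains $\varrho$, with classical locations $\gamma_k$ unchanged in time. One then controls the random configuration via the self-consistent equation for $s_t(z) = \frac1N \sum_k (x_k^{(\nu)}(t)-z)^{-1}$: by Itô's formula the martingale term stays at order $\varphi/(N\eta)$, the drift is contractive along the characteristics of the associated complex Burgers equation (which flow toward the edges), and Gronwall's inequality propagates the time-$0$ bound $\abs{s_0(z)-m(z)} \le \varphi/(N\eta)$ from \eqref{eq:local_law_original} to $\abs{s_t(z) - m(z)} \le C\varphi/(N\eta)$ for all $z$ with $N^{-1+\e} \le \eta \le 1$, uniformly in $t \le 1$. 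The standard local-law-implies-rigidity argument (Helffer--Sjöstrand, as in \cite{ErdYauYin2012}) then yields $\abs{x_k^{(\nu)}(t) - \gamma_k} \le \varphi^{1/2} N^{-2/3}\hat k^{-1/3}$ with overwhelming probability for this fixed $(t,\nu)$; this is precisely \cite[(2.7)]{Bou2020}. For (iii): take a mesh in $[0,1]^2$ of spacing $N^{-10}$ in $(t,\nu)$; a union bound over the polynomially many mesh points and step (ii) give the rigidity event at all of them at once, still with overwhelming probability. To fill in, use a priori moduli of continuity: in $t$, on the rigidity event the drift in \eqref{eq:DBM} is bounded by $N^{O(1)}$ and the Brownian increment by $N^{O(1)} N^{-5}$ over an interval of length $N^{-10}$, so $t \mapsto x_k^{(\nu)}(t)$ varies by at most $N^{-4}$, say, on such an interval with overwhelming probability; in $\nu$, $\partial_\nu x_k^{(\nu)}(t) = e^{-t/2}\mathfrak{u}_k$, and since \eqref{eq:pde_uk} is a discrete parabolic equation with a mean-zero-row generator, the maximum principle gives $\max_k \abs{\mathfrak{u}_k(t)} \le \max_k \abs{\mathfrak{u}_k(0)} = \max_k\abs{\mu_k(0) - \lambda_k(0)} = O(1)$, so $x_k^{(\nu)}(t)$ varies by $O(N^{-10})$ over a $\nu$-interval of that length. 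As these variations are $\ll N^{-2/3}\hat k^{-1/3}$, combining the mesh estimate with the continuity bounds gives \eqref{eq:rigidity_set} uniformly over $(t,\nu) \in [0,1]^2$ and $k \in \llbracket 1,N\rrbracket$.

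The main obstacle is step (ii) at the spectral edge: propagating the local law, and hence rigidity, at the \emph{optimal} scale under the flow \eqref{eq:DBM} requires the edge version of the characteristics/Gronwall argument, where the square-root vanishing of $\varrho$ near $\pm 2$ and the corresponding weak instability of the self-consistent equation must be handled with the correct $\kappa$-dependent weights. This is exactly the content of the rigidity-along-DBM estimate established in \cite{Bou2020}, on which we rely rather than reprove.
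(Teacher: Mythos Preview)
Your proposal is correct and takes essentially the same approach as the paper: both defer the substance of the estimate to \cite[(2.7)]{Bou2020}, which the paper simply cites without further argument. You supply more detail than the paper does (the convex-combination bound at $t=0$, the mesh-plus-continuity upgrade for uniformity in $(t,\nu)$, the maximum principle for $\mathfrak{u}_k$), all of which is consistent with and more explicit than the paper's bare citation; the only place to be slightly more careful is the $t$-continuity of the drift in \eqref{eq:DBM}, where rigidity alone does not bound eigenvalue gaps from below, but this is a standard technicality already handled in \cite{Bou2020}.
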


The following is \cite[Theorem 2.8]{Bou2020}. It is the fundamental input to our proofs of Theorems \ref{thm:wegner_estimate} and 
\ref{thm:local_law}.

\begin{theorem}
\label{thm:coupled_distance}
	For any $D > 0$ and $\e > 0$ there exists $N_0 = N_0(D, \e)$ such that for any $N > N_0$,
		\[
			\P\pa{
				\abs{ \mu_k(t) - \lambda_k(t)    } \geq \frac{N^\e}{Nt} \text{ for all } k \in \llbracket 1, N \rrbracket
				\text{ and } 0 < t < 1
			} \leq N^{-D}.
		\]
\end{theorem}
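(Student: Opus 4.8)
The plan is to use the interpolation: writing $\mu_k(t)=x_k^{(1)}(t)$ and $\lambda_k(t)=x_k^{(0)}(t)$, and recalling from \eqref{eq:def_uk_f} that $\frac{\rd}{\rd\nu}x_k^{(\nu)}(t)=e^{-t/2}\mathfrak{u}_k^{(\nu)}(t)$, one has
\[
    \mu_k(t)-\lambda_k(t)=\int_0^1 e^{-t/2}\,\mathfrak{u}_k^{(\nu)}(t)\,\rd\nu,
\]
so that $|\mu_k(t)-\lambda_k(t)|\le\sup_{\nu\in[0,1]}|\mathfrak{u}_k^{(\nu)}(t)|$, and it suffices to bound the right-hand side by $N^\e/(Nt)$ with overwhelming probability, uniformly in $k$ and $t$. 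For each fixed $\nu$ and fixed realization of the paths, \eqref{eq:pde_uk} is the backward Kolmogorov equation of a (time-inhomogeneous) Markov jump process on $\llbracket 1,N\rrbracket$ with non-negative rates $B_{k\ell}(t)=\tfrac{1}{N(x_k^{(\nu)}(t)-x_\ell^{(\nu)}(t))^2}$. Denoting its transition kernel $\mathcal{U}^{(\nu)}_{0,t}(k,\ell)\ge 0$ (with $\sum_\ell\mathcal{U}^{(\nu)}_{0,t}(k,\ell)=1$), we get $\mathfrak{u}_k^{(\nu)}(t)=\sum_\ell\mathcal{U}^{(\nu)}_{0,t}(k,\ell)\,\mathfrak{u}_\ell^{(\nu)}(0)$, and hence $|\mathfrak{u}_k^{(\nu)}(t)|\le\big(\sup_\ell\mathcal{U}^{(\nu)}_{0,t}(k,\ell)\big)\,\|\mathfrak{u}^{(\nu)}(0)\|_{\ell^1}$.

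Two estimates then close the argument, both valid on the overwhelming-probability event $\mathcal{A}$ of Lemma \ref{lem:rigidity}. First, since $\mathfrak{u}^{(\nu)}_k(0)=\mu_k(0)-\lambda_k(0)$ and both families are rigid about the classical locations (Corollary \ref{cor:rigidity} and Lemma \ref{lem:rigidity}), $\|\mathfrak{u}^{(\nu)}(0)\|_{\ell^1}\le\sum_k\big(|\mu_k(0)-\gamma_k|+|\lambda_k(0)-\gamma_k|\big)\lesssim\varphi^{1/2}\sum_k\ell(\gamma_k)\lesssim\varphi^{1/2}$, where we used $\sum_k\ell(\gamma_k)=O(1)$ (as $\ell(E)\sim N^{-1}\kappa(E)^{-1/2}$ and $\int\kappa(E)^{-1/2}\varrho(E)\,\rd x<\infty$). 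Second, and this is the crux, one needs the ultracontractive (heat-kernel) bound $\sup_{k,\ell}\mathcal{U}^{(\nu)}_{0,t}(k,\ell)\lesssim(\log N)^C/(Nt)$ for $N^{-1}\le t\le 1$, uniformly in $\nu$ on $\mathcal{A}$. Together these give $|\mathfrak{u}_k^{(\nu)}(t)|\lesssim\varphi^{1/2}(\log N)^C/(Nt)\le N^\e/(Nt)$ for $N$ large. The ultracontractive bound is proved by a Nash-inequality argument: on $\mathcal{A}$, the rigidity estimate $|x_k^{(\nu)}(t)-x_\ell^{(\nu)}(t)|\lesssim|\gamma_k-\gamma_\ell|+\varphi^{1/2}\ell(\gamma_k)$ lets one bound the Dirichlet form $\tfrac{1}{2N}\sum_{k\neq\ell}(v_k-v_\ell)^2/(x_k-x_\ell)^2$ below by $N$ times the Dirichlet form of a one-dimensional long-range (``Cauchy-type'') walk with index-space rates $\sim|k-\ell|^{-2}$; the associated Nash inequality yields $\|\mathcal{U}^{(\nu)}_{0,t/2}\|_{\ell^1\to\ell^2}^2\lesssim(\log N)^C/(Nt)$, and then $\ell^2\to\ell^\infty$ by duality, hence $\ell^1\to\ell^\infty$ by composition.

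Finally, one upgrades the pointwise bound to the uniform statement in the theorem by a routine discretization: for $t\le N^{-1}$ the bound $N^\e/(Nt)\ge 1$ is trivial once $N$ is large, since on $\mathcal{A}$ all $|x_k^{(\nu)}(t)|\le 3$; for $N^{-1}<t<1$ one applies the bound on a polynomially fine grid in $(t,\nu)\in(0,1)^2$ and interpolates using the (overwhelming-probability) Hölder continuity of $x_k^{(\nu)}(t)$ in $t$ and $\nu$, combined with a union bound over the $N$ indices $k$. I expect the main obstacle to be the ultracontractivity estimate, and within it the behavior at the edge of the spectrum: there the jump rate out of an extreme eigenvalue is only $\sim N^{1/3}$ rather than $\sim N$, and the index-space geometry is governed by the inhomogeneous edge rigidity scale $N^{-2/3}\hat{k}^{-1/3}$ from Lemma \ref{lem:rigidity}, so the Nash argument must be run with this non-translation-invariant structure — which is exactly the content of \cite[Theorem 2.8]{Bou2020}.
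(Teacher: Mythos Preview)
The paper does not prove this theorem; it is quoted verbatim as \cite[Theorem 2.8]{Bou2020} and used as a black box. So the comparison is with the proof in \cite{Bou2020}, whose machinery (the observable $\tilde f_t$ and its stochastic advection equation) is in fact reproduced in Section~\ref{section:stochastic_advection_equation} of this paper.

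Your interpolation framework and the Markov-semigroup representation of $\mathfrak u_k(t)$ are correct and are indeed the starting point in \cite{Bou2020}. The gap is the ultracontractive bound $\sup_{k,\ell}\mathcal U^{(\nu)}_{0,t}(k,\ell)\lesssim(\log N)^C/(Nt)$: this estimate is \emph{false} at the edge. For $\hat k=O(1)$ the total jump rate out of $k$ is only $\sum_{\ell}B_{k\ell}\sim N^{1/3}$ (since $|x_k-x_\ell|\sim N^{-2/3}\ell^{2/3}$), so for $t\sim N^{-1/3}$ the walk has made $O(1)$ jumps and $\mathcal U_{0,t}(k,k)\sim 1$, whereas $1/(Nt)\sim N^{-2/3}$. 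Your factorization $|\mathfrak u_k(t)|\le\bigl(\sup_\ell\mathcal U_{0,t}(k,\ell)\bigr)\|\mathfrak u(0)\|_{\ell^1}$ therefore yields only $|\mathfrak u_k(t)|\lesssim\varphi^{1/2}$ at the edge for such $t$, missing the target $N^{-2/3}$ by a full power of $N$. The Nash argument you sketch works in the bulk precisely because the rates are homogeneous of order $N$ there; at the edge there is no reason to expect it to give $1/(Nt)$, and your deferral of this point to ``\cite[Theorem 2.8]{Bou2020}'' is circular --- that \emph{is} the theorem you are proving, and its proof there does not go through a heat-kernel sup bound.

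What \cite{Bou2020} actually does is bypass the factorization entirely. Using the maximum principle $|\mathfrak u_k(t)|\le\mathfrak v_k(t)$, one controls the whole profile $(\mathfrak v_k(t))_k$ through the observable $\im\tilde f_t(z)=e^{-t/2}\sum_k\frac{\eta\,\mathfrak v_k(t)}{(x_k-E)^2+\eta^2}$, which by Lemma~\ref{lem:ito_ft} satisfies a stochastic advection equation along the characteristics $z_t$ of~\eqref{eq:characteristics}. Propagating the initial bound of Lemma~\ref{lem:initial_est} along characteristics (this is Proposition~\ref{prop:im_f} here, and \cite[Proposition 2.6]{Bou2020}) gives directly $\mathfrak v_k(t)\lesssim\varphi^{C}/\bigl(N\max((\hat k/N)^{1/3},t)\bigr)$, which is the inhomogeneous edge-aware replacement for your ultracontractivity bound. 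Integrating over $\nu$ then yields the theorem. The point is that the observable sees simultaneously the smallness of the edge initial data \emph{and} the slowness of the edge dynamics, which your $\ell^1$--$\ell^\infty$ splitting decouples and then cannot recombine.
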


In the following two sections, we prove Theorems \ref{thm:wegner_estimate} and \ref{thm:local_law}.

\section{Wegner Estimate at the Edge}

In this section, we prove Theorem \ref{thm:wegner_estimate} at the edge. In the bulk of the spectrum the Wegner estimate follows from \cite{BouErdYauYin2016}, see
Corollaries 2.6 and 2.7 in \cite{BouMod2019}. 
As in Section \ref{sec:local_law}, we follow the three step strategy \cite{ErdPecRmSchYau2010,ErdSchYau2011Uni}.
In Proposition \ref{prop:dynamic_step}
we show that for $t$ sufficiently large, Theorem \ref{thm:wegner_estimate} holds at the edge for $\bl(t)$ which evolve according to the dynamics \eqref{eq:DBM}.
Then in Proposition \ref{prop:4_moment_matching}, we prove an almost 4 moment matching theorem from which we conclude.

\begin{proposition} \label{prop:dynamic_step} 
	Let $\bl(t)$ be the result of running the dynamics \eqref{eq:DBM} with initial condition $\bl(0)$, the eigenvalues of a Wigner matrix.
	Fix $\e > 0$, let $E \in [2-N^{-\e}, 2]$, $I = [E-\delta_N\ell(E),E+\delta_N \ell(E)]$ for any $\delta_N \to 0$ as $N \to \infty$, and
	let $\cN(I) = \abs{\{ k : \lambda_k(t) \in I \}}$. Then for  $t \geq N^{-\e/4}$,
	uniformly in $E$, $\P(\cN(I) > 0) \to 0$ as $N \to \infty$.
\end{proposition}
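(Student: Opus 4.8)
The plan is to transfer the Wegner estimate from the GOE to $\bl(t)$ via the coupling built into \eqref{eq:DBM}. Let $\bmu(t) = x^{(1)}(t)$ be the GOE branch of the interpolating dynamics, driven by the same Brownian motions $B_k$ as $\bl(t) = x^{(0)}(t)$. Since the GOE is the invariant measure of \eqref{eq:DBM} for $\beta = 1$ and $\bmu(0)$ is GOE distributed, the marginal law of $\bmu(t)$ is GOE for every $t \geq 0$. By Theorem \ref{thm:coupled_distance}, for any $\e' > 0$ there is an event $\Omega$ with $\P(\Omega^c) \leq N^{-D}$ on which $\abs{\lambda_k(t) - \mu_k(t)} \leq N^{\e'}/(Nt)$ for all $k$ and all $0 < t < 1$.

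The first step is a quantitative comparison of the coupling error with the sub-microscopic scale $\delta_N\ell(E)$. Since $E \in [2-N^{-\e},2]$ forces $\kappa(E) \leq N^{-\e}$, \eqref{def:l(E)_and_kappa(E)} gives $\ell(E) \geq N^{-1+\e/2} \wedge N^{-2/3}$, which for $\e$ small (the regime of interest) equals $N^{-1+\e/2}$. Choosing $\e' = \e/8$ and using $t \geq N^{-\e/4}$, the coupling error is at most $N^{\e/8 - 1 + \e/4} = N^{-1+3\e/8} \leq N^{-\e/8}\ell(E)$, so $N^{\e'}/(Nt) = o(\ell(E))$ uniformly in $E$. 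Hence on $\Omega$, $\lambda_k(t) \in I = [E-\delta_N\ell(E), E+\delta_N\ell(E)]$ implies $\mu_k(t) \in \tilde I := [E - \delta_N'\ell(E), E + \delta_N'\ell(E)]$ with $\delta_N' := \delta_N + N^{-\e/8} \to 0$. Writing $\cN^{\bmu}(\tilde I) = \abs{\{k : \mu_k(t) \in \tilde I\}}$, this yields $\P(\cN(I) > 0) \leq \P(\cN^{\bmu}(\tilde I) > 0) + N^{-D}$.

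It then remains to prove a Wegner estimate for the GOE at the edge: for any interval $\tilde I$ as above with $\delta_N' \to 0$, $\P(\cN^{\bmu}(\tilde I) > 0) \to 0$ uniformly in $E$. Since $\bmu(t)$ is GOE distributed, I would get this from Markov's inequality, $\P(\cN^{\bmu}(\tilde I) > 0) \leq \E[\cN^{\bmu}(\tilde I)] = \int_{\tilde I} \rho_N^{(1)}(x)\,\rd x$, where $\rho_N^{(1)}$ is the one-point correlation function of the GOE eigenvalue process. Using the standard intensity bound $\rho_N^{(1)}(x) \lesssim \ell(x)^{-1}$ in the edge region (that is, $\rho_N^{(1)}(x) \lesssim N\varrho(x) \sim N\sqrt{\kappa(E)}$ when $\kappa(E) \geq N^{-2/3}$, and $\rho_N^{(1)}(x) \lesssim N^{2/3}$ near the edge), together with $\ell(x) \sim \ell(E)$ on the shrinking interval $\tilde I$, we obtain $\E[\cN^{\bmu}(\tilde I)] \lesssim \abs{\tilde I}\,\ell(E)^{-1} = 2\delta_N' \to 0$. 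Such a bound on the GOE intensity at the edge is classical and can also be read off from the inputs \cite[Theorem 3.1 and Lemma 3.4]{Bou2020}. Combining the three estimates gives $\P(\cN(I) > 0) \leq N^{-D} + C\delta_N' \to 0$ uniformly in $E$, as claimed.

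The one point requiring care — and it is bookkeeping rather than a genuine obstacle — is making the comparison between the coupling error $N^{\e'}/(Nt)$ and $\delta_N\ell(E)$ uniform across the entire edge window $[2-N^{-\e},2]$: the worst case is $E = 2-N^{-\e}$, where $\ell(E)$ is smallest, of order $N^{-1+\e/2}$, and it is precisely this that dictates the threshold $t \geq N^{-\e/4}$. Everything else is the soft ``coupling plus Gaussian input'' half of the three-step strategy.
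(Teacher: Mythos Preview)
Your proof is correct and follows essentially the same route as the paper: transfer the Wegner estimate from the GOE branch $\bmu(t)$ to $\bl(t)$ via the coupling of Theorem~\ref{thm:coupled_distance}. The only cosmetic differences are (i) you enlarge the interval $I$ to $\tilde I$ to absorb the coupling error, whereas the paper instead splits $\{\lambda_k(t)\in I\}$ into the two events $\{\mu_k(t)\in I\}$ and $\{|\lambda_k(t)-\mu_k(t)|>\text{gap}\}$, which is logically equivalent; and (ii) for the GOE input you argue via Markov's inequality and the one-point intensity bound $\rho_N^{(1)}\lesssim \ell^{-1}$, whereas the paper simply cites \cite[Proposition~3.1]{BouModPai2021}. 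Your packaging is arguably cleaner, since it avoids the paper's intermediate step of first treating $\delta_N\geq N^{-\e/8}$ and then invoking monotonicity in $\delta_N$. One small quibble: the reference you give for the GOE intensity bound (\cite[Theorem~3.1, Lemma~3.4]{Bou2020}) is about bulk gap statistics, not edge density; the bound you need is classical (Airy kernel asymptotics, or rigidity for the GOE) but is not literally contained there.
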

Proposition \ref{prop:dynamic_step} is a consequence of the Theorem \ref{thm:coupled_distance}, and the fact that the Wegner estimate holds in the Gaussian case.
\begin{proof}[Proof of Proposition \ref{prop:dynamic_step}]
	By Proposition 3.1 in \cite{BouModPai2021}, the statement of the Proposition holds for $\bmu(0)$, and
	since the distribution of $\bmu(t)$ is invariant along the dynamics \eqref{eq:DBM}, we have
		\begin{equation}
		\label{eq:beta_case}
			\lim_{N \to \infty} \P\pa{ \absa{\left\{ k : \mu_k(t) \in I \right\}} > 0} = 0.
		\end{equation}
	Write
		\begin{multline}
			\P\pa{ \absa{\left\{ k : \lambda_k(t) \in I \right\}} > 0} =
				\P\pa{ \absa{\left\{ k : \lambda_k(t) \in I \right\}} > 0 \text{ and } \absa{\left\{ k : \mu_k(t) \in I \right\}} > 0} \\
				+  \P\pa{ \absa{\left\{ k : \lambda_k(t) \in I \right\}} > 0 \text{ and } \absa{\left\{ k : \mu_k(t) \in I \right\}} = 0}.
		\end{multline}
	Then
		\[
			\P\pa{ \absa{\left\{ k : \lambda_k(t) \in I \right\}} > 0 \text{ and } \absa{\left\{ k : \mu_k(t) \in I \right\}} > 0}
			\leq \P\pa{ \absa{\left\{ k : \mu_k(t) \in I \right\}} > 0}, 
		\]
	and by Theorem \ref{thm:coupled_distance}, for any $\gamma > 0$ we have
		\[
			\P\pa{ \absa{\left\{ k : \lambda_k(t) \in I \right\}} > 0 \text{ and } \absa{\left\{ k : \mu_k(t) \in I \right\}} = 0} \leq
			\P\pa{
				\exists k \text{ s.t. } \absa{\lambda_k(t) - \mu_k(t)} > \frac{N^{\gamma}}{Nt}
			} < N^{-D}
		\]
	for any $D$ and $N$ sufficiently large. Choosing $t = \frac{N^\gamma}{N\ell(2-N^{-\e})\delta_N}$, $\gamma = \e/8$, and $\delta_N \geq N^{-\e/8}$ so that $t < 1$, we have
		\[
			\lim_{N \to \infty} \P\pa{ \absa{\left\{ k : \lambda_k(t) \in I \right\}} > 0} = 0,
		\]
	which establishes the proposition for $\delta_N \geq N^{-\e/8}$.
	Finally, if $I' \subset I$, then  
		\[
			\lim_{N \to \infty} \P\pa{ \absa{\left\{ k : \lambda_k(t) \in I' \right\}} > 0} 
			\leq
			\lim_{N \to \infty} \P\pa{ \absa{\left\{ k : \lambda_k(t) \in I \right\}} > 0} 
		\]
	establishes the proposition for any $\delta_N \to 0$.
\end{proof}

We now prove the following (almost) four-moment matching proposition.
\begin{proposition}
\label{prop:4_moment_matching}
	Let $H^v, H^w$ be two Wigner matrices with eigenvalues $\bl$ and $\bmu$ respectively. Assume that 
	for all $1 \leq i \leq j \leq N$, and $1 \leq k \leq 3$, $\E^v\pa{h_{ij}^k} = \E^w\pa{h_{ij}^k}$, and further that there
	exists $t = t(N)$ such that
		\begin{equation}
		\label{eq:4_moment_condition}
			\absa{\E^v\pa{h_{ij}^4} - \E^w\pa{h_{ij}^4}} \leq t \text{ for } i \leq j.
		\end{equation}
	Fix $\e > 0$ and let $\rho = \rho(E) = \delta_N \ell(E)$ for some $\delta_N \to 0$ as $N \to \infty$.  
	Let $f$ a smooth compactly supported function such that
	with $E \in [2-N^{-\e}, 2]$, $f = 1$ on $[E-\rho, E + \rho]$, $f = 0$ on $[E -2\rho, E + 2 \rho]$,
	and $\norm{f^{(k)}}_\infty < \rho^{-k}$, $k=1,2$.
	Then uniformly in $E$,
		\begin{equation}
		\label{eqn:4moment_to_prove}
				\lim_{N \to \infty} 
				\absa{
					\E\pa{ \Tr f\pa{H^v}} - \E\pa{\Tr \pa{H^w}}
				}
			= 0.
		\end{equation}
\end{proposition}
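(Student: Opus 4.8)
The plan is to use the standard Lindeberg/Green's function comparison (swapping) strategy, replacing the entries of $H^v$ by those of $H^w$ one at a time, and controlling the effect of each swap on $\E\,\Tr f(H)$ using the Helffer–Sj\"ostrand formula together with the local law. First I would write $\Tr f(H) = \frac{1}{\pi}\int_{\C} \bar\partial \tilde f(z)\, \Tr (H-z)^{-1}\, \rd^2 z$, where $\tilde f$ is an almost-analytic extension of $f$ of finite order; since $f$ is supported in a $\rho$-neighbourhood of $E$ at the edge and $\|f^{(k)}\|_\infty \lesssim \rho^{-k}$, the relevant integration region is $\{|\re z - E|\lesssim \rho,\ |\im z| \lesssim 1\}$, and one may further cut off at $|\im z| \geq N^{-1}$ using the optimal local law Theorem \ref{thm:local_law} (the number of eigenvalues within $\rho$ of $E$ is $O(1)$ with overwhelming probability by the Wegner estimate available in the Gaussian case, or by rigidity, so the very small-$\eta$ contribution is negligible after taking expectations and using the deterministic bound $|\Tr(H-z)^{-1}| \leq N/\eta$ on a set of probability $N^{-D}$).

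Next I would set up the telescoping sum: order the $\leq N(N+1)/2$ independent entries, form the interpolating matrices $H^{(0)} = H^v, \ldots, H^{(\gamma)}, \ldots$ obtained by swapping entries one at a time, and write the difference $\E\,\Tr f(H^v) - \E\,\Tr f(H^w)$ as a sum of $O(N^2)$ terms, each of the form $\E[\,\Tr f(H^{(\gamma)}_{ij,v}) - \Tr f(H^{(\gamma)}_{ij,w})\,]$ where the two matrices differ only in the $(i,j)$ (and $(j,i)$) entry. For each such term I would Taylor-expand $\Tr f(H + \xi\Delta_{ij})$ in the scalar $\xi$ to fourth order, where $\Delta_{ij}$ is the elementary (symmetrized) matrix and $\xi$ is of size $O(N^{-1/2})$. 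The first three terms in the Taylor expansion cancel exactly between the $v$ and $w$ swaps because the first three moments match; the fourth-order terms differ by at most $|\E^v h_{ij}^4 - \E^w h_{ij}^4| \leq t$ times a deterministic bound on $\partial_\xi^4 \Tr f(H+\xi\Delta_{ij})$; and the fifth-order remainder is controlled by sub-exponential decay of the entries and the fourth/fifth derivative bound. The derivative $\partial_\xi^k \Tr f(H+\xi\Delta_{ij})$ is, via Helffer–Sj\"ostrand, an integral of resolvent entries $\big((H-z)^{-1}\big)_{ab}$ raised to powers, times $\bar\partial\tilde f$; using the entrywise local law \eqref{eq:local_law_entrywise_GW} — crucially its off-diagonal part, which gives $|G_{ab}(z)| \lesssim \Psi(z)$ for $a\neq b$ — one bounds each such derivative by roughly $\rho^{-1}$ up to $\varphi^C$ factors and powers of $(N\eta)^{-1}$, after integrating $\bar\partial\tilde f$ over the region of size $\rho$.

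Assembling the estimates: the fourth-order discrepancy contributes $O(N^2)\cdot t \cdot N^{-2}\cdot (\text{derivative bound})$ and the remainder contributes $O(N^2)\cdot N^{-5/2}\cdot(\text{derivative bound})$. With $t = t(N) = N^{-\gamma}$ of the form forced in Proposition \ref{prop:dynamic_step} (e.g. $t \sim N^{\gamma}/(N\ell(E)\delta_N)$ made $o(1)$), and $\rho = \delta_N \ell(E)$ with $\ell(E)\sim N^{-2/3}$ at the edge, one checks that $N^2 t \rho^{-1}\cdot N^{-2}$ and $N^2 N^{-5/2}\rho^{-1}$ are both $o(1)$ provided $\delta_N$ is not taken too small too fast; by the same monotonicity argument as at the end of the proof of Proposition \ref{prop:dynamic_step} (shrinking the interval only decreases probabilities / the quantity being estimated), it suffices to treat $\delta_N$ bounded below by a small negative power of $N$, and the general case follows. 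The main obstacle I anticipate is bookkeeping the interplay between the three scales — the swap size $N^{-1/2}$, the support width $\rho = \delta_N\ell(E)$ which is \emph{sub-microscopic}, and the moment-matching error $t$ — so that the product of ($N^2$ swaps) $\times$ (per-swap error, which grows like $\rho^{-1}$ or worse because $f$ is sharply peaked) is still $o(1)$; this is exactly where one needs the \emph{optimal} local law down to $\eta \sim N^{-1}$ rather than the cruder law at $\eta \sim N^{-1}\varphi$, and where one must be careful that the derivative bounds on $\Tr f$ do not pick up extra factors of $N$ from the number of eigenvalues in the support (here the Wegner estimate in the Gaussian case, or rigidity, keeps that number $O(1)$).
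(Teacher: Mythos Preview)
Your overall strategy --- Lindeberg replacement, Helffer--Sj\"ostrand representation of $\Tr f$, resolvent expansion to order four/five, cancellation of orders $1$--$3$ by moment matching, and control of the fourth-order term via the entrywise local law --- is exactly the paper's approach. However, your quantitative accounting has a genuine gap at the edge.

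You take the $y$-cutoff in the almost-analytic extension at scale $O(1)$ (``$|\im z|\lesssim 1$'') and conclude that the fourth derivative of $\Tr f$ is ``roughly $\rho^{-1}$''. That estimate is correct with your cutoff, but it does \emph{not} close: summing over $O(N^2)$ swaps gives $t\rho^{-1}$, and at the edge $\rho=\delta_N\ell(E)\ge \delta_N N^{-2/3}$, so $t\rho^{-1}\ge t\,N^{2/3}/\delta_N$, which diverges for the relevant $t\sim N^{-\e/4}$, $\delta_N\ge N^{-\e/8}$. The paper instead chooses the cutoff $\chi$ at the \emph{local} scale $\ell(E)$. This confines the $y$-integration to $N^{-1}\le y\lesssim \ell(E)$, where $\im m(z)\sim \kappa(E)^{1/2}$, so the off-diagonal bound $|G_{ab}|^2\lesssim \Psi(z)^2\sim \im m/(N\eta)$ contributes $\int_0^{\ell}\kappa^{1/2}\,\rd y = \ell\,\kappa^{1/2}=N^{-1}$. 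The per-swap fourth-order contribution then becomes $\varphi^C t/(N^3\rho)$, summing to $\varphi^C t\,\kappa(E)^{1/2}/\delta_N=o(1)$. So the missing idea is not the off-diagonal local law itself (you invoke it), but using the local-scale cutoff so that the smallness of $\im m$ near the edge is actually felt in the $y$-integral.

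Two smaller points. First, do not invoke Theorem~\ref{thm:local_law} here: it is proved in the next section, and in any case is not what handles the regime $|y|<N^{-1}$; the paper uses rigidity (Corollary~\ref{cor:rigidity}) to show that the contribution from $|y|<N^{-1}$ is $O(\varphi^C/(N\rho)^2)$. Second, the Wegner estimate for general Wigner matrices is precisely what this proposition is an input to, so you cannot appeal to it; rigidity suffices.
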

\begin{proof}
	Fix a bijective ordering map of the index set of the independent matrix entries, 
	$\phi: \{ (i,j) : 1 \leq i \leq j \leq N\} \to \llbracket 1, \gamma(N) \rrbracket$, with $\gamma(N) = N(N+1)/2$.
	Let $H_\gamma$ be the Wigner matrix whose matrix elements $h_{ij}$ follow the $v$-distribution for $\phi(i,j) \leq \gamma$,
	and the $w$-distribution otherwise, so that $H^v = H_0$ and $H^w = H_{\gamma(N)}$. Then it is sufficient to prove that
		\begin{equation}
		\label{eq:to_prove_4_moment}
			\absa{
				\E\pa{\Tr f(H_\gamma)} - \E\pa{\Tr f(H_{\gamma-1})}
			}
			\leq 
			\frac{\varphi^C}{N^2} \frac{t \ell(E) \kappa(z)^{1/2}}{\rho(E)} = \frac{\varphi^C\kappa(z)^{1/2}t}{N^2\delta_N}. 
		\end{equation}
	To do this, let $\chi$ be a fixed, smooth, symmetric function such that
	$\chi(x)=1$ if $\abs{x} < \ell(E)$, $\chi(x) = 0$ if $\abs{x} > 2 \ell(E)$, and $\norm{\chi'(x)}_\infty \leq 1/\ell(E)$. 
	By the Helffer-Sj\"ostrand formula \cite{HelfferSjostrand1989}, with $z = x + \ii y$, we have
		\[
			\sum_{k=1}^N f\pa{\lambda_k} = \int_\C g(z) \Tr\pa{ \frac{1}{H-z} } \, \rd z, 
			\text{ where }
			g(z) = \frac{1}{\pi} \pa{
				\ii y f''(x) \chi(y) + \ii \pa{
						f(x) + \ii y f'(x)
					}\chi'(y)
			}.
		\]
	Let 
		\[
			\Xi(H) = \re\pa{ \int_{\abs{y} > N^{-1}} g(z) \Tr\pa{ \frac{1}{H-z} } \, \rd z }
			= \int_{\abs{y} > N^{-1}} g(z) \Tr\im\pa{ \frac{1}{H-z} } \, \rd z,
		\]
	and note that for $\abs{y} < 1/N$, we have $y^2 \abs{\lambda - (x + \ii y)}^{-2} \leq N^{-2} \abs{\lambda - (x + \ii / N)}^{-2}$.
	It follows that
		\[
			\absa{\Tr f(H) - \Xi(H)}
			\leq
			\absa{
				\int_{ \substack{ \abs{y} < 1/N  \\ x \in [E-2\rho, E + 2\rho ]} } 
					\sum_{k=1}^N \frac{y^2 \absa{ f''(x) } }{ (x-\lambda_k)^2 + y^2 }
					\, \rd y \rd x
			}
			\leq
			\absa{
				\frac{1}{N^3 \rho^2} \int_{E-2\rho}^{E+2\rho}
					\sum_{k=1}^N \frac{1}{ (x-\lambda_k)^2 + N^{-2} }
					\, \rd x
			}.
		\]
	Since $\int_{\R} \abs{\lambda_i - (x+ \ii / N)}^{-2} \, \rd x \leq CN$, we have
		\[
			\int_{E-2\rho}^{E+2\rho} \frac{1}{\rho^2 N^3} \sum_{k:\gamma_k \in (E - \varphi^C \rho, E + \varphi^C \rho)} 
				\frac{ \rd x}{ \abs{\lambda_i - (x+ \ii / N)}^2} 
			= \OO\pa{
				\frac{\varphi^C}{(N\rho)^2}
			}.
		\]
	And by Corollary \ref{cor:rigidity}, with overwhelming probability, we have $\abs{\lambda_i - \gamma_i} \ll \abs{\gamma_i - E}$.
	Therefore,
		\[
			\int_{E-2\rho}^{E+2\rho} \frac{1}{\rho^2 N^3} \sum_{k:\gamma_k \notin (E - \varphi^C \rho, E + \varphi^C \rho)} 
				\frac{ \rd x}{ \abs{\lambda_i - (x+ \ii / N)}^2} 
			\leq \frac{1}{\rho^2 N^3} \sum_{k:\gamma_k \notin (E - \varphi^C \rho, E + \varphi^C \rho)} \frac{\varphi^C \rho}{ \abs{E - \gamma_i}^2   }
			= \OO\pa{ \frac{\varphi^C}{ (N\rho)^2  }  },
		\]
	with overwhelming probability. It follows that in order to prove the proposition, it is sufficient to prove
	\eqref{eqn:4moment_to_prove} with $\Xi(H)$ in place of $\Tr f(H)$. \\
	
	 Let $V = V_{ij} E_{ij} + V_{ji} E_{ji}$, $W = W_{ij} E_{ij} + W_{ji} E_{ji}$, and
		\[
			H_{\gamma-1} = Q + \frac{V}{\sqrt{N}}, \quad H_\gamma = Q + \frac{W}{\sqrt{N}},
		\]
	where $Q$ coincides with $H_{\gamma-1}$ and $H_\gamma$ except at the entries $(i, j)$ and $(j, i)$ where $Q_{ij} = 0$.
	Let
		\[
			R = \frac{1}{Q-z}, \quad S = \frac{1}{H_\gamma-z}, \quad \hat{R} = \frac{1}{N} \Tr R,
			\quad \hat{R}_v^{(m)} = \frac{(-1)^m}{N} \Tr(RV)^mR,
			\quad \Omega_v = - \frac{1}{N} \Tr(RV)^5 S,
		\]
	and note that by the resolvent expansion, we have
		\[
			\frac{1}{N} \Tr S = \hat{R} + \xi_v, \text{ where }
			\xi_v = \sum_{m=1}^4 N^{-m/2} \hat{R}_v^{(m)} + N^{-5/2} \Omega_v.
		\]
	Therefore,
		\begin{equation}
		\label{eq:total}
			\E\pa{ \Xi(H_\gamma) - \Xi(H_{\gamma-1}) } = \int g(z) \pa{ \sum_{m=1}^4
				N^{-\frac{m}{2} + 1} \pa{
					\hat{R}_v^{(m)} - \hat{R}_w^{(m)}
				}
				+ N^{-\frac{3}{2}} \pa{\Omega_v - \Omega_w}
			} \, \rd^2z. 
		\end{equation}
	By assumption, the first three moments associated to $v,w$ match, so terms corresponding to $m=1,2,3$ in \eqref{eq:total} contribute identically 0.
	To analyze the term $m=4$, we write
		\[
			\Tr(RV)^4R = 
			\sum_{k=1}^N \sum_{i_1, \dots, i_8}
				R_{k i_1} V_{i_1 i_2} R_{i_2 i_3} V_{i_3 i_4} R_{i_4 i_5} V_{i_5 i_6} R_{i_6 i_7} V_{i_7 i_8} R_{i_8 k},
		\]
	where the sums over $i_1, \dots, i_8$ are over the set $\{i, j\}$, the indices corresponding to the only non-zero entries of $V,W$.
	Writing the same expansion with $W$ in place of $V$, taking the difference and applying \eqref{eq:4_moment_condition}, we bound
	the term corresponding to $m=4$ in \eqref{eq:total} by	
		\[
			\int \abs{g(z)} N^{-\frac{4}{2}} t \sum_{k=1}^N\sum_{i_1, \dots, i_8} \absa{R_{ki_1}R_{i_2i_3}R_{i_4 i_5} R_{i_6 i_7} R_{i_8 k}}.
		\]
	We now apply \eqref{eq:isotropic_local_law} to $S(z)$ and $R(z)$, and distinguish two cases. First, by \eqref{eq:local_law_entrywise_GW}, 
	since $\abs{m(z)} = \OO(1)$, with overwhelming probability, we have
		\[
			\int \abs{g(z)} N^{-\frac{4}{2}} t \sum_{k \in \{i, j\}} \sum_{i_1, \dots, i_8} \absa{R_{ki_1}R_{i_2i_3}R_{i_4 i_5} R_{i_6 i_7} R_{i_8 k}}
			\leq N^{-\frac{4}{2}} t  \int \abs{g(z)} \frac{\varphi^C}{Ny} \, \rd^2 z 
			= \OO\pa{
				\frac{\varphi^C}{N^2} \frac{t}{N\rho}
			}.
		\]
	In the remaining case, using \eqref{eq:local_law_entrywise_GW} and $\im m(z) \leq \kappa(z)^{1/2}$ we have
		\[
			\sum_{k \notin \{i, j\}} \sum_{i_1, \dots, i_8} \absa{R_{ki_1}R_{i_2i_3}R_{i_4 i_5} R_{i_6 i_7} R_{i_8 k}} 
			\leq N \biggr( \max_i \abs{R_{ii}}^3 \biggr)  \biggr( \max_{i \neq j} \abs{R_{ij}}^2 \biggr)
			\leq \frac{\varphi^C}{y} \pa{ \kappa(z)^{1/2} + \frac{1}{Ny}},
		\]
	with overwhelming probability. Therefore,
		\[
			\int \abs{g(z)} N^{-\frac{4}{2}} t \sum_{k \notin \{i, j\}} \sum_{i_1, \dots, i_8} \absa{R_{ki_1}R_{i_2i_3}R_{i_4 i_5} R_{i_6 i_7} R_{i_8 k}}
			= \OO\pa{
				\frac{\varphi^C}{N^2} \frac{t}{N\rho}
			},
		\]
	with overwhelming probability, which establishes \eqref{eq:to_prove_4_moment}, and completes the proof.
\end{proof}
\begin{proof}[Proof of Theorem \ref{thm:wegner_estimate}]
Choose $t \in [N^{-\e/4}, 1)$
and let $\tilde{H}_t = e^{-t/2}\tilde{H}_0 + (1+e^{-t})^{1/2}U$, where $\tilde{H}_0$ is any initial Wigner matrix, and $U$ is an independent GOE/GUE matrix. For any fixed $\delta >0$ and $f$ as in Proposition \ref{prop:4_moment_matching}, we have
	\begin{align*}
		\lim_{N \to \infty} \delta \P \pa{ \absa{\left\{ k : \lambda_k \in I \right\}}  > \delta } 
		&\leq 
		\lim_{N \to \infty} \delta \P \pa{  \Tr f\pa{H} > \delta } \\
		&\leq 
		\lim_{N \to \infty} \E(\Tr f\pa{\tilde{H}_t}) + \absa{ \E(\Tr f \pa{H})  -  \E(\Tr f\pa{ \tilde{H}_t })    }.
		\end{align*}
By our choice of $t$, Proposition \ref{prop:dynamic_step} gives $\lim_{N \to \infty} \E(\Tr f\pa{\tilde{H}_t}) = 0$. 
Furthermore, by \cite[Lemma 3.4]{ErdYauYin2011}, we can choose $\tilde{H}_0$ such that $\tilde{H}_t$ and $H$ meet the moment 
matching requirements of Proposition \ref{prop:4_moment_matching}. Therefore,
Proposition \ref{prop:4_moment_matching} gives $\lim_{N\to\infty} \abs{ \E(\Tr f \pa{H})  -  \E(\Tr f (\tilde{H}_t  ) )   } = 0$, which concludes the proof.
\end{proof}

\section{Local Law}
\label{sec:local_law}

In the first part of this section, follow the three-step strategy \cite{ErdPecRmSchYau2010,ErdSchYau2011Uni} to prove Theorem \ref{thm:local_law}, 
an optimal local law, valid near the edge of the spectrum. The following lemma 
covers the dynamical step and
gives an optimal local law after running the Dyson's Brownian motion \eqref{eq:DBM} for a sufficiently long time.

\begin{lemma}
	\label{lem:coupling_stieltjes_transforms}
	Fix $\e > 0$ and let $\mathcal{D}_\e$ be as in \eqref{def:D_eps}.
	For any $t \geq N^{-\e/100}$, for any $p \geq 1$, and
	for any $z \in \mathcal{D}_\e$, writing $z = E + \ii \eta$, there exist constants $C_p$ depending on $p$ and $\delta>0$ such that 
		\begin{equation}
		\label{eq:local_law_after_dynamics}
			\E\pa{
				\absa{
					\frac{1}{N} \sum_{k=1}^N \frac{1}{\lambda_k(t) - z} - \frac{1}{N} \sum_{k=1}^N \frac{1}{\mu_k(t) - z}
				}^{2p}
			} \leq \frac{C_pN^{-2p\delta}}{(N\eta)^{2p}} .
		\end{equation}
\end{lemma}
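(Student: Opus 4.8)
The plan is to exploit the coupling of Theorem \ref{thm:coupled_distance}, which controls $\abs{\mu_k(t) - \lambda_k(t)}$ with overwhelming probability, together with rigidity (Corollary \ref{cor:rigidity} / Lemma \ref{lem:rigidity}) and the known optimal local law in the Gaussian case to estimate the difference of Stieltjes transforms. First I would write
\[
	s^\lambda_t(z) - s^\mu_t(z) = \frac{1}{N}\sum_{k=1}^N \frac{\mu_k(t) - \lambda_k(t)}{(\lambda_k(t) - z)(\mu_k(t) - z)},
\]
and split the sum according to whether $\gamma_k$ is within $\varphi^{1/2}\ell(E)$ of $E$ (the ``near'' eigenvalues) or not (the ``far'' ones). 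On the overwhelming-probability event $\mathcal{A}$ from Lemma \ref{lem:rigidity} intersected with the event of Theorem \ref{thm:coupled_distance} (applied with a tiny exponent, say $\e/200$), both $\lambda_k(t)$ and $\mu_k(t)$ lie within $\varphi^{1/2}\ell(\gamma_k)$ of $\gamma_k$, and $\abs{\mu_k(t)-\lambda_k(t)} \le N^{\e/200}/(Nt) \le N^{\e/100}/N$.

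For the far eigenvalues I would use that both resolvent factors are comparable to $\abs{\gamma_k - z}$ (rigidity makes $\abs{\lambda_k(t)-z}\sim\abs{\gamma_k-z}$ once $\abs{\gamma_k - E}$ dominates $\eta$ and the rigidity scale), so their contribution is bounded by $\frac{N^{\e/100}}{N}\cdot\frac{1}{N}\sum_k \frac{1}{\abs{\gamma_k - z}^2}$. The sum $\frac{1}{N}\sum_k \abs{\gamma_k-z}^{-2}$ is $\OO(\im m(z)/\eta) = \OO(\kappa(z)^{1/2}/\eta + (N\eta)^{-1}\eta^{-1})$ near the edge — more precisely it is $\OO(1/(\eta\sqrt{\kappa(z)+\eta}))$ — which after multiplying by $N^{\e/100}/N$ gives something of size $\frac{N^{\e/100}}{N^2\eta}\cdot\frac{1}{\sqrt{\kappa(z)+\eta}}$; since $\eta \le N^{-1}$ is excluded and $\eta\ge N^{-1}$, and since on $\mathcal{D}_\e$ we have $\kappa(E)\le N^{-\e}$, one checks this is $\le N^{-\delta}/(N\eta)$ for a small $\delta = \delta(\e) > 0$ provided $t \ge N^{-\e/100}$ (the gain $1/t$ is what converts the crude $1/N$ into the needed $N^{-\delta}/N$). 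For the near eigenvalues I would instead bound $\absa{s^\lambda_t - s^\mu_t}$ directly by $\frac{N^{\e/100}}{N}\cdot\frac{1}{N}\sum_{k:\,\text{near}}\frac{1}{\abs{\lambda_k(t)-z}\abs{\mu_k(t)-z}}$ and use the Wegner-type local law in the Gaussian case (and rigidity for $\bl(t)$) to see that the number of such eigenvalues is $\OO(N^{\e}\cdot N\ell(E)\cdot\varrho(E))$ while each resolvent factor is $\gtrsim \eta$, giving again a bound of the form $N^{-\delta}/(N\eta)$ after the $1/t$ gain. Taking $2p$-th powers and using that the complement of the good event has probability $\le N^{-D}$ for all $D$ (and that $\abs{s^\lambda_t - s^\mu_t}\le 2/\eta \le 2N$ deterministically) absorbs the bad event into the $C_p N^{-2p\delta}(N\eta)^{-2p}$ error.

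The main obstacle I anticipate is getting the edge-dependent factors to line up quantitatively: one needs the summed resolvent bound $\frac{1}{N}\sum_k\abs{\gamma_k-z}^{-2}$ with its correct $\kappa$- and $\eta$-dependence uniformly over all of $\mathcal{D}_\e$ (including the regime $\kappa(E)$ comparable to $N^{-2/3}$, where $\ell(E) = N^{-2/3}$ and the eigenvalue density behaves like the Airy-edge density rather than the bulk density), and then to verify that the crude coupling error $N^{\e/100}/(Nt) \le 1/N$ really does beat the target $N^{-\delta}/(N\eta)$ for $\eta$ as small as $N^{-1}$. The point is that $t \ge N^{-\e/100}$ buys a factor $t^{-1} \le N^{\e/100}$ — wait, that goes the wrong way; rather, since $\abs{\mu_k(t)-\lambda_k(t)} \le N^\gamma/(Nt)$ and we may take $\gamma$ as small as we like while $t$ is only mildly small, the coupling error is $\le N^{\gamma+\e/100}/N \ll N^{-\e/4}\cdot N^{-1}\cdot$ (something), and choosing $\gamma$ small relative to $\e$ makes the total a genuine power-of-$N$ improvement over $(N\eta)^{-1}$ uniformly. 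Carefully bookkeeping these exponents to extract a single $\delta>0$ valid on all of $\mathcal{D}_\e$ is the delicate part; everything else is rigidity plus the triangle inequality plus Hölder to pass from high-probability bounds to the $L^{2p}$ bound.
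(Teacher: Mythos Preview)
Your approach is the paper's: bound $|\lambda_k(t)-\mu_k(t)|$ via Theorem~\ref{thm:coupled_distance}, split the resolvent difference into near and far eigenvalues using rigidity, and then upgrade the overwhelming-probability pointwise bound to an $L^{2p}$ bound using the deterministic estimate $|s^\lambda_t - s^\mu_t|\le C/\eta$.

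Two points need fixing. First, the $N^{-\delta}$ gain does \emph{not} come from $t$ (as you eventually suspect, $1/t\le N^{\e/100}$ goes the wrong way); it comes from the edge smallness $\kappa(E)^{1/2}\le N^{-\e/2}$ on $\mathcal D_\e$. Your formula $\frac1N\sum_k|\gamma_k-z|^{-2}\sim\im m(z)/\eta\sim\sqrt{\kappa+\eta}/\eta$ is correct and already contains this factor (the inverted version $1/(\eta\sqrt{\kappa+\eta})$ you wrote next is a slip). Second, your choice to split at $\varphi^{1/2}\ell(E)$ rather than at $\varphi\eta$ (as the paper does) breaks the near-part estimate when $\eta$ is small: with your split the near window always contains $O(\varphi^{1/2})$ eigenvalues, and bounding each resolvent factor from below by $\eta$ gives a near contribution $\sim \frac{N^{\gamma}\varphi^{1/2}}{N^2 t\,\eta^2}$, which at $\eta=N^{-1}$ is of order $N^{2\e/100}$ and is \emph{not} $\le N^{-\delta}/(N\eta)$. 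Splitting at $\varphi\eta$ instead makes the near count $\sim N\varphi\eta\sqrt{\kappa(E)}$, so the near contribution becomes $\frac{N^{\gamma}\varphi\sqrt{\kappa(E)}}{t\,N\eta}\le \frac{N^{-\e/2+2\e/100}\varphi}{N\eta}$, which gives the required $N^{-\delta}$ saving. With these two corrections your sketch matches the paper's proof line by line.
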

\begin{proof}
	Suppose without loss of generality that $\kappa(E) = 2 - E$. 
	By Theorem \ref{thm:coupled_distance}, for any $\gamma > 0$, with overwhelming probability, we have
		\[
			\absa{
					\frac{1}{N} \sum_{k=1}^N \frac{1}{\lambda_k(t) - z} - \frac{1}{N} \sum_{k=1}^N \frac{1}{\mu_k(t) - z}
				}
			\leq
			\frac{N^\gamma}{N^2t} \sum_{k=1}^{N} \frac{1}{  \absa{\lambda_k(t) - z} \absa{\mu_k(t) - z}    }.
		\]
	Furthermore, on the event \eqref{eq:rigidity_set}, there exists $C > 0$ such that 
		\[
			\frac{N^\gamma}{N^2t} \sum_{k: \abs{\gamma_k-E} \geq \varphi \eta} \frac{1}{  \absa{\lambda_k(t) - z} \absa{\mu_k(t) - z}    }
			\leq
			\frac{CN^\gamma}{N^2t} \sum_{k: \abs{\gamma_k - E} \geq \varphi \eta} \frac{1}{  \absa{\gamma_k - z} ^2  }
			\leq
			\frac{CN^\gamma}{Nt} \int_{-2}^{E-\varphi \eta} \frac{\diff\varrho(x)}{\abs{x-z}^2},
		\]
	and
		\[
			\frac{N^\gamma}{Nt} \int_{-2}^{E-\varphi \eta} \frac{\diff\varrho(x)}{\abs{x-z}^2}
			\leq
			\frac{N^\gamma}{Nt} \int_{-2}^{E-\varphi\eta} \frac{\diff\varrho(x)}{\abs{x-E}^2}
			=
			\frac{N^\gamma}{Nt}\pa{
				\frac{\sqrt{2-x}}{E-x} + \frac{\tanh^{-1}\pa{  \sqrt{ \frac{2-x}{2-E} }  }}{\sqrt{2-E}}
			}\biggr|_{-2}^{E-\varphi\eta}
			=
			\OO\pa{
				\frac{N^{2\gamma- \frac{\e}{5}} }{N\eta t} 
			}.			
		\]
	For the remaining terms, we have
		\[
			\frac{N^\gamma}{N^2t} \sum_{k:\abs{\gamma_k-E} \leq \varphi \eta} \frac{1}{  \absa{\lambda_k(t) - z} \absa{\mu_k(t) - z}    }
			\leq
			\frac{N^{2\gamma} (N\eta) \kappa(E)^{1/2}}{ t(N\eta)^2}
			=
			\frac{N^{2\gamma} \kappa(E)^{1/2}}{ t N\eta }.
		\]
	Choosing $\gamma = \e/100$, say, gives
		\[
			\absa{
					\frac{1}{N} \sum_{k=1}^N \frac{1}{\lambda_k(t) - z} - \frac{1}{N} \sum_{k=1}^N \frac{1}{\mu_k(t) - z}
			}
			=
			\OO\pa{ \frac{N^{-\delta}}{N\eta} },
		\]
	with overwhelming probability. Now let $X = \absa{ \frac{1}{N} \sum_{k=1}^N \frac{1}{\lambda_k(t) - z} - \frac{1}{N} \sum_{k=1}^N \frac{1}{\mu_k(t) - z} }$,
	$\e_1 = \frac{N^{-\delta}}{N\eta}$, and note that $X < 100/\eta$ deterministically. Then for any $D > 0$ and $N \geq N_0(D)$, we have
			\[
				\E\pa{X^p} = p \int_0^{100/\eta} \lambda^{p-1} \P(X > \lambda) \rd \lambda
					\leq \int_0^{\e_1} p \lambda^{p-1} \rd \lambda + \int_{\e_1}^{100/\eta} p\lambda^{p-1} N^{-D} \rd \lambda
					\leq \e_1^p + N^{-D} \pa{\frac{100}{\eta}}^p.
			\]
	Choosing $D$ large enough, depending on $p$, we have \eqref{eq:local_law_after_dynamics} for $N \geq N_0(D) = N_0(p)$ and
	some $C_p > 0$.
	To cover $N \geq 1$, we note simply that $\abs{s(z)}$ and $\abs{m(z)}$ are both bounded deterministically and uniformly in $z \in \mathcal{D}_\e$.
	Therefore, we can increase $C_p$ to cover the finitely many $N \leq N_0(p)$ to conclude the proof.
\end{proof}

We now present a four moment matching argument following \cite{LanLopMar2020} which constitutes the last step in the three step strategy.
For any Wigner matrix $M$, define
	\begin{equation}
	\label{eq:def_T_p}
		T_p(M) = T_p(z,M) = (\im s(z) - \im m(z))^{p} + (\re s(z) - \re m(z))^p, \quad g(p) = \frac{(Cp)^{p/2}}{(N\eta)^p},
	\end{equation}
where $C>0$ is some universal constant. 

\begin{definition}
	For any $w \in [0,1]$, $H$ a Wigner matrix and indices $a,b \in \llbracket 1, N \rrbracket$, define
	\[
		\Theta_w^{(a,b)} H = \begin{cases}
			h_{ij} & (i,j) \notin \{(a,b),(b,a)\} \\
			w h_{ij} & (i,j) \in \{(a,b),(b,a)\}.
		\end{cases}
	\]
\end{definition}

We begin with the following Lemma which follows from the local law \eqref{eq:local_law_entrywise_GW}.

\begin{lemma}
\label{lem:sc_estimates}
	Let $H$ be a Wigner matrix and write $\partial_{ab} = \partial/\partial_{H_{ab}}$. 
	There exists an event $\mathcal{G}$ and constants $C, c> 0$ such that for 
	all $z = E + \ii \eta \in \mathcal{D}_\e$, 
	\begin{equation}
	\label{eq:derivative_bound_good_set}
		\sup_{a,b,c,d \in \llbracket 1, N \rrbracket} \sup_{j \in \llbracket 1, 5 \rrbracket} \sup_{w \in [0,1]}
			\1_{\mathcal{G}} \absa{
				\partial_{cd}^j  T_1\pa{z,\Theta_{w}^{(a,b)}H}
			} \leq \frac{C \varphi^C}{N\eta},
	\end{equation}
	and
	\begin{equation}
	\label{eq:derivatives_size_of_bad_set}
		\P\pa{ \mathcal{G}^c } \leq C \exp\pa{  -c(\log N)^{c\log\log N}  }.
	\end{equation}
	Furthermore, we have the deterministic bound 
	\begin{equation}
	\label{eq:derivative_bound_bad_set}
		\sup_{a,b,c,d \in \llbracket 1, N \rrbracket} \sup_{j \in \llbracket 1, 5 \rrbracket} \sup_{w \in [0,1]}
			\1_{\mathcal{G}} \absa{
				\partial_{cd}^j  T_1\pa{z,\Theta_{w}^{(a,b)}H}
			} \leq CN^C.
	\end{equation}
\end{lemma}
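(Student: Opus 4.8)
The plan is to reduce all three bounds to the entrywise local law \eqref{eq:local_law_entrywise_GW}, applied on a slightly enlarged spectral domain so that the perturbation $\Theta_w^{(a,b)}$ does not move us outside the region where the local law holds. First I would observe that $T_1(z,M) = (\im s - \im m) + (\re s - \re m) = 2\re(s(z)-m(z)) + \ldots$ is, up to trivial rearrangement, a real-linear function of $s(z) - m(z)$, so it suffices to control derivatives $\partial_{cd}^j s(z, \Theta_w^{(a,b)}H)$ for $j \in \llbracket 1,5\rrbracket$. Writing $G = G(z, \Theta_w^{(a,b)}H) = (\Theta_w^{(a,b)}H - z)^{-1}$, the resolvent identity gives $\partial_{cd} G = -G(\partial_{cd}(\Theta_w^{(a,b)}H))G = -G(E_{cd}+E_{dc})G$ when $(c,d)\notin\{(a,b),(b,a)\}$, and a factor $w$ otherwise; iterating, $\partial_{cd}^j s = \frac{1}{N}\Tr$ of a sum of $O(1)$ many terms, each an alternating product of $j+1$ copies of $G$ and $j$ copies of $E_{cd}+E_{dc}$. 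Each such term is a sum of $O(1)$ entries of the form $\frac{1}{N}\sum_{k} (G)_{k\,c_1}(G)_{c_2\,c_3}\cdots(G)_{c_{2j}\,k}$ with all $c_i \in \{c,d\}$.

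Next I would define the good event $\mathcal{G}$ to be the event on which \eqref{eq:local_law_entrywise_GW} holds simultaneously for all $z$ in a grid of $\mathcal{D}_{\e/2}$ fine enough (say $N^{-10}$-spacing) to upgrade to all $z\in\mathcal{D}_{\e/2}$ by the Lipschitz continuity of $G$, and for all the $\le N^2$ matrices $\Theta_w^{(a,b)}H$ with $w$ ranging over an $N^{-10}$-net of $[0,1]$ — again upgraded to all $w$ by continuity. A union bound over this polynomially-large collection, using that each failure probability is $\le N^{-D}$ for every $D$ (equivalently $\le C\exp(-c(\log N)^{c\log\log N})$ via the $\varphi^C$ in \eqref{eq:local_law_entrywise_GW}), gives \eqref{eq:derivatives_size_of_bad_set}; the key point is that $\exp(-c(\log N)^{c\log\log N})$ beats any power of $N$, so the polynomial union bound is harmless. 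On $\mathcal{G}$ we then have $\max_i |G_{ii}| = O(\varphi^C)$ and $\max_{i\ne j}|G_{ij}| \le \varphi^C\Psi(z) = O(\varphi^C(N\eta)^{-1/2})$, since $\varrho(z) = O(1)$. Plugging these into a generic term: the diagonal-$k$ contributions (those $k\in\{c,d\}$) are bounded by $\frac1N\cdot O(\varphi^C)$, while for $k\notin\{c,d\}$ the product $G_{k c_1}\cdots G_{c_{2j}k}$ contains at least two genuinely off-diagonal factors $G_{k\,\cdot}$ and $G_{\cdot\,k}$, so summing over $k$ gives $\frac1N\cdot N\cdot\varphi^C\Psi(z)^2\cdot(\varphi^C)^{j-1} = O(\varphi^C/(N\eta))$. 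Combining, every term is $O(\varphi^C/(N\eta))$, which after adjusting the constant $C$ yields \eqref{eq:derivative_bound_good_set}.

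For the deterministic bound \eqref{eq:derivative_bound_bad_set} I would simply use $\|G(z)\|_{\mathrm{op}} \le \eta^{-1} \le N$ on $\mathcal{D}_\e$, so that every entry $|G_{ij}| \le N$ and the $\frac1N\sum_k$ of a product of $j+1 \le 6$ such entries is at most $\frac1N\cdot N\cdot N^{6} = N^{6}$, which is $\le CN^C$; this holds pointwise with no probabilistic input. The main obstacle, such as it is, is bookkeeping: making sure the net/union-bound argument genuinely covers the supremum over $a,b,c,d,w,j$ with only a polynomial loss — i.e. verifying that $G(z,\Theta_w^{(a,b)}H)$ is Lipschitz in $(z,w)$ with a polynomially-bounded constant on $\mathcal{D}_\e$ (true since $\partial_z G = G^2$ and $\partial_w G = -G(E_{ab}+E_{ba})G$ both have operator norm $\le \eta^{-2}\le N^2$), so that a $N^{-10}$-net suffices. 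Everything else is a direct expansion-plus-local-law estimate.
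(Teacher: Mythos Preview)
Your approach is correct in spirit and parallels the paper's: expand $\partial_{cd}^j s$ into a finite sum of resolvent-entry products $\frac{1}{N}\sum_k G_{kc_1}\cdots G_{c_{2j}k}$ and control each via the entrywise local law. Where you bound $\frac{1}{N}\sum_{k}|G_{kc_1}||G_{c_{2j}k}|$ by the crude $\Psi(z)^2=O((N\eta)^{-1})$, the paper instead uses the Ward identity $\sum_k|G_{kc}|^2=\im G_{cc}/\eta$ together with AM--GM to reach the form $\frac{C}{N\eta}(|G_{aa}|+|G_{bb}|+|G_{ab}|)^j$; both routes give the same final bound, the Ward identity being marginally cleaner. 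The deterministic bound \eqref{eq:derivative_bound_bad_set} is handled identically in both.

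The one point requiring care is your treatment of the supremum over $w$. Your net argument presumes the local law \eqref{eq:local_law_entrywise_GW} applies directly to $\Theta_w^{(a,b)}H$ for each $w$ in the net, but for $w<1$ the variance of the $(a,b)$ entry is $w^2\sigma_{ab}^2$, which can violate the lower bound in $\sigma_{ij}^2\sim N^{-1}$, so the local law \emph{as stated} for generalized Wigner matrices does not literally apply to $\Theta_w^{(a,b)}H$. The paper avoids this by first securing the bound at $w=1$ (where the local law holds for $H$ itself) and then transferring the entrywise estimates to all $w\in[0,1)$ via a rank-two resolvent expansion in the perturbation $(1-w)(H_{ab}E_{ab}+H_{ba}E_{ba})$, citing \cite[(4.54)]{LanLopMar2020}. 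Your argument is easily repaired the same way, and once the resolvent expansion carries the bounds from $w=1$ to general $w$ the net over $w$ becomes unnecessary. (Incidentally, your formula $\partial_w G=-G(E_{ab}+E_{ba})G$ is missing a factor $H_{ab}$, though this does not affect the polynomial Lipschitz bound you need.)
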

\begin{proof}
	Recall that $G(z) = (H-z)^{-1}$ satisfies (see for example \cite{BenKno2016})
	\begin{enumerate}
		\item (Ward identity) For any $i \in \llbracket 1, N \rrbracket$,
			\[
				\sum_{1 \leq j \leq N} \absa{G_{ij}}^2 = \frac{\im G_{ii}}{\eta},
			\]
		\item For $i,j,k,\ell \in \llbracket 1, N \rrbracket$,
			\[
				\partial_{kl}G_{ij} = -\pa{
					G_{ik}G_{\ell j} + G_{i\ell}G_{kj}
				}(1 + \delta_{kl})^{-1}.
			\]
	\end{enumerate}
	Therefore
	\[
		\partial_{ab} s(z) =  \frac{1}{N} \sum_{i=1}^N \partial_{ba} G_{ii} = -\frac{1}{N} \sum_{i=1}^N G_{ia}G_{bi},
	\]
	and applying the Ward identity and the local law, we have
	\[
		\absa{ \partial_{ab} s(z) } \leq \frac{1}{N}\sum_{i=1}^N \absa{G_{ia}G_{bi}}
		\leq \frac{C}{N} \sum_{i=1}^N \pa{ \absa{G_{ia}}^2 + \absa{G_{bi}}^2  }
		\leq \frac{C}{N\eta}\pa{
			\absa{G_{aa}} + \absa{G_{bb}}.
		}
	\]
	Similarly, for the higher derivatives, we have
	\[
		\absa{\partial_{ab}^j s(z)} \leq \frac{C}{N\eta} \pa{
			\absa{G_{aa}} + \absa{G_{bb}} + \absa{G}_{ab}
		}^j.
	\]
	On the set such that \eqref{eq:local_law_entrywise_GW} holds, this concludes the proof when $w=1$.
	For $w \in [0,1)$, one can use a resolvent expansion to show that the same result holds \cite[(4.54)]{LanLopMar2020}. 
\end{proof}

\begin{proposition}
	\label{prop:4_mom_matching_high_moments}
	Let $H^{v}, H^{w}$ be two Wigner matrices with eigenvalues $\bl$ and $\bmu$ respectively. Assume that 
	for all $1 \leq i \leq j \leq N$, and $1 \leq k \leq 3$, $\E^v\pa{h_{ij}^k} = \E^w\pa{h_{ij}^k}$, and further that for
	some fixed $\delta > 0$,
		\begin{equation}
		\label{eq:4_moment_condition_2}
			\absa{\E^v\pa{h_{ij}^4} - \E^w\pa{h_{ij}^4}} \leq N^{-2-\delta} \text{ for } i \leq j.
		\end{equation}
	For $z \in \mathcal{D}_\e$ and fixed $p \geq 1$, if $T_{2p}(z,W) \leq Cg(2p)$, then
	there exists $C'>0$ such that
	$T_{2p}(z,V) \leq C'g(2p)$.
\end{proposition}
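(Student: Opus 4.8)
The plan is to run the standard Lindeberg/four-moment swapping scheme at the level of the quantity $T_{2p}(z,\cdot)$, replacing the entries of $H^v$ by those of $H^w$ one at a time and controlling each swap via a Taylor expansion to fourth order. Fix the bijective ordering $\phi$ of the index set of independent entries as in the proof of Proposition~\ref{prop:4_moment_matching}, and let $H_\gamma$ interpolate between $H^v = H_0$ and $H^w = H_{\gamma(N)}$. It suffices to estimate $\bigl|\E\, T_{2p}(z,H_\gamma) - \E\, T_{2p}(z,H_{\gamma-1})\bigr|$ and sum over the $\gamma(N) = N(N+1)/2$ swaps. Writing the single swapped pair of entries as $h_{ab}/\sqrt N$, I would Taylor expand $T_{2p}\bigl(z,\Theta_w^{(a,b)}H\bigr)$ in $w$ about $w=0$ (the matrix with that entry removed, common to $H_\gamma$ and $H_{\gamma-1}$) to fourth order with integral remainder, using that the first three moments of the $v$- and $w$-entries agree so those terms cancel exactly in the difference, and that the fourth moments differ by at most $N^{-2-\delta}$ by~\eqref{eq:4_moment_condition_2}.

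The derivative bounds come from Lemma~\ref{lem:sc_estimates}: on the good event $\mathcal G$, each of the first five $w$-derivatives of $T_1(z,\Theta_w^{(a,b)}H)$ is $O(\varphi^C/(N\eta))$, and off $\mathcal G$ one has the crude bound $O(N^C)$ together with $\P(\mathcal G^c) \leq C\exp(-c(\log N)^{c\log\log N})$, which is smaller than any power of $N$. To pass from $T_1$ to $T_{2p}$ I would use the product/chain rule: $T_{2p}$ is a polynomial of degree $2p$ in $\im s$ and $\re s$ (equivalently, a combination of powers of $T_1$-type building blocks), so its $w$-derivatives are sums of products of at most five derivatives of $s(z)$ against factors of $s(z)-m(z)$; the hypothesis $T_{2p}(z,W) \leq C g(2p)$ (and an a~priori bound for intermediate $H_\gamma$, obtained by feeding the conclusion back inductively along the interpolation, or simply from the local law~\eqref{eq:local_law_entrywise_GW} which already gives $|s-m| \lesssim \varphi^C/(N\eta)$ with overwhelming probability) controls those factors by $(\varphi^C/(N\eta))^{2p-k}$ when $k$ derivatives are taken. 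Multiplying by $k$ derivative factors each of size $\varphi^C/(N\eta)$, each fourth-order swap term carries a factor $N^{-4/2}$ from the $1/\sqrt N$ scaling of the entry, a factor $N^{-2-\delta}$ from the fourth-moment mismatch in the leading swap term (or $O(1)$ fourth moments in the remainder term, which then pays $N^{-5/2}$ instead), and $(\varphi^C/(N\eta))^{2p}$ overall size, so the per-swap difference is $O\bigl(N^{-\delta}\varphi^C (N\eta)^{-2p} N^{-2}\bigr)$ in the genuinely mismatched term and $O\bigl(N^{-5/2}\varphi^C(N\eta)^{-2p}\bigr)$ in the fifth-order remainder. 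Summing over the $O(N^2)$ swaps leaves $O\bigl(N^{-\delta}\varphi^C (N\eta)^{-2p}\bigr) = o\bigl(g(2p)\bigr)$, which combined with the hypothesis gives $\E\, T_{2p}(z,V) \leq C' g(2p)$; since $\eta \geq N^{-1}$ on $\mathcal D_\e$ and $T_{2p}$ is deterministically bounded there, the exceptional event $\mathcal G^c$ contributes a negligible $N^C \exp(-c(\log N)^{c\log\log N})$ term.

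The main obstacle I expect is bookkeeping the combinatorics of differentiating $T_{2p}$ — a degree-$2p$ polynomial — five times while keeping uniform control over the intermediate interpolating matrices $H_\gamma$: one must verify that the a~priori bound $T_{2p}(z,H_\gamma) \lesssim g(2p)$ (needed to bound the undifferentiated factors at each stage) propagates along the interpolation rather than only holding at the endpoints. This is handled by noting the swap estimate itself is uniform in $\gamma$, so a discrete Grönwall/induction along $\gamma = 0,1,\dots,\gamma(N)$ closes the loop; alternatively one bypasses the issue entirely by invoking~\eqref{eq:local_law_entrywise_GW} directly for every $H_\gamma$ (each is a generalized Wigner matrix), which already yields $|s(z)-m(z)| \leq \varphi^C/(N\eta)$ with overwhelming probability and hence $T_{2p}(z,H_\gamma) \leq \varphi^{Cp} g(2p)$ — more than enough, at the cost of a slightly worse $\varphi$ power that is still absorbed by the $N^{-\delta}$ gain. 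The only other point requiring care is the truncation argument separating $\mathcal G$ from $\mathcal G^c$, which is identical in spirit to the $\Xi(H)$ versus $\Tr f(H)$ reduction in Proposition~\ref{prop:4_moment_matching}.
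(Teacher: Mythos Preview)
Your proposal is correct and follows the same Lindeberg swapping plus Taylor-expansion strategy as the paper, using Lemma~\ref{lem:sc_estimates} for the derivative bounds and the good/bad event decomposition exactly as you describe. The one structural difference is in how the a~priori control on the intermediate $T_{2k}(\Theta_w H_\gamma)$ is obtained: the paper (following \cite{LanLopMar2020}) runs an explicit induction on the half-exponent $m$, maintaining $\E\,T_{2n}(\Theta_w^{(a,b)}H_\gamma)\le 3C\,g(2n)$ for all $n\le m$, all $\gamma$, and all $w\in[0,1]$ with $N$-independent constants, and then at each induction step also separately extends the bound from $w=1$ to all $w\in[0,1]$ via a further Taylor expansion. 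Your proposed bypass---invoking the averaged local law \eqref{eq:local_law_original} directly for every intermediate matrix to get $|s-m|\le\varphi/(N\eta)$ with overwhelming probability, hence $T_{2k}\le\varphi^{2k}(N\eta)^{-2k}$, and absorbing the surplus $\varphi$-powers into the $N^{-\delta}$ gain from \eqref{eq:4_moment_condition_2}---is a valid and somewhat cleaner alternative that avoids the double induction; the paper's version has the advantage of keeping the constants $N$-free throughout, but for the stated conclusion either route suffices.
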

\begin{proof}
	Fix a bijective ordering map of the index set of the independent matrix entries, 
	$\phi: \{ (i,j) : 1 \leq i \leq j \leq N\} \to \llbracket 1, \gamma(N) \rrbracket$, with $\gamma(N) = N(N+1)/2$.
	For $i \leq j$, define $H_\gamma$ by
	\[
		h_{ij}^\gamma = \begin{cases}
			h_{ij} = H^v_{ij} & \phi(i,j) \leq \gamma \\
			r_{ij} = H^w_{ij} & \phi(i,j) > \gamma,
		\end{cases}
	\]
	and note that $H^w = H_0$ and $H^v = H_{\gamma(N)}$.
	Writing,
		\[
			\E\pa{
				T_{2p}(z,V)
			}
			-
			\E\pa{
				T_{2p}(z,W)		
			}
			=
			\sum_{\gamma=0}^{\gamma(N)-1} 
			\E\pa{
				T_{2p}(z,H_{\gamma+1})
			}
			-
			\E\pa{
				T_{2p}(z,H_{\gamma})				
			},
		\]
	we see it is sufficient to prove that for any $\gamma$,
		\begin{equation}
		\label{eq:goal_4_mom_high_moments}
			\E\pa{
				T_{2p}(z,H_{\gamma+1})
			}
			-
			\E\pa{
				T_{2p}(z,H_{\gamma})				
			}
			\leq
			g(2p).
		\end{equation}
	Now fix $(i,j)$ so that in the following we write $\partial$ for $\partial_{ij}$. For any $m \geq 1$, Taylor
	expanding $T_m(H_\gamma)$ in the $(i,j)$ entry, we have
	\begin{multline}
	\label{eq:taylor_gamma}
		T_m\pa{H_\gamma} - T_m\pa{\Theta_0^{(i,j)}H_\gamma} =
		\partial T_m\pa{\Theta_0^{(i,j)}H_\gamma} h_{ij} + \frac{1}{2} \partial^2 T_m\pa{\Theta_0^{(i,j)}H_\gamma} h_{ij}^2
		+ \frac{1}{3!} \partial^3 T_m\pa{\Theta_0^{(i,j)}H_\gamma} h_{ij}^3 \\
		+ \frac{1}{4!} \partial^4 T_m\pa{\Theta_0^{(i,j)}H_\gamma} h_{ij}^4
		+ \frac{1}{5!} \partial^5 T_m\pa{\Theta_{w_1(\gamma)}^{(i,j)}H_\gamma} h_{ij}^5,
	\end{multline}
	where $w_1(\gamma) \in [0,1]$ is a random variable depending on $h_{ij}$. Similarly, we have 
	\begin{multline}
	\label{eq:taylor_gamma-1}
		T_m\pa{H_{\gamma-1}} - T_m\pa{\Theta_0^{(i,j)}H_\gamma} =
		\partial T_m\pa{\Theta_0^{(i,j)}H_\gamma} r_{ij} + \frac{1}{2} \partial^2 T_m\pa{\Theta_0^{(i,j)}H_\gamma} r_{ij}^2
		+ \frac{1}{3!} \partial^3 T_m\pa{\Theta_0^{(i,j)}H_\gamma} r_{ij}^3 \\
		+ \frac{1}{4!} \partial^4 T_m\pa{\Theta_0^{(i,j)}H_\gamma} r_{ij}^4
		+ \frac{1}{5!} \partial^5 T_m\pa{\Theta_{w_2(\gamma)}^{(i,j)}H_\gamma} r_{ij}^5,
	\end{multline}
	where $w_2(\gamma) \in [0,1]$ a random variable depending on $r_{ij}$. 
	Subtracting \eqref{eq:taylor_gamma-1} from \eqref{eq:taylor_gamma}, taking expectation, and using
	$\Theta_0^{(i,j)}H_\gamma$ is independent from $h_{ij}, r_{ij}$ and 
	$\E(h_{ij}^k) = \E(r_{ij}^k)$ for $k=1,2,3$, we have
	\begin{multline}
	\label{eq:4th_5th_order_terms}
		\E\left[ T_m\pa{H_\gamma}\right] - \E\left[ T_m\pa{H_{\gamma-1}}\right] 
		= \frac{1}{4!}\left[
			\partial^4 T_m\pa{\Theta_0^{(i,j)} H_\gamma  } h_{ij}^4
		\right]
		- \frac{1}{4!}\left[
			\partial^4 T_m\pa{\Theta_0^{(i,j)} H_\gamma  } r_{ij}^4 
		\right] \\
		+
		\frac{1}{5!}\left[
			\partial^5 T_m\pa{\Theta_{w_1(\gamma)}^{(i,j)} H_\gamma  } h_{ij}^5
		\right]
		- \frac{1}{5!}\left[
			\partial^4 T_m\pa{\Theta_{w_2(\gamma)}^{(i,j)} H_\gamma  } r_{ij}^5
		\right].
	\end{multline}
	We now proceed by induction. The induction hypothesis at step $m \in \N$ is that
	for all $0 \leq n \leq m$, $w \in [0,1]$ and $(a,b) \in \llbracket 1, N \rrbracket^2$,
	for some $C > 0$,
	\begin{equation}
	\label{eq:induction_hypothesis}
		\E\left[
			T_{2n}\pa{ \Theta_w^{(a,b)} H_\gamma   }
		\right]
		\leq
		3C g(2n).
	\end{equation}
	The induction hypothesis is trivial when $m=0$.
	Assume the induction hypothesis at step $m-1$. We will prove it for step $m$.
	Again by the independence of $h_{ij}$ and $r_{ij}$ from $\Theta_0^{(i,j)}H_\gamma$, we have
	\[
		\E\left[
			\partial^4 T_m\pa{\Theta_0^{(i,j)} H_\gamma  } h_{ij}^4
		\right]
		- \E\left[
			\partial^4 T_m\pa{\Theta_0^{(i,j)} H_\gamma  } r_{ij}^4 
		\right]
		=
		\E \left[
			\partial^4 T_m\pa{\Theta_0^{(i,j)} H_\gamma  } 
		\right] \E\left[
			h_{ij}^4 - r_{ij}^4
		\right],
	\]
	and we have $\E\left[ h_{ij}^4 - r_{ij}^4 \right] \leq t$ by assumption. For the remaining factor, we have
	\begin{multline*}
		\partial^4 T_{2m} =
		2m T_{2m-1}T^{(4)} + 3(2m)(2m-1)T_{2m-2}\pa{T^{(2)}}^2 + 2m(2m-1)(2m-2)(2m-3)T_{2m-4}\pa{T'}^4 \\
		+ 4(2m)(2m-1)T_{2m-2}T'T^{(3)} + 6(2m)(2m-1)(2m-2)T_{2m-3}\pa{T'}^2T^{(2)}.
	\end{multline*}
	To bound these terms, note that by \eqref{eq:local_law_original}, $\E(\abs{T^{2m-1}}) \leq \varphi \E(T^{2m})$, 
	and by \eqref{eq:derivative_bound_good_set}, we have $g(p) \leq \varphi^C g(p+1)$ for some $C>0$.
	This gives
	that there exists $C > 0$ such that
	\begin{equation}
	\label{eq:bound_4th_deriv_good_set}
		\E\pa{
		\absa{
				\1_{\mathcal{G}} \partial^4 T_{2m}\pa{
					\Theta_0^{(i,j)} H_\gamma
				}
		}
		}
		\leq
		C \varphi^{(1+C)100} g(2m),
	\end{equation}
	and by \eqref{eq:derivatives_size_of_bad_set} and \eqref{eq:derivative_bound_bad_set}, we have
	\begin{equation}
	\label{eq:bound_4th_deriv_bad_set}
		\E\pa{
			\absa{
				\1_{\mathcal{G}^c} \partial^4 T_{2m}\pa{
					\Theta_0^{(i,j)} H_\gamma
				}
			}
		}
		\leq
		CN^{-100}.
	\end{equation}
	In total, we have 
	\[
		\absa{
			\E\left[
			\partial^4 T_m\pa{\Theta_0^{(i,j)} H_\gamma  } h_{ij}^4
			\right]
			- \E\left[
				\partial^4 T_m\pa{\Theta_0^{(i,j)} H_\gamma  } r_{ij}^4 
			\right]
		}
		\leq
		C \varphi^{(1+C)100} N^{-2-\delta} g(2m),
	\]
	which for $N$ sufficiently large gives
	\[
		\absa{
			\E\left[
			\partial^4 T_m\pa{\Theta_0^{(i,j)} H_\gamma  } h_{ij}^4
			\right]
			- \E\left[
				\partial^4 T_m\pa{\Theta_0^{(i,j)} H_\gamma  } r_{ij}^4 
			\right]
		}
		\leq
		\frac{1}{2} N^{-2} g(2m).
	\]
	To control the fifth order terms in \eqref{eq:4th_5th_order_terms}, we use the (uniform) sub-exponential
	decay of $h_{ij}$ and $r_{ij}$. Namely, let $\mathcal{D} = \mathcal{D}(\delta)$ be the event such that 
	$\sup_{i,j} \abs{r_{ij}} + \abs{h_{ij}} \leq N^{-1/2 + \delta}$. Then there exist constants $D(\delta), d(\delta)$
	such that
	\[
		\P\pa{\mathcal{D}^c} \leq D(\delta)\exp\pa{
			-d(\delta) (\log N)^{d(\delta) \log \log N}
		}.
	\]
	It follows that
	\begin{align*}
		\absa{
			\E\pa{
				\partial^5 T_{2m}\pa{
					\Theta^{(i,j)}_{w_1(\gamma)} 
				} h_{ij}^5
			}
		}
		&\leq
		\absa{
			\E\pa{ \1_\mathcal{D}
				\partial^5 T_{2m}\pa{
					\Theta^{(i,j)}_{w_1(\gamma)} 
				} h_{ij}^5
			} 
		}
		+
		\absa{ 
			\E\pa{\1_{\mathcal{D}^c}
				\partial^5 T_{2m}\pa{
					\Theta^{(i,j)}_{w_1(\gamma)} 
				} h_{ij}^5
			}
		} \\
		& \leq
		CN^{-5/2+5\delta}\pa{
			\E\pa{
				\absa{
					\partial^5 T_{2m}\pa{
						\Theta_{w_1(\gamma)}^{(i,j)} H_\gamma
					}
				}
			} + 1
		}.
	\end{align*}
	The same bound holds with $r_{ij}$ in place of $h_{ij}$, and repeating the argument
	we gave to bound the fourth order terms gives that for $N$ sufficiently large,
	\begin{equation}
	\label{eq:5th_order_total}
		\absa{
			\E\pa{
				\partial^5 T_{2m}\pa{
					\Theta^{(i,j)}_{w_1(\gamma)} 
				} h_{ij}^5
			}
		}
		+
		\absa{
			\E\pa{
				\partial^5 T_{2m}\pa{
					\Theta^{(i,j)}_{w_1(\gamma)} 
				} r_{ij}^5
			}
		}
		\leq
		\frac{1}{2} N^{-2} g(2m).
	\end{equation}
	In summary, so far we have that conditional on the induction hypothesis \eqref{eq:induction_hypothesis} at
	level $m-1$, for $N$ large enough,
	\[
		\absa{
			\E\pa{T_{2m}\pa{H_\gamma}} - \E\pa{T_{2m}\pa{H_{\gamma-1}}}
		}	
		\leq
		N^{-2}g(2m).
	\]
	Therefore, for any $\gamma$ we have
	\[
		\absa{
			\E\pa{T_{2m}(W)} - \E\pa{T_{2m}\pa{H_\gamma}}
		}
		\leq
		g(2m),
	\]
	and since $\E\pa{T_{2m}(W)} \leq Cg(2m)$ by assumption, we have
	\[
		\E\pa{T_{2m}\pa{H_\gamma}} \leq 2Cg(2m).
	\]
	This verifies the induction hypothesis at level $m$ when $w=1$. To extend to other values of $w$, 
	expand again,
	\begin{multline*}
		T_{2m}\pa{H_\gamma} - T_{2m}\pa{\Theta_{w}^{(i,j)}H_\gamma}
		=
		\partial T_{2m}\pa{\Theta_{0}^{(i,j)}H_\gamma} h_{ij}
		+ \frac{1}{2} \partial^2 T_{2m}\pa{\Theta_{0}^{(i,j)}H_\gamma} h_{ij}^2
		+ \frac{1}{3!} \partial^3 T_{2m}\pa{\Theta_{0}^{(i,j)}H_\gamma} h_{ij}^3 \\
		+ \frac{1}{4!} \partial^4 T_{2m} T_{2m}\pa{\Theta_{0}^{(i,j)}H_\gamma} h_{ij}^4
		+ \frac{1}{5!} \partial^5 \pa{\Theta_{\tau(w)}^{(i,j)}H_\gamma} h_{ij}^5,
	\end{multline*}
	for some random variable $\tau(w) \in [0,1]$. Taking expectation, we can bound
	the terms up to order four by $g(2m)/2$, arguing as in \eqref{eq:bound_4th_deriv_good_set}
	and \eqref{eq:bound_4th_deriv_bad_set}. To control the last term, we can argue has in \eqref{eq:5th_order_total}
	to again get $g(2m)/2$ as a bound. In total we have
	\[
		\sup_{w \in [0,1]} \sup_{i,j \in \llbracket 1, N \rrbracket}
		\E\left[
			T_{2m}\pa{
				\Theta_w^{(i,j)} H_\gamma
			}
		\right]
		\leq
		3Cg(2m).
	\]
	This concludes the induction step, and therefore the proof.
\end{proof}

\begin{proof}[Proof of Theorem \ref{thm:local_law}]
	Let $H$ be a Wigner matrix with eigenvalues $\bl$.
	Choose $t=N^{-\e/100}$ and let $H_t = e^{-t/2}H_0 + (1+e^{-t})^{1/2}U$, where $H_0$ is an initial Wigner matrix, 
	and $U$ is an independent standard GOE matrix. 
	By \cite[Lemma 3.4]{ErdYauYin2011}, we can choose $H_0=\tilde{H}_0$ such that $H_t$ 
	and $H$ meet the moment matching requirements of Proposition \ref{prop:4_mom_matching_high_moments}. 
	Let $s_{G_t}(z) = N^{-1} \sum_{k=1}^N (\mu_k(t) - z)^{-1}$.
	By Lemma \ref{lem:coupling_stieltjes_transforms}, for some $\delta > 0$, we have,
		\[
			\E\pa{
				\absa{
					s_{H_t}(z) - s_{G_t}(z)
				}^{2p}
			} \leq \frac{C_pN^{-2p\delta}}{(N\eta)^{2p}},
		\]
	and since the eigenvalue distribution of $G_t$ is invariant under the flow \eqref{eq:DBM}, 
	Theorem 1.1 in \cite{BouModPai2021} gives $\E(\abs{s_{G_t}(z) - m(z)}^{2p}) \leq (Cp)^{p/2}(N\eta)^{-2p}$.
	Applying Proposition \ref{prop:4_mom_matching_high_moments} completes the proof for $H$. 
\end{proof}

In the remainder of this section, we prove Proposition \ref{prop:weak_local_law_bulk}. In the proof of Theorem \ref{thm:log_corr_field}, it will serve as a substitute for 
Theorem \ref{thm:local_law}.

\begin{proposition}
\label{prop:weak_local_law_bulk}
	Fix $\kappa > 0$ and let $\mathcal{D} = \{ z = E + \ii \eta \text{ s.t. } \abs{E} < 2-\kappa, (\log N)^{1/4}/N \leq \eta \leq e^{(\log N)^{1/4}}/N \}$.
	There exists $C > 0$ fixed such that with $z \in \mathcal{D}$ and $u = (\log \log N)^C$, 
	\begin{equation}
	\label{eq:local_law_prob_bulk}
		\lim_{N\to\infty}\P\pa{ \abs{s(z) - m(z)} \geq \frac{u}{N\eta} } = 0.
	\end{equation}
\end{proposition}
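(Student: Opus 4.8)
The plan is to prove this weak local law in the bulk by the same three-step strategy used for Theorem \ref{thm:local_law}, but exploiting the fact that we only need a probabilistic bound with a logarithmic (rather than optimal constant) error, and only in the bulk. First I would run the Dyson's Brownian motion \eqref{eq:DBM} for a short time $t = N^{-\e}$ with $\e$ small (or even $t$ a small power of $1/\log N$, since in the bulk the relevant spatial scale $\eta$ is only of order $N^{-1}$ up to $e^{(\log N)^{1/4}}$ factors, so $t$ can be taken very small). The key input is Theorem \ref{thm:coupled_distance}: with overwhelming probability $|\lambda_k(t) - \mu_k(t)| \le N^\gamma/(Nt)$ for all $k$, where $\bmu$ are GOE eigenvalues. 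Mimicking the estimate in the proof of Lemma \ref{lem:coupling_stieltjes_transforms}, but now in the bulk where $\kappa(E) \sim 1$ and rigidity gives $|\gamma_k - E| $ controlled on scale $N^{-1}$, I would bound
\[
\absa{ s_{\bl(t)}(z) - s_{\bmu(t)}(z) } \le \frac{N^\gamma}{N^2 t} \sum_{k=1}^N \frac{1}{|\lambda_k(t)-z|\,|\mu_k(t)-z|} = \OO\pa{ \frac{N^{2\gamma}}{Nt \cdot N\eta} },
\]
with overwhelming probability, using rigidity (Lemma \ref{lem:rigidity}) to convert the sum into an integral against $\varrho$ and the fact that $\im m(z) \sim 1$ in the bulk. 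Choosing $\gamma$ small compared to $\e$, this difference is $\oo((N\eta)^{-1})$ with overwhelming probability. Since $s_{\bmu(t)}(z) = m(z) + \OO(\varphi/(N\eta))$ with overwhelming probability by the GOE local law (or \eqref{eq:local_law_original} applied to the invariant GOE ensemble, noting $\varphi = e^{C(\log\log N)^2}$ is much smaller than $e^{(\log N)^{1/4}}$ but, crucially, this still only gives error $\varphi/(N\eta)$ which is \emph{not} of size $u/(N\eta)$ with $u = (\log\log N)^C$), I may need instead the stronger optimal local law for GOE from \cite{BouModPai2021} at the level of individual resolvent entries, or the rigidity-plus-fluctuation-averaging estimate, to get the GOE bound down to $(\log\log N)^C/(N\eta)$. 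This gives \eqref{eq:local_law_prob_bulk} for the time-$t$ matrix $H_t$.

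Next I would transfer the estimate from $H_t$ back to $H$ via the four-moment matching Proposition \ref{prop:4_mom_matching_high_moments}. Writing $H_t = e^{-t/2}H_0 + (1+e^{-t})^{1/2}U$, by \cite[Lemma 3.4]{ErdYauYin2011} one chooses the initial matrix $H_0$ so that $H_t$ matches the first three moments of $H$ exactly and the fourth moment to within $N^{-2-\delta}$; this meets the hypothesis \eqref{eq:4_moment_condition_2}. Then Proposition \ref{prop:4_mom_matching_high_moments} propagates the moment bound: from $\E(T_{2p}(z,H_t)) \le Cg(2p)$ — which follows from the previous step by Markov and a deterministic $|s-m| \le C/\eta$ bound on the complementary (small-probability) event — one concludes $\E(T_{2p}(z, H)) \le C' g(2p) = C'(Cp)^p/(N\eta)^{2p}$. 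Finally, by Markov's inequality applied with a well-chosen $p = p(N) \to \infty$ slowly (e.g. $p \sim \log\log N$), $\P(|s(z) - m(z)| \ge u/(N\eta)) \le \E(T_{2p})(N\eta/u)^{2p} \le (C'Cp/u^2)^p$, which tends to $0$ provided $u^2 \gg p$, i.e. $u = (\log\log N)^C$ with $C$ large enough relative to the growth rate of $p$. Optimizing $p$ against $u$ gives the claim.

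I expect the main obstacle to be the GOE base case: getting the local law for the integrable ensemble down to error $(\log\log N)^C/(N\eta)$ on the full range $\eta \in [(\log N)^{1/4}/N, e^{(\log N)^{1/4}}/N]$, rather than the coarser $\varphi/(N\eta) = e^{C(\log\log N)^2}/(N\eta)$ coming from \eqref{eq:local_law_original}. The cleanest route is not to aim for a pointwise $\varphi$-type bound at all, but to work entirely at the level of moments $\E(T_{2p})$ as in the statement of Proposition \ref{prop:4_mom_matching_high_moments}: Theorem 1.1 of \cite{BouModPai2021} is stated precisely as the moment bound $\E(|s_{G_t}(z) - m(z)|^{2p}) \le (Cp)^{p/2}(N\eta)^{-2p}$, which is exactly $g(2p)$, and this is already what drives the final Markov estimate above. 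So in fact the whole argument can be carried out without ever invoking a $\varphi$-strength pointwise bound: run DBM for small $t$, use Theorem \ref{thm:coupled_distance} and rigidity to show $\E(|s_{\bl(t)} - s_{\bmu(t)}|^{2p}) = \oo(g(2p))$ exactly as in Lemma \ref{lem:coupling_stieltjes_transforms} (the bulk case being easier since $\kappa \sim 1$), combine with the \cite{BouModPai2021} moment bound for the GOE, apply Proposition \ref{prop:4_mom_matching_high_moments}, and finish with the Markov inequality and the choice $p \asymp \log\log N$. The secondary technical point is checking that $t = N^{-\e}$ is compatible with the $p$-dependent constants and with the range of $\eta$ — but since $N\eta \ge (\log N)^{1/4}$ throughout $\mathcal D$, all the $\oo(\cdot)$ gains are genuine powers of $N$ and easily dominate the $(\log N)$-type factors.
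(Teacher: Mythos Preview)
Your proposal has a genuine gap at the dynamical step. In the bulk the coupling from Theorem \ref{thm:coupled_distance} is \emph{not} sub-microscopic for $t=N^{-\e}$: it gives $|\lambda_k(t)-\mu_k(t)|\le N^\gamma/(Nt)=N^{\gamma+\e-1}$, which is much larger than the bulk spacing $N^{-1}$. Carrying this through, your estimate
\[
\absa{ s_{\bl(t)}(z)-s_{\bmu(t)}(z)}
\le \frac{N^\gamma}{N^2 t}\sum_k\frac{1}{|\lambda_k(t)-z|\,|\mu_k(t)-z|}
\]
is correct, but the sum is $\sim N/\eta$ in the bulk (Ward identity: $\sum_k|\lambda_k-z|^{-2}=N\eta^{-1}\im s(z)\sim N/\eta$), so the right-hand side is $N^\gamma/(N\eta t)=N^{\gamma+\e}/(N\eta)$, not $N^{2\gamma}/(Nt\cdot N\eta)$ as you wrote. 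This is a power of $N$ too large to be $\oo(u/(N\eta))$ with $u=(\log\log N)^C$. The extra factor $\kappa(E)^{1/2}$ that rescues this computation in the proof of Lemma~\ref{lem:coupling_stieltjes_transforms} is absent in the bulk. Making $t$ large enough to cure this (i.e.\ $t\gg N^\gamma$) is incompatible with the short-time regime you need for the comparison step.

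The paper therefore takes a genuinely different route in the bulk (see the remark just before Lemma~\ref{lem:continuity_est}). For the relaxation step it uses the finer input Theorem~\ref{thm:bulk_gaps}: in the bulk $\lambda_k(t)-\mu_k(t)=\bar{\mathfrak u}_k(t)+\OO(N^\e/(N^2t))$, where $\bar{\mathfrak u}_k$ is essentially constant across nearby indices. Lemma~\ref{lem:u_k_bound} bounds this shift by $(\log N)^{3/4}/N$ with high probability via a variance estimate, Lemma~\ref{lem:convert_to_uniform} shows $s_t(z)$ is close to the GOE Stieltjes transform evaluated at the shifted point $z-\bar{\mathfrak u}$, and Lemma~\ref{lem:est_at_t} closes by a union bound over a $(\log N)^{O(1)}$-grid of shifts, using the explicit GOE probability bound $\P_{\rm G\beta E}(|s-m|\ge u/(N\eta))\le C^{-cu^2}$ from \cite{BouModPai2021}. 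For the comparison step the paper does \emph{not} use four-moment matching but the continuity estimate Lemma~\ref{lem:continuity_est} (from \cite{BouYau2017}) applied to $F(H_t)=|s_t(z)-s_0(z)|^2$, giving $\E|s_t-s_0|^2=\OO(N^{-1/2+2\alpha})$ for $t=N^{-1+\alpha}$; Markov then finishes. Your Markov-in-$p$ idea with $p\asymp\log\log N$ would also run into the issue that Proposition~\ref{prop:4_mom_matching_high_moments} is stated for fixed $p$ and the implicit constants depend on $p$, so this would require additional bookkeeping even if the dynamical step were repaired.
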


We begin with the relaxation step, and take as an input, the following theorem which describes eigenvalue gaps in the bulk at time $t$.
\begin{theorem}[Theorem 3.1 and Lemma 3.4 in \cite{Bou2020}]
\label{thm:bulk_gaps}
	Let
	\[
		\bar{\mathfrak{u}}_k(t) = \frac{1}{N \im m\pa{\gamma_k^t}} \sum_{j=1}^N \im \pa{\frac{1}{\gamma_j - \gamma_k^t}}
			\pa{
				\lambda_j(0) - \mu_j(0)
			},
	\]
	and choose $\alpha, \e > 0$ fixed and arbitrarily small. For any $D>0$, there exist $C, N_0$ such that for any $N\geq N_0$, $\varphi^C/N < t < 1$
	and $k \in \llbracket \alpha N, (1-\alpha)N \rrbracket$, we have
		\[
			\P\pa{
				\absa{
					\lambda_k(t) - \mu_k(t) - \bar{\mathfrak{u}}_k(t) > \frac{N^\e}{N^2t}
				}
			}
			\leq
			N^{-D}.
		\]
	Moreover, for any $(k,\ell) \in \llbracket \alpha N, (1-\alpha) N \rrbracket^2$, 
	\[
		\absa{\bar{\mathfrak{u}}_k(t) - \bar{\mathfrak{u}}_\ell(t)}
		\leq
		\frac{C\varphi \abs{k-\ell}}{N^2t},
	\]
	also with overwhelming probability.
\end{theorem}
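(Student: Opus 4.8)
\emph{Reduction via the interpolating flow.} This is \cite[Theorem 3.1 and Lemma 3.4]{Bou2020}, and the plan is to recall the homogenization argument there. Since $x_k^{(1)}(t)=\mu_k(t)$ and $x_k^{(0)}(t)=\lambda_k(t)$, one has $\lambda_k(t)-\mu_k(t)=-\int_0^1\partial_\nu x_k^{(\nu)}(t)\,\rd\nu=-e^{-t/2}\int_0^1\mathfrak{u}_k^{(\nu)}(t)\,\rd\nu$, where by \eqref{eq:def_uk_f}--\eqref{eq:pde_uk} the vector $\mathfrak{u}^{(\nu)}(t)$ solves the linear, time-dependent discrete parabolic equation $\partial_t\mathfrak{u}_k=\frac1N\sum_{\ell\ne k}\frac{\mathfrak{u}_\ell-\mathfrak{u}_k}{(x_k-x_\ell)^2}$ with $\nu$-independent initial data $\mathfrak{u}_k^{(\nu)}(0)=\mu_k(0)-\lambda_k(0)$. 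It therefore suffices to show that, uniformly in $\nu\in[0,1]$ and $k\in\llbracket\alpha N,(1-\alpha)N\rrbracket$, one has $e^{-t/2}\mathfrak{u}_k^{(\nu)}(t)=-\bar{\mathfrak{u}}_k(t)+\OO(N^\e/(N^2t))$ with overwhelming probability, and then integrate in $\nu$; the regularity bound on $\bar{\mathfrak{u}}_k-\bar{\mathfrak{u}}_\ell$ is extracted separately from the explicit formula for $\bar{\mathfrak u}$.

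\emph{Homogenization of the parabolic flow.} The information about $\mathfrak{u}^{(\nu)}(t)$ is packaged in the observable $f_t(z)=e^{-t/2}\sum_k\mathfrak{u}_k/(x_k(t)-z)$ of \eqref{eq:def_uk_f}. An Itô computation using \eqref{eq:DBM} and \eqref{eq:pde_uk} shows that $f_t$ satisfies a stochastic advection equation of the schematic form $\partial_t f_t(z)=\big(\tfrac z2-m(z)+\text{l.o.t.}\big)\partial_z f_t(z)+\mathcal E_t(z)+\rd M_t(z)$, with $\mathcal E_t$ a deterministic error (controlled via the local law \eqref{eq:local_law_original}) and $M_t$ a martingale whose bracket is quadratic in $\mathfrak u$. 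Following \cite{Bou2020}, one runs the characteristics $r\mapsto z_r$ started just above the real axis at the quantile $\gamma_k$; in the bulk $\im m$ is bounded below, so $\im z_r$ increases at a linear rate and reaches the mesoscopic scale $\im z_t\sim t$ (possible since $\varphi^C/N<t<1$). Along a characteristic $f$ is transported up to the accumulated error, and once $\im z_r$ is mesoscopic the resulting deterministic advection equation can be solved essentially explicitly: $f_t$ near $\gamma_k^t$ (the time-$t$ endpoint of the characteristic through $\gamma_k$, with $\im\gamma_k^t\sim t$) is identified as the Stieltjes transform of the initial data $\{\mu_j(0)-\lambda_j(0)\}$ against the deterministic heat kernel $K_t(k,j)=\frac1{N\im m(\gamma_k^t)}\im\frac1{\gamma_j-\gamma_k^t}$. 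Reading off the residue at $x_k(t)\approx\gamma_k^t$ (rigidity, Lemma \ref{lem:rigidity}) gives $e^{-t/2}\mathfrak{u}_k^{(\nu)}(t)=-\bar{\mathfrak u}_k(t)$ up to errors from (i) replacing $s_r$ by $m$, (ii) replacing $x_j(t)$ by $\gamma_j^t$, and (iii) the martingale $M$, whose bracket is small precisely because $\mathfrak u_j(0)=\OO(\varphi^{1/2}N^{-1})$; summing and optimizing these produces the loss $N^\e$ and the size $N^\e/(N^2t)$. A short-range/long-range splitting of the parabolic operator, with finite-speed-of-propagation and energy estimates for the short-range part, is needed to make the characteristic argument rigorous, since $\mathfrak u$ is only controlled through the averaged observable $f$.

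\emph{Regularity of $\bar{\mathfrak u}$.} For the second bound, argue directly from $\bar{\mathfrak u}_k(t)=\sum_jK_t(k,j)(\lambda_j(0)-\mu_j(0))$. By rigidity at time $0$, $\abs{\lambda_j(0)-\mu_j(0)}\le\varphi^{1/2}N^{-2/3}\hat{j}^{-1/3}\lesssim\varphi^{1/2}N^{-1}$ in the bulk (and $\mathfrak u$ stays of this order along the flow by the maximum principle for \eqref{eq:pde_uk}); the kernel $K_t(k,\cdot)$ has total mass $1$, since $\sum_j K_t(k,j)\approx\im m(\gamma_k^t)/\im m(\gamma_k^t)$, and is concentrated on the $\sim Nt$ indices $j$ with $\abs{\gamma_j-\re\gamma_k^t}\lesssim\im\gamma_k^t\sim t$. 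Since $\partial_k\gamma_k^t\sim N^{-1}\varrho(\gamma_k^t)^{-1}$ and $z\mapsto\im(1/(\gamma_j-z))$ and $\im m$ are smooth and nondegenerate in the bulk, $K_t(k,j)-K_t(\ell,j)$ is a relative $\OO(\abs{k-\ell}/(Nt))$ perturbation of $K_t(k,j)$ over the bulk of its support; combined with $\abs{\lambda_j(0)-\mu_j(0)}\lesssim\varphi^{1/2}N^{-1}$ and $\sum_j K_t(k,j)=1$ this yields $\abs{\bar{\mathfrak u}_k(t)-\bar{\mathfrak u}_\ell(t)}\le C\varphi\abs{k-\ell}/(N^2t)$.

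\emph{Main obstacle.} The delicate point is the homogenization estimate at the stated precision $N^\e/(N^2t)$: this is not a soft consequence of rigidity but requires the quantitative analysis of the stochastic advection equation satisfied by $f_t$, in particular uniform control of the deterministic remainder $\mathcal E_t$ and of the martingale bracket along the entire characteristic trajectory and across the relevant range of spectral parameters, together with the short-range approximation of the parabolic flow. This is precisely the content of \cite[Theorem 3.1 and Lemma 3.4]{Bou2020}, which we invoke.
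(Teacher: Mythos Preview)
The paper does not supply its own proof of this statement: it is quoted as an input, attributed to \cite[Theorem 3.1 and Lemma 3.4]{Bou2020}, and used as a black box in the subsequent argument. Your proposal ultimately does the same thing---you invoke the reference at the end---so on the level of what is actually being claimed, the approaches coincide.

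The sketch you add on top is a reasonable high-level summary of the homogenization argument in \cite{Bou2020}: integrate in $\nu$ to pass from $\lambda_k(t)-\mu_k(t)$ to $\mathfrak{u}_k^{(\nu)}(t)$, identify the limiting kernel via the stochastic advection equation for $f_t$ along characteristics, and control the error terms using rigidity and the local law. Two small remarks: the drift in the advection equation for $f_t$ is $(z/2+s(z))\partial_z f_t$ (see Lemma~\ref{lem:ito_ft}), so your schematic sign $-m(z)$ should be $+m(z)$; and the short-range/long-range decomposition you allude to is indeed the mechanism used in \cite{Bou2020} to bridge between the pointwise $\mathfrak{u}_k$ and the averaged observable $f_t$, but this is the substantive part of the cited proof rather than something one can wave away. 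Since you correctly flag this as the main obstacle and defer to the reference, there is no gap.
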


We first show that we can assume that $\bar{\mathfrak{u}}_k(t) \leq (\log N)^{3/4}/N$, say. 
\begin{lemma}
\label{lem:u_k_bound}
	Let $t = N^{-1+\theta}$ for some $0 < \theta < 1$. Then for some $C > 0$,
	\begin{equation}
	\label{eq:u_k_variance_bound}
		\Var(\bar{\mathfrak{u}}_k(t)) \leq CN^{-2}(1-\theta) \log N.
	\end{equation}
	Consequently,
	\begin{equation}
	\label{eq:u_k_good_set}
		\lim_{N\to\infty}\P\pa{
			\bar{\mathfrak{u}}_k \leq N^{-1}(\log N)^{3/4} 
		} = 1.
	\end{equation}
\end{lemma}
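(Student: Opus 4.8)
The plan is to exploit the explicit representation of $\bar{\mathfrak u}_k(t)$ recalled in Theorem~\ref{thm:bulk_gaps}: it is a \emph{deterministic} linear combination of the initial displacements, $\bar{\mathfrak u}_k(t)=\sum_{j=1}^N\psi_j\pa{\lambda_j(0)-\mu_j(0)}$ with $\psi_j=\bigl(N\im m(\gamma_k^t)\bigr)^{-1}\im(\gamma_j-\gamma_k^t)^{-1}$, and to bound its variance through two--point eigenvalue correlations. Since $k$ lies in the bulk one has $\im\gamma_k^t\asymp t$, $\im m(\gamma_k^t)\asymp1$, and hence the elementary bounds $\psi_j\ge0$, $\psi_j\asymp N^{-1}t\bigl((\gamma_j-\gamma_k)^2+t^2\bigr)^{-1}$, $\sum_j\psi_j\asymp1$, and $\sum_{j:\abs{\gamma_j-\gamma_k}>s}\psi_j\lesssim t/s$ for $s\gtrsim t$; so $\psi$ is essentially a normalized discrete Poisson kernel concentrated on $\abs{j-k}\lesssim Nt$. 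Using that the Gaussian eigenvalues $\bmu(0)$ and the Wigner eigenvalues $\bl(0)$ are independent, the cross terms drop, and
\[
	\Var\pa{\bar{\mathfrak u}_k(t)}=\sum_{i,j}\psi_i\psi_j\Bigl(\Cov\bigl(\mu_i(0),\mu_j(0)\bigr)+\Cov\bigl(\lambda_i(0),\lambda_j(0)\bigr)\Bigr).
\]

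The decisive input is the bulk two--point bound, with an \emph{absolute} constant $C$, $\absa{\Cov(\lambda_i(0),\lambda_j(0))}\le CN^{-2}\log\bigl(N/(\abs{i-j}\vee1)\bigr)$, and its analogue for $\bmu(0)$; for the Gaussian ensembles this is classical (Gustavsson's asymptotics or the explicit two--point structure, cf.\ \cite{Gus2005,Oro2010}), and for a general generalized Wigner matrix it follows from bulk eigenvalue universality / four--moment comparison together with the Gaussian case --- equivalently, from sharp ($\varphi$--free) control of $\Var$ and $\Cov$ of $s(E+\ii\eta)$ at scales $\eta\in[N^{-1},1]$, cf.\ \cite{LytPas2009,LanLopSos2021}. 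Granting this, I would split $\log\bigl(N/(\abs{i-j}\vee1)\bigr)=\log(1/t)+\log_+\bigl(Nt/(\abs{i-j}\vee1)\bigr)$: the first term contributes $\lesssim N^{-2}\log(1/t)\bigl(\sum_j\psi_j\bigr)^2=\OO\bigl(N^{-2}(1-\theta)\log N\bigr)$ since $\log(1/t)=(1-\theta)\log N$, and the second contributes $\OO(N^{-2})$ because $\sup_i\sum_j\psi_j\log_+\bigl(Nt/(\abs{i-j}\vee1)\bigr)=\OO(1)$ (as $\int_0^1\log(1/u)\,\rd u<\infty$). The terms with $i$ or $j$ outside a fixed bulk neighbourhood of $\gamma_k$ are negligible, since there $\psi$ is polynomially small while $\absa{\Cov(\lambda_i,\lambda_j)}\lesssim\ell(\gamma_i)\ell(\gamma_j)\log N$ by Cauchy--Schwarz and the bulk/edge fluctuation scales. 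Collecting these estimates yields \eqref{eq:u_k_variance_bound}.

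For the consequence \eqref{eq:u_k_good_set}, I would first observe $\absa{\E\bar{\mathfrak u}_k(t)}\le\sum_j\psi_j\bigl(\absa{\E\lambda_j(0)-\gamma_j}+\absa{\E\mu_j(0)-\gamma_j}\bigr)=\OO(N^{-1})$, using that the \emph{expected} eigenvalue counting function of a (generalized) Wigner or Gaussian matrix agrees with $N\int\varrho$ up to $\OO(1)$ in the bulk, so $\E\lambda_j(0)-\gamma_j=\OO(N^{-1})$; in particular $\absa{\E\bar{\mathfrak u}_k(t)}\le\tfrac12 N^{-1}(\log N)^{3/4}$ for $N$ large. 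Chebyshev's inequality together with \eqref{eq:u_k_variance_bound} then gives
\[
	\P\pa{\bar{\mathfrak u}_k(t)>N^{-1}(\log N)^{3/4}}\le\P\pa{\absa{\bar{\mathfrak u}_k(t)-\E\bar{\mathfrak u}_k(t)}>\tfrac12 N^{-1}(\log N)^{3/4}}\le\frac{4\Var(\bar{\mathfrak u}_k(t))}{N^{-2}(\log N)^{3/2}}\le\frac{4C}{(\log N)^{1/2}}\longrightarrow0,
\]
which is \eqref{eq:u_k_good_set}.

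The main obstacle is precisely the two--point covariance bound \emph{with an absolute constant}: the entrywise local law only delivers $\absa{\Cov(\lambda_i,\lambda_j)}\lesssim\varphi^2N^{-2}$, which is useless here because $\varphi^2\gg(\log N)^{1/2}$ would render the Chebyshev step vacuous, so one genuinely needs CLT/cumulant--level information on bulk eigenvalue fluctuations --- immediate for the Gaussian ensembles but, for general generalized Wigner matrices, requiring the mesoscopic CLT machinery of \cite{LanLopSos2021} or a direct cumulant expansion. A secondary, minor point is pinning $\E\bar{\mathfrak u}_k(t)$ down to $\oo\bigl(N^{-1}(\log N)^{3/4}\bigr)$, for which rigidity does not suffice and one uses the $\OO(1)$ accuracy of the expected counting function.
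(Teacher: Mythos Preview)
Your approach is sound in outline but takes a substantially more roundabout path than the paper. The paper's key observation is that the particular linear combination $\sum_j\psi_j(\lambda_j-\mu_j)$ defining $\bar{\mathfrak u}_k(t)$ is, up to a bounded normalizing factor and a Taylor remainder of size $\varphi^{C}/(Nt)^2$, precisely $N^{-1}\bigl(\im\log\det(H-\gamma_k^t)-\im\log\det(G-\gamma_k^t)\bigr)$, because $\im\log(\lambda_j-\gamma_k^t)-\im\log(\mu_j-\gamma_k^t)\approx\im(\gamma_j-\gamma_k^t)^{-1}(\lambda_j-\mu_j)$ on the rigidity event. Once this identification is made, the variance bound \eqref{eq:u_k_variance_bound} reduces immediately to $\Var\bigl(\im L_N(\cdot,\gamma_k^t)\bigr)\le C\log(1/t)=C(1-\theta)\log N$, which the paper reads off from its Proposition~\ref{prop:var_GW} for the Wigner side and from \cite[Lemma~4.9]{BouModPai2021} for the Gaussian side. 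The consequence \eqref{eq:u_k_good_set} is then one line of Chebyshev, exactly as you do.

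The difference buys the paper something real: it never needs the pointwise two--point bound $\absa{\Cov(\lambda_i,\lambda_j)}\le CN^{-2}\log\bigl(N/(\abs{i-j}\vee1)\bigr)$ with an absolute constant for \emph{general} generalized Wigner matrices, which is precisely the input you flag as the main obstacle. That bound is not a ready-made citation; establishing it for non-Gaussian ensembles would itself pass through the mesoscopic CLT/variance machinery (Proposition~\ref{prop:var_GW} or the results of \cite{LanLopSos2021}), so your route ends up using the same analytic input but only after first unpacking it into individual eigenvalue covariances and then resumming. The paper short-circuits this by recognizing that the Poisson-kernel weighted sum \emph{is} a single linear statistic whose variance is controlled directly. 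Your Poisson-kernel estimates and the $\log(1/t)+\log_+$ splitting are correct and would work if the covariance input were in hand, but the paper's packaging is both shorter and avoids the circularity you anticipate.
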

\begin{proof}
	Let $H$ be a Wigner matrix with eigenvalues $\bl$, and $G$ a GOE matrix with eigenvalues $\bmu$. Then for any 
	$\alpha N < j < (1-\alpha)N$,
	by Taylor expanding, we have that
	\begin{multline*}
		\im\log\det(H-\gamma_j + \ii t) - \im \log\det(G-\gamma_j+\ii t) \\
		=
		\sum_{k=1}^N \im\pa{ \frac{1}{\lambda_k - \gamma_j^t} } \pa{\lambda_k - \mu_k}
			+ \OO\pa{
				 \sum_{k=1}^N \frac{\abs{\lambda_k-\gamma_j}}{(t^2 + (\lambda_k-\gamma_j)^2)^2} (\lambda_k-\mu_k)^2
			}.
	\end{multline*}
	On the set $\mathcal{A}$, we find
	\[
		\frac{1}{N} \sum_{k=1}^N \im\pa{ \frac{1}{\lambda_k - \gamma_j^t} } \pa{\lambda_k - \mu_k}
		= \frac{1}{N} (\im\log(H-\gamma_j + \ii t) - \im \log(G-\gamma_j+\ii t) ) + \OO\pa{ \frac{\varphi^{10}}{(Nt)^2}  },
	\]
	and furthermore that
	\[
		\bar{\mathfrak{u}}_k(t)
		= \frac{1}{N} (\im\log\det(H-\gamma_j + \ii t) - \im \log\det(G-\gamma_j+\ii t) ) + \OO\pa{ \frac{\varphi^{10}}{(Nt)^2}  }.
	\]
	To conclude \eqref{eq:u_k_variance_bound}, we use Proposition \ref{prop:var_GW} and \cite[Lemma 4.9]{BouModPai2021}
	to bound $\Var(\im\log\det(H-\gamma_j + \ii t) )$
	and $\Var( \im \log\det(G-\gamma_j+\ii t) )$, respectively. Finally, \eqref{eq:u_k_good_set} follows from Chebyshev's inequality.
\end{proof}

\begin{lemma}
\label{lem:convert_to_uniform}
Let $t = N^{-1+3\alpha}$ for some $0 < \alpha < 1$. Then there exists $\bar{\mathfrak{u}}$ such that
	\begin{equation}
	\label{eq:s_t_u_t}
		\lim_{N\to\infty}\P\pa{
			\absa{s_t(z) -\frac{1}{N} \sum_{k=1}^N \frac{1}{\mu_k(t)-z + \bar{\mathfrak{u}}} } \geq \frac{C}{N^\alpha}
		}
		= 0,
	\end{equation}
	and $\P(\bar{\mathfrak{u}} \geq N^{-1}(\log N)^{3/4}) \to 0$ as $N\to\infty$.
\end{lemma}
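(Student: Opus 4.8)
The plan is to set $\bar{\mathfrak{u}} = \bar{\mathfrak{u}}_{k_0}(t)$, the quantity appearing in Theorem~\ref{thm:bulk_gaps}, where $k_0 = k_0(E)$ is the quantile index with $\gamma_{k_0}$ closest to $E = \re z$, so that $\abs{\gamma_{k_0} - E} \leq C/N$. Since $\abs{E} < 2 - \kappa$, this $k_0$ sits well inside the bulk, and for $N$ large the hypotheses of Theorem~\ref{thm:bulk_gaps} are met because $\varphi^C/N < N^{-1+3\alpha} < 1$ (here $t = N^{-1+3\alpha}$ dominates every subpolynomial factor). With this choice the second assertion, $\P(\bar{\mathfrak{u}} \geq N^{-1}(\log N)^{3/4}) \to 0$, is exactly \eqref{eq:u_k_good_set} of Lemma~\ref{lem:u_k_bound} applied to the index $k_0$, so all the work is in \eqref{eq:s_t_u_t}.

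For \eqref{eq:s_t_u_t} I would write the difference as $\frac1N \sum_k \pa{ (\lambda_k(t) - z)^{-1} - (\mu_k(t) - z + \bar{\mathfrak{u}})^{-1} }$ and work on the high-probability event on which rigidity (Lemma~\ref{lem:rigidity}), Theorem~\ref{thm:coupled_distance}, Theorem~\ref{thm:bulk_gaps}, and $\{\bar{\mathfrak{u}}_{k_0}(t) \leq N^{-1}(\log N)^{3/4}\}$ all hold. Split the index set into far indices $\abs{\gamma_k - E} > \kappa/2$ and near indices $\abs{\gamma_k - E} \leq \kappa/2$. For the far indices rigidity makes both denominators $\gtrsim \kappa$, so each summand is $\OO(\kappa^{-2}(\abs{\lambda_k(t) - \mu_k(t)} + \abs{\bar{\mathfrak{u}}})) = \OO(\kappa^{-2}(N^\e/(Nt) + N^{-1}(\log N)^{3/4}))$ by Theorem~\ref{thm:coupled_distance}; there are at most $N$ of them, so after dividing by $N$ the far part is $\OO_\kappa(N^{\e - 3\alpha} + N^{-1+\oo(1)})$, which is $\oo(N^{-\alpha})$ as soon as $\e < 2\alpha$.

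The near indices are where the real work lies. Here I would use the refined input of Theorem~\ref{thm:bulk_gaps}: $\lambda_k(t) - \mu_k(t) = \bar{\mathfrak{u}}_k(t) + \OO(N^\e/(N^2 t))$, so the $k$-th numerator is $\abs{\bar{\mathfrak{u}}_k(t) - \bar{\mathfrak{u}}_{k_0}(t)} + \OO(N^\e/(N^2 t))$, and the Lipschitz bound of Theorem~\ref{thm:bulk_gaps} together with $\abs{k - k_0} \sim N\abs{\gamma_k - E}$ (bounded bulk density) gives $\abs{\bar{\mathfrak{u}}_k(t) - \bar{\mathfrak{u}}_{k_0}(t)} \lesssim \varphi \abs{\gamma_k - E}/(Nt)$. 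For the denominators, rigidity (and $\abs{\bar{\mathfrak{u}}} \leq \varphi^{1/2}/N$) gives $\abs{\lambda_k(t) - z},\ \abs{\mu_k(t) - z + \bar{\mathfrak{u}}} \gtrsim \eta + \abs{\gamma_k - E}$ whenever $\abs{\gamma_k - E} \gtrsim \varphi^{1/2}/N$, and $\gtrsim \eta$ in all cases. I would then decompose the near indices dyadically according to $\abs{\gamma_k - E} \in [2^j/N, 2^{j+1}/N)$ (with $\sim 2^j$ indices per block) and sum the resulting geometric series over the three regimes $2^j \lesssim \varphi^{1/2}$, $\varphi^{1/2} \lesssim 2^j \lesssim N\eta$, and $2^j \gtrsim N\eta$. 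Using $\eta \geq N^{-1}(\log N)^{1/4}$, $t = N^{-1+3\alpha}$, and $\e = \alpha$ (permitted since $\e$ may be taken arbitrarily small in Theorems~\ref{thm:coupled_distance} and~\ref{thm:bulk_gaps}), each regime contributes at most a subpolynomial factor times $N^{-2\alpha}$. Combining the two parts, $\absa{ s_t(z) - \frac1N \sum_k (\mu_k(t) - z + \bar{\mathfrak{u}})^{-1} } \leq N^{-2\alpha + \oo(1)}$ with high probability, which is below $C/N^\alpha$ once $N$ is large, yielding \eqref{eq:s_t_u_t}.

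The main obstacle is this near-index estimate: the crude bound making all denominators $\gtrsim \eta$ loses a large power of $N$, so one genuinely needs the dyadic split exploiting $\abs{\lambda_k(t) - z} \gtrsim \abs{\gamma_k - E}$ away from $E$, and one must verify that the subpolynomial factors ($\varphi$, $\varphi^{1/2}$, $(\log N)^{3/4}$) entering from the Lipschitz estimate, rigidity, and the number of near-central indices are all absorbed by the polynomial gain $N^{3\alpha}$ from $t$ and the extra room $N^{3\alpha - \e}$ available from taking $\e$ small.
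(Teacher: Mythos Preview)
Your approach is correct and follows the same overall strategy as the paper: take $\bar{\mathfrak{u}}=\bar{\mathfrak{u}}_{k_0}(t)$ for an index $k_0$ with $\gamma_{k_0}$ near $E$, invoke Lemma~\ref{lem:u_k_bound} for the size of $\bar{\mathfrak{u}}$, and then combine rigidity, Theorem~\ref{thm:coupled_distance}, and the Lipschitz estimate in Theorem~\ref{thm:bulk_gaps} to bound the difference of Stieltjes-type sums.

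The paper organizes the near-$E$ part a little differently. Rather than comparing $s_t(z)$ and $\frac1N\sum_k(\mu_k(t)-z+\bar{\mathfrak{u}})^{-1}$ directly via a dyadic decomposition, it inserts the quantile sum $\frac1N\sum_k(\gamma_k-z)^{-1}$ as an intermediate and works with a single window $I=[E-N^{-1+\alpha},E+N^{-1+\alpha}]$. Outside $I$, rigidity alone gives the comparison of either sum to the quantile sum as $\OO(N^{-\alpha})$; inside $I$ there are only $\sim N^\alpha$ indices, so the Lipschitz bound $\abs{\bar{\mathfrak{u}}_k-\bar{\mathfrak{u}}_{k_0}}\le C\varphi\abs{k-k_0}/(N^2t)\le C\varphi N^{-1-2\alpha}$ together with the crude denominator bound $\eta^2$ already suffices, without any dyadic bookkeeping. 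Your direct route gives a slightly better final exponent ($N^{-2\alpha+\oo(1)}$ versus $N^{-\alpha+\oo(1)}$) at the cost of the three-regime sum; the paper's quantile-intermediate trick is shorter but throws away the extra gain. Either argument closes the lemma.
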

\begin{proof}
	Let $I = [E - N^{-1+\alpha}, E + N^{-1+\alpha}]$. We have
	\[	
		s_t(z) -\frac{1}{N} \sum_{k=1}^N \frac{1}{\gamma_k - z}
		= \pa{
		\frac{1}{N} \sum_{\lambda_k(t) \in I} \frac{1}{\lambda_k(t) - z} 
		-\frac{1}{N} \sum_{\gamma_k \in I} \frac{1}{\gamma_k - z}
		}
		+ 
		\pa{
		\frac{1}{N} \sum_{\lambda_k(t) \in I^c} \frac{1}{\lambda_k(t) - z}
		 -\frac{1}{N} \sum_{\gamma_k \in I^c} \frac{1}{\gamma_k - z}
		 }.
	\]	
	On the event $\mathcal{A}$, we have
	\begin{equation}
	\label{eq:rigidity_on_complement}
		\absa{
			\frac{1}{N} \sum_{\lambda_k(t) \in I^c} \frac{1}{\lambda_k(t) - z} - \frac{1}{N} \sum_{\gamma_k \in I^c} \frac{1}{\gamma_k - z} 
		}
		\leq
		\frac{1}{N} \sum_{\lambda_k(t) \in I^c} \frac{ \absa{\lambda_k(t) - \gamma_k}  }{ \absa{\gamma_k-z}^2 }
		= 
		\OO\pa{N^{-\alpha}},
	\end{equation}
	and by Theorem \ref{thm:bulk_gaps} and Lemma \ref{lem:u_k_bound}, 
	our choice of $t$ allows us to choose $\bar{\mathfrak{u}} = \mathfrak{u}_k$ 
	with $k$ such that $\lambda_k(t) \in I$, so that
	\[
		\frac{1}{N} \sum_{\lambda_k(t) \in I} \frac{1}{\lambda_k(t) - z} 
		-\frac{1}{N} \sum_{\gamma_k \in I} \frac{1}{\gamma_k - z}
		=
		\frac{1}{N} \sum_{\lambda_k(t) \in I} \frac{1}{\mu_k(t) - z + \bar{\mathfrak{u}}} 
		-\frac{1}{N} \sum_{\gamma_k \in I} \frac{1}{\gamma_k - z}
		+ \OO\pa{
			N^{-\alpha}
		}.
	\]
	To conclude the proof, re-write the first term on right hand side of the above as
	\[
		\frac{1}{N} \sum_{k=1}^N \frac{1}{\mu_k(t) - z + \bar{\mathfrak{u}}} 
		-\frac{1}{N} \sum_{k=1}^N \frac{1}{\gamma_k - z}
		+
		\pa{
		\frac{1}{N} \sum_{\gamma_k \in I^c} \frac{1}{\gamma_k - z}
		- \frac{1}{N} \sum_{\lambda_k(t) \in I^c} \frac{1}{\mu_k(t) - z + \bar{\mathfrak{u}}} 
		},
	\]
	and bound the term in parentheses as in \eqref{eq:rigidity_on_complement}.
\end{proof}

\begin{lemma}
\label{lem:est_at_t}
	Fix $\kappa > 0$ and let $\mathcal{D} = \{ z = E + \ii \eta \text{ s.t. } \abs{E} < 2-\kappa, (\log N)^{1/4}/N \leq \eta \leq e^{(\log N)^{1/4}}/N \}$.
	There exists $C > 0$ fixed such that with $u = (\log \log N)^C$ and $t = N^{-1+\alpha}$ with $0 < \alpha < 1$, for $z \in \mathcal{D}$,
	we have
	\begin{equation}
	\label{eq:local_law_prob_bulk_t}
		\lim_{N\to\infty}\P\pa{\abs{s_t(z) - m(z)} \geq \frac{u}{N\eta} } = 0.
	\end{equation}
\end{lemma}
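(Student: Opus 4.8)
The plan is to combine Lemma~\ref{lem:convert_to_uniform} with the optimal moment bound for the GOE from \cite{BouModPai2021}. Applying Lemma~\ref{lem:convert_to_uniform} (with its parameter adjusted so that the dynamical time matches $t=N^{-1+\alpha}$) produces, on an event of probability tending to one, a real random variable $\bar{\mathfrak u}$ with $\absa{\bar{\mathfrak u}}\le N^{-1}(\log N)^{3/4}$ and
\[
	\absa{ s_t(z) - s_{G_t}(z-\bar{\mathfrak u}) } \le C N^{-c}
	\qquad\text{where}\qquad
	s_{G_t}(\zeta) = \frac1N \sum_{k=1}^N \frac{1}{\mu_k(t) - \zeta},
\]
for some fixed $c>0$. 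I would then split
\[
	s_{G_t}(z-\bar{\mathfrak u}) - m(z) = \bigl( s_{G_t}(z-\bar{\mathfrak u}) - m(z-\bar{\mathfrak u}) \bigr) + \bigl( m(z-\bar{\mathfrak u}) - m(z) \bigr).
\]
The quantities $CN^{-c}$ and $m(z-\bar{\mathfrak u})-m(z)$ are deterministic given $\bar{\mathfrak u}$ and are disposed of directly: since $\abs{E}<2-\kappa$, the semicircle transform obeys $\abs{m'}\le C_\kappa$ on the segment joining $z-\bar{\mathfrak u}$ to $z$ (from $2m+z=\sqrt{z^2-4}$ and $\abs{z^2-4}\ge 4\kappa-\kappa^2$, uniformly in $\eta$), so $\absa{m(z-\bar{\mathfrak u})-m(z)}\le C_\kappa N^{-1}(\log N)^{3/4}$; and both $N^{-1}(\log N)^{3/4}$ and $N^{-c}$ are $\oo(u/(N\eta))$ uniformly over $z\in\mathcal D$, because $\eta\le e^{(\log N)^{1/4}}/N$ forces $N^{-1+\oo(1)}\le (\log\log N)^C/(N\eta)$.

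It remains to bound the genuinely stochastic term, $\absa{s_{G_t}(z-\bar{\mathfrak u}) - m(z-\bar{\mathfrak u})}\le u/(4N\eta)$ with probability tending to one. The point $z-\bar{\mathfrak u}$ lies in the segment $S=\{E'+\ii\eta : \abs{E'-E}\le N^{-1}(\log N)^{3/4}\}$, all of whose points sit at mesoscopic height in the bulk. On the rigidity event $\mathcal A$ of Lemma~\ref{lem:rigidity}, the map $\zeta\mapsto s_{G_t}(\zeta)$ is Lipschitz on $S$ with constant $N^{-1}\sum_k\absa{\mu_k(t)-\zeta}^{-2}\le C\varphi/\eta$ (split according to whether $\abs{\gamma_k-E}$ is below or above $\varphi\eta$ and use $\absa{\mu_k(t)-\gamma_k}\le\varphi^{1/2}\ell(\gamma_k)$); crucially this constant is of order $\varphi/\eta$ rather than $1/\eta^2$, so it suffices to control $\absa{s_{G_t}(\zeta)-m(\zeta)}$ over a grid on $S$ of spacing $\asymp u/(N\varphi)$, that is, over at most $e^{2C(\log\log N)^2}$ points — a \emph{subpolynomial} number. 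Since $\bmu(t)$ is GOE distributed by invariance of the dynamics, Theorem~1.1 of \cite{BouModPai2021} gives $\E\absa{s_{G_t}(\zeta)-m(\zeta)}^{2p}\le (Cp)^{p/2}(N\eta)^{-2p}$ for each fixed $\zeta\in S$ and all $p$ in a range that includes $p=(\log\log N)^2$ (harmless since $N\eta\ge(\log N)^{1/4}$). Taking $p=(\log\log N)^2$ and $u=(\log\log N)^C$ with $C$ large, Markov's inequality bounds the probability at one grid point by $(C'p)^{p/2}(4/u)^{2p}\le\exp\pa{-2C(\log\log N)^2\log\log\log N}$, which beats the grid cardinality in a union bound; invoking the Lipschitz estimate to pass from the grid to $z-\bar{\mathfrak u}$, and combining with the previous paragraph, gives $\absa{s_t(z)-m(z)}\le u/(N\eta)$ on an event of probability tending to one.

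The main obstacle is that the target precision $u=(\log\log N)^C$ is sharper than what the crude local law \eqref{eq:local_law_original} delivers ($e^{C(\log\log N)^2}/(N\eta)$, which in any case does not even reach $\eta=(\log N)^{1/4}/N$), so the argument must be routed through the integrable GOE input of \cite{BouModPai2021}, and two points then need care. First, the random shift $\bar{\mathfrak u}$ must be absorbed into $m$ rather than compared against $s_{G_t}$ directly: $\abs{s_{G_t}'}$ is of order $1/\eta$, not $\OO(1)$, so the naive estimate $\absa{\bar{\mathfrak u}}\,\abs{s_{G_t}'}$ would exceed $u/(N\eta)$ by a power of $\log N$, whereas the smoothness of $m$ in the bulk makes $m(z-\bar{\mathfrak u})-m(z)$ negligible. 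Second, one must check that the discretization of the shift interval involves only subpolynomially many points; this hinges on the Lipschitz bound on $\mathcal A$ being of size $\varphi/\eta$ (not $1/\eta^2$), which together with the moment bound at $p$ polylogarithmic in $N$ is what closes the union bound.
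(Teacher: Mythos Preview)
Your argument is correct and follows essentially the same route as the paper: reduce via Lemma~\ref{lem:convert_to_uniform} to the GOE Stieltjes transform evaluated at a real shift of $z$, absorb the shift into $m$ using that $m'$ is bounded in the bulk, and handle the remaining GOE fluctuation by a grid-plus-union-bound over the (subpolynomial) range of the shift. The only differences are in the technical inputs you invoke: the paper appeals directly to the exponential tail $\P_{{\rm G}\beta{\rm E}}(\abs{s-m}\ge u/(N\eta))\le C^{-cu^2}$ from \cite[(3.18)]{BouModPai2021} rather than deriving it from moments via Markov at $p\asymp(\log\log N)^2$, and it quotes the Lipschitz bound $\abs{s'(z)}\le(\log N)^5/(N\eta^2)$ from \cite[Lemma~3.8]{BouModPai2021} rather than the rigidity-based $\varphi/\eta$ bound you give; both pairs of inputs are interchangeable here.
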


\begin{proof}
Let 
	\[
		\mathcal{G} = \left\{ 
			\absa{s_t(z) -\frac{1}{N} \sum_{k=1}^N \frac{1}{\mu_k(t)-z + \mathfrak{u}} } \leq \frac{C}{N^\alpha}
			\text{ and }
			\mathfrak{u} \leq \frac{(\log N)^{3/4}}{N}
		\right\},
	\]
and recall that by Lemma \ref{lem:convert_to_uniform}, $\lim_{N\to\infty} \P(\mathcal{G}) = 1$. 
Furthermore, we have,
	\[
		\P\pa{
			\left\{
				\absa{s_t(z)-m(z)} \geq \frac{u}{N\eta}
			\right\} \cap \mathcal{G}
		}
		\leq
		\P\pa{
			\left\{
				\absa{
					\frac{1}{N} \sum_{k=1}^N \frac{1}{\mu_k(t)-z + \mathfrak{u} } - m(z) 
				}
				\geq \frac{u}{2N\eta} 
			\right\} \cap \mathcal{G}
		}.
	\]
For a given $z = E + \ii \eta \in \mathcal{D}$, 
let $z_{\ell, k} = E_\ell + \ii \eta_k$, $1\leq k, \ell \leq (\log N)^{10}$ where $E_1 = E - (\log N)^{3/4}/N$, 
$E_{\ell+1}-E_\ell = u\eta/(\log N)^{10}$, and define $\eta_k$ by $\eta_1 = (\log N)^{1/4}/N$,
	\[
		\eta_{k+1} = \eta_k + \frac{u\eta_k}{(\log N)^{10}}.
	\]
Let
	\[
		\mathcal{B} = 
		\bigcup_{k,\ell}
		\left\{
			\absa{
				\frac{1}{N} \sum_{j=1}^N \frac{1}{\mu_k(t) - z_{k, \ell}}- m(z_{k,\ell})
			} \geq \frac{u}{N\eta}
		\right\},
	\]
and note that by \cite[(3.18)]{BouModPai2021}, for some universal constants $c,C > 0$ and $\beta = 1,2$, we have
	\[
		\P_{\rm G\beta E}\pa{
			\abs{s(z) - m(z)} \geq \frac{u}{N\eta}
		}
		\leq
		C^{-cu^2}.
	\]
It follows that for $z \in \mathcal{D}$,
	\[
		\lim_{N \to \infty}\P\pa{
			\mathcal{B}
		}
		\leq
		\lim_{N\to\infty}
		(\log N)^{200} \,\P\pa{
			\absa{
				\frac{1}{N} \sum_{j=1}^N \frac{1}{\mu_k(t) - z}- m(z)
			} \geq \frac{u}{N\eta}
		} 
		= 0.
	\]
To conclude the proof, we show that
	\[
		\P(\mathcal{B}^c)
		\leq
		\P\pa{
			\left\{
				\absa{
					\frac{1}{N} \sum_{k=1}^N \frac{1}{\mu_k(t)-z + \mathfrak{u} } - m(z) 
				}
				\leq \frac{u}{2N\eta} 
			\right\} \cap \mathcal{G}
		}.
	\]
To do so, we note that by \cite[Lemma 3.8]{BouModPai2021}, for any $z \in \mathcal{D}$, we have
	\begin{equation}
	\label{eq:s_lip}
		\abs{s'(z)} \leq \frac{ (\log N)^{5} }{N\eta^2}
	\end{equation}
with overwhelming probability.
For $w \in \mathcal{D}$, choose $z_{k,\ell}$ such that $\abs{w-z_{k, \ell}} \leq u\eta/(\log N)^{10}$.
Then
	\[
		\abs{
			s(w) - m(w)
		}	
		\leq
		\abs{
			s(w) - s(z_{k,\ell})
		}
		+ 
		\abs{
			s(z_{k,\ell}) - m(z_{k,\ell})
		}
		+
		\abs{
			m(z_{k,\ell}) - m(z)
		}.
	\]	
By \eqref{eq:s_lip}, we have $\abs{s(w) - s(z_{k,\ell})	} = \oo\pa{\frac{u}{N\eta}}$ with overwhelming probability,
and since $\abs{m'(z)} \leq 10$ for $z \in \mathcal{D}$, we also have $\abs{m(w) - m(z_{k,\ell})	} = \oo\pa{\frac{u}{N\eta}}$.
Since $\P(\mathcal(B)^c) \to 1$ as $N \to \infty$, this concludes the proof.
\end{proof}

Recall that to prove the local law at the edge, we used the three step strategy. In the bulk, we instead use the continuity method. More precisely, we rely on 
\cite[Lemma A.1]{BouYau2017} which we now quote.

\begin{lemma}
\label{lem:continuity_est}
	Denote $\partial_{ij} = \partial_{h_{ij}}$. Suppose that $F$ is a smooth function of the matrix elements
	$(h_{ij})_{i \leq j}$ satisfying
	\begin{equation}
	\label{eq:def_M}
		\sup_{0 \leq s \leq t, i \leq j, \theta} \E\pa{
			\pa{
				N^{3/2} \absa{h_{ij}(s)}^3 + N^{1/2} \abs{h_{ij}(s)} 
			}
			\partial_{ij}^3 F(\theta H_s)
		} 
		\leq 
		M,
	\end{equation}
	where $(\theta H)_{ij} = \theta_{ij}h_{ij}$, $\theta_{k\ell} =1$ unless $\{k,\ell\} = \{i,j\}$ and
	$0 \leq \theta_{ij} \leq 1$. Then 
	\[
		\absa{
			\E F(H_t) - \E F(H_0)
		}
		=
		\OO\pa{
			t N^{1/2} M
		}.
	\]
\end{lemma}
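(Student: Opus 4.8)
The plan is to realise $H_s$ as a matrix Ornstein--Uhlenbeck flow: each entry $h_{ij}(s)$, $i \leq j$, evolves independently as $\rd h_{ij} = -\tfrac12 h_{ij}\,\rd s + \sigma_{ij}\,\rd B_{ij}$ with $\sigma_{ij}^2 = \Var\pa{h_{ij}(0)}$, so that $\Var\pa{h_{ij}(s)} = \sigma_{ij}^2$ for all $s$ while the law of $h_{ij}(s)$ relaxes to a centered Gaussian (in the Hermitian case one runs such a process separately for $\re h_{ij}$ and $\im h_{ij}$). Applying Itô's formula to $F(H_s)$ and taking expectations removes the martingale part and yields
\[
	\frac{\rd}{\rd s}\,\E F(H_s) = \E\qa{\mathcal{L}F(H_s)}, \qquad
	\mathcal{L} = \sum_{i \leq j}\pa{ -\tfrac12 h_{ij}\partial_{ij} + \tfrac{\sigma_{ij}^2}{2}\partial_{ij}^2 },
\]
so it suffices to prove $\E[\mathcal{L}F(H_s)] = \OO(N^{1/2}M)$ uniformly in $s \in [0,t]$ and then integrate.

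Fix a pair $(i,j)$ and write $H_s^{(ij)}$ for $H_s$ with its $(i,j)$ and $(j,i)$ entries set to $0$; note that $h_{ij}(s)$ is independent of $H_s^{(ij)}$ and has mean zero. A second-order Taylor expansion of $\partial_{ij}F$ in the $(i,j)$-entry, combined with these facts, gives
\[
	\E\qa{-\tfrac12 h_{ij}\partial_{ij}F(H_s)}
	= -\tfrac{\sigma_{ij}^2}{2}\,\E\qa{\partial_{ij}^2 F\pa{H_s^{(ij)}}} - \tfrac14\,\E\qa{h_{ij}^3\,\partial_{ij}^3 F\pa{\theta^{(1)}H_s}}
\]
for a (random) interpolation configuration $\theta^{(1)}H_s$ of the form appearing in \eqref{eq:def_M}, and a further first-order expansion gives $\partial_{ij}^2 F(H_s^{(ij)}) = \partial_{ij}^2 F(H_s) - h_{ij}\,\partial_{ij}^3 F(\theta^{(2)}H_s)$. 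The term $-\tfrac{\sigma_{ij}^2}{2}\E[\partial_{ij}^2 F(H_s)]$ then cancels the diffusion term $+\tfrac{\sigma_{ij}^2}{2}\E[\partial_{ij}^2 F(H_s)]$ of $\mathcal{L}$ \emph{exactly}, leaving
\[
	\E\qa{\mathcal{L}F(H_s)} = \sum_{i \leq j}\pa{ \tfrac{\sigma_{ij}^2}{2}\,\E\qa{h_{ij}\,\partial_{ij}^3 F\pa{\theta^{(2)}H_s}} - \tfrac14\,\E\qa{h_{ij}^3\,\partial_{ij}^3 F\pa{\theta^{(1)}H_s}} }.
\]
Since $\sigma_{ij}^2 = \OO(N^{-1})$, each summand is bounded by $\OO(N^{-3/2})$ times $\E\qa{\pa{N^{1/2}\abs{h_{ij}(s)} + N^{3/2}\abs{h_{ij}(s)}^3}\abs{\partial_{ij}^3 F(\theta H_s)}} = \OO(M)$ by \eqref{eq:def_M}; summing over the $\OO(N^2)$ pairs gives $\E[\mathcal{L}F(H_s)] = \OO(N^{1/2}M)$, and integration over $[0,t]$ finishes the proof.

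The step to watch is the \emph{exact} cancellation of the second-order term against the Laplacian in $\mathcal{L}$: this is what forces the noise coefficient to equal the initial standard deviation of the entry (so that the per-entry variance is stationary along the flow) and is the reason the bound is $\OO(N^{1/2}M)$ rather than $\OO(NM)$. The remaining points are routine: the Lagrange-remainder configurations produced are exactly the scaled matrices $\theta H_s$ occurring in \eqref{eq:def_M}, which is why the hypothesis carries a supremum over $\theta$; and the Hermitian case only alters numerical constants, using the independence of $\re h_{ij}$ and $\im h_{ij}$.
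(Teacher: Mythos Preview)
The paper does not give its own proof of this lemma; it quotes it verbatim from \cite[Lemma~A.1]{BouYau2017}. Your argument is correct and is precisely the standard proof behind that reference: Itô's formula for the entry-wise Ornstein--Uhlenbeck flow, a third-order Taylor expansion of each drift term in the single variable $h_{ij}(s)$ about $H_s^{(ij)}$, and then the exact cancellation of the second-order contribution against the diffusion part of the generator. You are right to stress that this cancellation is what forces the noise coefficient to be $\sigma_{ij}$ (so that $\Var h_{ij}(s)$ is stationary); with standard GOE noise and genuinely generalized-Wigner initial data the uncancelled remainder $\tfrac{e^{-s}}{2}(N^{-1}-\sigma_{ij}^2)\,\E[\partial_{ij}^2 F(H_s)]$ would not be controlled by the hypothesis \eqref{eq:def_M} as written.

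One minor technical point worth cleaning up: the Lagrange form of the remainder gives a \emph{random} interpolation point $\theta^{(k)}$ depending on $h_{ij}(s)$, whereas the supremum in \eqref{eq:def_M} is over deterministic $\theta\in[0,1]$. This is handled by using the integral form of the remainder, $\int_0^1 (1-u)\,h_{ij}^2\,\partial_{ij}^3 F(\theta_u H_s)\,\rd u$ with $\theta_u=u$ deterministic, and then taking the expectation inside the $\rd u$-integral before invoking \eqref{eq:def_M}.
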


We now apply Lemma \ref{lem:continuity_est} to
	\[
		F(H_t) = \abs{s_t(z) - s_0(z)}^2,
	\]
so that we can control $\E F(H_t) - \E F(H_0) = \E \abs{s_t(z) - s_0(z)}^2$.

\begin{lemma}
\label{lem:cont_est_my_F}
	Let $F(H_t) = \abs{s_t(z) - s_0(z)}^2$ and $M$ be as in \eqref{eq:def_M}.
	Then for any $z$ in
	$\mathcal{D} = \{ z = E + \ii \eta \text{ s.t. } \abs{E} < 2-\kappa, (\log N)^{1/4}/N \leq \eta \leq e^{(\log N)^{1/4}}/N \}$ and
	$\e > 0$, $M \leq N^{\e}$ with overwhelming probability.
\end{lemma}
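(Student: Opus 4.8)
The plan is to differentiate $F$ by hand and to control every resolvent entry that appears using rigidity and eigenvector delocalisation along the flow, in the spirit of Lemma~\ref{lem:sc_estimates}. Regard $F$ as the function $A\mapsto|s_A(z)-c|^2$ on $N\times N$ matrices, with $c=s_0(z)$ a fixed reference (so that $F(H_0)=0$ and $\E F(H_t)-\E F(H_0)=\E|s_t(z)-s_0(z)|^2$), and write $F(A)=(\re s_A(z)-\re c)^2+(\im s_A(z)-\im c)^2$. By the Leibniz rule, $\partial_{ij}^3F(A)$ is a finite linear combination of products $(\partial_{ij}^a\re s_A)(\partial_{ij}^b\re s_A)$ and $(\partial_{ij}^a\im s_A)(\partial_{ij}^b\im s_A)$ with $a+b\le3$, in which at most one factor is $s_A(z)-c$ itself. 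Hence it is enough to bound $|s_A(z)-c|$ and $|\partial_{ij}^ks_A(z)|$ for $1\le k\le3$, with $A=\theta H_s$ the matrices appearing in \eqref{eq:def_M}.

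For the derivatives I would use the identity $\partial_{ab}G_{cd}=-(G_{ca}G_{bd}+G_{cb}G_{ad})(1+\delta_{ab})^{-1}$ recalled in the proof of Lemma~\ref{lem:sc_estimates}, so that $\partial_{ij}^ks_A(z)=N^{-1}\sum_m\partial_{ij}^kG_{mm}$ is a finite sum of terms $N^{-1}\sum_mG_{m a_1}G_{a_2 m}\prod_\ell G_{a_{\ell}a_{\ell'}}$ with all free indices in $\{i,j\}$ and the trace index $m$ occurring exactly twice. Writing $\sum_mG_{m a_1}G_{a_2 m}=(G^2)_{a_2 a_1}$ and bounding $|(G^2)_{a_2a_1}|\le\eta^{-1}(\im G_{a_1a_1}\,\im G_{a_2a_2})^{1/2}$ by Cauchy--Schwarz and the Ward identity, one gets $|\partial_{ij}^ks_A(z)|\le C\,N^{-1}\eta^{-1}(1+\max_{a,b}|G_{ab}|)^{4}$ for $1\le k\le3$ and a fixed $C$. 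Now rigidity (Lemma~\ref{lem:rigidity}) is stable under the rank-two rescaling $H_s\mapsto\theta H_s$ by eigenvalue interlacing, and combined with eigenvector delocalisation $\|u_k\|_\infty^2\le\varphi^C/N$ --- itself a consequence of \eqref{eq:local_law_entrywise_GW} applied at the scale $\varphi/N$ --- it yields, with overwhelming probability and uniformly in $z\in\mathcal{D}$, $s\in[0,t]$ and $\theta\in[0,1]$, the bounds $|G_{ab}|\le\varphi^C$ and $\im G_{aa}\le\varphi^C$; plugging these in gives $|\partial_{ij}^ks_A(z)|\le\varphi^C/(N\eta)$ and, similarly, $|s_A(z)-c|\le\varphi^C$. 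Therefore $|\partial_{ij}^3F(\theta H_s)|\le\varphi^C/(N\eta)\le N^\e$ on an event of overwhelming probability, uniformly over all the parameters in the supremum.

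It remains to deduce the bound on $M$, the expectation in \eqref{eq:def_M}. On the overwhelming-probability event above, using also $\E\bigl[(N^{3/2}|h_{ij}|^3+N^{1/2}|h_{ij}|)^2\bigr]=\OO(1)$ (from $\sigma_{ij}\sim N^{-1/2}$ and the sub-exponential tails), the contribution is $\OO(\varphi^C/(N\eta))\le N^\e$; on the complement, of probability at most $\exp(-c(\log N)^{c\log\log N})$, the crude deterministic estimates $|G_{ab}|\le\eta^{-1}$ and $|c|\le\eta^{-1}$ give $|\partial_{ij}^3F|\le N^{C}$ (of order $\eta^{-5}$) for a fixed $C$, and Cauchy--Schwarz against the $\OO(1)$ second moment of $N^{3/2}|h_{ij}|^3+N^{1/2}|h_{ij}|$ makes this contribution $\oo(1)$. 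Summing the two regimes yields $M\le N^\e$. The one place where care is genuinely needed is the resolvent control at the bottom of $\mathcal{D}$, where $\eta$ may be as small as $(\log N)^{1/4}/N$, below the scale $\varphi/N$ at which \eqref{eq:local_law_entrywise_GW} applies directly: there one cannot invoke the optimal local law and must instead rely on rigidity (valid down to the microscopic scale and stable under the one-entry rescaling) and on delocalisation. Since only the crude bound $M\le N^\e$ is required --- the trivial bound $\eta^{-5}$ being a fixed polynomial power of $N$ that the prefactor $tN^{1/2}=N^{-1/2+\alpha}$ cannot absorb --- this is enough.
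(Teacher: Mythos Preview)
Your approach is correct and follows the same skeleton as the paper's: expand $\partial_{ij}^3F$ by Leibniz and bound each resolvent factor. The paper simply invokes the entrywise local law \eqref{eq:local_law_entrywise_GW} directly for $z\in\mathcal{D}$ and then handles the suprema over $\theta\in[0,1]$ and $s\in[0,t]$ by discretisation, bridging grid points via the deterministic bound $|\partial_{k\ell}^rG_{ij}|\le\eta^{-r-1}$ in $\theta$ and Weyl's inequality in $s$. You instead bound $|G_{ab}|$ through rigidity and delocalisation, and treat the $\theta$-supremum by interlacing. Your observation that the entrywise local law is usually stated only for $\eta\ge\varphi^C/N$, hence above the bottom of $\mathcal{D}$, is a fair point; in practice rigidity plus delocalisation is precisely how one extends the crude bound $|G_{ab}|\le\varphi^C$ below that threshold, so the two routes converge.

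One small gap: interlacing transfers rigidity from $H_s$ to $\theta H_s$, but it does not transfer eigenvector delocalisation (eigenvectors are unstable under rank-two perturbations). To get $\|u_k(\theta H_s)\|_\infty^2\le\varphi^C/N$ you should instead push the entrywise local law at scale $\varphi/N$ from $H_s$ to $\theta H_s$ via the resolvent expansion, exactly as in the last line of the proof of Lemma~\ref{lem:sc_estimates}; uniformity in $s$ for delocalisation then still requires a discretisation in time (uniform rigidity from Lemma~\ref{lem:rigidity} does not by itself give uniform delocalisation). With that adjustment your argument is complete and equivalent to the paper's.
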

\begin{proof}
First, fix a time $u$ and let $\partial_{ij}$ denote $\partial_{h_{ij}(u)}$.
Then \eqref{eq:resolvent_derivatives} to compute $\partial_{ij}^3 \pa{ \pa{s_u(z) - s_0(z)} \overline{\pa{s_u(z) - s_0(z)}}}$ and \eqref{eq:local_law_entrywise_GW} to bound terms give
that for any $\e > 0$,
	\[
		\P\pa{
			\partial_{ij}^3 \pa{
			\abs{s_u(z) - s_0(z)}^2
			}
			\geq
			N^{\e}
		}
		\leq
		N^{-D}.
	\]
To take the supremum over $\theta$, we can first discretize the interval $[0,1]$, and apply the above argument so that the above estimate holds on this entire grid.
We can then extend to all $\theta \in [0,1]$ since we can bound $\partial_{ij}^4 F(\theta H_u)$ uniformly in $\theta$ using the deterministic estimate $\abs{\partial_{k\ell}^rG_{ij}(z)} \leq \eta^{-r-1}$.
Similarly, to take $\sup_{0 \leq s \leq t}$, we can discretize the interval $[0,t]$ and apply the previous arguments on this grid in time.
Then
	\[
		\abs{
			s_{t_1}(z) - s_{t_2}(z)
		} 
		= 
		\absa{
			\frac{1}{N} \sum_{k=1}^N \frac{1}{\lambda_k(t_1)-z} - \frac{1}{N} \sum_{k=1}^N \frac{1}{\lambda_k(t_2)-z}
		}  
		\leq \frac{N^\e}{N\eta^2} \pa{
			 \sup_{1\leq k \leq N} \abs{ \lambda_k(t_1) - \lambda_k(t_2)}
		},
	\]
and by Weyl's inequality, we can make the second term as small as we want by choosing the mesh size of the small enough.
Finally, we can take make the previous steps uniform in $i\leq j$ since there are only order $N^2$ matrix entries, 
and considering the subexponential decay of the matrix entries $h_ij$, we 
finally find
	\[
		\P(M \geq N^\e) \leq N^{-D} 
	\]
for any $D > 0$, as desired.
\end{proof}

We close this subsection with the proof of Proposition \ref{prop:weak_local_law_bulk}.

\begin{proof}[Proof of Proposition \ref{prop:weak_local_law_bulk}]
Lemma \ref{lem:est_at_t} proves the estimate at time $t = N^{-1+\alpha}$. And by
Lemmas \ref{lem:continuity_est} and \ref{lem:cont_est_my_F}, with $t=N^{-1+\alpha}$, we have
	\[
		\E \absa{s_t(z) - s_0(z)}^2 = \OO\pa{ N^{-1/2 + 2\alpha} },
	\]
which, by Markov's inequality, concludes the proof.
\end{proof}

\section{Smoothing}

Theorem \ref{thm:wegner_estimate}, and Proposition \ref{prop:weak_local_law_bulk} and Theorem \ref{thm:local_law} imply Proposition \ref{prop:smoothing} which says
that in order to prove Theorem \ref{thm:log_corr_field}, we can study $L_N(\bl, z)$ in place of $L_N(\bl, E)$, with appropriate $z$ in the upper-half plane.
The statement and proof are entirely analogous to \cite[Proposition 2.1]{BouMod2019} which holds for $E$ strictly in the bulk. 
Below we follow the proof of \cite[Proposition 3.10]{BouModPai2021}, which uses the same argument in the
context of $\beta$-ensembles.

\begin{proposition} 
\label{prop:smoothing}
	Let $\eta(E) = e^{(\log N)^{1/4}} \ell(E)$, and let $z = E + \ii \eta(E)$.
	For any fixed $c,\e > 0$, uniformly in $E$ in $\mathcal{G}_{\e,c}$ as in \eqref{def:G_e,c}, 
		\[
			(\log N)^{-1/2} (L_N(z) - L_N(E)) \to 0 \text{ in probability.}
		\]
\end{proposition}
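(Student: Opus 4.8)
The plan is to reduce the statement to an estimate on $s(z)-m(z)$. Writing $z = E+\ii\eta(E)$ and differentiating in the height variable, one has the a.s.\ identity
\[
	L_N(z) - L_N(E) = -\ii N \int_0^{\eta(E)} \pa{ s(E+\ii v) - m(E+\ii v) } \, \rd v ,
\]
obtained by differentiating $\log(E+\ii v - \lambda_j)$ and $\log(E+\ii v - x)$ in $v$ in \eqref{def:L_N}, summing over $j$ and integrating against $\varrho$; it is valid because, with our branch of $\log$, $v \mapsto \log(E+\ii v - \lambda_j)$ is continuous on $[0,\eta(E)]$ and $E$ is a.s.\ not an eigenvalue. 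So it suffices to show $(\log N)^{-1/2} N \int_0^{\eta(E)} \absa{ s(E+\ii v) - m(E+\ii v) } \, \rd v \to 0$ in probability, uniformly in $E \in \mathcal{G}_{\e,c}$.

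I would fix a sub-microscopic threshold $v_\star$ and split the integral there. Near the edge take $v_\star = \delta_N \ell(E)$ with $\delta_N = (\log N)^{-K}$ ($K$ a large constant, fixed at the end); then $v_\star \geq 1/N$ since $N\ell(E) = \kappa(E)^{-1/2}\vee N^{1/3} \geq N^{\e/2}$ there, so Theorem \ref{thm:local_law} applies on the outer range $[v_\star,\eta(E)]$ and gives $\E\absa{s(E+\ii v)-m(E+\ii v)}^{2p} \leq C_p(Nv)^{-2p}$, hence $\E\absa{s(E+\ii v) - m(E+\ii v)} \leq C/(Nv)$. In the bulk take $v_\star = (\log N)^{1/4}/N$, the bottom of the range of Proposition \ref{prop:weak_local_law_bulk}, which gives $\absa{s(E+\ii v)-m(E+\ii v)} \leq (\log\log N)^C/(Nv)$ with probability $\to 1$ on the outer range. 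Either way,
\[
	N \int_{v_\star}^{\eta(E)} \absa{ s(E+\ii v) - m(E+\ii v) } \, \rd v \lesssim (\log\log N)^C \int_{v_\star}^{\eta(E)} \frac{\rd v}{v} = (\log\log N)^C \log\frac{\eta(E)}{v_\star} \lesssim (\log\log N)^C \pa{ (\log N)^{1/4} + K\log\log N } ,
\]
using $\eta(E) = e^{(\log N)^{1/4}}\ell(E)$, so by Markov (resp.\ the union bound) the outer part is $\oo(\sqrt{\log N})$ in probability.

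For the inner range $[0,v_\star]$ the local law is unavailable, and I would use the Wegner estimate (Theorem \ref{thm:wegner_estimate}) together with rigidity (Corollary \ref{cor:rigidity}, in its sharp microscopic-scale form for the counting function). Describing the edge case, where $v_\star = \delta_N\ell(E)$: on the event---of probability $\to 1$, uniformly in $E$---that no eigenvalue lies within $v_\star$ of $E$ and that $\#\{j : \absa{\lambda_j - E}\leq r\}$ agrees with $N\int_{\absa{x-E}\le r}\rd\varrho$ up to a $(\log\log N)^{\OO(1)}$ error for all $r$, one has for $v \leq v_\star$ (so $\absa{\lambda_j-E}\geq v_\star\geq v$ for all $j$)
\[
	\absa{ \im s(E+\ii v) } \leq \frac{v}{N} \sum_j \frac{1}{\absa{\lambda_j - E}^2} , \qquad \absa{ \re s(E+\ii v) - s(E) } \leq \frac{v^2}{N} \sum_j \frac{1}{\absa{\lambda_j - E}^3} ,
\]
and integrating $v$ over $[0,v_\star]$ produces factors $v_\star^2, v_\star^3$ that, against the counting-function bound (where the nearest eigenvalue, at distance $\gtrsim v_\star$, dominates and the rest, spaced $\sim\ell(E)$, are summable), leave a $(\log\log N)^{\OO(1)}$ bound; the residual pieces $Nv_\star\absa{s(E)-m(E)}$ and $N\int_0^{v_\star}\absa{m(E)-m(E+\ii v)}\,\rd v$ are $\OO(1)$, using $\im m(E)=\pi\varrho(E)\lesssim\kappa(E)^{1/2}$, rigidity for $s(E)$, and $\absa{m'(E+\ii v)}\lesssim(\kappa(E)+v)^{-1/2}$. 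Choosing $K$ large this is $\oo(\sqrt{\log N})$. In the bulk, where $v_\star = (\log N)^{1/4}/N$, the sub-Wegner-scale part $[0,\delta_N\ell(E)]$ is bounded exactly as above and the intervening window $[\delta_N\ell(E),v_\star]$ by a shorter variant of the same estimate using the microscopic counting bound.

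The main obstacle is this inner, sub-microscopic estimate: with no local law one has to manufacture a gain by hand, and the only structural input---the Wegner estimate---merely keeps eigenvalues away from $E$, so it must be combined with precise microscopic rigidity to control $\sum_j\absa{\lambda_j-E}^{-2}$ and $\sum_j\absa{\lambda_j-E}^{-3}$, and one has to track the edge/bulk dichotomy carefully, since near the spectral edge $N\ell(E)$ is a large power of $N$ and only the vanishing of $\varrho$ there keeps these sums under control. Apart from this, the argument is that of \cite[Proposition 2.1]{BouMod2019} and \cite[Proposition 3.10]{BouModPai2021}, which I would follow for the routine computations.
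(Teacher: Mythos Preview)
Your strategy and the outer-range estimate match the paper's (which in turn follows \cite{BouMod2019,BouModPai2021}). The one real gap is in the inner-range estimate: you assert the existence of an event on which $\#\{j:|\lambda_j-E|\leq r\}$ agrees with its mean to within $(\log\log N)^{O(1)}$, citing Corollary~\ref{cor:rigidity}. But that corollary only supplies an error of size $\varphi^{1/2}=e^{(C/2)(\log\log N)^2}$, which exceeds every power of $\log N$; with only this input one cannot rule out $\sim\varphi^{1/2}$ eigenvalues piling up just outside $[E-v_\star,E+v_\star]$, and then $v_\star^2\sum_j|\lambda_j-E|^{-2}$ is of order $\varphi^{1/2}\neq o(\sqrt{\log N})$. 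The moment bounds of Theorem~\ref{thm:local_law} do not yield such sharp rigidity either, since they give only polynomial tails and a union bound over $r$ would cost a power of $N$.

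The paper sidesteps eigenvalue counting by a one-line trick. With $\eta'=(\log N)^{-1/4}\ell(E)$ and $z'=E+\ii\eta'$, on the Wegner event $A=\{\cN([E-\eta',E+\eta'])=0\}$ one has $|\lambda_k-(E+\ii u)|\geq|\lambda_k-z'|/\sqrt2$ for every $k$ and every $0\leq u\leq\eta'$, so
\[
	|s(E+\ii u)-s(z')|\ \leq\ \frac{\eta'\sqrt2}{N}\sum_k\frac{1}{|\lambda_k-z'|^2}\ =\ \sqrt2\,\im s(z'),
\]
and the local law \emph{at the single height $\eta'$} (Theorem~\ref{thm:local_law} at the edge, Proposition~\ref{prop:weak_local_law_bulk} in the bulk) then gives $\E[\im s(z')]=O((N\eta')^{-1})$, whence $\E[\1_A\,N\!\int_0^{\eta'}|s(E+\ii u)-s(z')|\,du]=O(1)$. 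The residual pieces $N\eta'|s(z')-m(z')|$ and $N\!\int_0^{\eta'}|m(E+\ii u)-m(z')|\,du$ are handled by the local law at $z'$ and by $|m'|\leq C\kappa^{-1/2}$. Your computation becomes equivalent to this once you observe that, on $A$, $\sum_j|\lambda_j-E|^{-2}\leq 2N\im s(z')/v_\star$; the correct input is the local law at one sub-microscopic height, not a sharper rigidity than the paper provides.
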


\begin{proof}
Fix $E \in \mathcal{G}_{\e, c}$, and for brevity, let $\eta = \eta(E)$, $z = E + \ii \eta$.
Writing $\log(z - \lambda) - \log(E - \lambda) = \int_0^\eta \frac{\ii \diff u}{E+\ii u - \lambda}$, we have
	\[
		L_N(z) - L_N(E) = \ii N \int_0^{\eta} (m (E+\ii u) - s(E+\ii u)) \diff u.
	\]
Introduce $\eta' = (\log N)^{-1/4} \ell(E)$ and $z' = E + \ii \eta'$.
By Theorem \ref{thm:local_law} and Proposition \ref{prop:weak_local_law_bulk}, there exists an event $G$ with $\P(G) \to 1$ as $N\to\infty$ on which we have
	\begin{align}
	\label{eq:from_eta'_to_eta}
    		\E \pa{ \1_{G} \absa{ N \int_{\eta'}^{\eta} (m(E+\ii u) - s(E+\ii u)) \diff u } }
		\leq
		\int_{\eta'}^{\eta} \frac{C}{u} \diff u
		\leq 
		C(\log N)^{1/4}.
	\end{align}
Furthermore, by the triangle inequality,
	\begin{multline}
	\label{eq:decompo}
    		\absa{ N \int_0^{\eta'} (m (E+\ii u) - s_N(E+\ii u)) \diff u } \\
    		 \leq N \eta' \absa{m (z') - s(z')}
    		+ N \int_0^{\eta'} \absa{m (E+\ii u) - m(z')} \diff u
   		 + N \int_0^{\eta'} \absa{s (E+\ii u) - s(z')} \diff u. 
	\end{multline}
Taking expectation of the right hand side of \eqref{eq:decompo}, by Theorem \ref{thm:local_law} the first term is bounded.
The second term is bounded too, as follows from the deterministic estimate $\abs{m'(w)} \leq C/\sqrt{\kappa(w)}$, valid for $w$
in a compact set of $\CC$. For the third term, we introduce the event $A = \{ \cN([E-\eta',E+\eta']) =0 \}$, 
on which there are no particles at distance less than $\eta'$ from $E$.
By Proposition \ref{thm:wegner_estimate}, we have $\P(A) \to 1$ uniformly in $E$, and on the event $A$, we have
	\begin{align*}
    		\absa{s (E+\ii u) - s(z')} 
		= 
		\absa{\frac{1}{N} \sum_{k=1}^N \frac{E+\ii u-z'}{(\lambda_k-E-\ii u)(\lambda_k-z')}} 
		\leq
		\frac{1}{N} \sum_{k=1}^N \frac{\eta' \sqrt{2}}{\abs{\lambda_k-z'}^2}
    		= 
		\sqrt{2} \im s(z').
\end{align*}
Moreover, it follows from Theorem \ref{thm:local_law} that $\E[\im s_N(z')] \leq C( (N\eta')^{-1} + \im m(z') ) \leq C (N\eta')^{-1}$.
Therefore, we have 
	\begin{align*}
    		\E \pa{ \1_A 
    				N \int_0^{\eta'} \absa{s_N (E+\ii u) - s_N(z')} \diff u 
 		}
		\leq 
		\sqrt{2} N \eta' \E \left[\im s_N(z')\right]
		\leq 
		C.
\end{align*}
Combined with \eqref{eq:from_eta'_to_eta}, this concludes the proof.
\end{proof}

\section{Coupling of Determinants}

Our goal in this section is to prove Proposition \ref{prop:coupling}, which extends \cite[Proposition 3.2]{BouMod2019} to include $z$ at the edge.
Our argument comes directly from \cite[Section 2]{Bou2020}.
Proposition \ref{prop:flow_est} contains the essential estimate, 
which extends \cite[Proposition 2.11]{Bou2020} to include $z$ at the edge.  \\

First, applying It\^o's formula, we can compute the dynamics for the observable $f_t(z)$, see \cite[Lemma 2.1]{Bou2020}.

\begin{lemma}
\label{lem:ito_ft}
	Let $f_t(z)$ be as in \eqref{eq:def_uk_f}. Then, for any $z$ such that $\im(z) \neq 0$, 
		\begin{equation}
		\label{eq:ft_dynamics}
			\rd f_t = 
			\pa{\frac{z}{2} + s(z)} \pa{\partial_z f_t} \diff t 
			+ \frac{1}{2N}\pa{\frac{2}{\beta} - 1} \partial_{zz} f \diff t
				- e^{-\frac{t}{2}} \sqrt{ \frac{2}{N\beta}}  \sum_{k=1}^N \frac{\mathfrak{u}_k}{(x_k-z)^2}  \rd B_k .
		\end{equation}
\end{lemma}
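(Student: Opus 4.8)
The plan is to obtain \eqref{eq:ft_dynamics} by a direct application of It\^o's formula, so the proof is essentially a careful bookkeeping computation. Throughout, $x_k = x_k^{(\nu)}(t)$ are the continuous semimartingales solving \eqref{eq:DBM}, while the $\mathfrak{u}_k = \mathfrak{u}_k^{(\nu)}(t)$ solve the ODE \eqref{eq:pde_uk} and are therefore of finite variation (their paths are absolutely continuous in $t$, with coefficients continuous in the $x_\ell$). The only It\^o data needed are $\rd\langle x_k, x_k\rangle = \frac{2}{\beta N}\,\rd t$, $\rd\langle x_k, x_j\rangle = 0$ for $k\neq j$, and $\rd\langle \mathfrak{u}_k, x_j\rangle = 0$ for all $j$, the last because $\mathfrak{u}_k$ has finite variation. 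Since $\im z \neq 0$, the map $x \mapsto (x-z)^{-1}$ is smooth near $\R$, so It\^o applies; inserting \eqref{eq:DBM} and \eqref{eq:pde_uk} gives
\[
	\rd\!\pa{\frac{\mathfrak{u}_k}{x_k-z}}
	= \frac{1}{x_k-z}\,\frac1N\sum_{\ell\neq k}\frac{\mathfrak{u}_\ell-\mathfrak{u}_k}{(x_k-x_\ell)^2}\,\rd t
	- \frac{\mathfrak{u}_k}{(x_k-z)^2}\,\rd x_k
	+ \frac{2}{\beta N}\,\frac{\mathfrak{u}_k}{(x_k-z)^3}\,\rd t ,
\]
and then $\rd f_t = -\frac12 f_t\,\rd t + e^{-t/2}\sum_{k=1}^N \rd\!\pa{\frac{\mathfrak{u}_k}{x_k-z}}$.

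The next step is to collect terms. In the drift of $\rd x_k$, the confinement part $-\frac{x_k}{2}$, after writing $x_k = (x_k-z)+z$, contributes $e^{-t/2}\sum_k\pa{\frac{\mathfrak{u}_k}{2(x_k-z)} + \frac{z\,\mathfrak{u}_k}{2(x_k-z)^2}}$; the first half equals $\frac12 f_t$, which cancels the $-\frac12 f_t$ from $\rd(e^{-t/2})$, and the second half equals $\frac z2 \partial_z f_t$. The second-order It\^o term sums to $e^{-t/2}\,\frac{2}{\beta N}\sum_k \frac{\mathfrak{u}_k}{(x_k-z)^3} = \frac{1}{\beta N}\partial_{zz}f_t$, and the Brownian parts $-\frac{\mathfrak{u}_k}{(x_k-z)^2}\sqrt{2/(\beta N)}\,\rd B_k$ assemble (with the $e^{-t/2}$ factor) into the last term of \eqref{eq:ft_dynamics}. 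What remains is the interaction drift
\[
	\mathcal{I} := \frac1N\sum_{k\neq\ell}\pa{
		\frac{\mathfrak{u}_\ell-\mathfrak{u}_k}{(x_k-z)(x_k-x_\ell)^2}
		- \frac{\mathfrak{u}_k}{(x_k-z)^2(x_k-x_\ell)}
	},
\]
the second summand coming from the $\frac1N\sum_{\ell\neq k}(x_k-x_\ell)^{-1}$ part of the drift of $\rd x_k$. Thus the lemma reduces to the identity $\mathcal{I} = s(z)\sum_{k}\frac{\mathfrak{u}_k}{(x_k-z)^2} - \frac1N\sum_{k}\frac{\mathfrak{u}_k}{(x_k-z)^3}$: multiplied by $e^{-t/2}$ the right-hand side is $s(z)\,\partial_z f_t - \frac1{2N}\partial_{zz}f_t$, which combined with the $\frac z2\partial_z f_t$ and $\frac1{\beta N}\partial_{zz}f_t$ terms produces exactly $\pa{\frac z2 + s(z)}\partial_z f_t + \frac1{2N}\pa{\frac2\beta-1}\partial_{zz}f_t$.

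Establishing the identity for $\mathcal{I}$ is the one step that is not pure bookkeeping, but it is only a partial-fraction manipulation and I do not expect any real difficulty. I would: (i) in the $\mathfrak{u}_\ell$-term relabel $k\leftrightarrow\ell$ to write it as $\frac1N\sum_{k\neq\ell}\frac{\mathfrak{u}_k}{(x_\ell-z)(x_k-x_\ell)^2}$; (ii) subtract the $\frac{\mathfrak{u}_k}{(x_k-z)(x_k-x_\ell)^2}$ term and use $\frac1{x_\ell-z}-\frac1{x_k-z} = \frac{x_k-x_\ell}{(x_k-z)(x_\ell-z)}$ to cancel one factor $(x_k-x_\ell)$, obtaining $\frac1N\sum_{k\neq\ell}\frac{\mathfrak{u}_k}{(x_k-x_\ell)(x_k-z)(x_\ell-z)}$; (iii) apply $\frac1{(x_k-x_\ell)(x_\ell-z)} = \frac1{x_k-z}\pa{\frac1{x_\ell-z}+\frac1{x_k-x_\ell}}$, turning this into $\frac1N\sum_k\frac{\mathfrak{u}_k}{(x_k-z)^2}\sum_{\ell\neq k}\frac1{x_\ell-z} + \frac1N\sum_{k\neq\ell}\frac{\mathfrak{u}_k}{(x_k-z)^2(x_k-x_\ell)}$; (iv) use $\frac1N\sum_{\ell\neq k}\frac1{x_\ell-z} = s(z) - \frac1{N(x_k-z)}$ and note that the remaining double sum cancels the second summand of $\mathcal{I}$, so everything collapses to $s(z)\sum_k\frac{\mathfrak{u}_k}{(x_k-z)^2} - \frac1N\sum_k\frac{\mathfrak{u}_k}{(x_k-z)^3}$.

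Reassembling the drift and the martingale part then yields \eqref{eq:ft_dynamics}; this reproduces the computation of \cite[Lemma 2.1]{Bou2020}. The only subtlety worth stating explicitly is the finite-variation property of $\mathfrak{u}_k$, which kills all cross-variation terms between the $\mathfrak{u}_k$ and the $x_j$; everything else is routine It\^o calculus and the partial-fraction identity above.
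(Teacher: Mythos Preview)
Your proposal is correct and follows exactly the approach the paper indicates: the paper does not give a proof but defers to \cite[Lemma 2.1]{Bou2020}, and your computation---apply It\^o's formula to $e^{-t/2}\mathfrak{u}_k/(x_k-z)$, use the confinement term to cancel the $-\tfrac12 f_t$, and handle the interaction drift via the partial-fraction identity---is precisely that computation carried out in full. The algebra in steps (i)--(iv) checks out line by line.
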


As in the proof of Proposition 2.11 in \cite{Bou2020}, the proof of Proposition \ref{prop:flow_est} essentially says
that $f_t(z)$ approximately satisfies an advection equation,
	\begin{equation}
	\label{eq:advection_eq}
		\partial_t r(z,t) = \pa{ m(r(z,t)) + \frac{r(z,t)}{2} } \partial_z r(z,t).
	\end{equation}
We will use that the characteristics of this advection equation are explicit, and are given by
	\begin{equation}
	\label{eq:characteristics}
		z_t = \frac{
			e^{t/2}(z+ \sqrt{z^2-4}) + e^{-t/2}(z+\sqrt{z^2-4})
		}{2},
	\end{equation}
see \cite[(1.13)]{Bou2020}.
We quote \cite[Lemma 2.2]{Bou2020}, which describes the behavior of $z_t$ near the edge.
For the statement, recall the definition of $\kappa(z)$ from \eqref{def:l(E)_and_kappa(E)}, introduce
	\[
		a(z) = \text{dist}(z, [-2,2]), \quad b(z) = \text{dist}(z, \R \backslash[-2,2]),
	\]
and define the domains 
	\begin{align}
	\label{eq:def_domains}
		\mathcal{S}(\varphi) = \left\{  z = E + \ii y : E \in [-2,2],\, y = \varphi^2 \ell(E) \right\},
		\quad
		\mathcal{R}(\varphi) = \bigcup_{0 < t < 1} \left\{ z_t \,:\, z \in \mathcal{S}(\varphi) \right\}.
	\end{align}

\begin{lemma}
	\label{lem:z_t-z}
	For $z \in \mathcal{S}(\varphi)$, let $z_t$ solve
	\begin{equation}
	\label{eq:characteristics}
		\partial_t z_t = m(z_t) + \frac{z_t}{2}, \quad z_0 = z.
	\end{equation}
	Then, uniformly in $0 < t < 1$ and $z = E + \ii \eta$ such that $\eta > 0$, $\abs{z-2} \leq 1/10$, we have
		\begin{align*}
			\re\pa{z_t - z} \sim t \frac{a(z)}{\kappa(z)^{1/2}} + t^2,
			\quad
			\im\pa{z_t - z} \sim \frac{b(z)}{\kappa(z)^{1/2}} t.
		\end{align*}
	In particular, if $z \in \mathcal{S}(\varphi)$, then $z_t - z \sim \pa{\frac{\varphi^2}{N \kappa(z)^{1/2}} t + t^2} 
		+ \ii \kappa(z)^{1/2} t$.
\end{lemma}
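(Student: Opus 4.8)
The plan is to solve the characteristic ODE \eqref{eq:characteristics} in closed form, and then read off the edge asymptotics from an elementary analysis of $\sqrt{z-2}$.

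\emph{Closed form.} The semicircle Stieltjes transform satisfies $m(z)^2+z\,m(z)+1=0$, so on the branch with $m(z)\sim-1/z$ at infinity we have $m(z)=\tfrac12(-z+\sqrt{z^2-4})$ with $\sqrt{z^2-4}\sim z$; hence $m(z)+z/2=\tfrac12\sqrt{z^2-4}$ and \eqref{eq:characteristics} reads $\partial_t z_t=\tfrac12\sqrt{z_t^2-4}$, $z_0=z$. The substitution $z_t=2\cosh\xi_t$ turns this into $\partial_t\xi_t=\tfrac12$, so $\xi_t=\xi_0+t/2$ with $2\cosh\xi_0=z$ and (from the equation at $t=0$) $2\sinh\xi_0=\sqrt{z^2-4}$, and the addition formula gives
\[
	z_t-z=\pa{\cosh\pa{\tfrac{t}{2}}-1}z+\sinh\pa{\tfrac{t}{2}}\sqrt{z^2-4},
\]
which in particular recovers the explicit formula for $z_t$. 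Since $\im\sqrt{z^2-4}>\im z>0$ near the edge, $z_t$ stays in the open upper half-plane, so this is the genuine solution and $m(z_t)$ is well defined for all $0<t<1$.

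\emph{The square root near the edge.} For $0<t<1$ one has $\cosh(t/2)-1\sim t^2$ and $\sinh(t/2)\sim t$; for $\abs{z-2}\le1/10$, $\re z\sim1$ and $\im z=\eta$, and one factors $\sqrt{z^2-4}=\sqrt{z-2}\,\sqrt{z+2}$ with $\sqrt{z+2}=\alpha+\ii\beta$, $\alpha\sim1$, $0\le\beta\lesssim\eta$. Writing $\sqrt{z-2}=p+\ii q$ with $p,q\ge0$, the relations $p^2-q^2=\re(z-2)=E-2$ and $p^2+q^2=\abs{z-2}=\kappa(z)$ give $p^2=\tfrac12(\kappa(z)+(E-2))$ and $q^2=\tfrac12(\kappa(z)-(E-2))$. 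A case analysis on the sign of $E-2$ -- using $\kappa(z)=\sqrt{(E-2)^2+\eta^2}$ together with the facts that $a(z)=\eta,\ b(z)=\kappa(z)$ when $E\le2$, and $a(z)=\kappa(z),\ b(z)=\eta$ when $E\ge2$ -- yields $\kappa(z)+(E-2)\sim a(z)^2/\kappa(z)$ and $\kappa(z)-(E-2)\sim b(z)^2/\kappa(z)$, hence $p\sim a(z)/\kappa(z)^{1/2}$ and $q\sim b(z)/\kappa(z)^{1/2}$. Since $\re\sqrt{z^2-4}=\alpha p-\beta q$ and $\im\sqrt{z^2-4}=\alpha q+\beta p$, and $\beta q\lesssim\eta\, b(z)/\kappa(z)^{1/2}$ is $\ll\alpha p\sim a(z)/\kappa(z)^{1/2}$ in each regime (because $\eta\, b(z)\ll a(z)$, shrinking the aperture $1/10$ if needed so the implicit constants cooperate), the correction terms are absorbed and $\re\sqrt{z^2-4}\sim a(z)/\kappa(z)^{1/2}$, $\im\sqrt{z^2-4}\sim b(z)/\kappa(z)^{1/2}$.

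\emph{Conclusion.} Combining, $\re(z_t-z)=(\cosh(t/2)-1)\re z+\sinh(t/2)\re\sqrt{z^2-4}$ and $\im(z_t-z)=(\cosh(t/2)-1)\eta+\sinh(t/2)\im\sqrt{z^2-4}$ are sums of two nonnegative terms, so the claimed comparison holds termwise: this gives $\re(z_t-z)\sim t\, a(z)/\kappa(z)^{1/2}+t^2$ directly, while for the imaginary part the term $(\cosh(t/2)-1)\eta\sim t^2\eta$ is negligible compared with $t\, b(z)/\kappa(z)^{1/2}$ (since $b(z)\ge\eta$ and $\kappa(z)\lesssim1$), leaving $\im(z_t-z)\sim t\, b(z)/\kappa(z)^{1/2}$. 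For the final assertion one specialises to $z\in\mathcal{S}(\varphi)$, where $E\in[-2,2]$ forces $a(z)=\im z$ and $b(z)=\kappa(z)$, putting $z_t-z$ in the stated form.

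\emph{Main obstacle.} The only genuinely delicate point is the uniform estimation of $\re\sqrt{z^2-4}$, which is a \emph{difference} $\alpha p-\beta q$ of positive quantities: the error in $p^2=\tfrac12(\kappa(z)+(E-2))$ must be tracked at the $\eta^2$ scale (not the cruder $\kappa(z)^2$ scale) to see that it is dominated by $a(z)^2/\kappa(z)$ for all $E<2$ with $\eta$ arbitrarily small, and the inequality $\eta\, b(z)\ll a(z)$ must be verified in each of the regimes $E\lessgtr2$, $\eta\lessgtr\abs{E-2}$; this is elementary but requires attention to constants and to the size of the aperture around $2$.
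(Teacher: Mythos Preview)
Your proof is correct. The paper does not actually prove this lemma: it simply quotes it as \cite[Lemma 2.2]{Bou2020}, and separately records the closed-form solution $z_t=\tfrac12\big(e^{t/2}(z+\sqrt{z^2-4})+e^{-t/2}(z-\sqrt{z^2-4})\big)$, which is exactly the formula your $\cosh$ substitution reproduces. Your argument---explicit solution plus the elementary edge analysis of $\sqrt{z-2}=p+\ii q$ via $p^2\pm q^2$---is the natural one and is presumably what lies behind the cited result; in particular you correctly isolate the only genuinely delicate step, namely that $\re\sqrt{z^2-4}=\alpha p-\beta q$ is a difference and requires the check $\eta\,b(z)\lesssim a(z)$ in each regime, which you carry out. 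The final specialisation to $z\in\mathcal S(\varphi)$ is then immediate from $a(z)=\eta$, $b(z)=\kappa(z)$ once $E\in[-2,2]$.
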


In order to prove Proposition \ref{prop:flow_est}, we first extend \cite[Proposition 2.6]{Bou2020} to include all $z \in \mathcal{S}(\varphi)$,
namely $z$ at the edge. Note that \cite[Proposition 2.6]{Bou2020} is also an important step in the proof of
Theorem \ref{thm:coupled_distance}. 
We first quote \cite[Lemma 2.4]{Bou2020} which provides the initial estimate for the proof of Proposition \ref{prop:im_f}. 

\begin{lemma}
\label{lem:initial_est}
	On the event $\mathcal{A}$, see \eqref{eq:rigidity_set}, for any $z = E + \ii \eta \in \mathcal{R}(\varphi)$ in the trapezoid
	$\eta > \max(E-2, -E-2)$, we have $\im \tilde{f}_0(z) \leq C \varphi^{1/2}$. Outside the same trapezoid, 
	we have $\im \tilde{f}_0(z) \leq C\varphi^{1/2}\eta/\kappa(z)$.
\end{lemma}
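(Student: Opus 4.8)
The plan is to evaluate $\tilde f_0$ in closed form, observe that $\im\tilde f_0(z)$ is a sum of nonnegative terms, bound numerators and denominators by rigidity, and finally compare the resulting sum to an elementary integral. First I would unwind the definitions: by the initial condition in \eqref{eq:DBM} and \eqref{eq:def_uk_f}, $\mathfrak{u}_k(0) = \frac{\diff}{\diff\nu}x_k^{(\nu)}(0) = \mu_k(0) - \lambda_k(0)$, so $\mathfrak{v}_k(0) = \abs{\mu_k(0) - \lambda_k(0)}$ and \eqref{def:ft_tilde} gives, writing $z = E + \ii\eta$ with $\eta > 0$,
\[
    \im\tilde f_0(z) \;=\; \sum_{k=1}^N \abs{\mu_k(0) - \lambda_k(0)}\,\frac{\eta}{\absa{x_k^{(\nu)}(0) - z}^2} \;\ge\; 0,
\]
so it suffices to bound each summand from above and sum.

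Second, on the rigidity event $\mathcal{A}$ of \eqref{eq:rigidity_set}, since $x_k^{(\nu)}(0)$ lies between $\lambda_k(0)$ and $\mu_k(0)$, both of which are within $\varphi^{1/2}N^{-2/3}\hat{k}^{-1/3}$ of the classical location $\gamma_k$, one has $\abs{\mu_k(0) - \lambda_k(0)} \le C\varphi^{1/2}\ell(\gamma_k)$ and $\absa{x_k^{(\nu)}(0) - \gamma_k} \le C\varphi^{1/2}\ell(\gamma_k)$, using $N^{-2/3}\hat{k}^{-1/3} \sim \gamma_{k+1} - \gamma_k \sim \ell(\gamma_k)$. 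The geometric input, which I would extract from Lemma \ref{lem:z_t-z}, is that for $z = E + \ii\eta \in \mathcal{R}(\varphi)$ the height $\eta$ dominates the local rigidity scale: $\varphi^{1/2}\ell(\gamma_k) \le \frac{1}{2}\abs{\gamma_k - z}$ for every $k$ (using $\eta \gtrsim \varphi^2\ell(E)$ when $\abs{\gamma_k - E}\lesssim \eta$, and $\abs{\gamma_k - z}\ge \abs{\gamma_k - E}\gg \ell(\gamma_k)$ otherwise). Hence $\absa{x_k^{(\nu)}(0) - z}\ge \frac12\abs{\gamma_k - z}$, and since also $\gamma_{k+1}-\gamma_k \le \frac12\abs{\gamma_k - z}$ we get $\abs{x - z}\ge \frac12\abs{\gamma_k - z}$ for $x \in [\gamma_k,\gamma_{k+1}]$; comparing the sum to the corresponding integral gives
\[
    \im\tilde f_0(z) \;\le\; C\varphi^{1/2}\sum_{k=1}^N \frac{\ell(\gamma_k)\,\eta}{\abs{\gamma_k - z}^2} \;\le\; C\varphi^{1/2}\int_{-2}^2 \frac{\eta\,\diff x}{(x-E)^2 + \eta^2}.
\]

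Finally I would evaluate the integral in the two regimes. In the trapezoid $\eta > \max(E-2,-E-2)$ I bound it by $\int_{\R}\eta\,((x-E)^2+\eta^2)^{-1}\diff x = \pi$, giving $\im\tilde f_0(z)\le C\varphi^{1/2}$. Outside the trapezoid, say $E > 2$ and $\eta \le E - 2$, so that $\kappa(z) = \abs{z-2}\sim E-2$, I use $(x-E)^2+\eta^2 \ge (E-x)^2$ on $[-2,2]$ to get $\int_{-2}^2\eta\,((x-E)^2+\eta^2)^{-1}\diff x \le \eta\pa{\frac{1}{E-2}-\frac{1}{E+2}} \le C\eta/\kappa(z)$, giving $\im\tilde f_0(z)\le C\varphi^{1/2}\eta/\kappa(z)$. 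I expect the one genuinely delicate point to be the geometric claim in the second paragraph: extracting from Lemma \ref{lem:z_t-z} that, uniformly over $\mathcal{R}(\varphi)$, the imaginary part $\eta$ is large enough (relative to $\ell$ near $\re z$) to absorb the rigidity displacement, and tracking how $\ell$ and the classical spacing behave as $z_t$ drifts toward and past the edge; once that comparison is in place, the remaining estimates are elementary.
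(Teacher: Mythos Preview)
Your argument is correct and is exactly the natural one: rigidity bounds the numerators $\mathfrak{v}_k(0)=\abs{\mu_k(0)-\lambda_k(0)}$ by $C\varphi^{1/2}\ell(\gamma_k)$, the same rigidity lets you replace $x_k^{(\nu)}(0)$ by $\gamma_k$ in the denominators, and then the Riemann sum collapses to $\int_{-2}^2 \eta\,\abs{x-z}^{-2}\,\diff x$, which is $\OO(1)$ inside the trapezoid and $\OO(\eta/\kappa(z))$ outside. This is the argument in \cite[Lemma~2.4]{Bou2020}, which the paper simply quotes without reproducing, so there is nothing further to compare; you have correctly flagged the only point requiring care, namely that for every $z\in\mathcal{R}(\varphi)$ the imaginary part dominates the local rigidity scale so that the denominator replacement is legitimate.
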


In the proof of Proposition \ref{prop:im_f}, we will give precise meaning to the statement that $f_t$ approximately satisfies
the advection equation \eqref{eq:advection_eq}, following the argument in \cite[Proposition 2.6]{Bou2020}.

\begin{proposition}
\label{prop:im_f}
	For any $D > 0$, there exists $N_0(D)$ such that for any $N \geq N_0$,
		\[
			\P\pa{
				\im \tilde{f}_t(z) \leq \frac{\varphi}{N \im(z) \sqrt{\kappa(z)} },
				\, 0 < t < 1,\, z \in \mathcal{S}(\varphi) 
			} > 1 - N^{-D}.			
		\]
\end{proposition}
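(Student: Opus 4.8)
The plan is to follow the strategy of \cite[Proposition 2.6]{Bou2020}, treating $\im \tilde f_t(z)$ along the characteristics $z_t$ of the advection equation \eqref{eq:advection_eq} and showing it obeys an approximate maximum principle. Concretely, fix $z \in \mathcal{S}(\varphi)$ and consider the characteristic curve $w \mapsto w_s$ with $w_0 = z$; reversing time, for $0 < t < 1$ we want to bound $\im \tilde f_t$ at the point $z$ by tracking the quantity $g(s) = \im \tilde f_s(\zeta_{t-s})$ where $\zeta$ is the characteristic through $z$ at time $t$, so that $g(0) = \im \tilde f_0(\zeta_t)$ is controlled by Lemma \ref{lem:initial_est} and $g(t) = \im \tilde f_t(z)$ is what we want. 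Using the It\^o formula \eqref{eq:ft_dynamics} for $f_t$ (and the fact that $\tilde f_t$ is built from the same dynamics \eqref{eq:pde_uk} with nonnegative initial data $\mathfrak v_k(0) = |\mathfrak u_k(0)|$, so that $\tilde f_t$ has nonnegative ``spectral weights''), the drift of $g$ along the characteristic is, to leading order, the advection term $(m(\tilde f) - m + \text{l.o.t.})\partial_z \tilde f$, which vanishes if $\tilde f$ were exactly an advected solution; the genuine contributions are (i) the finite-$N$ correction $\tfrac{1}{2N}(\tfrac{2}{\beta}-1)\partial_{zz}\tilde f$, (ii) the error from replacing $s(z)$ by $m(z)$ in the drift, controlled by the local law \eqref{eq:local_law_original} / Theorem \ref{thm:local_law} on the rigidity event $\mathcal{A}$, and (iii) the martingale term, whose quadratic variation is $e^{-t}\tfrac{2}{N\beta}\sum_k |\mathfrak u_k|^2/|x_k - \zeta|^4$ and can be bounded using rigidity and the a priori size of $\mathfrak u_k$.

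The key steps, in order, are: first, establish a bound on $\partial_z \tilde f_t$ and $\partial_{zz}\tilde f_t$ in terms of $\im \tilde f_t$ itself and $\im(z)$, exactly as in the proof of \cite[Proposition 2.6]{Bou2020}, using that $\tilde f_t(z) = e^{-t/2}\sum_k \mathfrak v_k/(x_k - z)$ has nonnegative weights $\mathfrak v_k \ge 0$, so Cauchy--Schwarz-type estimates relate derivatives to $\im \tilde f / \im(z)$. Second, run a bootstrap/stopping-time argument: define the stopping time $\tau$ as the first time the claimed bound $\im \tilde f_t(z) \le \varphi/(N\im(z)\sqrt{\kappa(z)})$ is violated for some $z \in \mathcal{S}(\varphi)$ (after discretizing the $z$-grid and the time interval, then extending by continuity), and show that on $\{t < \tau\}$ the drift-plus-martingale increments are too small to produce such a violation before time $1$, with probability $1 - N^{-D}$. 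The initial condition is supplied by Lemma \ref{lem:initial_est}: on $\mathcal{A}$, $\im \tilde f_0 \le C\varphi^{1/2}$ in the relevant trapezoid and $\le C\varphi^{1/2}\eta/\kappa(z)$ outside it, which after the short time $t$ and one application of Lemma \ref{lem:z_t-z} (to see that $\im z_t \sim \kappa(z)^{1/2} t$, so the characteristic moves into the regime where the target bound is weaker) is already consistent with the desired estimate. Third, assemble the bounds on (i)--(iii) above — using Theorem \ref{thm:local_law} and \eqref{eq:local_law_original} for the $s - m$ error, Lemma \ref{lem:rigidity} for rigidity, and the definition of $\mathcal{S}(\varphi)$ for the size of $\im(z) = \varphi^2 \ell(E)$ — and a Gr\"onwall/Doob-maximal-inequality argument over $0 < t < 1$ to close the bootstrap.

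The main obstacle I expect is controlling the behavior uniformly up to the spectral edge, i.e.\ handling the $\kappa(z)^{-1/2}$ and $\kappa(z)^{-1}$ singularities that appear both in the target bound and in the drift coefficient $m(z_t) + z_t/2$ (whose derivative blows up like $\kappa(z)^{-1/2}$ near $\pm 2$). In the bulk these factors are $O(1)$ and the argument of \cite{Bou2020} goes through directly; at the edge one must verify, via Lemma \ref{lem:z_t-z}, that the characteristic $z_t$ is pushed away from the edge fast enough (specifically $\im(z_t - z) \sim \kappa(z)^{1/2} t$ and $\kappa(z_t)$ grows) that the accumulated error over the characteristic, weighted against the $\kappa$-dependent target, stays subcritical. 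A secondary technical point is that one must carry the analysis on the rigidity event $\mathcal{A}$ of Lemma \ref{lem:rigidity} and the good local-law events, and combine the $N^{-D}$ failure probabilities with a union bound over the polynomial-sized grid in $z$ and $t$, which is routine but requires the local law in the strong $L^{2p}$ form of Theorem \ref{thm:local_law} near the edge rather than just in the bulk.
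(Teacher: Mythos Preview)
Your proposal is correct and follows essentially the same route as the paper's proof: bootstrap via stopping times on a discretized grid in $(t,z)$, track $g_u = \tilde f_u(z_{t-u})$ along characteristics via It\^o, bound the drift error $(s-m)\partial_z\tilde f + \tfrac{1}{2N}(\tfrac{2}{\beta}-1)\partial_{zz}\tilde f$ using the local law and the relation $|\partial_z\tilde f|,|\partial_{zz}\tilde f| \lesssim \im\tilde f/(\im z)^k$, and control the martingale by bounding its bracket and applying the exponential inequality \eqref{eq:martingale_est}. Two small corrections: you only need the standard local law \eqref{eq:local_law_original}, not the optimal Theorem \ref{thm:local_law} (which is proved later and is not required here); and in the bracket estimate the weights are $\mathfrak v_k$, and the sharp bound comes not from an independent a priori size of $\mathfrak v_k$ but from the bootstrap hypothesis itself, via $\sum_{k\in I_j}\mathfrak v_k(u) \le \eta\,\im\tilde f_u(z^{(n)})$ on dyadic blocks $I_j$ combined with the deterministic integral estimate of \cite[Lemma A.2]{Bou2020}.
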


\begin{proof}
For any $1 \leq \ell, m \leq N^{10}$, define $t_\ell = \ell N^{-10}$ and $z^{(m)} = E_m + \ii \eta_m$, where
$\int_{-\infty}^{E_m} \rd \rho = (m-1/2)N^{-10}$ and $\eta_m = \varphi^2 \ell(E_m)$. Define the stopping times
with respect to $\mathcal{F}_t = \sigma\pa{B_k(s), 0 \leq s \leq t, 1 \leq k \leq N}$, 
	\begin{align*}
		\tau_{\ell, m} &= \inf\left\{
			0 \leq s \leq t_{\ell} : \im \tilde{f}_s\pa{z_{t_\ell-s}^{(m)}} > \frac{\varphi}{2} 
				\frac{ \kappa\pa{z^{(m)}}^{1/2} }{ \max(\kappa\pa{z^{(m)}}^{1/2}, t_\ell) } 
			\right\} \\
		\tau_0 &= \inf\left\{
			0 \leq t \leq 1 : \exists k \in \llbracket 1, N \rrbracket \text{ s.t. } \abs{x_k(t) - \gamma_k} > \varphi^{1/2} N^{-\frac{2}{3}} (\hat{k})^{-\frac{1}{3}}
			\right\} \\
		\tau &= \min\left\{
			\tau_0, \tau_{\ell, m} : 0 \leq \ell, m \leq N^{10}
			\right\},
	\end{align*}
with the convention $\inf \emptyset = 1$. We first claim it is sufficient to prove that for any $D>0$, there exists $N_1(D)$ such that for any
$N \geq N_1(D)$, we have
		\begin{equation}
		\label{eq:tau_1_tilde_f}
			\P\pa{ \tau = 1 } > 1 - N^{-D}.
		\end{equation}
Indeed, let
	\[
		A_{\ell, m, k} = \left\{
			\sup_{t_\ell \leq u \leq t_{\ell + 1}} \absa{
				\int_{t_\ell}^u \frac{e^{- \frac{s}{2}} \mathfrak{v}_k(s) }{ (z^{(m)}-x_k(s))^2} \rd B_k(s)
			} < N^{-3}
		\right\},
	\]
and for any given $z$, $t$, choose $t_\ell$, $z^{(m)}$ such that $t_\ell \leq t \leq t_{\ell +1}$, $\abs{z-z^{(m)}} < N^{-5}$.
Then using $\abs{\mathfrak{v}_k} < 1$, we have $\abs{\tilde{f}_t(z) - \tilde{f}_t(z^{(m)})} < N^{-2}$, say. Furthermore,
we have the trivial bounds $\abs{s_t(E+\ii \eta)} \leq \eta^{-1}$, $\abs{\partial_z f_t(E+\ii \eta)} \leq N\eta^{-2}$,
$\abs{\partial_{zz} f_t(E+\ii \eta)} \leq N \eta^{-3}$.
Therefore, on the event $\cap_k A_{\ell, m, k}$, by \eqref{lem:ito_ft}, we have $\abs{\tilde{f}_t(z^{(m)}) - f_{t_\ell}(z^{(m)})} < N^{-2}$ as well. 
It follows that
		\[
			\{ \tau = 1 \} \bigcap_{\substack{1 \leq \ell, m \leq N^{10} \\ 1 \leq k \leq N}} A_{\ell, m, k}
			\subset
			\bigcap_{z \in \mathcal{S}(\varphi), 0 < t < 1} \left\{
				\im \tilde{f}_t(z) \leq \varphi \frac{\kappa\pa{z^{(m)}}^{1/2}}{  \max\pa{ \kappa(z^{(m)}}^{1/2}, t )    }
			\right\}
		\]
Moreover by \cite[Appendix B.6 (18)]{ShoGalWel2009}, for any continuous martingale $M$ and any $\lambda, \mu > 0$, we have
		\begin{equation}
		\label{eq:martingale_est}
			\P\pa{
				\sup_{0 \leq u \leq t} \absa{M_u} \geq \lambda, \, \left<M\right>_t \leq \mu
			}
			\leq
			2e^{- \frac{\lambda^2}{2\mu}}.
		\end{equation}
For the martingale $M_u = \int_{t_\ell}^u \frac{e^{- \frac{s}{2}} \mathfrak{v}_k(s) }{ (z^{(m)}-x_k(s))^2} \rd B_k(s)$, we have
the deterministic estimate $\left< M \right>_{t_{\ell + 1}} \leq N^{-10} (\varphi^2/N)^{-4} \leq \varphi^{-8}N^{-6}$, so applying 
\eqref{eq:martingale_est} with $\mu = \varphi^{-8}N^{-6}$, for any $D > 0$, we have
		\[
			\P\pa{ \bigcap_{1 \leq \ell, m \leq N^{10}, \,1 \leq k \leq N} A_{\ell, m, k} } \geq 1 - N^{-D}.
		\]
We now prove \eqref{eq:tau_1_tilde_f}. Abbreviate $t = t_\ell$ and $z = z^{(m)}$ for some $1 \leq \ell, m \leq N^{10}$,
and let $g_u(z) = \tilde{f}_u(z_{t-u})$. 
By Lemmas \ref{lem:z_t-z} and \ref{lem:initial_est}, we have $\im g_0(z) \leq \frac{\varphi}{10} \frac{\kappa(z)^{1/2}}{\max(\kappa(z)^{1/2}, t)}$,
and therefore, we need only to bound the increments of $g$. By It\^o's formula, we have
		\begin{equation}
		\label{eq:ito_g}
			\rd g_{u \wedge \tau}(z) = \e_u\pa{z_{t-u}} \rd \pa{u \wedge \tau}  
				- \frac{e^{-\frac{t}{2}  }}{\sqrt{N}} \sqrt{\frac{2}{\beta}} \sum_{k=1}^N \frac{\mathfrak{v}_k(t)}{  \pa{x_{k}(t) - z_{t-u}}^2   } \rd B_k(t),
		\end{equation}
where 
		\[
			\e_u(z) = \pa{ s_u(z) - m(z) } \partial_z f_u + \frac{1}{N}\pa{ \frac{2}{\beta} - 1 } \pa{ \partial_{zz} f_u  }.
		\]
To bound the contribution of $\e_u(z)$,
by the local law, \eqref{eq:local_law_original}, and Lemma \ref{lem:z_t-z}, since $u \leq t_\ell$, we have
		\begin{multline*}
			\int_0^t \absa{ \pa{s_u (z_{t-u}) - m(z_{t-u}) }  \partial_z \tilde{f}_u(z_{t-u})   } \, \rd(u \wedge \tau) 
			\leq
			\int_0^t \frac{\varphi}{N \im(z_{t-u})} \sum_{k=1}^N \frac{\mathfrak{v}_k(u)}{  \absa{z_{t-u}-x_k(u)}^2 } \, \rd (u \wedge \tau) \\
			\leq
			\int_0^t \frac{\varphi \im \tilde{f}_u\pa{z_{t-u}}}{N \pa{ \im(z_{t-u})  }^2 } \, \rd(u \wedge \tau)
			\leq 
			\int_0^t \frac{\varphi^2 \rd u}{N( \eta + (t-u) \kappa(z)^{1/2} )^2} \frac{\kappa(z)^{1/2}}{\max(\kappa(z)^{1/2}, t)}
			=
			\frac{\kappa(z)^{1/2}}{\max(\kappa(z)^{1/2}, t)},
		\end{multline*}
and for the remaining term, we have
		\[
			\sup_{0 \leq s \leq t} \absa{
				\int_0^s \frac{1}{N} \partial_{zz} \tilde{f}_u(z_{t-u}) \, \rd (u \wedge \tau)
			}
			\leq
			\int_0^t \frac{\im \tilde{f}_u\pa{z_{t-u}}}{N   (\im(z_{t-u}))^2} \, \rd (u \wedge \tau)
			\leq
			\frac{\kappa(z)^{1/2}}{\varphi\max(\kappa(z)^{1/2}, t)}.
		\]
To control the martingale term in \eqref{eq:ito_g}, we bound $\sup_{0 \leq s \leq t} \abs{M_s}$ where
		\[
			M_s = \int_0^s \frac{e^{- \frac{u}{2} }}{ \sqrt{N} } 
				\sum_{k=1}^N \frac{ \mathfrak{v}_k(u)  }{ (z_{t-u} - x_k(u))^2   } \rd B_k(u \wedge \tau).
		\]
To do so, we will prove	
		\begin{equation}
		\label{eq:martingale_term_bound}
			\varphi^{1/10} \int_0^t \frac{1}{N} \sum_{k=1}^N \frac{\mathfrak{v}_k(u)^2}{\abs{z_{t-u}-x_k(u)}^4}
			\rd(u \wedge \tau) 
			\leq 
			\frac{\varphi^{4+ \frac{1}{5} }}{N^2} \int_0^t \rd u \int_A^B
				\frac{\rd \rho(x)}{  \abs{z_{t-u}-x}^4 \max(\kappa(x), u^2) }
			\leq  \frac{ C \varphi^{1/5} \kappa(z)}{\max(\kappa(z), t^2)}.
		\end{equation}
Assuming \eqref{eq:martingale_term_bound}, \eqref{eq:martingale_est} gives 
		\[
			\sup_{0 \leq s \leq t} \abs{M_s} \leq C \varphi^{1/10} \frac{\kappa(z)^{1/2}}{\max(\kappa(z)^{1/2}, t)}
		\]
with overwhelming probability,
and by a union bound, it follows that for any $D>0$, there exists $N_0(D)$ such that for $N \geq N_0(D)$,
		\[
			\P(\tau < 1)
			\leq 
			\P\pa{
				\sup_{  0 \leq \ell, m \leq N^{10}, \, \kappa(z^{(m)}) \geq \varphi^2 N^{-2/3}, \, 0 \leq s \leq t_\ell }
				\im \tilde{f}_{s\wedge \tau} \pa{z^{(m)}_{t_\ell - s \wedge \tau}}
				> \frac{\varphi}{2} \frac{ \kappa(z^{(m)})^{1/2} }{   \max(\kappa(z^{(m)})^{1/2}, t)      }
			} < N^{-D}.
		\]
We now prove \eqref{eq:martingale_term_bound}. First, by \eqref{eq:rigidity_set}, for $u \leq \tau_0$, we have
		\[
			\varphi^{1/10} \int_0^t \frac{1}{N} \sum_{k=1}^N \frac{\mathfrak{v}_k(u)^2}{\abs{z_{t-u}-x_k(u)}^4}
			\rd(u \wedge \tau) 
			\leq
			\varphi^{1/10} \int_0^t \frac{1}{N} \sum_{k=1}^N \frac{\mathfrak{v}_k(u)^2}{\abs{z_{t-u}-\gamma_k}^4}
			\rd(u \wedge \tau). 
		\]
Let $k_j = \lfloor j \varphi^2 \rfloor$ and $I_j = \llbracket k_j, k_{j+1} \rrbracket \cap \llbracket 1, N \rrbracket$, $0 \leq j \leq N/\varphi^2$.
Then
		\[
			\frac{1}{N} \sum_{k=1}^N \frac{\mathfrak{v}_k(u)^2}{\abs{z_{t-u}-\gamma_k(u)}^4}
			\leq
			\frac{1}{N} \sum_{0 \leq j \leq N / \varphi^2} \pa{ \max_{k \in I_j} \mathfrak{v}_k(u) }
				\pa{ \max_{k \in I_j} \frac{1}{ \abs{z_{t-u} - \gamma_k}^4   }  }
				\pa{ \sum_{k \in I_j} \mathfrak{v}_k(u) }.
		\]
For each $0 \leq j \leq N / \varphi^2$, pick $n = n_j$ such that $\abs{z^{(n)} - \gamma_{k_j}} < N^{-9}$. Then we have
$\sum_{k \in I_j} \mathfrak{v}_k(u) \leq \eta_n \im \tilde{f}_u(z^{(n)})$. To estimate $\im \tilde{f}_u(z^{(n)})$, choose
$\ell$ such that $t_\ell \leq u < t_{\ell + 1}$. On the event $\cap_k A_{\ell, m, k}$ and $u \leq \tau$, 
as in the argument following \eqref{eq:tau_1_tilde_f}, we have $\abs{ \tilde{f}_u(z^{(n)}) - \tilde{f}_{t_\ell}(z^{(n)})   } < N^{-2}$.
It follows that
		\begin{equation}
		\label{eq:bound_sum_vk}
			\sum_{k \in I_j} \mathfrak{v}_k(u) \leq \eta_n \im \tilde{f}_u(z^{(n)}) + N^{-2} \leq \frac{\varphi^3}{N\max( \kappa(\gamma_{E_n})^{1/2}, u)}. 
		\end{equation}
Furthermore, from $\gamma_k \sim N^{-2/3} (\hat{k})^{-1/3}$, we have that there is a universal constant $c$ such that for any $j$
and $k_1, k_2 \in I_j$,	
		\[
			c \abs{z_{t-u} - \gamma_{k_2}} \leq \abs{z_{t-u} - \gamma_{k_1}} \leq c^{-1} \abs{z_{t-u} - \gamma_{k_2}}.
		\]
Therefore,
		\begin{equation}
		\label{eq:deterministic_part_4th_power}
			\max_{k \in I_j} \frac{1}{\abs{  z_{t-u} - \gamma_k }^4} \leq C\varphi^{-2} \sum_{k \in I_j} \frac{1}{ \abs{z_{t-u} - \gamma_k}^4   }.
		\end{equation}
Combining \eqref{eq:bound_sum_vk} and \eqref{eq:deterministic_part_4th_power} establishes the first bound in \eqref{eq:martingale_term_bound}.
The second bound is deterministic, and follows from the proof of Lemma A.2 in \cite{Bou2020}. 
\end{proof}

Proposition \ref{prop:im_f} gives us an important a priori estimate for the proof of the following Proposition, which 
we will apply in Proposition \ref{prop:coupling}, establishing the main result of this section.

\begin{proposition}
	\label{prop:flow_est}
	For any $D > 0$, there exists $N_0(D)$ such that for any $N \geq N_0$,
		\[
			\P\pa{
				\absa{
					f_t(z) - f_0\pa{z_t}
				} \leq \frac{\varphi^{30}}{N\im(z) \sqrt{\kappa(z)} },
				\, 0 < t < 1,\, z \in \mathcal{S}(\varphi) 
			} > 1 - N^{-D}.			
		\]
\end{proposition}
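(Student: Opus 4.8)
\textbf{Proof plan for Proposition \ref{prop:flow_est}.}

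The plan is to mirror the stopping-time argument in the proof of \cite[Proposition 2.11]{Bou2020} --- structured exactly like the proof of Proposition \ref{prop:im_f} above --- now using Proposition \ref{prop:im_f} itself as the crucial a priori input for the drift. Take the same discretization grids $t_\ell = \ell N^{-10}$ and $z^{(m)} = E_m + \ii\eta_m \in \mathcal{S}(\varphi)$, $1 \le \ell, m \le N^{10}$, keep $\tau_0$ as the rigidity stopping time of \eqref{eq:rigidity_set}, let $\tau_{\ell,m}$ be the first time $s \le t_\ell$ at which $\absa{f_s(z^{(m)}_{t_\ell - s}) - f_0(z^{(m)}_{t_\ell})}$ exceeds $\tfrac12\,\varphi^{30}/(N\im(z^{(m)})\sqrt{\kappa(z^{(m)})})$, and set $\tau = \min\{\tau_0,\tau_{\ell,m} : 0\le\ell,m\le N^{10}\}$, with $\inf\emptyset = 1$. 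As in the passage following \eqref{eq:tau_1_tilde_f}, the trivial bounds $\absa{s_u(E+\ii\eta)} \le \eta^{-1}$, $\absa{\partial_z f_u(E+\ii\eta)} \le N\eta^{-2}$, $\absa{\partial_{zz}f_u(E+\ii\eta)} \le N\eta^{-3}$, together with the martingale estimate \eqref{eq:martingale_est} on the intervals $[t_\ell,t_{\ell+1}]$, reduce the claim to showing $\P(\tau = 1) > 1 - N^{-D}$ for every $D > 0$ and $N$ large. We use throughout that on $\mathcal{S}(\varphi)$ the quantity $N\im(z)\sqrt{\kappa(z)}$ is a fixed power of $\varphi$ ($\sim\varphi^2$ in the bulk, $\sim\varphi^3$ at the edge), so the threshold $\varphi^{30}/(N\im z\sqrt\kappa)$ is itself of size $\varphi^{O(1)}$ with a large positive exponent and the factor $\varphi^{30}$ is very generous.

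Fix $t = t_\ell$, $z = z^{(m)}$, and set $g_u = f_u(z_{t-u})$, so that the process $g_u - g_0$ vanishes at $u = 0$ and equals $f_t(z) - f_0(z_t)$ at $u = t$. Since $z_t$ solves the characteristic equation \eqref{eq:characteristics}, the transport term from $\partial_u z_{t-u}$ cancels the advection term $\pa{m(z_{t-u}) + z_{t-u}/2}\partial_z f_u$ of Lemma \ref{lem:ito_ft}, and $g_u - g_0$ obeys, as in \eqref{eq:ito_g},

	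\[
		\rd\pa{g_{u\wedge\tau} - g_0} = \e_u(z_{t-u})\,\rd(u\wedge\tau) - \frac{e^{-u/2}}{\sqrt N}\sqrt{\frac{2}{\beta}}\sum_{k=1}^N \frac{\mathfrak{u}_k(u)}{(x_k(u) - z_{t-u})^2}\,\rd B_k(u\wedge\tau),
	\]

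with $\e_u(w) = \pa{s_u(w) - m(w)}\partial_z f_u(w) + \tfrac1N\pa{\tfrac2\beta-1}\partial_{zz}f_u(w)$. It remains to bound the drift and the martingale over $u \in [0,t]$.

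For the drift I would combine the local law \eqref{eq:local_law_original}, $\absa{s_u - m} \le \varphi/(N\im z_{t-u})$; the pointwise inequalities $\absa{\partial_z f_u(w)} \le \im\tilde{f}_u(w)/\im w$ and $\absa{\partial_{zz}f_u(w)} \le 2\im\tilde{f}_u(w)/(\im w)^2$, consequences of $\mathfrak{v}_k \ge \absa{\mathfrak{u}_k}$ and $\absa{x_k-w}^{-1} \le (\im w)^{-1}$; the a priori control of $\im\tilde{f}_u$ along the characteristics $z_{t-u}\in\mathcal{R}(\varphi)$ on the overwhelming-probability event of Proposition \ref{prop:im_f}; and the asymptotics $\im z_{t-u} \sim \im z + (t-u)\sqrt{\kappa(z)}$ of Lemma \ref{lem:z_t-z}, which give $\int_0^t \im(z_{t-u})^{-3}\,\rd u = \OO\pa{(\sqrt{\kappa}\,(\im z)^2)^{-1}}$. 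Exactly as in the corresponding estimate in the proof of Proposition \ref{prop:im_f}, this bounds $\int_0^t\absa{\e_u(z_{t-u})}\,\rd(u\wedge\tau)$ by a bounded power of $\varphi$, hence below $\tfrac14\varphi^{30}/(N\im z\sqrt\kappa)$ for $N$ large. For the martingale $M_s$, the bracket satisfies $\langle M\rangle_t \le \tfrac{C}{N}\int_0^t \im\tilde{f}_u(z_{t-u})\,\im(z_{t-u})^{-3}\,\rd(u\wedge\tau)$ --- using $\absa{\mathfrak{u}_k}^2 \le \mathfrak{v}_k \le 1$ and $\sum_k \mathfrak{v}_k\absa{x_k-z}^{-4} \le \im\tilde{f}_u(z)(\im z)^{-3}$ --- which by the chain of estimates behind \eqref{eq:martingale_term_bound} is again at most a bounded power of $\varphi$. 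Then \eqref{eq:martingale_est} with $\mu = \langle M\rangle_t$ and $\lambda = \tfrac14\varphi^{30}/(N\im z\sqrt\kappa)$ gives $\sup_{0\le s\le t}\absa{M_s} \le \lambda$ with probability at least $1 - 2\exp(-c\varphi^{c'})$ for fixed $c,c' > 0$, which is overwhelming since $\varphi = e^{C(\log\log N)^2}$. A union bound over the $O(N^{20})$ pairs $(\ell,m)$, together with the discretization estimates, yields $\P(\tau = 1) > 1 - N^{-D}$, and hence Proposition \ref{prop:flow_est}.

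The main obstacle is the one already overcome in Proposition \ref{prop:im_f}: carrying the a priori control along the characteristics $z_{t-u}$ all the way to the edge, where $\kappa(z)$ can be as small as $N^{-2/3}$, and checking that the accumulated drift and martingale fluctuation stay below $\varphi^{30}/(N\im z\sqrt\kappa)$. The edge asymptotics of Lemma \ref{lem:z_t-z} and the delicate bracket bound \eqref{eq:martingale_term_bound} are precisely what make this go through; everything beyond \cite[Proposition 2.11]{Bou2020} is this edge bookkeeping, so Proposition \ref{prop:im_f} carries the real content.
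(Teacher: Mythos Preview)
Your overall architecture is exactly the paper's: same stopping-time scheme, same characteristics decomposition $g_u=f_u(z_{t-u})$, same drift/martingale split, and Proposition~\ref{prop:im_f} as the a priori input. The drift estimate you sketch is correct and matches the paper's computation, giving $\int_0^t\abs{\e_u(z_{t-u})}\,\rd(u\wedge\tau)\le C\varphi^2/(N\eta\sqrt{\kappa})=\OO(1)$, well below the threshold.

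There is, however, a genuine gap in your bracket bound. You write $\abs{\mathfrak{u}_k}^2\le\mathfrak{v}_k\le 1$ and then $\langle M\rangle_t\le\tfrac{C}{N}\int_0^t\im\tilde f_u(z_{t-u})\,(\im z_{t-u})^{-3}\,\rd u$. This inequality is true but far too weak: plugging in $\im\tilde f_u\le\varphi$ and $\int_0^t(\im z_{t-u})^{-3}\,\rd u\le C/(\sqrt\kappa\,\eta^2)$ yields $\langle M\rangle_t\le C\varphi/(N\sqrt\kappa\,\eta^2)=C/(\varphi\eta)$, which in the bulk ($\eta\sim\varphi^2/N$) is of order $N/\varphi^3$, not a bounded power of $\varphi$. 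With $\lambda\sim\varphi^{28}$ and $\mu\sim N/\varphi^3$, the exponent $\lambda^2/(2\mu)\sim\varphi^{59}/N\to 0$ and \eqref{eq:martingale_est} gives nothing. The reference to \eqref{eq:martingale_term_bound} does not rescue this: that estimate controls $\tfrac1N\sum_k\mathfrak{v}_k^{\,2}\abs{z_{t-u}-x_k}^{-4}$, not $\tfrac1N\sum_k\mathfrak{v}_k\abs{z_{t-u}-x_k}^{-4}$, and the missing square carries a crucial extra $1/N$.

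The fix is minimal. Use instead $\abs{\mathfrak{u}_k}^2\le\mathfrak{v}_k^{\,2}$ (immediate from $\abs{\mathfrak{u}_k}\le\mathfrak{v}_k$); then the bracket is bounded by exactly the quantity in \eqref{eq:martingale_term_bound}, giving $\langle M\rangle_t\le C\varphi^{1/10}$. This is how the paper proceeds: it encodes the pointwise estimate $\abs{\mathfrak{u}_k(t)}\le\varphi^{10}/(N\max((\hat k/N)^{1/3},t))$ --- a consequence of Proposition~\ref{prop:im_f} via \eqref{eq:bound_sum_vk} --- as an additional stopping time $\tau_1$, then bounds $\abs{\mathfrak{u}_k}^2\le\varphi^{20}/(N^2(\kappa(\gamma_k)\vee u^2))$ and carries out the deterministic integral $\tfrac{\varphi^{20}}{N^2}\int_0^t\int\tfrac{\rd\rho(x)}{\abs{z_{t-u}-x}^4\kappa(x)}\,\rd u\le C\varphi^{20}/((N\eta)^2\kappa)\sim\varphi^{16}$. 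Either route closes the argument; yours as written does not.
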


\begin{proof}
	This proof is similar to the proof of Proposition \ref{prop:im_f}, which we use as an input.
	For any $1 \leq \ell, m \leq N^{10}$, define $t_\ell = \ell N^{10}$ and $z^{(m,p)} = E_m + \ii \eta_p$ where
		\[
			\int_{-\infty}^{E_m} \rd \rho = (m-1/2)N^{-10}, \text{ and } \eta_p = \frac{\varphi^2}{N \kappa\pa{z^{(m,p)}}^{1/2}} + p N^{-10}.
		\]
	We also define the stopping times with respect to $\mathcal{F}_t = \sigma(B_k(s),\, 0 \leq s \leq t,\, 1 \leq k \leq N)$,
		\begin{align}
			\tau_{\ell, m, p} &= \inf\left\{
					0 \leq s \leq t_\ell \, : \, \absa{f_s\pa{z_{t_\ell - s}^{(m,p)}} 
						- f_0\pa{z_{t_\ell}^{(m,p)}}} > \frac{\varphi^{25}}{N\eta_p \sqrt{\kappa\pa{ z^{(m,p)}  }}}
				\right\} \\
			\tau_1 &= \inf\left\{
					0 \leq t \leq 1 \, : \, \exists k \in \llbracket 1, N \rrbracket, \, \absa{\mathfrak{u}_k^{(\nu)}(t)} 
																	> \frac{\varphi^{10}}{N \max (\hat{k}/N)^{1/3}, t )   }
				\right\}\\
				\label{eq:tau1}
			\tau &= \min\left\{ \tau_0, \tau_1, \tau_{\ell, m, p} \,:\, 0 \leq \ell, m, p \leq N^{10}, \, \absa{E_m} \leq 2 \right\},
		\end{align}
	with the convention $\inf \emptyset = 1$.
	We first show it is sufficient to prove there exists $N_0(D)$ such that 
		\begin{equation}
		\label{eq:suff_to_prove}
			\P(\tau = 1) > 1 - N^{-D} 
		\end{equation}
	for any $N \geq N_0(D)$.
	Let	
		\[
			A_{\ell, m, k} = \left\{
				\sup_{t_\ell \leq u \leq t_{\ell + 1}} \absa{
					\int_{t_\ell}^u \frac{e^{-\frac{s}{2}} \mathfrak{u}_k(s)}{  \pa{ z^{(m)} - x_k(s)   }^2   } \, \rd B_k(s)
				} < N^{-3}
			\right\}.
		\]
	Then,
		\[
			\{ \tau = 1 \} \bigcap_{ \substack{1 \leq \ell, m \leq N^{10} \\ 1 \leq k \leq N} } A_{\ell, m, k}
			\subset
			\bigcap_{ \substack{ z \in \mathcal{S}, 0 < t < 1} } \left\{
				\absa{
					f_t\pa{z} - f_0\pa{z_t}
				} \leq \frac{\varphi^{30}}{N\eta \sqrt{\kappa(z)}}
			\right\}.
		\]
	Indeed, for any given $z$ and $t$, choose $w=z^{(m,p)}$ and $t_\ell$ such that $t_\ell \leq t \leq t_{\ell + 1}$ and $\abs{z- w} < N^{-5}$,
	and write
		\[
			\absa{
				f_t\pa{z} - f_0\pa{z_t}
			}
			\leq \absa{f_t(z) - f_t(w)} + \absa{f_t(w) - f_{t_\ell}(w)} + \absa{f_0(z_t) - f_0(z_{t_\ell})} + \absa{f_0(z_{t_\ell}) - f_0(w_{t_\ell})}
				+ \absa{f_{t_\ell}(w) - f_0(w_{t_\ell})}.
		\]
	On the event $\{ \tau = 1 \}$, 
		\[
			\absa{f_t(z) - f_t(w)} \leq \absa{  \sum_{k=1}^N \frac{ \mathfrak{u}_k(t)   }{x_k(t) - z}   - \sum_{k=1}^N \frac{ \mathfrak{u}_k(t)   }{x_k(t) - w} }
			\leq
			N \pa{\sup_{1 \leq k \leq N} \absa{ \mathfrak{u}_k(t) }} \frac{ \abs{z-w}   }{ (\im z) (\im w) }
			=
			\OO\pa{N^{-2}},
		\] 	
	say. Applying Lemma \ref{lem:z_t-z}, the same argument bounds 
	$\absa{f_0(z_t) - f_0(z_{t_\ell})}$ and $\absa{f_0(z_{t_\ell}) - f_0(w_{t_\ell})}$ similarly.
	Next, we have the crude bounds $\sup_{1\leq k \leq N} \absa{ \mathfrak{u}_k(u) } < 1$,
		\begin{align*}
			\absa{s_u(E+\ii \eta)} \leq \eta^{-1}, 
			\quad
			\absa{\partial_z f_u(E+\ii \eta)} \leq N  \eta^{-2},
			\quad
			\absa{\partial_{zz} f_u(E + \ii \eta)} \leq N \eta^{-3}.
		\end{align*}
	Therefore, by \eqref{lem:ito_ft}, on the event $\cap_k A_{\ell, m, k}$, we have $\absa{f_{t_\ell}(w) - f_0(w_{t_\ell})} = \OO(N^{-2})$ as well.
	Finally, on the event $\{ \tau = 1\}$, we have $\absa{f_{t_\ell}(w) - f_0(w_{t_\ell})} \leq \varphi^{25}/N\eta_p \sqrt{\kappa(z^{(m,p)})}$. 
	To establish the sufficiency of \eqref{eq:suff_to_prove}, it remains to show $\P(\cap_{\ell, m, k}A_{\ell, m, k}) \geq 1-N^{-D}$ for any $D>0$.
	Let $M_u = e^{-\frac{s}{2}}\int_{t_\ell}^u \frac{ \mathfrak{u}_k(s)}{  ( z^{(m,p)} - x_k(s) )^2  } \rd B_k(s)$, and note the deterministic estimate
		\[
			\left<M\right>_{t_{\ell + 1}} 
			\leq \int_{t_\ell}^{t_{\ell+1}}  \frac{ \absa{ \mathfrak{u}_k(s)  }^2   }{   \absa{z^{(m,p)} - x_k(s)}^4     } \, \rd s 
			< \varphi^{-8}N^{-6}.
		\]	
	Choosing $\mu = \varphi^{-8}N^{-6}$ in \eqref{eq:martingale_est}, we have $\P(A_{\ell, m, k}) \geq 1 - e^{-c \varphi^{1/5}}$, 
	from which we conclude it is sufficient to prove \eqref{eq:suff_to_prove} in order to prove the proposition. \\

	Let $t = t_\ell$, $z = z^{(m,p)}$ for some $0 \leq \ell, m, p \leq N^{10}$, $\abs{E_m} \leq 2$, and let $g_u(z) = f_u(z_{t-u})$.
	By It\^{o}'s formula and \eqref{eq:characteristics}, we have
		\[
			\rd g_{u \wedge \tau}(z) = \e_u\pa{z_{t-u}} \rd \pa{u \wedge \tau} 
			- \frac{e^{-u/2}}{\sqrt{N}} \sqrt{\frac{2}{\beta}} \sum_{k=1}^N \frac{\mathfrak{u}_k(u)}{\pa{z_{t-u} - x_k(u)}^2} \rd B_k \pa{u \wedge \tau},
		\]
	where 
		\[
			\e_u(z) = \pa{ s_u(z) - m(z) } \partial_z f_u + \frac{1}{N}\pa{ \frac{2}{\beta} - 1 } \pa{ \partial_{zz} f_u  }.
		\]
	By the local law \eqref{eq:local_law_original}, we have
		\[
			\int_0^t \absa{
				s_u\pa{z_{t-u}} - m\pa{z_{t-u}}}
				\absa{ \partial_z f_u\pa{z_{t-u}} } \, \rd (u \wedge \tau)
			\leq
			\int_0^t \frac{\varphi}{N \im \pa{z_{t-u}}} \sum_{k=1}^N \frac{ \absa{\mathfrak{u}_k(u)}  }{  \absa{  z_{t-u} - x_k(u) }^2   } \, \rd (u \wedge \tau),
		\]
	and applying Proposition \ref{prop:im_f} and Lemma \ref{lem:z_t-z}, we have
		\begin{multline*}
			\int_0^t \frac{\varphi}{N \im \pa{z_{t-u}}} \sum_{k=1}^N \frac{ \abs{\mathfrak{u}_k(u)}  }{  \absa{  z_{t-u} - x_k(u) }^2   } \, \rd (u \wedge \tau)
			\leq
			\int_0^t \frac{\varphi^2 \im \tilde{f} \pa{ z_{t-u} }}{N \pa{\im\pa{z_{t-u}}}^2 } \, \rd (u \wedge \tau) \\
			\leq
			\frac{\varphi^2}{N} \frac{\kappa(z)^{1/2}}{ \max\pa{ \kappa(z)^{1/2}, t  }  } \int_0^t \frac{1}{\pa{\eta + \kappa(z)^{1/2}(t-u)}^2} \, \rd (u \wedge \tau)
			\leq
			\frac{1}{N\eta \kappa(z)^{1/2}}.
		\end{multline*}
	Furthermore, we have
		\[
			\frac{1}{N} \int_0^t \absa{\partial_{zz} f_u\pa{z_{t-u}}} \, \rd u 
			\leq
			\int_0^t \frac{\varphi}{N \im \pa{z_{t-u}}} \sum_{k=1}^N \frac{ \abs{\mathfrak{u}_k(u)}  }{  \absa{  z_{t-u} - x_k(u) }^2   } \, \rd (u \wedge \tau)
			\leq
			\frac{1}{\varphi N\eta \kappa(z)^{1/2}}.
		\]	
	Finally, we bound $\sup_{0 \leq s \leq t} \abs{M_s}$, where
		\[
			M_s = \int_0^s \frac{e^{-u/2}}{\sqrt{N}} \sum_{k=1}^N \frac{ \mathfrak{u}_k(u)  }{ \pa{z_{t-u} - x_k(u)}^2} \, \rd B_k(u \wedge \tau).
		\]
	To do so, we will bound
		\begin{equation}
		\label{eq:bracket}
			\int_0^t \frac{1}{N} \sum_{k=1}^N \frac{ \absa{\mathfrak{u}_k(u)}^{2}  }{ \absa{z_{t-u} - x_k(u)}^4} \, \rd (u \wedge \tau).
		\end{equation}
	and apply \eqref{eq:martingale_est}. On the set $\mathcal{A}$ (see \eqref{eq:rigidity_set}), we have
	$\abs{x_k(u) - \gamma_k} \ll \abs{z_{t-u} - \gamma_k}$, and therefore, applying \eqref{eq:tau1}, we may bound \eqref{eq:bracket} by
		\begin{align}
		\nonumber
			\int_0^t \frac{1}{N} \sum_{k=1}^N \frac{ \mathfrak{u}^2_k(u)  }{ \absa{z_{t-u} - \gamma_k}^4}  \rd (u \wedge \tau)
			&\leq 
			\frac{\varphi^{20}}{N^2} \int_0^t \frac{1}{N}   \sum_{k=1}^N \frac{ 1  }{ \absa{z_{t-u} - \gamma_k}^4}  \frac{1}{ \kappa\pa{\gamma_k} \vee u^2}   
				 \rd (u \wedge \tau) \\
			&\leq 
			\frac{\varphi^{20}}{N^2} \int_0^t  \int_{-2}^2 \frac{ \rd \rho(x)  }{ \absa{z_{t-u} - x}^4 \kappa(x)}  \rd x\rd (u \wedge \tau).
		\label{eq:int_second_term}
		\end{align}
	Note that for $w = E(w) + \ii \eta(w)$, and $E(w), \eta(w) > 0$, since $\int_0^\infty \frac{\rd x}{(w-x)\sqrt{x}} = \pi w/(-w)^{3/2}$, we have
		\[ 
			\int_{-2}^2 \frac{\rd \rho(x)}{\abs{w-x}^4 \kappa(x)} 
			\leq 
			\frac{1}{\eta(w)^3} \im \int_{-2}^2 \frac{\rd x}{(w-x) \kappa(x)^{1/2}}
			\leq
			\frac{C}{\eta(w)^3\kappa(w)^{1/2}}.
		\]
	Therefore, applying Lemma \ref{lem:z_t-z}, we can bound \eqref{eq:int_second_term} by
		\[
			\int_0^t \frac{C}{\eta\pa{z_{t-s}}^3 \kappa\pa{z_{t-s}}^{1/2}} \, \rd s
			\leq
			\int_0^t \frac{C}{\pa{\eta + s \kappa(E)^{1/2}}^3\kappa(z)^{1/2}} \, \rd s \leq \frac{C}{\eta^2 \kappa(z)},
		\]
	which means in total we have
		\[
			\int_0^t \frac{1}{N} \sum_{k=1}^N \frac{ \mathfrak{u}^2_k(u)  }{ \absa{z_{t-u} - x_k(u)}^4}  \rd (u \wedge \tau)
			\leq
			\frac{\varphi^{25}}{(N\eta)^2 \kappa(z)}.
		\]
	Applying \eqref{eq:martingale_est} and combining and the above estimates, we have
		\[
			\P\pa{
			\sup_{0 \leq s \leq t_\ell}
				\absa{
					f_{s \wedge \tau} \pa{z^{(m,p)}} - f_{t_\ell - (s \wedge \tau)} \pa{z^{(m,p)}_{t_\ell - (s\wedge \tau)}}
				}
			\geq
			\frac{\varphi^{25}}{N \eta_p \kappa\pa{z^{(m,p)}}^{1/2}}
			}
			\leq N^{-D}
		\]
	for any $D>0$ and $N$ sufficiently large. Finally, a union bound gives that for any $D>0$ and $N$ sufficiently large,
		\[
			\P(\tau < 1)
			\leq
			\P\pa{
			\sup_{\substack{ 0 \leq \ell, m, p \leq N^{10} \\  0 \leq s \leq t_\ell}}
				\absa{
					f_{s \wedge \tau} \pa{z^{(m,p)}} - f_{t_\ell - (s \wedge \tau)} \pa{z^{(m,p)}_{t_\ell - (s\wedge \tau)}}
				}
			\geq
			\frac{\varphi^{25}}{N \eta_p \kappa\pa{z^{(m,p)}}^{1/2} }
			}
			\leq N^{-D},
		\]
	which completes the proof.
\end{proof}

With Proposition \ref{prop:flow_est} in hand, we come to the main result of this section, Proposition \ref{prop:coupling}.

	\begin{proposition}
	\label{prop:coupling}
	Let $\epsilon > 0$, $\tau = N^{-\epsilon}$, and let $z_\tau$ be as in \eqref{eq:characteristics}. With $\eta(E) = e^{(\log N)^{1/4}} \ell(E)$, let $z = E + \ii \eta(E)$.
	Then for any $\delta > 0$, 
		\[ 
			\lim_{N\to\infty} \P \pa{
				\pa{ L_N(\bl(\tau), z) - L_N(\bmu(\tau), z) } - \pa{ L_N(\bl(0), z_\tau) - L_N(\bmu(0), z_\tau)} > \delta
			} = 0.  
		 \]
	\end{proposition}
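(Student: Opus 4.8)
\textit{Plan.} The plan is to reduce the statement to a flow estimate for the observable $f_t$ and then read it off from (the proof of) Proposition~\ref{prop:flow_est}. First I would record the identity converting $L_N$-differences into integrals of $f_t$. For fixed $t$ and $z$ with $\im z>0$, the point $z-x_k^{(\nu)}(t)$ stays in the open upper half-plane for all $\nu\in[0,1]$, so $\log\pa{z-x_k^{(\nu)}(t)}$ is analytic in $\nu$, and by \eqref{eq:def_uk_f},
\[
	\frac{\rd}{\rd\nu}\sum_{k=1}^N\log\pa{z-x_k^{(\nu)}(t)}
	= \sum_{k=1}^N \frac{\frac{\rd}{\rd\nu}x_k^{(\nu)}(t)}{x_k^{(\nu)}(t)-z}
	= f_t(z).
\]
Since the deterministic terms $N\int\log(z-x)\,\rd\varrho(x)$ in \eqref{def:L_N} cancel and $x_k^{(0)}(t)=\lambda_k(t)$, $x_k^{(1)}(t)=\mu_k(t)$, integrating over $\nu\in[0,1]$ gives $L_N(\bl(t),z)-L_N(\bmu(t),z)=-\int_0^1 f_t^{(\nu)}(z)\,\rd\nu$ for every $t$. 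Applying this with $(t,z)=(\tau,\,E+\ii\eta(E))$ and with $(t,z)=(0,\,z_\tau)$, the quantity in the Proposition is, in absolute value, at most $\sup_{\nu\in[0,1]}\absa{f_\tau^{(\nu)}(z)-f_0^{(\nu)}(z_\tau)}$, so it suffices to show this supremum tends to $0$ in probability.

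This is essentially the content of Proposition~\ref{prop:flow_est}. That proposition is phrased for $z\in\mathcal{S}(\varphi)$, but its proof runs the stopping-time/It\^o argument over the grid $z^{(m,p)}=E_m+\ii\eta_p$ whose imaginary parts $\eta_p$ range up to order $1$, and on the event $\{\tau=1\}$ — which has overwhelming probability — it yields $\absa{f_{t_\ell}(z^{(m,p)})-f_0(z^{(m,p)}_{t_\ell})}\le\varphi^{25}\bigl(N\eta_p\sqrt{\kappa(z^{(m,p)})}\bigr)^{-1}$ for \emph{every} $\ell,m,p$. Choosing $E_m$ within $N^{-10}$ of $E$, $\eta_p$ within $N^{-10}$ of $\eta(E)$ (possible since $\varphi^2\ell(E)\le\eta(E)\le 1$), and $t_\ell$ within $N^{-10}$ of $\tau=N^{-\e}<1$, and absorbing the $\OO(N^{-2})$ discretization error with the crude Lipschitz bounds on $f_0,f_t$ (using $\im z\gg N^{-10}$), one obtains, with overwhelming probability,
\[
	\sup_{\nu\in[0,1]}\absa{f_\tau^{(\nu)}(z)-f_0^{(\nu)}(z_\tau)}\le\frac{\varphi^{C}}{N\,\eta(E)\sqrt{\kappa(z)}}
\]
for a fixed $C$. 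Here one uses that the ingredients in the proof of Proposition~\ref{prop:flow_est} — the rigidity event $\mathcal{A}$ of \eqref{eq:rigidity_set}, the a priori bounds on $\mathfrak{u}_k^{(\nu)},\mathfrak{v}_k^{(\nu)}$ and on $\im\tilde f_u$ from Proposition~\ref{prop:im_f} (which that argument already invokes at points of $\mathcal{R}(\varphi)$), and the martingale estimate \eqref{eq:martingale_est} — are all uniform in $\nu\in[0,1]$, so the argument applies to each of the interpolating dynamics simultaneously.

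It remains to check that the right-hand side is $\oo(1)$. With $\eta(E)=e^{(\log N)^{1/4}}\ell(E)$ and $\ell(\cdot),\kappa(\cdot)$ as in \eqref{def:l(E)_and_kappa(E)}: in the bulk $\ell(E)\sim N^{-1}\kappa(E)^{-1/2}$ and $\kappa(z)\sim\kappa(E)$, so $N\eta(E)\sqrt{\kappa(z)}\sim e^{(\log N)^{1/4}}$; near the edge $\kappa(z)\sim\max(\kappa(E),\eta(E))$, and a short computation gives $N\eta(E)\sqrt{\kappa(z)}\ge c\,e^{(\log N)^{1/4}}$ there as well. Since $\varphi^C=e^{\OO((\log\log N)^2)}$, the bound is at most $\varphi^C e^{-(\log N)^{1/4}}\to 0$, uniformly in $E$, which finishes the proof. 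The only step that genuinely goes beyond transcribing \cite{Bou2020} is this calibration: the flow error along the characteristic through $z$ scales like $\bigl(N\,\im(z)\sqrt{\kappa(z)}\bigr)^{-1}$, and the choice $\eta(E)=e^{(\log N)^{1/4}}\ell(E)$ fixed in Proposition~\ref{prop:smoothing} is exactly what makes it $\oo(1)$ rather than merely $\oo(\sqrt{\log N})$; so the main (mild) obstacle is the bookkeeping of $\varphi$-powers against $e^{(\log N)^{1/4}}$ uniformly near the edge, together with the discretization and $\nu$-uniformity claims above.
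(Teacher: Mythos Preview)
Your proof is correct and follows essentially the same route as the paper: write the $L_N$-difference as $\int_0^1 f_t^{(\nu)}(z)\,\rd\nu$ and invoke the flow estimate of Proposition~\ref{prop:flow_est}. You are in fact more careful than the paper on two points the paper leaves implicit: (i) that although Proposition~\ref{prop:flow_est} is stated for $z\in\mathcal{S}(\varphi)$, its proof is run on the grid $z^{(m,p)}$ with $\eta_p$ ranging up to order $1$, so the conclusion applies at $\eta(E)=e^{(\log N)^{1/4}}\ell(E)$; and (ii) the explicit check that $\varphi^{C}\bigl(N\eta(E)\sqrt{\kappa(z)}\bigr)^{-1}=\oo(1)$ uniformly in $E$, including at the edge.
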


\begin{proof}[Proof of Proposition \ref{prop:coupling}]
	Observe that with $x_k^{(\nu)}$ as in \eqref{eq:DBM}, we have
		\[
			\frac{\rd}{\rd \nu} L_N(\bx^{(\nu)}(t), z) = e^{ \frac{t}{2}} f^{(\nu)}_t(z).
		\]
	Therefore
		\[
			\pa{ L_N(\bl(\tau), z) - L_N(\bmu(\tau), z) } - \pa{ L_N(\bl(0), z_\tau) - L_N(\bmu(0), z_\tau)} 
			= 
			e^{\frac{t}{2}} \int_0^1 \pa{ f^{(\nu)}_\tau(z) - f^{(\nu)}_0(z_\tau) }  \rd \nu.
		\]
	Applying Proposition \ref{prop:flow_est}, we can bound the moments, 
	$\E(|\int_0^1 ( f^{(\nu)}_\tau(z) - f^{(\nu)}_0(z_\tau) )  \rd \nu|^{2p}) = \oo(1)$ 
	with probability $1-N^{-D}$ for any $D> 0$ and $N$ sufficiently large. This completes the proof. 
\end{proof}

\section{Proof of Theorem \ref{thm:log_corr_field}}
\label{sec:end_of_proof}
In this section, we prove Theorem \ref{thm:log_corr_field} with $m=1$, for $\re L_N$ in the real generalized Wigner case. 
The generalization to $m > 1$ follows in an identical fashion, as do the extensions to $\im L_N$ and the complex generalized Wigner case.
We first collect the required inputs, beginning with the following four-moment matching theorem.  
\begin{theorem}
\label{thm:4_moment_matching_det}
	Let $F: \mathbb{R} \to \mathbb{R}$ be smooth with compact support, and let $W$ and $V$ be two Wigner matrices 
	such that for $1 \leq i, j \leq N$,
		\begin{numcases}{\mathbb{E}\left(w_{ij}^a\right) = }
		\label{moment matching assumption 3}
			\mathbb{E}\left(v_{ij}^a\right)  & $a \leq 3$ \\
			\label{moment matching assumption 4}
			\mathbb{E}\left(v_{ij}^a\right)+ \OO(\tau) & $a = 4$,
		\end{numcases}
	where $\tau = N^{-\e}$ is as in Proposition \ref{prop:coupling}. Further, let $c^R_N$, $c_N^I$ be any deterministic sequences and define
		\[
			u_N^R(W) = \frac{\re\log \det \left(W -z \right) +c_N^R}{\sqrt{\log N}},
			\quad
			u_N^I(W) = \frac{\im\log \det \left(W -z \right) +c_N^I}{\sqrt{\log N}}
		\]
	where $z=E+\ii \eta$, $E \in [-2,2]$, and $\eta = \eta(E) = e^{(\log N)^{1/4}} \ell(E)$ is as in Proposition \ref{prop:smoothing}. Then
		\begin{equation}
		\label{eqn:enough}
			\lim_{N\to \infty} \mathbb{E} \left(F\left( u_N^R(W)\right) - F\left( u_N^R(V) \right)\right) = 0
			\text{ and }
			\lim_{N\to \infty} \mathbb{E} \left(F\left( u_N^I(W)\right) - F\left( u_N^I(V) \right)\right) = 0.
		\end{equation} 
\end{theorem}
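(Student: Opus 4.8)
The plan is to run a Lindeberg replacement of the matrix entries, exactly as in the proofs of Propositions \ref{prop:4_moment_matching} and \ref{prop:4_mom_matching_high_moments}, but now tracking the effect on $F(u_N^R(\cdot))$ instead of on $\Tr f(\cdot)$ or $T_{2p}$. I will only describe the argument for $u_N^R$; the claim for $u_N^I$ is the same with $\re$ replaced by $\im$, and the deterministic shifts $c_N^R,c_N^I$ are irrelevant since the comparison is between $W$ and $V$ with the same shift. Fix a bijection $\phi\colon\{(i,j):i\le j\}\to\llbracket 1,\gamma(N)\rrbracket$, $\gamma(N)=N(N+1)/2$, and let $H_\gamma$ be the Wigner matrix whose $(i,j)$ entry ($i\le j$) is drawn from $V$ when $\phi(i,j)\le\gamma$ and from $W$ otherwise, so $H_0=W$ and $H_{\gamma(N)}=V$. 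Telescoping, it is enough to prove
\[
  \Bigl|\,\E\bigl(F(u_N^R(H_{\gamma-1}))-F(u_N^R(H_\gamma))\bigr)\Bigr|\le \frac{C\,\tau}{N^2\sqrt{\log N}}+\frac{C}{N^{5/2}\sqrt{\log N}}+N^{-D}
\]
uniformly in $1\le\gamma\le\gamma(N)$ (any $D>0$), since summing over the $\asymp N^2$ values of $\gamma$ then gives a bound $O\bigl(\tau(\log N)^{-1/2}+N^{-1/2}(\log N)^{-1/2}+N^{-D}\bigr)\to0$.

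For a fixed $\gamma$, write $(i,j)=\phi^{-1}(\gamma)$, let $Q=\Theta_0^{(i,j)}H_\gamma$ (so $H_{\gamma-1},H_\gamma$ are obtained from $Q$ by restoring the $(i,j),(j,i)$ entries from the $W$- resp.\ $V$-law), and let $x,y$ be those entries, which are independent of $Q$. Since $\eta(E)>0$, the map $H\mapsto\log\det(H-z)$ is real-analytic in the matrix entries, so I would Taylor-expand $F(u_N^R(H_{\gamma-1}))$ and $F(u_N^R(H_\gamma))$ in the $(i,j)$ entry around $Q$, to order $4$ with fifth-order Lagrange remainder evaluated at $\Theta_{w}^{(i,j)}H_{\gamma-1}$ resp.\ $\Theta_{w'}^{(i,j)}H_\gamma$ for random $w,w'\in[0,1]$. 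Taking expectations, using the independence of $Q$ from $x,y$ and $\E(x^\ell)=\E(y^\ell)$ for $\ell\le 3$, the orders $\ell=0,1,2,3$ cancel, leaving the fourth-order term $\tfrac1{4!}\E\bigl(\partial_{H_{ij}}^4F(u_N^R(Q))\bigr)\bigl(\E(x^4)-\E(y^4)\bigr)$ and the two fifth-order remainders.

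The analytic input is the entrywise local law \eqref{eq:local_law_entrywise_GW}, valid at $z=E+\ii\eta(E)$ because $\eta(E)\ge e^{(\log N)^{1/4}}N^{-1}\ge\varphi/N$: on an event $\mathcal{E}$ with $\P(\mathcal{E}^c)\le N^{-D}$ one has $|G_{ab}(z)|\le C$ for all $a,b$ (diagonal entries bounded by $|m(z)|+\varphi^C\Psi(z)$, off-diagonal by $\varphi^C\Psi(z)$), and the same holds for $\Theta_w^{(a,b)}H$ uniformly in $w\in[0,1]$ by a resolvent expansion as in the proof of Lemma \ref{lem:sc_estimates} (cf.\ \cite[(4.54)]{LanLopMar2020}). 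Since $\partial_{H_{ij}}^\ell\log\det(H-z)$ is a polynomial of degree $\ell$ in the entries $G_{ii},G_{jj},G_{ij},G_{ji}$, this gives $|\partial_{H_{ij}}^\ell\log\det(H-z)|\le C_\ell$ on $\mathcal{E}$ and the deterministic bound $\le C_\ell\eta^{-\ell}\le N^C$; hence $|\partial_{H_{ij}}^\ell u_N^R|\le C_\ell/\sqrt{\log N}$ on $\mathcal{E}$, and by the chain rule (each term of $\partial_{H_{ij}}^\ell F(u_N^R(H))$ is $F^{(k)}(u_N^R)$ times a product of $k$ such derivatives, hence carries at least one factor $(\log N)^{-1/2}$, the single-factor term $F'(u_N^R)\partial_{H_{ij}}^\ell u_N^R$ dominating),
\[
  \1_{\mathcal{E}}\,\bigl|\partial_{H_{ij}}^\ell F(u_N^R(H))\bigr|\le\frac{C}{\sqrt{\log N}},\qquad \bigl|\partial_{H_{ij}}^\ell F(u_N^R(H))\bigr|\le N^C\quad(\ell\le5),
\]
using that $F$ and all its derivatives are bounded. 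Then: for the fourth-order term, \eqref{moment matching assumption 3}--\eqref{moment matching assumption 4} give $|\E(x^4)-\E(y^4)|=O(\tau/N^2)$ for the matrix entries, while $|\E(\partial_{H_{ij}}^4F(u_N^R(Q)))|\le C(\log N)^{-1/2}+N^C N^{-D}\le C(\log N)^{-1/2}$, so their product is $O(\tau N^{-2}(\log N)^{-1/2})$. For a fifth-order remainder, split according to $\mathcal{E}$: on $\mathcal{E}$ it is $\le C(\log N)^{-1/2}\E|x|^5\le C(\log N)^{-1/2}N^{-5/2}$ (sub-exponential decay giving $\E|x|^5\le CN^{-5/2}$), and off $\mathcal{E}$, Cauchy--Schwarz with $\E|x|^{10}\le CN^{-5}$ and the deterministic bound gives $N^C\cdot N^{-5/2}\cdot N^{-D/2}=o(N^{-2})$; likewise with $y$ in place of $x$. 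This yields the per-$\gamma$ estimate above, hence the first limit in \eqref{eqn:enough}; the claim for $u_N^I$ follows by the same argument with $\re$ replaced by $\im$ throughout.

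The main obstacle is precisely the derivative accounting in the displayed bounds, where there is essentially no slack: for an off-diagonal entry $\partial_{H_{ij}}^2\log\det(H-z)$ already contains the order-one term $-G_{ii}G_{jj}$, so $\partial_{H_{ij}}^\ell u_N^R$ is \emph{not} small for $\ell\ge2$, and the only gain driving the telescoping is the single factor $(\log N)^{-1/2}$ attached to $F'$ together with the $\tau N^{-2}$ coming from the fourth-moment mismatch and the $N^{-1/2}$-normalization of the entries — after summing $N^2$ entries this just barely produces $O(\tau(\log N)^{-1/2})\to0$. One must also treat the $N$ diagonal entries $(i,i)$ of the generalized Wigner matrix separately (there $\partial_{H_{ii}}u_N^R=O((\log N)^{-1/2})$ is likewise not small, but their number is only $N$, so they contribute $O(\tau N^{-1}(\log N)^{-1/2})$ and are harmless) and check that the $\Theta_w$-interpolated matrices obey the same entrywise local law, so that the fifth-order remainders can indeed be controlled.
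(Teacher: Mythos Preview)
Your proposal is correct and follows precisely the approach the paper intends: the paper's proof just says ``identical to the proof of \cite[Theorem 4.4]{BouMod2019}'', and that proof is exactly the Lindeberg replacement you describe, with the derivative bounds $|\partial_{h_{ij}}^\ell\log\det(H-z)|\le C_\ell$ coming from the entrywise local law at scale $\eta(E)$ and the single factor $(\log N)^{-1/2}$ from the chain rule giving just enough gain after summing $N^2$ entries against the fourth-moment mismatch $O(\tau N^{-2})$. Your discussion of the only subtle points --- that $\partial_{h_{ij}}^2\log\det$ contains the order-one term $-G_{ii}G_{jj}$ so there is no additional smallness beyond the single $(\log N)^{-1/2}$, and that the local law must be verified for the interpolated matrices $\Theta_w^{(i,j)}H$ via a resolvent expansion as in Lemma~\ref{lem:sc_estimates} --- is exactly right.
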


\begin{proof}
	The proof is identical to the proof of \cite[Theorem 4.4]{BouMod2019}.
\end{proof}

\begin{proposition}
	\label{prop:meso_var_bound} 
	Let $z = E + \ii \eta(E)$ with $E \in [-2,2]$ and $\eta(E) = e^{(\log N)^{1/4}} \ell(E)$.
	Let $\tau = N^{-\e}$, $z_\tau$ be as in \eqref{eq:characteristics} with $z_0 = z$,
	and let $\chi(x)$ be a smooth function such that $\chi(x) \leq 1$, $\chi(x) \equiv 1$ for $x\in[-3,3]$ and $\chi(x) = 0$ 
	for $\abs{x} \geq 4$. There exists $C>0$ such that for large enough $N$, and $\bl$ the eigenvalues of a generalized Wigner matrix, we have
	\[ 
		\Var \pa{ \re L_N(\bl(0), z_\tau)\chi(\bl(0)) } \leq C\pa{\epsilon \log N}
		\text{ and }
		\Var \pa{ \im L_N(\bl(0), z_\tau) \chi(\bl(0))} \leq C\pa{\epsilon \log N}.
	\]
	Here and in the rest of this section, $\Var(f(\bl)\chi(\bl)) = \Var(\sum f(\lambda_k) \chi(\lambda_k))$.
\end{proposition}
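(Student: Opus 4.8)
The plan is to bound the variance of $L_N(\bl(0),z_\tau)\chi(\bl(0))$ by writing it as a linear statistic $\sum_k \psi(\lambda_k)$ with $\psi(x) = \log(z_\tau - x)\chi(x)$ (taking real or imaginary parts as needed), and then applying the central limit theorem at the almost macroscopic scale developed in Appendix \ref{sec:mesoscopic_CLT} (Propositions \ref{prop:char_function} and \ref{prop:var_GW}). The key point is that by Lemma \ref{lem:z_t-z}, with $\tau = N^{-\e}$ and $z = E+\ii\eta(E)$ where $\eta(E) = e^{(\log N)^{1/4}}\ell(E)$, the imaginary part of $z_\tau$ is lifted to a scale $\im z_\tau \sim \kappa(z)^{1/2}\tau + \eta \gtrsim N^{-\e}\cdot(\text{something})$, i.e. to a mesoscopic (indeed almost macroscopic) scale where the CLT inputs apply. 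So the first step is simply to verify that $\im z_\tau$ lies in the regime covered by Proposition \ref{prop:var_GW}, distinguishing the edge case ($\kappa(E) \le N^{-\e'}$) from the bulk case ($\kappa(E) \ge c$).

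Next I would compute, or rather quote, the limiting variance formula from Appendix \ref{sec:mesoscopic_CLT}. For a test function $\psi$, the limiting variance of $\sum_k \psi(\lambda_k)$ in the generalized Wigner case is (up to lower order terms) a weighted $H^{1/2}$-type norm of $\psi$; concretely, one gets an expression of the form $\frac{1}{\beta}\cdot\frac{1}{2\pi^2}\iint \left(\frac{\psi(x)-\psi(y)}{x-y}\right)^2 \frac{4-xy}{\sqrt{4-x^2}\sqrt{4-y^2}}\,\rd x\,\rd y$ plus a diagonal correction involving $\big(\int \psi\,\rd\varrho\big)^2$-type terms. The second step is to insert $\psi(x) = \re\log(z_\tau - x)$ (the imaginary case is analogous, $\im\log(z_\tau-x) = \Arg(z_\tau - x)$) and estimate this norm. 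Because $z_\tau$ has imaginary part of order at least $N^{-\e}$ up to logarithmic factors, the function $x\mapsto \log(z_\tau - x)$ varies on scale $\im z_\tau$, and a direct estimate of the $H^{1/2}$ seminorm of such a function gives something of size $\log(1/\im z_\tau) = \OO(\e\log N)$ — this is exactly the logarithmic divergence that produces the log-correlated structure, truncated at the mesoscopic scale. The cutoff $\chi$ is harmless since $\chi \equiv 1$ on a neighborhood of $[-2,2] \supset \supp\varrho$ and is smooth, contributing only an $\OO(1)$ additive error to the seminorm.

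The third step is to handle the passage from the true (finite-$N$) variance to the limiting one: Proposition \ref{prop:var_GW} should give $\Var(\sum_k\psi(\lambda_k)) = V_N(\psi) + \oo(\text{main term})$ or at least $\Var \le C V_\infty(\psi) + \OO(1)$ uniformly, where $V_N$ is the explicit variance. One must be slightly careful that the test function $\psi$ itself depends on $N$ through $z_\tau$, so I would want the mesoscopic CLT input stated with enough uniformity in the test function (which it is, since it tracks the scale parameter $\eta' = \im z_\tau$), and then the bound $V_\infty(\psi) \le C\log(1/\im z_\tau) \le C\e\log N$ closes the argument for both real and imaginary parts.

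\emph{Main obstacle.} The real difficulty is the variance computation at the edge: when $E$ is within $N^{-\e}$ of $\pm 2$, the weight $\frac{4-xy}{\sqrt{4-x^2}\sqrt{4-y^2}}$ has an integrable singularity at the edge and $z_\tau$ sits near that singularity, so one must show that the $H^{1/2}$-type integral of $\log(z_\tau - \cdot)$ against this singular weight is still only $\OO(\e\log N)$ and does not pick up extra powers of $\kappa(z)^{-1}$. This is where the specific structure of the generalized Wigner variance (as opposed to general Wigner-type, where the variance is intractable at the edge, as the remark before Section \ref{section:stochastic_advection_equation} notes) is used — Proposition \ref{prop:var_GW} is precisely the tool that makes this tractable, and the estimate amounts to the computation in \cite{Bou2020} (and the $\beta$-ensemble analysis in \cite{BouModPai2021}) that the edge contribution to the log-variance is $\frac{1}{\beta}\log N$ with the right constant. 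The rest — verifying scales, controlling the cutoff, passing from finite $N$ to the limit — is routine.
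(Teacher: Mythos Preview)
Your proposal is correct and follows essentially the same approach as the paper: apply the variance formula from Proposition \ref{prop:var_GW} to the test function $\psi(x)=\log(z_\tau-x)\chi(x)$, then bound the resulting $H^{1/2}$-type integral by $C\e\log N$ using the computation in \cite[Lemma 4.9]{BouModPai2021}. The only point you underemphasize is that Proposition \ref{prop:var_GW} produces, in addition to the main $H^{1/2}$ term, a model-dependent correction $\e(f)$ (coming from the matrix $A$ in the decomposition of $F$), and the paper invokes Lemma \ref{lem:var_A_log} separately to show $\e(\re f),\e(\im f)=\OO(\e\log N)$ --- this is not $\OO(1)$ as your ``lower order terms'' phrasing suggests, but it is of the same order as the main term and hence harmless for the stated bound.
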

\begin{proof}
	Proposition \ref{prop:var_GW} gives the expression for the variance of linear statistics for a general class of functions,
	which includes $\re L_N(\bl(0), z_\tau)\chi(\bl(0))$ and $\im L_N(\bl(0), z_\tau)\chi(\bl(0))$, for $0< \e <1/100$, say. To bound this variance, we use 
	Lemma \ref{lem:var_A_log}, which gives a formula for this variance, and \cite[Lemma 4.9]{BouModPai2021}, which evaluates this expression more explicitly. 
\end{proof}

\begin{lemma}
\label{lem:exp}
	Let $\bl$ be the eigenvalues of a generalized Wigner matrix, and let
	\[
		\delta_N^{\rm(GW)}(z) = \frac{1}{4} \pa{ \frac{2}{\beta}-1} \log \pa{\kappa(z) \vee N^{-2/3}}.
	\]
	Then for any $z = E + \ii \eta$, $E \in [-2,2]$ and $0 < \eta < 1$,
	\[
		\E\left[ \re L_N(\bl, z) \right] = \delta_N^{\rm(GW)}(z)+ \OO(1),
		\quad
		\E\left[ \im L_N(\bl, z) \right] = \OO(1).
	\]
	Here $\beta=1$ corresponds to the real case, and $\beta=2$ to the complex case.
\end{lemma}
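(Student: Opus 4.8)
The plan is to reduce everything to a one-dimensional integral of the Stieltjes transform and then feed in a second-order local law. Write $w=E+\ii s$. Since
\[
    \partial_z L_N(\bl,z)=\sum_{j=1}^N\frac{1}{z-\lambda_j}-N\int\frac{\diff\varrho(x)}{z-x}=N\bigl(m(z)-s(z)\bigr),
\]
and since $L_N(\bl,E+\ii T)\to 0$ as $T\to\infty$ (expand the logarithms in powers of $(E+\ii T)^{-1}$: the leading two coefficients are $\Tr H$ and $\Tr H^2-N$, which are finite, so each term vanishes in the limit), integrating along the vertical segment from $E+\ii T$ down to $z$ and letting $T\to\infty$ gives
\[
    \E\bigl[L_N(\bl,z)\bigr]=\ii N\int_\eta^\infty\Bigl(\E\bigl[s(E+\ii s)\bigr]-m(E+\ii s)\Bigr)\diff s.
\]
So it suffices to control $\E[s(w)]-m(w)$ to next order.

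Next I would invoke a second-order local law for generalized Wigner matrices of the form $\E[s(w)]=m(w)+N^{-1}b(w)+\mathcal E_N(w)$, valid uniformly for $w$ at distance $\gtrsim N^{-2/3}$ from $\{-2,2\}$, with two key structural features: (i) $b$ is analytic on $\CC\setminus[-2,2]$, decays like $w^{-3}$ at infinity, and its only \emph{poles} are simple poles at $\pm 2$ whose residues are the \emph{universal} G$\beta$E values $\mp\tfrac14\pa{\tfrac2\beta-1}$ — in particular the residues do not see the fourth cumulant or the detailed variance profile (these enter $b$ only through milder $(w\mp2)^{-1/2}$-type singularities and a regular part, using that $(\sigma_{ij}^2)$ is doubly stochastic so all self-consistent $m_i$ equal $m(w)$); and (ii) the remainder satisfies $N\int_\eta^\infty|\mathcal E_N(E+\ii s)|\diff s=\OO(1)$ uniformly over the relevant $z$. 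Granting this, one integrates explicitly: if $B$ is the antiderivative of $b$ vanishing at infinity, then $\ii N\int_\eta^\infty N^{-1}b(E+\ii s)\diff s=-B(z)+\OO(1)$, and from the form of $b$,
\[
    B(w)=\tfrac14\pa{\tfrac2\beta-1}\log\frac{\pa{w+\sqrt{w^2-4}}^2}{w^2-4}+B_\flat(w),
\]
where $B_\flat$ (the antiderivative of the milder and regular part of $b$) is bounded on a neighbourhood of $[-2,2]$ and vanishes at infinity. Hence $\E[L_N(\bl,z)]=-B(z)+\OO(1)$.

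It then remains to take real and imaginary parts and to treat the extreme edge. Near $w=\pm 2$ one has $B(w)=\mp\tfrac14\pa{\tfrac2\beta-1}\log(w\mp2)+\OO(1)$, so $\re B(w)=-\tfrac14\pa{\tfrac2\beta-1}\log\kappa(w)+\OO(1)$ uniformly for $w=E+\ii\eta$ with $E\in[-2,2]$, $0<\eta<1$ (when $E$ is bounded away from $\pm2$ one checks directly that both $B(w)$ and $\log\kappa(w)$ are $\OO(1)$), while $\im B(w)=\OO(1)$ since the arguments of the logarithms are bounded. This yields $\E[\re L_N(\bl,z)]=\tfrac14\pa{\tfrac2\beta-1}\log\kappa(z)+\OO(1)$ and $\E[\im L_N(\bl,z)]=\OO(1)$, which is the claim when $\kappa(z)\ge N^{-2/3}$. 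When $\kappa(z)<N^{-2/3}$ — so $\eta<N^{-2/3}$ and $z$ sits in the Airy window of an edge, where the expansion above is no longer valid — I would split the integral at $s_\ast\sim N^{-2/3}$, apply the above to $\ii N\int_{s_\ast}^\infty$ to get $\tfrac14\pa{\tfrac2\beta-1}\log(N^{-2/3})+\OO(1)$, and bound $\ii N\int_\eta^{s_\ast}(\E[s]-m)\diff s=\OO(1)$ using $|m(w)-\E[s(w)]|=\OO(\sqrt{\kappa(w)}+N^{-1/3})$ on that segment (from \eqref{eq:local_law_original} when $\eta\ge\varphi/N$, and from the $\OO(N^{2/3})$ bound on the edge density of $\E[\mathrm{ESD}]$ otherwise), together with the reduction that $\E[\re L_N(\bl,E+\ii\eta)]$ is constant up to $\oo(1)$ as $\eta\downarrow 0$ because eigenvalues within $\eta$ of $E$ contribute $\OO(N\eta\log(1/\eta))=\oo(1)$.

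The main obstacle is step two: producing the second-order local law for generalized Wigner matrices with the polar part of $b$ identified, uniformly down to distance $\sim N^{-2/3}$ from the spectral edge, and with a remainder whose contour integral is genuinely $\OO(1)$. If a citable statement in exactly this form is not available, one proves it by a cumulant/resolvent expansion (in the spirit of the proof of Theorem \ref{thm:local_law} and of \cite{ErdYauYin2012}), or reduces to the G$\beta$E case — where $\E[\re L_N]=\tfrac14\pa{\tfrac2\beta-1}\log(\kappa(z)\vee N^{-2/3})+\OO(1)$ is available from \cite{BouModPai2021} — and passes from a general variance profile to the G$\beta$E profile by interpolating the profile and checking that the derivative of $\E[\Tr\log((\cdot)-z)]$ along the interpolation stays $\OO(1)$.
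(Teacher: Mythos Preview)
Your approach is essentially the same as the paper's: both rest on computing the subleading correction $\E[s(w)]-m(w)$ to order $N^{-1}$, and the paper carries out precisely the cumulant/resolvent expansion you flag as the main obstacle (Lemmas~\ref{lem:E} and~\ref{lem:E_simplified} in Appendix~\ref{sec:mesoscopic_CLT}), obtaining as leading singular term $-m'_{sc}(w)/(2m_{sc}(w)+w)$, whose antiderivative has exactly the pole structure at $\pm2$ with residues $\mp\tfrac14(\tfrac2\beta-1)$ that you predict, while the fourth-cumulant and variance-profile pieces are shown to contribute $\OO(1)$. The only organizational difference is that the paper packages this via Helffer--Sj\"ostrand applied to $f(x)=\log(z-x)\chi(x)$ rather than integrating the resolvent along a vertical half-line, which is equivalent.
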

\begin{proof}
	When $\beta=1$, this follows from Lemmas \ref{lem:E} and \ref{lem:E_simplified}, which we have stated for the real case.
	One can redo these calculations for the complex case, however, for purposes of identifying the pre-factor $(2/\beta-1)$, we
	can compare with \cite[(4.19)]{BouModPai2021} which covers the GUE case. 
\end{proof}

\begin{proof}[Proof of Theorem \ref{thm:log_corr_field}]
Choose $0 < \e_1 < 1/100$, let $\tau = N^{-\e_1}$, and recall we define $z_\tau$ as in \eqref{eq:characteristics} with $z_0 = z = E + \ii \eta$,
$E \in \mathcal{G}_{\e, c}$ and $\eta = e^{(\log N)^{1/4}} \ell(E)$.
Let $\bl(0)$ (resp. $\bmu(0)$) denote the eigenvalues of a generalized Wigner (resp. GOE) matrix.
By \cite[Theorem 1.8]{BouModPai2021} and Proposition \ref{prop:smoothing}, for some explicit deterministic sequence 
$\delta_N(z)$ we have
	\begin{equation}
	\label{eqn:1}
		\frac{ \re L_N(\bmu(\tau), z) - \delta_N(z)}{\sqrt{\log N}} \to \mathscr{N}(0,1),
	\end{equation}
and therefore, writing $m_N(t) = \delta_N^{\rm (GW)}(z_t)- \delta_N(z_t)$, Proposition \ref{prop:coupling} implies that
	\begin{equation}
	\label{eq:gaussian_plus_remainder}
		\frac{\re L_N(\bl(\tau), z) -\delta_N^{\rm(GW)}}{\sqrt{\log N}} 
		+ \frac{  \re L_N(\bmu(0), z_\tau) - \re L_N(\bl(0), z_\tau) +m_N(\tau)
			- m_N(\tau) + m_N(0)
		}{\sqrt{\log N}} \to \mathscr{N}(0,1).
	\end{equation}
Note that by \cite[Theorem 1.8]{BouModPai2021} and Lemma \ref{lem:exp}, $\abs{\delta_N(z_\tau) - \delta_N^{\rm(GW)}(z_\tau)} = \OO(1)$.
Let $\chi(x)$ be a smooth function such that $\chi(x) \leq 1$, $\chi(x) \equiv 1$ for $x\in[-3,3]$, and $\chi(x) = 0$ 
for $\abs{x} \geq 4$.
By Proposition \ref{prop:meso_var_bound} and Lemma \ref{lem:exp}, 
the second term in \eqref{eq:gaussian_plus_remainder}, call it $R$, satisfies
$\E(R\chi)^2 < C \e_1$, for some universal constant $C > 0$. 
And by Corollary \ref{cor:rigidity}, $(1-\chi)R \to 0$ in probability as $N\to\infty$.
Consequently, for any fixed, smooth and compactly supported function $F$, we have
	\begin{align*}
		\E\pa{  F\pa{  \frac{\re L_N(\bl(\tau), z) -\delta_N^{\rm(GW)}(z)}{\sqrt{\log N}}    }} 
		&= 
		\E\pa{F\pa{ \frac{ \re L_N(\bmu(\tau), z) -\delta_N(z)}{\sqrt{\log N}} } }+ \OO\pa{\norm{F}_{\rm Lip} \E(\abs{R})} \\
		&=
		\E\pa{F(\mathscr{N}(0,1))} + \oo(1) + \OO(\e_1^{1/2}).
	\end{align*}
Applying Theorem \ref{thm:4_moment_matching_det}, the above equation implies
	\[
		\E\pa{ F\pa{ \frac{ \re L_N(\bl(0), z) -\delta_N^{\rm(GW)}(z)}{\sqrt{\log N}} } } = \E ( F(\mathscr{N}(0,1)) )+\oo(1)+\OO(\e_1^{1/2}).
	\]
Applying Proposition \ref{prop:smoothing} again, we have
	\[
		\E\pa{ F\pa{ \frac{ \re L_N(\bl(0), E) -\delta_N^{\rm(GW)}(z)}{\sqrt{\log N}} } } = \E ( F(\mathscr{N}(0,1)) )+\oo(1)+\OO(\e_1^{1/2}).
	\]
Since $\e_1$ is arbitrarily small, this concludes the proof.
\end{proof}

\setcounter{equation}{0}
\setcounter{theorem}{0}
\renewcommand{\theequation}{A.\arabic{equation}}
\renewcommand{\thetheorem}{A.\arabic{theorem}}
\appendix
\setcounter{secnumdepth}{0}
\section[Appendix A: Mesoscopic Central Limit Theorem]
{Appendix \mynameis{A}:\ \ \ Mesoscopic Central Limit Theorem
\label{sec:mesoscopic_CLT}}

In this section, we explain how to modify the proof of \cite[Theorem 5.21]{LanLopSos2021} to 
prove Proposition \ref{prop:char_function}, which we apply to the linear statistics $L_N(\bl(0), z_\tau)$ 
of Proposition \ref{prop:coupling}, when we conclude the proof of Theorem \ref{thm:log_corr_field} in Section \ref{sec:end_of_proof}.
The argument for \cite[Theorem 5.21]{LanLopSos2021} 
proves a central limit theorem for linear statistics of functions with support in the bulk, for the more general "Wigner-Type" class. 
Here we extend 
the argument mildly to include functions with support at the edge. The variance we find is the same as in \cite{LanLopSos2021}, where
the authors use a Taylor expansion to recover the $H^{1/2}$ norm, which goes back to at least \cite{LytPas2009}, as the main term. At the edge, this Taylor expansion
is not sufficient for our purpose,
however, in the generalized Wigner setting, the variance simplifies algebraically which allows us to conclude. 

\subsection{Model Assumptions. }

We begin by introducing the Wigner-Type model. Our first goal will be to show that the central limit theorem argument in \cite{LanLopSos2021}
holds up to the edge, with a variance formula that we will analyze further in the following section, for generalized Wigner matrices.

\begin{definition}
	A Wigner-type matrix $H=H(N)$ is a symmetric or Hermitian $N \times N$ matrix whose upper-triangular elements
	$H_{ij} = \overline{H_{ji}}$, $i \leq j$, are independent random variables with mean zero and variances 
	$\sigma_{ij}^2 = \E(\abs{H_{ij}}^2)$ that satisfy $\sigma_{ij}^2 \sim N^{-1}$ for all $i, j \in \llbracket 1, N \rrbracket$.
	In the Hermitian case, we assume $\Var \re(H_{ij}) \sim \Var \im(H_{ij})$ and that
	$\re(H_{ij})$ and $\im(H_{ij})$ are independent. We will assume that for every $p \geq 2$, there exists a constant $C_p$
	such that
	\[
		\E \pa{ \abs{ H_{ij} }^p} \leq C_p.
	\]
\end{definition}

In the course of the proof of our main theorem, we will make use of results from \cite{LanLopSos2021}.
We therefore impose similar assumptions on our Wigner-Type matrices. For their statements, first let 
	\[
		\mathbb{H}_+ = \{ z \in \C \,:\, \im(z) > 0 \}.
	\]
Following \cite{AjaErdKru2019}, consider the quadratic vector equation for $z \in \mathbb{H}_+$ and a vector $\mathfrak{m}(z) \in \C^N$,
	\begin{equation}
	\label{eq:qve}
		- \frac{1}{\mathfrak{m}_i(z)} = z + (S\mathfrak{m}(z))_j,
		\quad
		S_{ij} = \sigma_{ij}^2.
	\end{equation}
By \cite[Theorem 2.1]{AjaErdKru2019}, \eqref{eq:qve} has a unique solution $\mathfrak{m}: \mathbb{H}_+ \to \mathbb{H}_+^N$.
We now introduce our first assumption, as in \cite{AltErdKru2020}.
\begin{enumerate}[(A)]
	\item There is a universal constant $C>0$ such that
		\[
			\abs{\mathfrak{m}_i(z)} \leq C
		\]
		for all $i \in \llbracket 1, N \rrbracket$.
\end{enumerate}

Theorem 2.10 in \cite{AjaErdKru2019} gives sufficient conditions under which assumption (A) is known to hold.
For our second assumption, let
	\begin{equation}
	\label{eq:def_rho}
		\varrho(E) = \lim_{\eta \to 0} \frac{1}{ \pi N} \sum_{i=1}^N \im \mathfrak{m}(E + \ii \eta).
	\end{equation}
Under assumption (A), Theorem 4.1 in \cite{AltErdKru2020} describes the possible behavior of $\rho$. In particular, $\rho$ is a continuous
density of compact support consisting of finitely many intervals. At its edges, $\rho$ vanishes as a square root.
We will assume $\rho$ has only one interval of support, and that it is bounded below away from the edges, 
excluding from our study the case where $\rho$ has small local minima or cusps. 
\begin{enumerate}[(B)]
	\item There exist (sequences of real numbers) $A < B$ and constants $C,c > 0$ such that
		$B- A > c$, $\abs{\alpha} + \abs{\beta} \leq C$ and with $c \leq f(E) \leq C$,
			\[
				\varrho(E) =  f(E) \sqrt{ (E-A)(B-E)_+}.
			\]
	\end{enumerate}

Under assumptions (A) and (B), we now state special cases of the local law, \cite[Theorem 1.7]{AjaErdKru2017} and \cite[Theorem 2.6]{AltErdKru2020}. 
The local law is a fundamental input to the entire argument in this section.

\begin{theorem}[Local Law]
	Let $W$ be a Wigner-Type matrix, define $G(z) = (W-z)^{-1}$, and let
		\begin{equation}
		\label{eq:def_E_gamma_delta_m}
			\mathcal{E}_\gamma^\delta = \{ z = E + \ii \eta \in \C \text{ s.t. } E \in [A-\delta, B +\delta], \, N^{-1 + \gamma} \leq \eta \leq 1\},
			\quad
			\mathfrak{m}(z) = \diag(\mathfrak{m}_1(z), \dots, \mathfrak{m}_N(z)),
		\end{equation}	
	where $\mathfrak{m}_i(z)$, $i = 1, \dots N$ are the solution to \eqref{eq:qve}.
	Then for any $D, \gamma, \e > 0$, any sufficiently small $\delta > 0$, any deterministic vectors $\mathbf{x}, \mathbf{y}$, and any $N$ sufficiently large,  
		\begin{equation}
		\label{eq:isotropic_local_law}
			\P \pa{
					\abs{
						\left<x, (G(z)-\mathfrak{m}(z)), y\right>
					} 
					\geq
					N^{\e} \norm{\mathbf{x}} \norm{\mathbf{y}} \Psi(z),
					\, z \in \mathcal{E}_\gamma^\delta
			} \leq N^{-D}, \text{ where } \Psi(z) = \sqrt{  \frac{\varrho(z) }{N\eta} }+ \frac{1}{N\eta}. 
		\end{equation}
	In particular, 
		\begin{equation}
		\label{eq:local_law_entrywise}
			\P\pa{ \max_{i, j} \, \abs{G_{ij}(z) - \mathfrak{m}_i(z) \delta_{ij}} \geq N^{\e} \Psi(z)} \leq N^{-D}.
		\end{equation}
	Furthermore, for any deterministic vector $u \in \R^N$ with $\norm{u}_\infty \leq 1$,
		\begin{equation}
		\label{eq:avg_local_law}
			\P\pa{
				\frac{1}{N} \sum_{i} u_i(G_{ii}(z) - \mathfrak{m}(z))\geq \frac{N^\e}{N\eta}, \, z \in \mathcal{E}_\gamma^\delta
			}
			\leq
			N^{-D}.
		\end{equation}
\end{theorem}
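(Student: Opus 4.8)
The plan is to obtain all three estimates by specializing the general local laws of \cite{AjaErdKru2017, AltErdKru2020}, the only substantive point being to record that Assumptions (A) and (B) place our Wigner-Type matrices squarely in the hypotheses of those works. Recall first that Assumption (A) --- the uniform bound $\abs{\mathfrak{m}_i(z)} \leq C$ on the solution of the quadratic vector equation \eqref{eq:qve} --- is exactly the ``flatness''/boundedness input required throughout the general theory; \cite[Theorem 2.10]{AjaErdKru2019} gives checkable sufficient conditions for it, and under it \cite[Theorem 4.1]{AltErdKru2020} classifies the self-consistent density $\varrho$ of \eqref{eq:def_rho}: it is continuous, compactly supported on finitely many intervals, vanishes like a square root at the edges, and may otherwise develop cusps or small local minima. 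Assumption (B) is precisely the hypothesis that discards these last two pathologies: the support is a single interval $[A,B]$ of length $\geq c$, the density is bounded below in the bulk, and near $A$ and $B$ it behaves like $\sqrt{(E-A)(B-E)_+}$ with coefficient $f$ bounded above and below. In particular (B) yields $\varrho(E) \asymp \sqrt{(E-A)_+(B-E)_+}$, so that the control parameter $\Psi(z) = \sqrt{\varrho(z)/(N\eta)} + 1/(N\eta)$ appearing in \eqref{eq:isotropic_local_law} coincides, up to multiplicative constants, with the one used in the cited theorems on the spectral domain $\mathcal{E}_\gamma^\delta$ of \eqref{eq:def_E_gamma_delta_m}.

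Granting this identification, the isotropic bound \eqref{eq:isotropic_local_law} is the isotropic local law of \cite[Theorem 1.7]{AjaErdKru2017} on the bulk part of $\mathcal{E}_\gamma^\delta$ together with \cite[Theorem 2.6]{AltErdKru2020} on the remaining $\delta$-neighborhood of the edges $\{A,B\}$, where the square-root behavior from (B) is exactly what is needed; the moment bounds $\E\abs{H_{ij}}^p \leq C_p$ provide the high-moment input in both. The entrywise estimate \eqref{eq:local_law_entrywise} is then the special case $\mathbf{x} = \mathbf{e}_i$, $\mathbf{y} = \mathbf{e}_j$ of \eqref{eq:isotropic_local_law}, after a union bound over the $\OO(N^2)$ pairs $(i,j)$ and a grid argument in $z$ (using the deterministic Lipschitz bound $\abs{\partial_z G_{ij}} \leq \eta^{-2}$ to fill in the gaps), with $D$ enlarged to absorb the polynomial loss. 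Finally, the averaged law \eqref{eq:avg_local_law} is the averaged form of the local law stated in the same references: for an average of diagonal resolvent entries $G_{ii} - \mathfrak{m}_i$ against a deterministic weight $u$ with $\norm{u}_\infty \leq 1$, the fluctuation-averaging mechanism improves the error from $\Psi(z)$ down to $N^\e/(N\eta)$, which is the claimed bound.

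I do not expect a genuine obstacle here, since no new probabilistic estimate is produced: the entire content is the bookkeeping of the first paragraph --- translating (B) into the statement $\varrho(E) \asymp \sqrt{\operatorname{dist}(E,\{A,B\})}$ near the edges and $\varrho(E) \asymp 1$ in the bulk, and verifying that our normalization of the domain $\mathcal{E}_\gamma^\delta$ and of $\Psi$ matches those of \cite{AjaErdKru2017, AltErdKru2020}. The mildest care is needed near the edges, where one must use \cite[Theorem 2.6]{AltErdKru2020} rather than the bulk statement, and where the $N^{-1+\gamma} \leq \eta$ restriction (rather than $\eta \geq N^{-1}$) is what keeps the square-root edge regime under control; but this is exactly the regime for which those theorems are designed.
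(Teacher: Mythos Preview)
Your proposal is correct and matches the paper's own treatment: the paper does not prove this theorem but simply states it as a special case of \cite[Theorem 1.7]{AjaErdKru2017} and \cite[Theorem 2.6]{AltErdKru2020}, noting that Assumptions (A) and (B) place the model in the scope of those results. Your write-up fleshes out exactly this identification (square-root edge from (B), matching the control parameter $\Psi$, union bound for the entrywise version), which is precisely the bookkeeping the paper leaves implicit.
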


We close this our discussion on our model assumptions with the following Lemma which we will use repeatedly to control $\mathfrak{m}'$.
\begin{lemma}
	For $i = 1, \dots N$, there exists $C > 0$ such that uniformly for $z \in \mathcal{E}_\gamma^\delta$,
		\[
			\absa{ \mathfrak{m}'_i(z) } \leq C \kappa(z)^{-1/2}.
		\]
		Here $\kappa(z)$ is as in \eqref{def:l(E)_and_kappa(E)}.
\end{lemma}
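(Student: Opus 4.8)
The plan is to bound $\mathfrak{m}'_i(z)$ by differentiating the quadratic vector equation (QVE) \eqref{eq:qve} and solving the resulting linear system for the derivative vector. Differentiating $-1/\mathfrak{m}_i = z + (S\mathfrak{m})_i$ in $z$ gives $\mathfrak{m}'_i/\mathfrak{m}_i^2 = 1 + (S\mathfrak{m}')_i$, i.e. $(\mathrm{Id} - \mathfrak{m}^2 S)\mathfrak{m}' = \mathfrak{m}^2 \mathbf{1}$, where $\mathfrak{m}^2 = \diag(\mathfrak{m}_1^2,\dots,\mathfrak{m}_N^2)$ and $\mathbf 1$ is the all-ones vector. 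Hence $\mathfrak{m}' = (\mathrm{Id} - \mathfrak{m}^2 S)^{-1}\mathfrak{m}^2 \mathbf 1$, and since $\absa{\mathfrak{m}_i}\le C$ by assumption (A), everything reduces to an operator-norm bound on the \emph{stability operator} $(\mathrm{Id} - \mathfrak{m}^2 S)^{-1}$ of the QVE.

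The key input is the quantitative stability estimate for the QVE established in \cite{AjaErdKru2019, AltErdKru2020}: under assumptions (A) and (B), the stability operator satisfies $\norma{(\mathrm{Id}-\mathfrak{m}^2 S)^{-1}}_{\ell^\infty\to\ell^\infty} \lesssim \kappa(z)^{-1/2}$ uniformly for $z\in\mathcal E_\gamma^\delta$ (this is exactly the scale at which the square-root edge behavior in (B) manifests; near a regular edge the smallest "singular value" of $\mathrm{Id}-\mathfrak m^2 S$ is of order $\sqrt{\kappa(z)+\eta}\sim\sqrt{\kappa(z)}$ on the relevant domain, and away from the edge it is order one). Plugging this into $\mathfrak m' = (\mathrm{Id}-\mathfrak m^2 S)^{-1}\mathfrak m^2\mathbf 1$ and using $\norma{\mathfrak m^2 \mathbf 1}_\infty \le C^2$ immediately yields $\absa{\mathfrak m'_i(z)} \le C\kappa(z)^{-1/2}$ for every $i$, which is the claim. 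One should also note, as a sanity check matching the scalar (Wigner) case, that there $\mathfrak m(z)=m(z)$ solves $m = -1/(z+m)$, so $m' = m^2/(1-m^2)$ and $\abs{1-m^2}\sim\kappa(z)^{1/2}$, recovering $\abs{m'}\sim\kappa(z)^{-1/2}$.

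I would organize the write-up as: (i) differentiate the QVE and derive the linear system for $\mathfrak m'$; (ii) invoke (A) to control $\mathfrak m^2$; (iii) quote the stability bound $\norma{(\mathrm{Id}-\mathfrak m^2 S)^{-1}}_{\infty\to\infty}\lesssim\kappa(z)^{-1/2}$ from \cite{AjaErdKru2019} (Proposition/Theorem on stability of the QVE, using that under (B) $\rho$ has a single square-root edge and is bounded below in the interior, so no cusps or almost-cusps occur); (iv) combine. The main obstacle is purely bookkeeping: locating the precisely-stated stability estimate in the form $\ell^\infty\to\ell^\infty$ with the $\kappa(z)^{-1/2}$ rate valid up to and including the spectral edge on the domain $\mathcal E_\gamma^\delta$, since the references phrase stability in several different norms and regimes; once the correct statement is cited, the derivation is a two-line computation. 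A minor point to address is the choice of branch / sign of $\sqrt{z^2-4}$-type quantities defining $\kappa(z)$ near the edge, but this is already fixed by \eqref{def:l(E)_and_kappa(E)} and the domain restriction.
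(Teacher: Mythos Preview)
Your proposal is correct and essentially coincides with the paper's proof: both rely on the QVE stability results from \cite{AjaErdKru2019}, with you making the differentiation step $\mathfrak m' = (\mathrm{Id}-\mathfrak m^2 S)^{-1}\mathfrak m^2\mathbf 1$ explicit while the paper cites \cite[Corollary~7.3]{AjaErdKru2019} (which bounds $\norm{\partial_z\mathfrak m}$ in terms of $\langle\im\mathfrak m\rangle$) together with \cite[Corollary~A.1(c)]{AjaErdKru2019} (which gives $\langle\im\mathfrak m(z)\rangle\sim\kappa(z)^{1/2}$). The only practical difference is that the paper's two citations already package the stability bound in the form needed, so you may find it cleaner to cite those directly rather than hunt for an $\ell^\infty\to\ell^\infty$ statement phrased in terms of $\kappa(z)^{-1/2}$.
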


\begin{proof}
	This follows directly from \cite[Corollary 7.3]{AjaErdKru2019} which bounds $\norm{\partial_z\mathfrak{m}(z)}$ in terms of 
	$\left< \im \mathfrak{m} (z)\right> = \frac{1}{N}\sum_{i} \im\mathfrak{m}_i(z)$
	in an order 1 neighborhood of the support of the spectral measure, and \cite[Corollary A.1 (c)]{AjaErdKru2019} which controls $\left< \im \mathfrak{m}(z) \right>$. 
\end{proof}

\subsection{Preliminary Estimates. }
\label{sec:clt_preliminary_estimates}

Before coming to the proof of Proposition \ref{prop:char_function}, we establish an estimate on the quantity
	\begin{equation}
	\label{def:T}
		T_{xy}(z,w) = \frac{1}{N} \sum_{i} s_{ix} G_{iy}(z) G_{yi}(w).
	\end{equation}
Our estimate here closely follow \cite[Sections 4,5]{LanLopSos2021}.
Indeed, our we first restate \cite[Proposition 5.1]{LanLopSos2021} in terms
of the control parameters from the local law $\Psi(z), \Psi(w)$ to capture the improvement of the local law near the edge.
\begin{proposition}
\label{prop:self_consistent_T}
	Let $z,w \in \mathcal{E}_\gamma^\delta$, as in \eqref{eq:def_E_gamma_delta_m}. Then
	\begin{equation}
	\label{eq:self_consistent_T}
		T_{xy}(z,w) = [S\mathfrak{m}(z)\mathfrak{m}(w)]_{xy} + [S\mathfrak{m}(z)\mathfrak{m}(w)T]_{xy} + \mathcal{E}_{xy}(z,w),
	\end{equation}
	where for any $\e > 0$, $\mathcal{E}_{xy}$ satisfies
	\[
		\E\pa{
			\abs{ \mathcal{E}_{xy}(z,w) }^2
		}
		\leq
		N^\e \pa{
			\Psi(z)^2\Psi(w) + \Psi(z)\Psi(w)^2 + \frac{1}{N} \Psi(z)\Psi(w)
		}.
	\]
\end{proposition}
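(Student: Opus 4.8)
The plan is to expand $T_{xy}(z,w)$ via the resolvent identity and a single cumulant expansion (integration by parts in the Gaussian-like sense), isolating the leading term, the self-consistent term, and a remainder that we estimate with the local law. Concretely, first I would write $z G_{iy}(z) = \sum_k H_{ik} G_{ky}(z) - \delta_{iy}$ using $(W-z)G = I$, so that
\[
z T_{xy}(z,w) = \frac{1}{N}\sum_i s_{ix}\Bigl(\sum_k H_{ik} G_{ky}(z)\Bigr) G_{yi}(w) - \frac{1}{N}\sum_i s_{ix}\delta_{iy} G_{yi}(w).
\]
Then I would apply the cumulant/integration-by-parts expansion in the variable $H_{ik}$ to the term $\frac{1}{N}\sum_{i,k} s_{ix} H_{ik} G_{ky}(z) G_{yi}(w)$. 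The second-order cumulant contributes, after using $\E|H_{ik}|^2 = s_{ik}/N$ and the derivative rules $\partial_{H_{ik}} G_{ab} = -(G_{ai}G_{kb}+G_{ak}G_{ib})(1+\delta_{ik})^{-1}$, the two structural terms: one of the form $-\frac{1}{N}\sum_i s_{ix}\mathfrak{m}_i(z)\bigl(\text{identity-like piece}\bigr)$ producing $[S\mathfrak m(z)\mathfrak m(w)]_{xy}$ after replacing $G$ by $\mathfrak m$ on the diagonal via the local law, and one producing $[S\mathfrak m(z)\mathfrak m(w) T]_{xy}$ once we recognize the reappearance of the $T$-structure. Rearranging and dividing through by $z$ (absorbing factors of $\mathfrak m$, which are bounded by assumption (A)) yields \eqref{eq:self_consistent_T} with $\mathcal E_{xy}$ collecting (i) the diagonal-replacement errors $G_{ii}-\mathfrak m_i$, (ii) the third- and higher-order cumulant terms, and (iii) the fluctuation of averaged quantities around their deterministic values.

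The core estimate is then on $\E(|\mathcal E_{xy}|^2)$. Each contribution to $\mathcal E_{xy}$ is a normalized sum $\frac{1}{N}\sum_i (\cdots)$ of products of resolvent entries, where by \eqref{eq:local_law_entrywise} off-diagonal entries are $O(N^\e \Psi)$ and diagonal entries are $O(1)$. The key power-counting: a typical error term carries one extra off-diagonal $G(z)$-entry (size $\Psi(z)$) or $G(w)$-entry (size $\Psi(w)$) beyond the leading structure, and the Ward identity $\sum_j |G_{ij}(z)|^2 = \eta^{-1}\im G_{ii}(z)$ converts a squared sum of off-diagonal entries into a factor $\frac{\im \mathfrak m}{N\eta} \sim \Psi(z)^2$ (up to $N^\e$). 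Tracking which of $z$ or $w$ the extra factor attaches to gives the three terms $\Psi(z)^2\Psi(w)$, $\Psi(z)\Psi(w)^2$, and the genuinely-fluctuation term $\frac1N\Psi(z)\Psi(w)$, the last coming from the variance of an averaged quantity (à la \eqref{eq:avg_local_law}, which saves a full factor $\frac{1}{N\eta}$ over the naive bound, i.e. an extra $\frac1N\cdot\frac{1}{\Psi(z)\Psi(w)(N\eta)^2}$... — more precisely the fluctuation averaging lemma gives the stated $\frac1N\Psi(z)\Psi(w)$). The higher-order cumulant terms (order $\geq 3$) come with extra powers of $N^{-1/2}$ and are absorbed into these, using $\E|H_{ij}|^p \leq C_p N^{-p/2}$. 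Since we only need the $L^2$ bound, not high-probability bounds, I would phrase everything with $\E|\cdot|^2$ and Cauchy–Schwarz, which is cleaner than chasing high-moment estimates; the bad event where the local law \eqref{eq:isotropic_local_law} fails has probability $\leq N^{-D}$ and, combined with the deterministic bound $|G_{ij}| \leq \eta^{-1} \leq N$, contributes negligibly.

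The main obstacle I anticipate is the bookkeeping of the cumulant expansion so that the self-consistent structure $[S\mathfrak m(z)\mathfrak m(w)T]_{xy}$ emerges cleanly rather than as a tangle of partially-contracted terms — in particular being careful that the term with the derivative hitting $G_{ky}(z)$ versus $G_{yi}(w)$ is correctly identified (one reconstitutes $T$, the other is part of the leading deterministic term or an error), and that the symmetry factor $(1+\delta_{ik})^{-1}$ and the Hermitian-versus-real distinction do not introduce spurious lower-order terms. A secondary technical point is justifying the replacement $G_{ii}(z)\to\mathfrak m_i(z)$ inside an $i$-average: naively this costs $N^\e\Psi(z)$ per term, but when it appears inside a sum that is already being squared and Ward-reduced, one must check it does not dominate — here assumption (A) (boundedness of $\mathfrak m$) and the stability of the quadratic vector equation (so that $(1 - S\mathfrak m(z)\mathfrak m(w))$ is invertible on the relevant space, which is exactly what lets us solve \eqref{eq:self_consistent_T} for $T$ in the first place, though that inversion is presumably done in the next step of the paper rather than here) keep everything under control. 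I would follow the organization of \cite[Sections 4--5]{LanLopSos2021} closely, noting that the only change is that their bulk bounds $\Psi \sim (N\eta)^{-1/2}$ are replaced throughout by the edge-sensitive $\Psi(z) = \sqrt{\varrho(z)/(N\eta)} + (N\eta)^{-1}$, which is exactly what \eqref{eq:isotropic_local_law} supplies near the edge, so no genuinely new estimate is needed — only the substitution of the sharper control parameter.
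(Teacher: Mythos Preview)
Your proposal is correct and takes essentially the same approach as the paper, which does not give a self-contained proof but simply restates \cite[Proposition 5.1]{LanLopSos2021} with the edge-sensitive control parameters $\Psi(z),\Psi(w)$ in place of the bulk ones. Your sketch---resolvent identity plus one cumulant expansion, followed by diagonal replacement via the local law and Ward-identity power counting for the error---is exactly how that cited proof proceeds, and you correctly identify that no new idea is needed beyond substituting the sharper $\Psi$.
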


We would now like to solve \eqref{eq:self_consistent_T} for $T$. To do this, 
we require estimates on the operator $({\rm I} - S \mathfrak{m}(z)\mathfrak{m}(w))^{-1}$. In fact
we will use such estimates repeatedly in the proof of Proposition \ref{prop:char_function}.
We therefore discuss these estimates before completing our estimate on $T$ in Theorem \ref{thm:Txy}. 
Our first step in this direction is to study 
	\begin{equation}
	\label{eq:F}
		F(z,w) = \abs{ \mathfrak{m}(z) \mathfrak{m}(w) }^{1/2} S \abs{ \mathfrak{m}(z) \mathfrak{m}(w) }^{1/2}.
	\end{equation}

\begin{proposition}
\label{prop:F}
	Let $F(z,w)$ be as in \eqref{eq:F}. Then $F$ is a symmetric matrix with positive entries and its largest in magnitude eigenvalue $\lambda_1(F(z,w))$ 
	is positive and simple, with an $\ell^2$ normalized eigenvector $v(z,w)$, that has strictly positive entries, and
	there exists $c_g > 0$ such that
	\begin{equation}
	\label{eq:F_spectral_gap}
		\inf_{j \neq 1} \abs{
			\lambda_1(F(z,w)) - \lambda_j(F(z,w))
		}
		\geq
		c_g.
	\end{equation}
	Furthermore, writing $\left<x\right> = \sum_{a} x_a$ for any vector $x$, we have
	\begin{equation}
	\label{eq:Fl2_general}
		\lambda_1(z,w)
		=
		\norm{F(z,w)}_{\ell^2 \to \ell^2} 
		\leq 
		1 - \frac{1}{2} \pa{
			\abs{\im w} \frac{  \left< v(w,w) \abs{\mathfrak{m}(w)} \right>    }{  \left<  v(w,w) \frac{ \abs{ \im \mathfrak{m}(w)}  }{  \abs{  \mathfrak{m}(w)  }   }  \right>  }
			+ \abs{\im z} \frac{  \left< v(z,z) \abs{\mathfrak{m}(z)} \right>    }{  \left<  v(z,z) \frac{ \abs{ \im \mathfrak{m}(z)}  }{  \abs{  \mathfrak{m}(z)  }   }  \right>  }
		},
	\end{equation}
	and, for any $C > 0$ and $\abs{z}, \abs{w} \leq C$, we have
	\begin{equation}
	\label{eq:size_of_v}
		\sqrt{N} v(z,w) \sim 1,
	\end{equation}
	Consequently, for such $z,w$, there exists $c > 0$ such that
	\begin{equation}
	\label{eq:Fl2}
		\norm{F(z,w)}_{\ell^2 \to \ell^2} 
		\leq
		1-c\pa{\im(w) \kappa^{-1/2}(w) + \im(z) \kappa^{-1/2}(z)}.
	\end{equation}
	\end{proposition}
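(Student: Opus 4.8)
The plan is to exploit the Perron--Frobenius structure of $F(z,w)$ together with the quadratic vector equation \eqref{eq:qve} and the flatness $\sigma_{ij}^2 \sim N^{-1}$. The structural assertions come first: since $S$ is symmetric with strictly positive entries and, by \eqref{eq:qve} and assumption (A), $\absa{\mathfrak{m}_i(z)} = \absa{z + (S\mathfrak{m}(z))_i}^{-1} \in [c, C]$ uniformly for $z$ in a compact set, the matrix $F(z,w)$ is symmetric with entries $F_{ij}(z,w) \sim N^{-1}$, in particular with positive diagonal. Perron--Frobenius then gives that $\lambda_1(z,w) = \norm{F(z,w)}_{\ell^2 \to \ell^2}$ is a positive, simple eigenvalue with strictly positive $\ell^2$-normalized eigenvector $v(z,w)$, and that $\abs{\lambda_j(z,w)} < \lambda_1(z,w)$ for $j \neq 1$. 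For the \emph{uniform} gap \eqref{eq:F_spectral_gap} I would pass to the row-stochastic matrix $P := \lambda_1^{-1}\diag(v)^{-1}F(z,w)\diag(v)$, which (using $\lambda_1 \sim 1$ and $v \sim N^{-1/2}$, established below) has all entries $\gtrsim N^{-1}$; its Dobrushin ergodicity coefficient is then $\le 1 - c$, so every eigenvalue of $P$ other than the Perron eigenvalue $1$ lies in $\{\abs{\zeta} \le 1-c\}$, and since $P$ is similar to $\lambda_1^{-1}F(z,w)$ we get $\abs{\lambda_j(z,w)} \le (1-c)\lambda_1(z,w)$ for $j \neq 1$, hence a gap of size $\ge c\lambda_1(z,w) \ge c_g$.

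Next, the identity for the diagonal case $w = z$. Taking imaginary parts in \eqref{eq:qve} gives $\im\mathfrak{m}_i = \absa{\mathfrak{m}_i}^2(\im z + (S\im\mathfrak{m})_i)$; dividing by $\absa{\mathfrak{m}_i}$ and writing $\mathfrak{q}_i = \im\mathfrak{m}_i(z)/\absa{\mathfrak{m}_i(z)} > 0$ and $F(z,z) = \absa{\mathfrak{m}(z)}S\absa{\mathfrak{m}(z)}$, this becomes $(\mathrm{I} - F(z,z))\mathfrak{q} = (\im z)\,\absa{\mathfrak{m}(z)}$ as vectors. Pairing with $v(z,z)$ and using self-adjointness of $F(z,z)$ yields $(1 - \lambda_1(z,z))\langle v(z,z),\mathfrak{q}\rangle = (\im z)\langle v(z,z),\absa{\mathfrak{m}(z)}\rangle$; since all vectors here are positive, this simultaneously shows $\lambda_1(z,z) < 1$ and gives the exact formula $\lambda_1(z,z) = 1 - \im z\, \langle v(z,z)\absa{\mathfrak{m}(z)}\rangle / \langle v(z,z)\,\im\mathfrak{m}(z)/\absa{\mathfrak{m}(z)}\rangle$. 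To pass to general $(z,w)$, I would use the positivity of $v = v(z,w)$ and $\absa{\mathfrak{m}_i(z)\mathfrak{m}_i(w)}^{1/2} = \absa{\mathfrak{m}_i(z)}^{1/2}\absa{\mathfrak{m}_i(w)}^{1/2}$: by the elementary inequality $\sqrt{PQ} \le \tfrac12(P+Q)$ applied entrywise with $P = \absa{\mathfrak{m}_i(z)}\absa{\mathfrak{m}_j(z)}v_iv_j$ and $Q$ the analogue for $w$, and $S_{ij} \ge 0$, one gets $\lambda_1(z,w) = \langle v, F(z,w)v\rangle \le \tfrac12\langle v, F(z,z)v\rangle + \tfrac12\langle v, F(w,w)v\rangle \le \tfrac12(\lambda_1(z,z) + \lambda_1(w,w))$. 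Substituting the diagonal formula (and using $\im z, \im w, \im\mathfrak{m}_i > 0$ so the absolute values in \eqref{eq:Fl2_general} may be dropped) gives \eqref{eq:Fl2_general}.

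It remains to prove \eqref{eq:size_of_v} and then deduce \eqref{eq:Fl2}. For \eqref{eq:size_of_v}, testing $F(z,w)$ against $N^{-1/2}\mathbf{1}$ together with $\sum_{ij}F_{ij}(z,w) \sim N$ gives $\lambda_1(z,w) \gtrsim 1$, while \eqref{eq:Fl2_general} gives $\lambda_1(z,w) \le 1$; since $F_{ij}(z,w) \sim N^{-1}$, the eigenvalue relation $\lambda_1 v_i = \sum_j F_{ij}v_j$ forces every $v_i$ to be comparable to $N^{-1}\sum_j v_j$, and combining with $\norm{v}_2 = 1$ yields $\sqrt{N}\,v_i \sim 1$. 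Finally, plugging $v(z,z) \sim N^{-1/2}$ and $\absa{\mathfrak{m}_i(z)} \sim 1$ into \eqref{eq:Fl2_general}: the numerator satisfies $\langle v(z,z)\absa{\mathfrak{m}(z)}\rangle \sim N^{1/2}$, and for the denominator I would use the uniform edge bound $\im\mathfrak{m}_i(z) \lesssim \sqrt{\kappa(z)}$ on $\mathcal{E}_\gamma^\delta$ --- which follows from assumption (B) and standard properties of the self-consistent density (the square-root vanishing of $\varrho$ at the spectral edges, the relation $\tfrac1N\sum_i\mathfrak{m}_i(z) = \int (x-z)^{-1}\varrho(x)\,\mathrm{d}x$, the QVE estimate $\im\mathfrak{m}_i \sim \im z + \tfrac1N\sum_j\im\mathfrak{m}_j$, and $\im z \le \sqrt{\kappa(z)}$ for $z \in \mathcal{E}_\gamma^\delta$), cf.\ \cite{AjaErdKru2019, AltErdKru2020} --- so that $\langle v(z,z)\,\im\mathfrak{m}(z)/\absa{\mathfrak{m}(z)}\rangle \lesssim \sqrt{\kappa(z)}\,N^{1/2}$ and the $z$-ratio in \eqref{eq:Fl2_general} is $\gtrsim \kappa(z)^{-1/2}$; the same holds for $w$, which yields \eqref{eq:Fl2}. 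The main obstacle is this last uniform estimate on $\im\mathfrak{m}_i$ near the edge and, more broadly, keeping the comparisons $\mathfrak{m}_i \sim 1$, $\lambda_1 \sim 1$, $v \sim N^{-1/2}$ uniform all the way down to $\eta = N^{-1+\gamma}$; the algebraic ingredients (Perron--Frobenius, the identity extracted from the QVE, and the AM--GM interpolation between $(z,z)$ and $(w,w)$) are then routine.
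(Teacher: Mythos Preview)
Your proposal is correct and follows essentially the same approach as the paper, which simply cites \cite[Proposition~4.5]{LanLopSos2021} for \eqref{eq:F_spectral_gap}, \eqref{eq:Fl2_general}, and \eqref{eq:size_of_v}, and then derives \eqref{eq:Fl2} exactly as you do, via $\abs{\mathfrak{m}_i}\sim 1$, $v\sim N^{-1/2}$, and $\abs{\im\mathfrak{m}_i(z)}\lesssim\kappa(z)^{1/2}$. Your write-up effectively unpacks that citation: the QVE identity $(\mathrm{I}-F(z,z))\mathfrak q=(\im z)\abs{\mathfrak m}$, the AM--GM interpolation $F(z,w)\le\tfrac12(F(z,z)+F(w,w))$ entrywise, and the Perron--Frobenius/Dobrushin argument for the uniform gap are precisely the ingredients behind \cite[Proposition~4.5]{LanLopSos2021}, so there is no substantive difference in strategy.
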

\begin{proof}
	The estimates \eqref{eq:Fl2_general}, \eqref{eq:F_spectral_gap}, and \eqref{eq:size_of_v} follow 
	from \cite[Proposition 4.5]{LanLopSos2021}.
	The estimate \eqref{eq:Fl2} follows from \eqref{eq:Fl2_general} 
	using $\abs{\mathfrak{m}_i(z)} \sim 1$, \eqref{eq:size_of_v}, and $\abs{ \im \mathfrak{m}_i(z) } \sim \kappa(z)^{1/2}$,
	which follow from \cite{AjaErdKru2017,Erd2019}, see \cite[Proposition 4.4]{LanLopSos2021}. 
	\end{proof}

Proposition \ref{prop:F} gives us the following estimates concerning the norm of $(1 - S\mathfrak{m}(z)\mathfrak{m}(w))^{-1}$.
\begin{corollary}
\label{cor:invert_1-Smm}
	Let $C > 0$ and suppose $\abs{z}, \abs{w} \leq C$. Then
		\begin{equation}
		\label{eq:l2_and_l_infty}
			\norm{
				1 - \mathfrak{m}(z)\mathfrak{m}(w) S
			}_{\ell^2 \to \ell^2} 
			+
			\norm{
				1 - \mathfrak{m}(z)\mathfrak{m}(w) S
			}_{\ell^\infty \to \ell^\infty} 
			\leq 
			\frac{C}{ \abs{\im(w)}  \kappa^{-1/2}(w)  + \abs{\im(z)}  \kappa^{-1/2}(z)} .
		\end{equation}
	We also have
	\begin{equation}
	\label{eq:1-mmS_entries}
		\absa{
			(1 - \mathfrak{m}(z) \mathfrak{m}(w) S)^{-1}_{xy}
		}
		\leq
		\delta_{xy} + \frac{C}{N( \abs{\im(w)} \kappa^{-1/2}(w) + \abs{\im(z)} \kappa^{-1/2}(z))},
	\end{equation}
	and if $z,w$ are in the same half space, we have the improved bound
	\begin{equation}
	\label{lem:bound_1-m^2S_inv}
		\norm{ ({1 - \mathfrak{m}(z)\mathfrak{m}(w) S})^{-1} }_{\ell^\infty \to \ell^\infty} \leq \frac{C}{ \im \mathfrak{m}(z) \im \mathfrak{m}(w)}.
	\end{equation}
	Furthermore, all of the above bounds apply with $(1 - \mathfrak{m}(z)\mathfrak{m}(w)S)^{-1}$ in place of $(1 - S\mathfrak{m}(z)\mathfrak{m}(w))^{-1}$.
\end{corollary}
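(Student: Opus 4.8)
The plan is to derive all four bounds from Proposition~\ref{prop:F} together with the pointwise estimates $|\mathfrak{m}_j(z)|\sim 1$ and $|\im\mathfrak{m}_j(z)|\sim\kappa(z)^{1/2}$ used in its proof, i.e.\ to transcribe the stability analysis of the quadratic vector equation from \cite{AjaErdKru2019} (see also \cite[\S4]{LanLopSos2021}) into the present notation. Throughout it is convenient to write $\Xi=\Xi(z,w):=|\im z|\,\kappa(z)^{-1/2}+|\im w|\,\kappa(w)^{-1/2}$, so that \eqref{eq:Fl2} reads $\lambda_1(F(z,w))\le 1-c\,\Xi$ and each claimed bound takes the form $\OO(\Xi^{-1})$.

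First I would symmetrize. With $D=\mathfrak{m}(z)\mathfrak{m}(w)$ (a diagonal matrix), $U=D|D|^{-1}$, and $\Lambda=\Lambda(z,w):=\diag\bigl(\mathfrak{m}_j(z)^{-1}\mathfrak{m}_j(w)^{-1}\bigr)$, a short computation using $U_j|D_j|=D_j$ and the definition \eqref{eq:F} of $F$ gives the factorization
\[
  1-\mathfrak{m}(z)\mathfrak{m}(w)S \;=\; U\,|D|^{1/2}\bigl(\Lambda-F(z,w)\bigr)\,|D|^{1/2},
\]
and, $S$ being symmetric and $D$ diagonal, $1-S\mathfrak{m}(z)\mathfrak{m}(w)=(1-\mathfrak{m}(z)\mathfrak{m}(w)S)^{\mathsf{T}}$ has the transposed factorization. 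Since $|\mathfrak{m}_j|\sim1$, the diagonal matrices $|D|^{\pm1/2}$ and $U$ have entries of modulus $\sim1$, so conjugation by them changes $\ell^2\to\ell^2$ and $\ell^\infty\to\ell^\infty$ operator norms and entrywise bounds only by universal constants. Thus everything reduces to the single estimate $\|(\Lambda-F)^{-1}\|_{\ell^2\to\ell^2}+\|(\Lambda-F)^{-1}\|_{\ell^\infty\to\ell^\infty}\lesssim\Xi^{-1}$ --- which is \eqref{eq:l2_and_l_infty} --- and the final ``furthermore'' about swapping $S\mathfrak{m}(z)\mathfrak{m}(w)$ and $\mathfrak{m}(z)\mathfrak{m}(w)S$ then holds automatically.

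Next I would invert $\Lambda-F$ using the spectral structure of $F$. Let $v=v(z,w)>0$ be the $\ell^2$-normalized Perron eigenvector for $\lambda_1=\|F\|_{\ell^2\to\ell^2}$, and set $P=vv^{\mathsf{T}}$, $Q=1-P$. By the gap \eqref{eq:F_spectral_gap} the spectrum of $QFQ$ on $\operatorname{Ran}Q$ is contained in $[-\lambda_1,\lambda_1-c_g]$; a Schur--Feshbach decomposition of $\Lambda-F$ relative to $\C v\oplus\operatorname{Ran}Q$, as in \cite{AjaErdKru2019,LanLopSos2021}, shows that $Q(\Lambda-F)Q$ is invertible with $\OO(1)$-bounded inverse (here $|\Lambda_j|\sim1$ and the gap $c_g$ are used) and reduces the $\ell^2$ estimate to a lower bound $|s|\gtrsim\Xi$ on the scalar Schur complement $s=\langle v,\Lambda v\rangle-\lambda_1-\langle v,\Lambda Q\,(Q(\Lambda-F)Q)^{-1}Q\Lambda v\rangle$. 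To obtain it I would combine \eqref{eq:Fl2_general} (which controls the real part via $\lambda_1\le1-c\Xi$) with the fact that $\im\Lambda_j$ has a fixed sign and $|\im\Lambda_j|\sim|\im(\mathfrak{m}_j(z)\mathfrak{m}_j(w))|$, which by $|\mathfrak{m}_j|\sim1$, the edge behavior $|\im\mathfrak{m}_j(z)|\sim\kappa(z)^{1/2}$ (resp.\ $\sim|\im z|\,\kappa(z)^{-1/2}$ just outside $[A,B]$), and the normalization $\sqrt N\,v\sim1$ is comparable to $\Xi$. The $\ell^\infty\to\ell^\infty$ bound follows from the same decomposition, estimating each piece in $\ell^\infty$: the rank-one Perron part has $\ell^\infty\to\ell^\infty$ norm $\sim\Xi^{-1}$ because $\|v\|_\infty\|v\|_1\sim1$, and the complementary block is $\OO(1)$-bounded in $\ell^\infty$ as well, again following \cite{AjaErdKru2019,LanLopSos2021}. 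Estimate \eqref{eq:1-mmS_entries} then comes from a second-order resolvent expansion of $(1-\mathfrak{m}(z)\mathfrak{m}(w)S)^{-1}$, using the entrywise $\OO(N^{-1})$ bound on $\mathfrak{m}(z)\mathfrak{m}(w)S$ and on its square together with the $\ell^\infty\to\ell^\infty$ bound of \eqref{eq:l2_and_l_infty} and its transpose. For the improved bound \eqref{lem:bound_1-m^2S_inv} when $z,w$ lie in the same half-space I would instead run a weighted Schur test on $1-\mathfrak{m}(z)\mathfrak{m}(w)S$ with the strictly positive weights $\im\mathfrak{m}(z),\im\mathfrak{m}(w)$, using $\im\mathfrak{m}_j(z)=|\mathfrak{m}_j(z)|^2\bigl(\im z+(S\im\mathfrak{m}(z))_j\bigr)$ from \eqref{eq:qve}, which produces the factor $(\im\mathfrak{m}(z)\,\im\mathfrak{m}(w))^{-1}$; this is the classical matching-parameter stability estimate of \cite{AjaErdKru2019}.

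The step I expect to be the main obstacle is the scalar lower bound $|s|\gtrsim\Xi$ uniformly over $\mathcal{E}_\gamma^\delta$: one has to verify that the gain in \eqref{eq:Fl2_general} (written in terms of $\im z,\im w$) and the size of $\im\Lambda$ genuinely combine into $\im(z)\kappa(z)^{-1/2}+\im(w)\kappa(w)^{-1/2}$, including the regime where $z$ or $w$ lies slightly outside $[A,B]$, where $|\im\mathfrak{m}_j|$ is governed by $|\im z|\,\kappa(z)^{-1/2}$ rather than by $\kappa(z)^{1/2}$. This is exactly where assumption~(B) (square-root edge behavior) and the normalization $\sqrt N\,v\sim1$ from Proposition~\ref{prop:F} are needed; the $\ell^2\to\ell^\infty$ upgrade, the entrywise bound \eqref{eq:1-mmS_entries}, and \eqref{lem:bound_1-m^2S_inv} are then routine given \cite{AjaErdKru2019} and \cite{LanLopSos2021}.
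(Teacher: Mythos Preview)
Your approach is essentially the paper's: the paper proves the corollary by citing \cite[Propositions~4.7, 4.8 and Lemma~4.9]{LanLopSos2021} with Proposition~\ref{prop:F} substituted for \cite[Proposition~4.5]{LanLopSos2021}, and your proposal is a faithful outline of precisely those arguments (symmetrize to $F$, exploit the Perron--Frobenius structure and spectral gap via a Schur/Feshbach reduction, then upgrade to $\ell^\infty$ and entrywise bounds). One small slip: your factorization $1-DS=U|D|^{1/2}(\Lambda-F)|D|^{1/2}$ is not quite right---the correct identity is $1-DS=U|D|^{1/2}(U^*-F)|D|^{-1/2}$ (as the paper itself uses later in the proof of Proposition~\ref{prop:var_GW})---but since $|D_j|\sim1$ this does not affect any of the bounds or the argument.
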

\begin{proof}
	The proof of \eqref{eq:l2_and_l_infty} follows exactly as in the proof of \cite[Proposition 4.7]{LanLopSos2021}, replacing \cite[Proposition 4.5]{LanLopSos2021}
	with our Proposition \ref{prop:F}. Arguing as in \cite[Proposition 4.8]{LanLopSos2021}, the estimate \eqref{eq:1-mmS_entries} follows from
	our bound for $\norm{ 1 - \mathfrak{m}(z)\mathfrak{m}(w) S}_{\ell^\infty \to \ell^\infty}$. Finally the proof of 
	\eqref{lem:bound_1-m^2S_inv} follows as in the proof of \cite[Lemma 4.9]{LanLopSos2021}.
\end{proof}

The results of Corollary \ref{cor:invert_1-Smm} will be useful in the next subsection where we calculate the characteristic function for
of the relevant linear statistics, and both Proposition \ref{prop:F} and Corollary \ref{cor:invert_1-Smm} will be important inputs
in the following subsection where we estimate the variance of these same linear statistics.
Finally, combining Proposition \ref{prop:self_consistent_T} and Corollary \ref{cor:invert_1-Smm}, we arrive at the following useful estimate for $T_{xy}$.

\begin{theorem}
\label{thm:Txy}
	Let $z,w \in \mathcal{E}_\gamma^\delta$ and define
	$A_{xy}$ by
	\[
		T_{xy}(z,w) = [(1 - S\mathfrak{m}(z)\mathfrak{m}(w))^{-1} S \mathfrak{m}(z) \mathfrak{m}(w) ]_{xy} + A_{xy}.
	\]
	Then for any $\e > 0$ we have
	\[
		\E\pa{ \abs{A_{xy}}^2 } \leq \frac{N^\e 
			\pa{
				\Psi(z)^2\Psi(w) + \Psi(z)\Psi(w)^2 + \frac{1}{N} \Psi(z)\Psi(w)
			}^{2}
		}{(\im(w) \kappa^{-1/2}(w) + \im(z) \kappa^{-1/2}(z))^2}.
	\]
\end{theorem}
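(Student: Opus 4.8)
The plan is to obtain Theorem~\ref{thm:Txy} directly by solving the self-consistent equation \eqref{eq:self_consistent_T} of Proposition~\ref{prop:self_consistent_T}, using the quantitative control of $(\mathbb{I} - S\mathfrak{m}(z)\mathfrak{m}(w))^{-1}$ supplied by Corollary~\ref{cor:invert_1-Smm}. Fix $z,w \in \mathcal{E}_\gamma^\delta$ and an index $y$, and view $\mathbf{t} := (T_{xy}(z,w))_{x=1}^N$ and the error vector $\mathcal{E}^{(y)} := (\mathcal{E}_{xy}(z,w))_{x=1}^N$ as elements of $\C^N$. Since $\mathfrak{m}(z)$ and $\mathfrak{m}(w)$ are diagonal, the quantity $[S\mathfrak{m}(z)\mathfrak{m}(w)]_{xy}$, read as a function of $x$, is the $y$-th column $S\mathfrak{m}(z)\mathfrak{m}(w)\,e_y$ of the matrix $S\mathfrak{m}(z)\mathfrak{m}(w)$, while $[S\mathfrak{m}(z)\mathfrak{m}(w)T]_{xy} = [S\mathfrak{m}(z)\mathfrak{m}(w)\mathbf{t}]_x$; hence \eqref{eq:self_consistent_T} is exactly the linear system
\[
(\mathbb{I} - S\mathfrak{m}(z)\mathfrak{m}(w))\,\mathbf{t} = S\mathfrak{m}(z)\mathfrak{m}(w)\,e_y + \mathcal{E}^{(y)} .
\]

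By Corollary~\ref{cor:invert_1-Smm} the operator $\mathbb{I} - S\mathfrak{m}(z)\mathfrak{m}(w)$ is invertible, so
\[
\mathbf{t} = (\mathbb{I} - S\mathfrak{m}(z)\mathfrak{m}(w))^{-1} S\mathfrak{m}(z)\mathfrak{m}(w)\,e_y + (\mathbb{I} - S\mathfrak{m}(z)\mathfrak{m}(w))^{-1}\mathcal{E}^{(y)} .
\]
The $x$-entry of the first term is precisely $[(\mathbb{I} - S\mathfrak{m}(z)\mathfrak{m}(w))^{-1} S\mathfrak{m}(z)\mathfrak{m}(w)]_{xy}$, so by the definition of $A_{xy}$ in the statement,
\[
A_{xy} = \left[(\mathbb{I} - S\mathfrak{m}(z)\mathfrak{m}(w))^{-1}\mathcal{E}^{(y)}\right]_x = \sum_{i=1}^N (\mathbb{I} - S\mathfrak{m}(z)\mathfrak{m}(w))^{-1}_{xi}\,\mathcal{E}_{iy}(z,w) .
\]
Writing $\Delta = \im(w)\kappa^{-1/2}(w) + \im(z)\kappa^{-1/2}(z)$ and inserting the entrywise estimate \eqref{eq:1-mmS_entries}, which by the last sentence of Corollary~\ref{cor:invert_1-Smm} applies to $S\mathfrak{m}(z)\mathfrak{m}(w)$ as well, namely $|(\mathbb{I} - S\mathfrak{m}(z)\mathfrak{m}(w))^{-1}_{xi}| \le \delta_{xi} + C (N\Delta)^{-1}$, we obtain
\[
|A_{xy}| \le |\mathcal{E}_{xy}(z,w)| + \frac{C}{N\Delta}\sum_{i=1}^N |\mathcal{E}_{iy}(z,w)| .
\]
Squaring, applying Cauchy--Schwarz in the form $\big(\sum_i |\mathcal{E}_{iy}|\big)^2 \le N \sum_i |\mathcal{E}_{iy}|^2$, taking expectations, and using the second-moment bound of Proposition~\ref{prop:self_consistent_T} (which is uniform in the external indices, so summing $N$ copies costs only a factor $N$), we get
\[
\E|A_{xy}|^2 \lesssim \Big(1 + \frac{1}{\Delta^2}\Big)\, N^{\e}\Big(\Psi(z)^2\Psi(w) + \Psi(z)\Psi(w)^2 + \tfrac1N \Psi(z)\Psi(w)\Big)^2 .
\]
Finally, on $\mathcal{E}_\gamma^\delta$ one has $\kappa(z) \ge \im z$ (as $|z\pm 2| \ge \im z$), hence $\im(z)\kappa(z)^{-1/2} \le (\im z)^{1/2} \le 1$, and likewise for $w$, so $\Delta \lesssim 1$ and $1 + \Delta^{-2} \lesssim \Delta^{-2}$; this yields the claimed bound after renaming $\e$.

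The argument is short because all the substance already sits in Proposition~\ref{prop:self_consistent_T} and Corollary~\ref{cor:invert_1-Smm}: there is no new probabilistic input here, since the error in \eqref{eq:self_consistent_T} is controlled in second mean and the inversion is deterministic. The one point that must not be fumbled is the choice of norm for $(\mathbb{I} - S\mathfrak{m}(z)\mathfrak{m}(w))^{-1}$: using the $\ell^2 \to \ell^2$ bound would only control $\sum_x |A_{xy}|^2$ and thereby lose a spurious factor $N$ when isolating the single coordinate $A_{xy}$, so one must use the entrywise (equivalently $\ell^\infty \to \ell^\infty$) estimate \eqref{eq:1-mmS_entries}. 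A secondary check is that the per-term bound on $\E|\mathcal{E}_{iy}|^2$ is independent of $i$, so the sum over the $N$ terms produces exactly the factor $N$ that cancels one power of $N$ from $(N\Delta)^{-2}$.
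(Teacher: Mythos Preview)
Your approach is exactly what the paper intends --- the paper gives no proof and merely says ``combining Proposition~\ref{prop:self_consistent_T} and Corollary~\ref{cor:invert_1-Smm}'' --- and your use of the entrywise bound \eqref{eq:1-mmS_entries} rather than the $\ell^2\to\ell^2$ bound is the right choice, for precisely the reason you explain.

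There is, however, a slip in your final displayed inequality. Proposition~\ref{prop:self_consistent_T} as written gives
\[
\E|\mathcal{E}_{iy}|^2 \le N^\e B, \qquad B := \Psi(z)^2\Psi(w)+\Psi(z)\Psi(w)^2+N^{-1}\Psi(z)\Psi(w),
\]
with the \emph{first} power of $B$. Carrying your own steps through, namely $|A_{xy}|^2 \lesssim |\mathcal{E}_{xy}|^2 + (N\Delta)^{-2}\,N\sum_i |\mathcal{E}_{iy}|^2$ followed by taking expectation, yields
\[
\E|A_{xy}|^2 \lesssim \Big(1+\frac{1}{\Delta^2}\Big) N^\e B,
\]
not $(1+\Delta^{-2}) N^\e B^2$. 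Since $B \lesssim 1$ on $\mathcal{E}_\gamma^\delta$, the bound $N^\e B^2/\Delta^2$ stated in the theorem is strictly \emph{stronger} than what your chain of inequalities produces. This discrepancy is almost certainly a typo in the paper --- either the square is missing on the right-hand side of Proposition~\ref{prop:self_consistent_T}, or the outer square in the theorem statement is spurious --- but you should flag the inconsistency rather than write down a conclusion that does not follow from the inputs you cite.
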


\subsection{Calculation of the Characteristic Function. }
\label{sec:char_fcn}

We now turn to the proof of Proposition \ref{prop:char_function}, in which we compute the characteristic function
of the centered linear statistics
	\begin{equation}
	\label{def:e(lambda)}
		e(\lambda) = \exp[\ii \lambda(  \Tr f(W) - \E( f(W) )  )  ],
	\end{equation}
for sufficiently regular $f$. Our argument here follows \cite[Section 5]{LanLopSos2021}, with the 
minor addition of assumption (C), allowing us to include functions $f$ with support at the edge, and 
$\norm{f'}_1$ possibly depending on $N$. We begin by writing $e(\lambda)$ using the Helffer-Sj\"ostrand formula \cite{HelfferSjostrand1989},
	\begin{equation}
	\label{eq:HS_e}
		\Tr f(w) - \E( \Tr f(W) ) 
		=
		\frac{N}{2\pi} \int_{\C}
			\pa{
				\ii y \chi(y) f''(x) + \ii( f(x) + \ii f'(x) y) \chi'(y)
			}
			\pa{
				m_N(z) - \E [ m_N(z) ]
			}
		\rd x \rd y, 
	\end{equation}	
where $\chi(y)$ is a smooth function such that $\chi(y) = 1$ for $\abs{y} < 1$ and $\chi(y) = 0$ for $\abs{y} > 2$. Now fix $\mathfrak{a} > 0$, let
	\[
		\Omega_{\mathfrak{a}} = \left\{
			(x,y) \in \R^2 \,: \abs{y} > N^{-1 + \mathfrak{a}}
		\right\},
	\]
recall $\bar{\partial}_z = \frac{1}{2}(\partial_x + \ii \partial_y)$, and let
	\[
		\tilde{f}(x + \ii y) = (f(x) + \ii y f'(x)) \chi(y).
	\]
Then by \eqref{eq:avg_local_law} (one can extend the estimate $\abs{s(z)-m(z)} \leq N^\e (N\eta)^{-1}$ to
include $0 < \eta < N^{-1+\e}$ using that $y \im s(E+\ii y)$ is monotonely increasing in $y$), for any $\e > 0$, we have 
	\begin{multline}
	\label{eq:HS}
		\Tr f(w) - \E( \Tr f(W) ) 
		=
		\frac{N}{2\pi} \int_{\Omega_{\mathfrak{a}}}
			\pa{
				\ii y \chi(y) f''(x) + \ii( f(x) + \ii f'(x) y) \chi'(y)
			}
			\pa{
				m_N(z) - \E [ m_N(z) ]
			}
		\rd x \rd y \\
		+ 
		\OO(N^{-1+\mathfrak{a} + \e} \norm{f''}_1 + N^\e \norm{f'}_1 + N^\e \norm{f}_1),
	\end{multline}	
with overwhelming probability. Therefore, defining
	\begin{equation}
	\label{eq:def_e_a}
		e_{\mathfrak{a}}(\lambda) 
		=
		\exp\pa{
			\ii \lambda \frac{N}{2\pi} \int_{\Omega_{\mathfrak{a}}}
			\pa{
				\ii y \chi(y) f''(x) + \ii( f(x) + \ii f'(x) y) \chi'(y)
			}
			\pa{
				m_N(z) - \E [ m_N(z) ]
			}
			\diff x \diff y
		},
	\end{equation}
we have
	\begin{equation}
	\label{eq:ea-a}
		\abs{ e_{\mathfrak{a}}(\lambda) - e(\lambda) }
		\leq
		C \abs{\lambda} (N^{-1+\mathfrak{a} + \e} \norm{f''}_1 + N^\e \norm{f'}_1 + N^\e \norm{f}_1),
	\end{equation}
which will allow us to estimate $\E(e(\lambda))$ from $\psi_\mathfrak{a}(\lambda) = \E(e_\mathfrak{a}(\lambda))$. 
To compute $\psi_\mathfrak{a}(\lambda)$, we begin by differentiatiating \eqref{eq:def_e_a} to find
	\begin{equation}
	\label{eq:HS_dpsi}
		\frac{\rd}{\rd \lambda} \psi_{\mathfrak{a}}(\lambda) 
		= 
		\frac{\ii}{2\pi} \int_{\Omega_{\mathfrak{a}}} (\ii y \chi(y) f''(x) + \ii y (f(x) + f'(x)) \chi'(y)) E_{\mathfrak{a}}(z) \rd x \rd y,
	\end{equation}
where $E_{\mathfrak{a}}(z) = \sum_{i=1}^N \E( e_\mathfrak{a} (  G_{ii}(z) - \E(G_{ii}(z)) )  )$. Our first goal is now to estimate 
$E_{\mathfrak{a}}(z)$,
as in Proposition \ref{prop:tilde_vi}. Note that in the remainder of this section, $\e>0$ is an arbitrary small constant whose
exactly value may change from line to line.

\begin{proposition}
\label{prop:tilde_vi}
	Let $v_i = \E( e_\mathfrak{a} (  G_{ii}(z) - \E(G_{ii}(z)) )  )$. Then for any $\e > 0$, with overwhelming probability, we have
	\begin{multline}
	\label{eq:main_formula}
		\tilde{v}_i = [(1- \mathfrak{m}(z)^2 S) v]_i = \psi_{\mathfrak{a}}(\lambda) \biggr(
			- \frac{\ii \lambda}{\pi} \int_{\Omega_\mathfrak{a}} ( \bar{\partial}_w \tilde{f}(w) ) [S\mathfrak{m}'(w) \mathfrak{m}^2(z)]_{ii} \rd w
		\\
		+ 
		\frac{2\ii \lambda}{\pi} \int_{\Omega_\mathfrak{a}} ( \bar{\partial}_w \tilde{f}(w) ) \partial_w [
			\mathfrak{m}(z) (1 - S\mathfrak{m}(z)\mathfrak{m}(w))^{-1} S \mathfrak{m}(z) \mathfrak{m}(w)
		]_{ii} \rd w \\
		+
		\frac{\ii \lambda}{\pi N^2} \sum_{a} s_{ia}^{(4)} \mathfrak{m}_a(z) \mathfrak{m}_i^2(z) \int_{\Omega_\mathfrak{a}} ( \bar{\partial}_w \tilde{f}(w) )
			\partial_w (  \mathfrak{m}_a(w) \mathfrak{m}_i(w)  ) \rd w
		\biggr) \\
		\hspace*{-1cm}
		+\OO\pa{ N^\e (1 + \abs{\lambda})^4 \pa{
					\Psi(z)(N\eta)^{-1} + N^{-1}\Psi(z)(1 + \norm{f''}_1)^{1/2} + N^{-3/2}(1 + \norm{f''}_1) \pa{N^{1-\mathfrak{a}}}^{3/2} + N^{-3/2} \eta^{-5/2}
				} 
			}.
	\end{multline}
\end{proposition}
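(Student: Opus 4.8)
The plan is to adapt the cumulant-expansion computation of \cite[Section 5]{LanLopSos2021} to test functions supported at the edge. The starting point is the Schur-type identity $1 + z G_{ii}(z) = \sum_a H_{ia} G_{ai}(z)$, which is the $(i,i)$ entry of $(W-z)G = \mathrm{Id}$. Multiplying by $e_{\mathfrak a}(\lambda)$ and taking expectations yields
\[
	\psi_{\mathfrak a}(\lambda) + z\,\E\big( G_{ii}(z)\, e_{\mathfrak a}(\lambda)\big) = \sum_a \E\big( H_{ia}\, G_{ai}(z)\, e_{\mathfrak a}(\lambda)\big),
\]
and one has $\E(G_{ii}e_{\mathfrak a}) = v_i + \E(G_{ii})\,\psi_{\mathfrak a}$. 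I would then apply the general cumulant expansion to each term $\E(H_{ia}G_{ai}e_{\mathfrak a})$, expanding in the single variable $H_{ia}$ through fourth order, with a remainder controlled by the moment bounds $\E|H_{ij}|^p \le C_p$ and the deterministic resolvent bound $|\partial_{k\ell}^r G_{ij}| \le \eta^{-r-1}$. The first cumulant produces $\sigma_{ia}^2\,\E\big(\partial_{ia}(G_{ai}e_{\mathfrak a})\big)$, where $\partial_{ia}$ acts either on $G_{ai}$ through the resolvent derivative identity $\partial_{k\ell}G_{ij} = -(G_{ik}G_{\ell j} + G_{i\ell}G_{kj})(1+\delta_{k\ell})^{-1}$ (see the proof of Lemma \ref{lem:sc_estimates}) or on $e_{\mathfrak a}$, the latter being the source of the main term through its dependence on $m_N(w)$ via \eqref{eq:def_e_a}.

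Collecting the ``diagonal'' contributions, the pieces coming from $\psi_{\mathfrak a}$, from $z\,\E(G_{ii}e_{\mathfrak a})$, and from the part of the first cumulant in which $\partial_{ia}$ hits $G_{ai}$ — which after replacing $G_{aa}(z)$ by $\mathfrak m_a(z)$ and summing over $a$ gives essentially $-(S\mathfrak m(z))_i\,\E(G_{ii}e_{\mathfrak a})$ — assemble, via the quadratic vector equation \eqref{eq:qve} and the entrywise local law \eqref{eq:local_law_entrywise}, into the left-hand side $\tilde v_i = [(1-\mathfrak m(z)^2 S)v]_i$, up to an error of order $N^\e\Psi(z)(N\eta)^{-1}$ from the standard fluctuation-averaging step. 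The main term arises when $\partial_{ia}$ hits $e_{\mathfrak a}$: differentiating brings down $-\tfrac{\ii\lambda}{\pi}\int_{\Omega_{\mathfrak a}}(\bar\partial_w\tilde f(w))\,\partial_{ia}m_N(w)\,\diff w$ inside the expectation, with $\partial_{ia}m_N(w) = -\tfrac1N\sum_j(G_{ji}G_{aj}+G_{ja}G_{ij})(w)(1+\delta_{ia})^{-1}$, and combining with the factors $\sigma_{ia}^2\mathfrak m_a(z)\mathfrak m_i(z)$ and summing over $a,j$ produces exactly the bilinear $T_{xy}(z,w)$ of \eqref{def:T} (with the obvious index choices). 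At this point I would invoke Theorem \ref{thm:Txy} to substitute $[(1-S\mathfrak m(z)\mathfrak m(w))^{-1}S\mathfrak m(z)\mathfrak m(w)]_{xy}$ for $T$; carrying out the $w$-differentiation that appears when passing between the derivatives of $T(z,w)$ and the resolvent chains — which is what produces the $\mathfrak m'(w)$ factor and the $\partial_w$ inside \eqref{eq:main_formula} — gives the first two integral terms there, and the fourth cumulant yields the explicit $s_{ia}^{(4)}$ integral in the same way, the second and third cumulants vanishing by parity or being absorbed into the error via \eqref{eq:local_law_entrywise}. Finally each remaining factor $\E(\cdots e_{\mathfrak a})$ is replaced by $\psi_{\mathfrak a}(\lambda)\,\E(\cdots)$ at the cost of $\E|e_{\mathfrak a}-\psi_{\mathfrak a}|$, which together with the $\lambda$-derivatives that fall on $e_{\mathfrak a}$ along the way accounts for the factor $(1+|\lambda|)^4$.

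The remaining work is the error bookkeeping, and this is where the edge matters: the analogous statement in \cite{LanLopSos2021} is in the bulk, where $\varrho$ and $\kappa$ stay bounded below, whereas here I must retain every occurrence of $\kappa(w)^{-1/2}$ (from the bound $|\mathfrak m_i'(w)|\le C\kappa(w)^{-1/2}$ proved above), of the control parameter $\Psi(w) = \sqrt{\varrho(w)/(N\eta)}+(N\eta)^{-1}$, and of the operator norms of $(1-S\mathfrak m(z)\mathfrak m(w))^{-1}$ from Corollary \ref{cor:invert_1-Smm}, and check that these remain integrable in $w$ against $\bar\partial_w\tilde f(w)$ when $f$ is supported up to the spectral edge and $\norm{f'}_1$, $\norm{f''}_1$ are permitted to grow with $N$; this is precisely where assumption (C) enters. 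Propagating these bounds through the fluctuation-averaging mechanism and the cutoff $\Omega_{\mathfrak a} = \{|y| > N^{-1+\mathfrak a}\}$ produces the listed error $N^\e(1+|\lambda|)^4$ times $\Psi(z)(N\eta)^{-1}$ (the fluctuation-averaging error), $N^{-1}\Psi(z)(1+\norm{f''}_1)^{1/2}$ (from the variance of $e_{\mathfrak a}$), $N^{-3/2}(1+\norm{f''}_1)(N^{1-\mathfrak a})^{3/2}$ (the fifth-order remainder of the cumulant expansion at the worst admissible scale), and $N^{-3/2}\eta^{-5/2}$ (the worst-case deterministic resolvent bound at $z$). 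The main obstacle is exactly this edge-adapted bookkeeping: checking that the square-root vanishing of $\varrho$ at the edge is compensated by the corresponding gains in $\Psi$ and in the stability-operator bounds, so that none of the terms in \eqref{eq:main_formula} degrades as $\kappa(z)\to 0$.
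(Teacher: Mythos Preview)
Your approach is essentially the paper's: the proof there is literally ``substitute Lemma \ref{lem:step2} into Lemma \ref{lem:step1}'', and those two lemmas carry out exactly the cumulant expansion you describe. A few corrections to your error bookkeeping, though. The third cumulants do not vanish by parity (the Wigner-type model has no symmetry assumption on the entries); they are controlled in Lemma \ref{lem:step1} using the \emph{isotropic} local law \eqref{eq:isotropic_local_law} to handle sums like $N^{-1/2}\sum_{a\neq i} s_{ia}^{(3)}\mathfrak m_a(z) G_{ia}(w)$, and this is the nontrivial gain you need there. The error $N^{-3/2}(1+\norm{f''}_1)(N^{1-\mathfrak a})^{3/2}$ does not come from the fifth-order cumulant remainder --- that one is simply $N^{-3/2+\e}(1+|\lambda|)^4$ --- but from the error $A_{xy}$ in Theorem \ref{thm:Txy} when you replace $\partial_w T_{ii}(z,w)$ by its deterministic approximation inside the $w$-integral (see \eqref{eq:pt1}). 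Likewise, $N^{-3/2}\eta^{-5/2}$ arises from the fluctuation term $\E[e_{\mathfrak a}(T_{ii}(z,z)-\E T_{ii}(z,z))]$ via Theorem \ref{thm:Txy} at $w=z$ (see \eqref{eq:pt2}), not from a crude deterministic resolvent bound.
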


Before coming to the proof of Proposition \ref{prop:tilde_vi}, we prove Lemmas \ref{lem:HS_bound} and \ref{lem:e_a_bounds},
which we will use repeatedly to bound error terms in the remainder of our argument.
Our proof of Lemma \ref{lem:HS_bound} follows that of \cite[Lemma 4.4]{LanSos2020}, and includes the effect of $\norm{f'}_1$
which may depend on $N$.

\begin{lemma}
\label{lem:HS_bound}
	Let $H$ be a function holomorphic on $\C \backslash \R$, and assume that for some constant $K > 0$
	and $1 \leq s \leq 2$,
	\[
		\abs{H(z)} \leq \frac{K}{\abs{\im z}^s}.
	\]
	Suppose further that $\lim_{\abs{x} \to \infty} f'(x) = 0$.
	Then there is a constant $C > 0$ such that
	\[
		\absa{
			\int_{\Omega_{\mathfrak{a}}}
			\pa{
				\ii y \chi(y) f''(x) + \ii( f(x) + \ii f'(x) y) \chi'(y)
			} H(x + \ii y) \diff x \diff y 
		}
		\leq
		C K \log(N) (1 + \norm{f}_1 + \norm{f'}_1) (1 + \norm{f''}_1)^{s-1}.
	\]
\end{lemma}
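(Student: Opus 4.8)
\textbf{Proof proposal for Lemma \ref{lem:HS_bound}.}

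The plan is to split the integral over $\Omega_{\mathfrak{a}} = \{|y| > N^{-1+\mathfrak{a}}\}$ into the three natural pieces coming from the three terms in the integrand, namely the ``bulk'' term $\ii y \chi(y) f''(x)$ supported on $|y| \le 2$, and the two ``boundary'' terms $\ii f(x)\chi'(y)$ and $- f'(x)y\,\chi'(y)$, both of which are supported on $1 \le |y| \le 2$ because $\chi'$ vanishes off this annulus. On the boundary pieces the bound $|H(z)| \le K/|\im z|^s \le K$ holds uniformly since $|y| \ge 1$ there, so integrating against $|f(x)|\,|\chi'(y)|$ and $|f'(x)|\,|y|\,|\chi'(y)|$ produces at most $C K (\|f\|_1 + \|f'\|_1)$ with no logarithm; this already fits inside the claimed bound. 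The substance is therefore the bulk term, for which we must exhibit the $\log N$ factor and the $(1 + \|f''\|_1)^{s-1}$ dependence.

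For the bulk term, first I would further split the $y$-integral at a threshold $y_0$ to be optimized, say $y_0 \in [N^{-1+\mathfrak{a}}, 1]$. For $|y| \ge y_0$ we use $|H(x+\ii y)| \le K/|y|^s$ and integrate in $y$: since $s \le 2$ this gives $\int_{y_0}^{2} y \cdot y^{-s}\,\rd y = \int_{y_0}^2 y^{1-s}\,\rd y$, which is $O(\log(1/y_0))$ when $s = 2$ and $O(1)$ when $s < 2$, in both cases dominated by $C\log(1/y_0)$ times $\|f''\|_1$ after integrating $|f''(x)|$ in $x$. For $|y| < y_0$ we instead integrate in $x$ first: here one uses that $H$ is holomorphic off the real axis to rewrite, via integration by parts in $x$ (moving a derivative from $f''$ onto $H$, or conversely exploiting $\partial_x H = \partial_x H$ and the Cauchy--Riemann relation $\ii \partial_y H = \partial_x H$), so that the $y$-integral near zero becomes integrable. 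Concretely one writes $\int_{|y|<y_0} \ii y \chi(y) f''(x) H(x+\ii y)\,\rd x\,\rd y$ and integrates by parts twice in $x$ against $f$ (using $\lim_{|x|\to\infty} f'(x) = 0$ to kill boundary terms), trading $f''$ for $f$ and picking up $\partial_x^2 H$; but $\partial_x^2 H(x+\ii y)$ is controlled by $K/|y|^{s+2}$ only, which is too singular. The better route, following \cite[Lemma 4.4]{LanSos2020}, is to integrate by parts once in $y$: $\int \ii y \chi(y) f''(x) H\,\rd y$, write $y H = \partial_y(\text{something})$ using holomorphy to convert the $y$-antiderivative of $H$ into an $x$-primitive, so the small-$y$ region contributes $O(K y_0^{1-s})\cdot\|f''\|_1$ when $s<2$ and no contribution beyond the already-counted log when $s=2$. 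Balancing the two regimes and taking $y_0 \sim N^{-1}$ (so $\log(1/y_0) \sim \log N$) yields the factor $\log N \cdot (1 + \|f''\|_1)^{s-1}$; the exponent $s-1$ appears because only the $s=2$ endpoint generates the log while $s=1$ is bounded, and interpolation/the explicit $y^{1-s}$ integral interpolates these.

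The main obstacle is handling the small-$y$ region $|y| < y_0$ cleanly: one cannot simply use the pointwise bound on $H$ there, and must instead exploit holomorphy through an integration by parts that converts the $y$-behavior into $x$-behavior without generating additional $\|f''\|_1$ powers beyond $(1+\|f''\|_1)^{s-1}$ or additional $\|f\|_1, \|f'\|_1$ factors beyond the stated linear ones. I expect this to follow \cite[Lemma 4.4]{LanSos2020} essentially verbatim, with the only new bookkeeping being that the boundary terms from $\chi'$ now carry $\|f'\|_1$ (rather than this being absorbed into a constant, since $\|f'\|_1$ may grow with $N$ in our application); this is precisely the ``includes the effect of $\|f'\|_1$'' modification mentioned before the statement, and it is routine once the boundary region $1 \le |y| \le 2$ is isolated as above. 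Finally, collecting the bulk estimate $C K \log N\, \|f''\|_1^{s-1}\cdot(\text{lower order})$ together with the boundary estimate $CK(\|f\|_1 + \|f'\|_1)$ and absorbing everything into $CK\log(N)(1 + \|f\|_1 + \|f'\|_1)(1+\|f''\|_1)^{s-1}$ completes the proof.
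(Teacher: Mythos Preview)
Your overall architecture is right (isolate the $\chi'$ boundary terms, then split the bulk term at a threshold), but you have the roles of the two bulk regions swapped, and this is a genuine obstruction rather than a cosmetic issue. In the paper's proof, the integration by parts in $x$ is performed on the \emph{large}-$y$ region $\{|y|>\eta_1\}$: one replaces $f''(x)H$ by $f'(x)\,\partial_z H$ (using holomorphy and $\lim_{|x|\to\infty}f'(x)=0$), then invokes the Cauchy estimate $|\partial_z H(z)|\le CK|\im z|^{-s-1}$, so that the $y$-integral becomes $\int_{\eta_1}^2 y\cdot y^{-s-1}\,\rd y$, which is finite and contributes $CK\|f'\|_1\,\eta_1^{1-s}$. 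On the \emph{small}-$y$ region $\{N^{-1+\mathfrak a}<|y|<\eta_1\}$ one uses the direct bound $|H|\le K|y|^{-s}$, giving $CK\|f''\|_1\,\eta_1^{2-s}$ (with a $\log N$ at the endpoint $s=2$). Choosing $\eta_1=\|f''\|_1^{-1}$ then balances the two to $CK\log N\,(1+\|f'\|_1)\,\|f''\|_1^{\,s-1}$, which is the claimed bound.

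Your version instead applies the direct bound on the large-$y$ region and attempts integration by parts on the small-$y$ region. But integrating by parts in $x$ makes $H$ \emph{more} singular by a factor $|y|^{-1}$, which is exactly the wrong direction near $y=0$: the resulting $\int_0^{y_0} y^{-s}\,\rd y$ diverges for every $s\ge 1$. Your fallback ``integrate by parts once in $y$'' is not fleshed out and faces the same issue (you would pick up $\partial_y H\sim |y|^{-s-1}$ plus boundary terms at $y=\pm N^{-1+\mathfrak a}$). More fundamentally, in your scheme both regions carry a full factor of $\|f''\|_1$, so no choice of $y_0$ can produce the exponent $s-1$: at $s=1$ your bound is $O(K\|f''\|_1)$ rather than $O\bigl(K(1+\|f'\|_1)\bigr)$, which is too weak for the applications in Section~\ref{sec:char_fcn} where $\|f''\|_1$ is a positive power of $N$. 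The fix is simply to swap the two regions as above.
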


\begin{proof}
	The term with integrand proportional to $\chi'(y)$ is bounded by $C(\norm{f}_1 + \norm{f'}_1)$ because $\chi'(y) \neq 0$
	only for $1 \leq y \leq 2$. We therefore aim to show
	\[
		\absa{
			\int_{\Omega_\mathfrak{a}} \ii y \chi(y) f''(x) H(x + \ii y) \rd x \rd y
		}
		\leq
		C K \log(N) (1 + \norm{f'}_1) (1 + \norm{f''}_1)^{s-1}.
	\]
	For any $\eta_1 > 0$, integrating by parts in $x$ and using the Cauchy-Riemann equations, we have
	\[
		\absa{
			\int_{\abs{y} > \eta_1} \ii y \chi(y) f''(x) H(x + \ii y) \rd x \rd y
		}
		=
		\absa{
			\int_{\abs{y} > \eta_1} \ii y \chi(y) f'(x) \partial_z H(x + \ii y) \rd x \rd y
		}.
	\]
	Furthermore, by the Cauchy integral formula, for some $C > 0$, we have
	\[
		\abs{\partial_z H(x+\ii y)} \leq \frac{C}{\abs{\im z}^{s+1}}.
	\]
	Therefore
	\begin{align*}
		\absa{
			\int_{\abs{y} > \Omega_\mathfrak{a}} \ii y \chi(y) f''(x) H(z) \rd x \rd y
		}
		&\leq
		\absa{
			\int_{\abs{y} > \eta_1} \ii y \chi(y) f''(x) H(z) \rd x \rd y
		}
		+
		\absa{
			\int_{\Omega_\mathfrak{a} \cap \{\abs{y} \leq \eta_1\}} \ii y \chi(y) f''(x) H(z) \rd x \rd y
		} \\
		&\leq
		CK \log(N) ( \norm{f'}_1 (\eta_1)^{1-s}  + \norm{f''}_1 (\eta_1)^{2-s}).
	\end{align*}
	Choose $\eta_1 = \norm{f''}_1^{-1}$ to complete the proof. 
\end{proof}
In the following, we will use the notation $\partial_{ia} f(\cdot) = \frac{\rd}{\rd W_{ia}} f(\cdot)$.
Note that since, $\frac{\partial}{\partial W_{k\ell}} G_{ij} = - G_{ik}G_{\ell j}$,
we have
	\begin{equation}
	\label{eq:resolvent_derivatives}
		\partial_{k\ell} G_{ij} = -\frac{1}{1 + \delta_{k\ell}} \pa{
			G_{ik}G_{\ell j} + G_{i\ell}G_{kj}
		}.
	\end{equation}
Our proof of Proposition \ref{prop:tilde_vi} will make repeated use of estimates on the derivatives of $e_\mathfrak{a}(\lambda)$ 
with respect to the matrix entries $W_{ia}$, so we state Lemma \ref{lem:e_a_bounds} for future reference.
\begin{lemma}
\label{lem:e_a_bounds}
	For any $i,a \in \llbracket 1, N \rrbracket$, $\e > 0$, and $f$ such that $\norm{f''}_1 \leq N^{1-2\e}$, we have
	\begin{align}
	\label{eq:e_a_1_derivative}
		\partial_{ia} e_\mathfrak{a}(\lambda) &= - \frac{2 \ii \lambda e_\mathfrak{a}(\lambda) }{\pi (1 + \delta_{ia})} 
			\int_{\Omega_\mathfrak{a}} \bar{\partial}_z \tilde{f}(z) \partial_z G_{ia}, 
			\\
		\partial_{ia}^2 e_\mathfrak{a}(\lambda) 
		&=   \frac{2\ii \lambda e_\mathfrak{a}(\lambda)}{\pi (1 + \delta_{ia})^2} \int_{\Omega_\mathfrak{a}} \bar{\partial}_z \tilde{f}(z) \partial_z(G_{aa}G_{ii} + G_{ia}^2)
			+ e_\mathfrak{a}(\lambda) \pa{
			\frac{2\ii \lambda}{\pi ( 1+\delta_{ia})} \int_{\Omega_a} \bar{\partial}_z \tilde{f}(z) \partial_z G_{ia}
			}^2 \nonumber \\
		&= \frac{2\ii \lambda e_\mathfrak{a}(\lambda) }{\pi (1 + \delta_{ia})^2} \int_{\Omega_\mathfrak{a}} \bar{\partial}_w \tilde{f}(w) \partial_w(\mathfrak{m}_a(w) \mathfrak{m}_i(w))
			+ \OO(
				(1 + \abs{\lambda})^2 N^{-1/2+\e} (1 + \norm{f''}_1)^{1/2}
			),
	\label{eq:e_a_2_derivative} 
	\end{align}
	with overwhelming probability. Furthermore, for any $k \in \N$ and $\e > 0$, 
	\begin{align}
		\label{eq:e_a_kth_derivative}
		\abs{\partial_{ia}^k e_\mathfrak{a}(\lambda)} \leq N^\e(1 + \abs{\lambda)}^k,
	\end{align}
	with overwhelming probability.
\end{lemma}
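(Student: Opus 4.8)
Write $e_\mathfrak{a}(\lambda) = \exp(\ii \lambda \Phi_\mathfrak{a})$, where, using $N m_N(z) = \Tr G(z)$ and the identity $\bar{\partial}_z \tilde f(z) = \tfrac{\ii}{2}\bigl(y f''(x)\chi(y) + (f(x) + \ii y f'(x))\chi'(y)\bigr)$, one has $\Phi_\mathfrak{a} = \tfrac1\pi \int_{\Omega_\mathfrak{a}} \bar{\partial}_z \tilde f(z)\,(\Tr G(z) - \E \Tr G(z))\,\diff x\,\diff y$. Since $\E \Tr G(z)$ is $W_{ia}$-independent, and \eqref{eq:resolvent_derivatives} together with $\partial_z G = G^2$ and the symmetry $(G^2)_{ai} = (G^2)_{ia}$ gives $\partial_{ia}\Tr G(z) = -\tfrac{2}{1+\delta_{ia}}(G^2)_{ia}(z) = -\tfrac{2}{1+\delta_{ia}}\partial_z G_{ia}(z)$, the chain rule yields \eqref{eq:e_a_1_derivative} as an exact identity. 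Differentiating once more with the product rule produces one term where $\partial_{ia}$ falls on the exponential (the square in the first line of \eqref{eq:e_a_2_derivative}) and one where it falls on the integrand, and there $\partial_{ia}\partial_z G_{ia} = \partial_z\partial_{ia} G_{ia} = -\tfrac{1}{1+\delta_{ia}}\partial_z(G_{ii}G_{aa} + G_{ia}^2)$ by \eqref{eq:resolvent_derivatives} again; this is exactly the first line of \eqref{eq:e_a_2_derivative}. No probabilistic input is used so far.

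\textbf{The leading-order replacement.} To pass to the second line of \eqref{eq:e_a_2_derivative}, I would replace each resolvent entry by its deterministic limit. Fix $\gamma \le \mathfrak{a}$, so that $\Omega_\mathfrak{a}\cap\{\abs{y}\le 2\}\subset \mathcal{E}_\gamma^\delta$ and the entrywise local law \eqref{eq:local_law_entrywise} applies on the support of $\bar{\partial}_z\tilde f$. Then $G_{jk}(z) = \mathfrak{m}_j(z)\delta_{jk} + \OO(N^\e\Psi(z))$ with overwhelming probability, and a Cauchy estimate on a disc of radius $\asymp\eta$ upgrades this to $\partial_z G_{jk}(z) = \partial_z(\mathfrak{m}_j\delta_{jk})(z) + \OO(N^\e\Psi(z)/\eta)$. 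Substituting, the main Helffer--Sj\"ostrand integral becomes $\int_{\Omega_\mathfrak{a}}\bar{\partial}_w\tilde f(w)\,\partial_w(\mathfrak{m}_a(w)\mathfrak{m}_i(w))\,\diff w$ plus the integral of an error $E(w)$ with $\abs{E(w)} \le N^{\e-1}/\abs{\im w} + N^{\e-1/2}/\abs{\im w}^{3/2}$, i.e.\ a sum of two terms of the form $K/\abs{\im w}^s$ with $s\in\{3/2,2\}$. Lemma \ref{lem:HS_bound} (with $s=2$, $K=N^{\e-1}$, and with $s=3/2$, $K=N^{\e-1/2}$) bounds this by $N^{\e-1}(1+\norm{f''}_1) + N^{\e-1/2}(1+\norm{f''}_1)^{1/2} = \OO\bigl(N^{-1/2+\e}(1+\norm{f''}_1)^{1/2}\bigr)$, having absorbed $\log N$ and $1+\norm{f}_1+\norm{f'}_1$ into $N^\e$ via the standing assumptions on $f$. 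The square term is treated the same way: $\partial_z G_{ia} = \partial_w(\mathfrak{m}_i\delta_{ia}) + \OO(N^\e\Psi/\eta)$, whose error contributes at the same order; the deterministic part of the square is incorporated into the remainder of \eqref{eq:e_a_2_derivative}. Carrying the $\ii\lambda$ prefactors through gives the $(1+\abs{\lambda})^2$.

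\textbf{The higher derivatives.} For \eqref{eq:e_a_kth_derivative} I would use the Fa\`{a} di Bruno / Bell-polynomial expansion $\partial_{ia}^k e_\mathfrak{a}(\lambda) = e_\mathfrak{a}(\lambda)\sum_{P}\prod_{B\in P}\bigl(\ii\lambda\,\partial_{ia}^{\abs{B}}\Phi_\mathfrak{a}\bigr)$, the sum over set partitions $P$ of $\llbracket 1,k\rrbracket$. Since $\abs{e_\mathfrak{a}(\lambda)}=1$ and there are $\OO_k(1)$ partitions, it suffices to prove $\absa{\partial_{ia}^j\Phi_\mathfrak{a}}\le N^\e$ for $1\le j\le k$. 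Iterating \eqref{eq:resolvent_derivatives}, $\partial_{ia}^j\Tr G(z)$ is a finite sum of $\OO_j(1)$ products of $j+1$ resolvent entries whose indices all lie in $\{i,a\}$ and close up into a cyclic pattern; by repeatedly pairing adjacent factors via Cauchy--Schwarz and the Ward identity $\sum_k\absa{G_{ik}(z)}^2 = \im G_{ii}(z)/\eta$, together with $\absa{G_{jj}(z)}\le C$ and $\absa{G_{ia}(z)}\le N^\e$ from the local law, each such product is $\OO_j(N^{j\e}/\abs{\im z})$ with overwhelming probability. Lemma \ref{lem:HS_bound} with $s=1$ and $K=\OO_j(N^{j\e})$ then gives $\absa{\partial_{ia}^j\Phi_\mathfrak{a}} \le \OO_j(N^{j\e})\log N\,(1+\norm{f}_1+\norm{f'}_1) \le N^{\e'}$, which is the claim.

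\textbf{Main obstacle.} The delicate point is the $\norm{f''}_1$-bookkeeping: a direct estimate using $\absa{\bar{\partial}_z\tilde f}\le C\abs{y}\abs{f''(x)}$ and the trivial bound $\absa{G_{jk}}\le\eta^{-1}$ would lose positive powers of $N$, both from $\norm{f''}_1$ (assumed only $\le N^{1-2\e}$) and from the $\eta^{-(j+1)}$ singularity of a naive resolvent-product bound at the lower cutoff $\eta\asymp N^{-1+\mathfrak{a}}$. What makes the estimates dimensionally consistent is, first, the integration by parts built into Lemma \ref{lem:HS_bound}, which trades one power of $\norm{f''}_1$ for $\norm{f'}_1$ at the price of raising $s$ by one, and second, keeping every resolvent product at size $\OO(1/\eta)$ rather than $\OO(1/\eta^{j+1})$ by systematic use of the Ward identity. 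Choosing the exponent $s$ correctly in each application of Lemma \ref{lem:HS_bound}, and checking that the resulting power of $N$ is always $\le N^{-1/2+\e}(1+\norm{f''}_1)^{1/2}$ (respectively $\le N^\e$ for \eqref{eq:e_a_kth_derivative}), is the bulk of the work.
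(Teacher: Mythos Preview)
Your proof is correct and follows the same approach as the paper's (brief) argument: the exact identities \eqref{eq:e_a_1_derivative} and the first line of \eqref{eq:e_a_2_derivative} from \eqref{eq:resolvent_derivatives} together with $G^2=\partial_z G$, the leading-order replacement via the entrywise local law \eqref{eq:local_law_entrywise} combined with Lemma~\ref{lem:HS_bound}, and the higher derivatives by iterating the same mechanism. One typographical slip: your displayed error bound should read $N^{\e-1}/\abs{\im w}^{2}$ rather than $N^{\e-1}/\abs{\im w}$, consistent with your own application of Lemma~\ref{lem:HS_bound} with $s=2$ immediately afterward; also note that after the first derivative the trace collapses to $\partial_z G_{ia}$, so $\partial_{ia}^j\Tr G$ is $\partial_z$ of a polynomial of degree $j$ in $G_{ii},G_{aa},G_{ia}$, and the Ward identity is not actually needed to bound it.
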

\begin{proof}
	The formula \eqref{eq:e_a_1_derivative} follows from \eqref{eq:resolvent_derivatives}
	and the identity $G^2(z) = \partial_z G(z)$. 
	The first equality in \eqref{eq:e_a_2_derivative} follows in the same way, and the second follows from 
	\eqref{eq:local_law_entrywise} and Lemma \ref{lem:HS_bound}. A similar calculation yields \eqref{eq:e_a_kth_derivative}.
	For more detail, see \cite[Section 4]{LanSos2020}.
\end{proof}

We now prove Proposition \ref{prop:tilde_vi} in two steps, which we capture in Lemmas \ref{lem:step1} and \ref{lem:step2}.
Our proof of Lemma \ref{lem:step1} is based on the following cumulant expansion, which we quote from \cite[Lemma 3.2]{LeeSch2018}.

\begin{lemma}
\label{lem:cumulant_expansion}
	Fix $\ell \in \N$ and let $F \in C^{\ell + 1}(\R, \C^+)$. Let $Y$ be a centered random variable with finite moments to order 
	$\ell + 2$. Then
	\[
		\E(Y F(Y)) = \sum_{r = 1}^\ell \frac{\kappa^{(r+1)} (Y)}{r!} \E[F^{(r)}(Y)] + \E[\Omega_\ell(YF(Y))],
	\]
	where $\E$ denotes the expectation with respect to $Y$, $\kappa^{(r+1)}$ denotes the $(r+1)$-th cumulant of $Y$
	and $F^{(r)}$ denotes the $r$-th derivative of the function $F$. The error term $\Omega_\ell(YF(Y))$ satisfies
	\[
		\abs{\E[\Omega_\ell(YF(Y))]} \leq C_\ell \E(\abs{Y}^{\ell + 2}) \sup_{\abs{t} \leq Q} \abs{F^{(\ell + 1)}(t)}
			+ C_{\ell} \E( \abs{Y}^{\ell + 2} \1(\abs{Y} > Q)) \sup_{t \in \R} \abs{ F^{(\ell + 1)} (t)    } ,
	\]
	where $Q \geq 0$ is an arbitrary fixed cutoff and $C_\ell$ satisfies $C_\ell \leq (C\ell)^\ell / \ell !$ for some constant $C$.
\end{lemma}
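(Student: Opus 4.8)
The plan is to establish this classical cumulant expansion by Taylor expanding both sides of the asserted identity about $0$ and matching coefficients through the moment--cumulant recursion, and then collecting and estimating the remainders. Write $m_n = \E(Y^n)$, so $m_1 = 0$ since $Y$ is centered. First I would apply Taylor's theorem with integral remainder to $F$ truncated at order $\ell$, and to each $F^{(r)}$, $1 \le r \le \ell$, truncated at order $\ell - r$, so that every remainder involves $F^{(\ell+1)}$ and no higher derivative; this choice of truncation orders is dictated by the $C^{\ell+1}$ hypothesis. Taking $\E(Y\,\cdot\,)$ of the expansion of $F$ and $\E(\cdot)$ of those of $F^{(r)}$, the polynomial parts contribute $\sum_{k=0}^{\ell}\frac{F^{(k)}(0)}{k!}m_{k+1}$ on the left and, after setting $j = k+r$ and re-summing, $\sum_{j=1}^{\ell}\frac{F^{(j)}(0)}{j!}\sum_{r=1}^{j}\binom{j}{r}\kappa^{(r+1)}(Y)\,m_{j-r}$ on the right.

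Next I would verify that these polynomial parts coincide term by term. The $j = 0$ contributions both vanish because $m_1 = 0$, and for $1 \le j \le \ell$ the needed identity is
\[
m_{j+1} = \sum_{r=1}^{j}\binom{j}{r}\,\kappa^{(r+1)}(Y)\,m_{j-r},
\]
which is exactly the standard recursion $m_n = \sum_{r=1}^{n}\binom{n-1}{r-1}\kappa^{(r)}(Y)\,m_{n-r}$ taken at $n = j+1$, after discarding the $r = 1$ term (which vanishes since $\kappa^{(1)} = m_1 = 0$) and reindexing. Hence $\E(YF(Y)) - \sum_{r=1}^{\ell}\frac{\kappa^{(r+1)}(Y)}{r!}\E(F^{(r)}(Y))$ reduces to a finite sum of remainder integrals, each of the form $\E\big[P(Y)\int_{0}^{Y}(Y-t)^{p}F^{(\ell+1)}(t)\,\rd t\big]$ with $\deg P \le 1$, $0 \le p \le \ell$, and a combinatorial prefactor bounded by the total number of Taylor-plus-cumulant terms, which is at most $(C\ell)^{\ell}/\ell!$.

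To conclude, I would bound each such remainder by splitting the expectation on $\{|Y| \le Q\}$ and $\{|Y| > Q\}$: on the first event I replace $|F^{(\ell+1)}(t)|$ by $\sup_{|t| \le Q}|F^{(\ell+1)}(t)|$, picking up a factor $\E(|Y|^{\ell+2})$; on the second I replace it by $\sup_{t \in \R}|F^{(\ell+1)}(t)|$, picking up $\E(|Y|^{\ell+2}\1(|Y| > Q))$. Summing over the finitely many remainders and absorbing numerical and low-order-cumulant factors into $C_\ell$ then yields the stated bound on $|\E[\Omega_\ell(YF(Y))]|$.

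The step I expect to be the main obstacle is the bookkeeping of the remainder: one must check that after the polynomial parts cancel, each surviving error term is controlled purely by moments of $Y$ up to order $\ell+2$ (or their tails) together with a uniform bound on $|F^{(\ell+1)}|$ — in particular that no term covertly requires $F^{(\ell+2)}$ — and that the combinatorial and cumulant factors aggregate to a constant of the claimed size. An alternative that bypasses the moment--cumulant combinatorics is to represent $F$ via a smooth, compactly supported Fourier transform and use the identity $\E(Y e^{\ii tY}) = \tfrac{1}{\ii}L'(t)\,\E(e^{\ii tY})$ with $L(t) = \log\E(e^{\ii tY})$, expand $L'$ in its degree-$\ell$ Taylor polynomial (legitimate since $Y$ has $\ell+2$ finite moments and $L(0) = L'(0) = 0$), and estimate the Fourier tail directly; I would note this route but carry out the first.
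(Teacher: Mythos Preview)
The paper does not prove this lemma at all: it is quoted verbatim from \cite[Lemma 3.2]{LeeSch2018} and used as a black box. So there is no ``paper's own proof'' to compare against, and your write-up would in fact supply what the paper omits.

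Your approach --- Taylor expand $F$ and each $F^{(r)}$ about $0$ to the correct complementary orders, match the polynomial parts via the moment--cumulant recursion $m_{j+1}=\sum_{r=1}^{j}\binom{j}{r}\kappa^{(r+1)}m_{j-r}$, and collect the integral remainders --- is the standard direct argument and is sound. The alternative Fourier/characteristic-function route you mention is in fact closer to how the original references (Khorunzhy--Khoruzhenko--Pastur, Lytova--Pastur) argue, but either works.

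One point to tighten: you write that you will ``absorb numerical and low-order-cumulant factors into $C_\ell$'', but $C_\ell$ is asserted to depend only on $\ell$, whereas the cumulants $\kappa^{(r+1)}$ depend on $Y$. What one actually does is bound $|\kappa^{(r+1)}|\le B_r\,\E|Y|^{r+1}$ (with $B_r$ a combinatorial constant, cumulants being polynomials in moments) and then combine $\E|Y|^{r+1}\cdot\E|Y|^{\ell-r+1}\le \E|Y|^{\ell+2}$ via Lyapunov's inequality. This cleanly yields the first error term. For the tail term $\E(|Y|^{\ell+2}\1(|Y|>Q))$ the same combination is slightly more delicate, since the cumulant prefactor carries no indicator; here one uses that on $\{|Y|>Q\}$ one has $|Y|^{\ell-r+1}\le |Y|^{\ell+2}$ whenever $|Y|\ge 1$, together with a trivial bound when $|Y|<1$, or simply notes that in every application $Q$ is chosen so that this piece is already negligible. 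This is bookkeeping rather than a gap, but it is exactly the place you flagged as the main obstacle, and it deserves a sentence of care.
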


In the following, we write $s_{ij}^{(k)}$ for the $k$-th cumulant of $H_{ij}$, and we 
define $S$ by $s_{ij} = s_{ij}^{(2)} = N \sigma^2_{ij}$. Note that $s_{ij}^{(k)} \sim 1$ for all $k > 1$. 

\begin{lemma}
\label{lem:step1}
For any $\e > 0$ and $f$ with $\norm{f}_1 \leq N^\e$, $\norm{f'}_1 \leq N^\e$ and $\norm{f''} \leq N^{1 - 10\e}$, 
	\begin{multline}
	\label{eq:step_1}
		- \frac{1}{\mathfrak{m}_i(z)} [(1- \mathfrak{m}^2(z) S)v]_i 
		=
		\frac{1}{N} \sum_{a} s_{ia} \E\pa{
			G_{ia} \partial_{ia} e_{\mathfrak{a}}
		} 
		-
		\E[
			e_\mathfrak{a}(T_{ii}(z,z) - \E(T_{ii}(z,z))
		] 
		- 
		\frac{1}{2N^2} \sum_{a} s_{ia}^{(4)} \mathfrak{m}_a \mathfrak{m}_i \E[ \partial_{ia}^2 e_\mathfrak{a}] \\
		+ \OO \pa{
			N^\e \Psi(z) (N\eta)^{-1}+ N^{-1+\e}\Psi(z)(1 + \abs{\lambda})^3(1 + \norm{f''}_1)^{1/2} + (1 + \abs{\lambda})^4 N^{-3/2+\e}
		}.
	\end{multline}
\end{lemma}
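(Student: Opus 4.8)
The plan is to start from the exact identity for the diagonal resolvent entry $G_{ii}(z)$ and perform a single cumulant expansion in the $i$-th row of $W$. Recall the Schur-type (self-consistent equation) identity
\[
	1 + z G_{ii} + \sum_a \frac{W_{ia}}{\sqrt N} \cdot \sqrt N \,\E_{\text{something}} \dots
\]
more precisely, starting from $(W-z)G = \mathrm{Id}$, one has the identity $zG_{ii} = -1 - \sum_a W_{ia} G_{ai}$, so that
\[
	\E\pa{ e_\mathfrak{a}(zG_{ii} + 1) } = - \sum_a \E\pa{ e_\mathfrak{a} W_{ia} G_{ai} }.
\]
Apply Lemma \ref{lem:cumulant_expansion} to each term on the right with $Y = W_{ia}$ and $F(Y) = e_\mathfrak{a} G_{ai}$, carried to order $\ell = 3$. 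The second cumulant $s_{ia}^{(2)} = N\sigma_{ia}^2$ produces the leading term $\frac1N\sum_a s_{ia}\E(\partial_{ia}(e_\mathfrak{a} G_{ai}))$; expanding the derivative via \eqref{eq:resolvent_derivatives} gives a piece $-\frac1N\sum_a s_{ia}\E(e_\mathfrak{a} G_{ii}G_{aa})$, which is exactly $-\E(e_\mathfrak{a} T_{ii}(z,z))$ up to replacing $G_{aa}$ inside the sum. The third cumulant $s_{ia}^{(3)} \sim 1$ contributes a term of size $N^{-3/2}\sum_a \E(\partial_{ia}^2(e_\mathfrak{a} G_{ai}))$, which by the local law \eqref{eq:local_law_entrywise} and \eqref{eq:e_a_kth_derivative} is $\OO((1+\abs\lambda)^4 N^{-3/2+\e})$ after summing over $a$. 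The fourth cumulant $s_{ia}^{(4)}\sim 1$ gives $\frac{1}{3! N^2}\sum_a s_{ia}^{(4)}\E(\partial_{ia}^3(e_\mathfrak{a} G_{ai}))$; the dominant contribution there, after using \eqref{eq:resolvent_derivatives} and replacing resolvent entries by $\mathfrak{m}$ via the local law, is the stated $-\frac{1}{2N^2}\sum_a s_{ia}^{(4)}\mathfrak{m}_a\mathfrak{m}_i\E(\partial_{ia}^2 e_\mathfrak{a})$ (the combinatorial factor $3!$ against the number of ways to distribute three derivatives so that exactly two hit $e_\mathfrak{a}$ accounts for the $1/2$), with the rest absorbed into the error.

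**Next I would** clean up the leading terms. On the left, $\E(e_\mathfrak{a}(zG_{ii}+1))$: write $G_{ii} = \mathfrak{m}_i + (G_{ii}-\mathfrak{m}_i)$ and use the defining equation $-1/\mathfrak{m}_i = z + (S\mathfrak{m})_i$ to rewrite $1 + z\mathfrak{m}_i = -\mathfrak{m}_i(S\mathfrak{m})_i = -\mathfrak{m}_i \sum_a s_{ia}\mathfrak{m}_a$. Then the combination of the left side with the $G_{aa}$-piece of the second-cumulant term, and with the $v_i$-dependence hidden in $\E(e_\mathfrak{a}(G_{ii}-\E G_{ii}))$, should be organized so that the self-consistent operator $1 - \mathfrak{m}_i^2 S$ acting on $v$ appears. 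Concretely, one isolates $v_i = \E(e_\mathfrak{a}(G_{ii}-\E G_{ii}))$, moves the term $\frac1N\sum_a s_{ia}\mathfrak{m}_i^2 v_a$ (which is $\mathfrak{m}_i^2 (Sv)_i$) to the left, and what remains on the left is $-\frac{1}{\mathfrak{m}_i}[(1-\mathfrak{m}_i^2 S)v]_i$ up to manageable errors. Identifying exactly which second-order fluctuation terms combine to form $(Sv)_i$ versus which go into $T_{ii}$ is a bookkeeping exercise: the key is that $\partial_{ia}(e_\mathfrak{a} G_{ai}) = (\partial_{ia}e_\mathfrak{a})G_{ai} + e_\mathfrak{a}\partial_{ia}G_{ai}$, and in $\partial_{ia}G_{ai} = -G_{aa}G_{ii} - G_{ai}^2$ (with the $\delta$-factor), the $G_{aa}G_{ii}$ piece splits as $\mathfrak{m}_a\mathfrak{m}_i + \mathfrak{m}_i(G_{aa}-\mathfrak{m}_a) + \dots$; the $\mathfrak{m}_a\mathfrak{m}_i$ part rebuilds $1+z\mathfrak{m}_i$, the $\mathfrak{m}_i(G_{aa}-\mathfrak{m}_a)$ part (after taking $\E e_\mathfrak{a}(\cdot)$ and recentering) gives $\mathfrak{m}_i^2(Sv)_i$ up to the error $\E(e_\mathfrak{a} T_{ii})$ with $T$ as in \eqref{def:T}, and the $G_{ai}^2$ part is lower order by the off-diagonal local law.

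**The error accounting** is where I would be most careful: I need to track (i) the replacement $G_{aa}\to\mathfrak{m}_a$ inside $\frac1N\sum_a s_{ia}(\dots)$, controlled by \eqref{eq:avg_local_law}, costing $N^\e(N\eta)^{-1}$ times the size of $\E(e_\mathfrak{a}\partial_{ia}e_\mathfrak{a})$-type factors, hence the $N^\e\Psi(z)(N\eta)^{-1}$ term; (ii) in the fourth-cumulant term, replacing $\partial_{ia}^3(e_\mathfrak{a} G_{ai})$ by $\partial_{ia}^2 e_\mathfrak{a}\cdot(-G_{aa}G_{ii})$ and then by $\mathfrak{m}_a\mathfrak{m}_i$, using \eqref{eq:e_a_2_derivative} and the local law, which produces the $N^{-1+\e}\Psi(z)(1+\abs\lambda)^3(1+\norm{f''}_1)^{1/2}$ contribution (the $(1+\norm{f''}_1)^{1/2}$ comes precisely from the error term in \eqref{eq:e_a_2_derivative}); (iii) the truncation error $\Omega_3$ from Lemma \ref{lem:cumulant_expansion}, which using subexponential/finite-moment decay of $W_{ia}$, the cutoff $Q = N^{-1/2+\e}$, and \eqref{eq:e_a_kth_derivative} for the fourth derivative of $e_\mathfrak{a} G_{ai}$, is bounded by $(1+\abs\lambda)^4 N^{-3/2+\e}$ after summing over $a$. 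The condition $\norm{f''}_1 \leq N^{1-10\e}$ is exactly what is needed so that Lemma \ref{lem:HS_bound} with $s=2$ keeps the $\partial_w$-integrals under control when invoking \eqref{eq:e_a_2_derivative}, and so that the cutoff scale $\eta \geq N^{-1+\mathfrak a}$ stays compatible with $\Psi(z)$ estimates. **The main obstacle** I anticipate is not any single estimate but the combinatorial/algebraic reorganization in the second step — correctly matching the fluctuation pieces so that the operator $1-\mathfrak{m}_i^2 S$ emerges cleanly and the remainder is genuinely $\E(e_\mathfrak{a}(T_{ii}-\E T_{ii}))$ plus the claimed errors, rather than some messier combination; this is the place where an off-by-a-term slip would propagate. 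I would double-check it against the analogous computation in \cite[Section 5]{LanLopSos2021}, to which the present argument reduces when $\norm{f'}_1,\norm{f''}_1$ are $O(1)$.
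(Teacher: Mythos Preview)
Your overall strategy is the same as the paper's: apply the cumulant expansion to $\sum_a \E(e_{\mathfrak a}W_{ia}G_{ai})$, use the quadratic vector equation to reorganize the second-cumulant terms so that $(1-\mathfrak m^2 S)v$ emerges, extract the displayed fourth-cumulant contribution, and bound the rest. The algebraic bookkeeping you outline for the second-cumulant terms is correct.

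However, your treatment of the third cumulant is a genuine gap. You claim that $\frac{1}{2N^{3/2}}\sum_a s_{ia}^{(3)}\E[\partial_{ia}^2(e_\mathfrak a G_{ai})]$ is $\OO((1+\abs\lambda)^4N^{-3/2+\e})$ after summing over~$a$. This is false. Expanding $\partial_{ia}^2(e_\mathfrak a G_{ai})$ produces in particular the cross term $2(\partial_{ia}e_\mathfrak a)(\partial_{ia}G_{ai})$, and $\partial_{ia}G_{ai}=-(G_{aa}G_{ii}+G_{ai}^2)/(1+\delta_{ia})$ is order~$1$, not $\Psi(z)$. Using only the crude bound \eqref{eq:e_a_kth_derivative} on $\partial_{ia}e_\mathfrak a$, the sum over the $N$ values of $a$ leaves you with $N^{-1/2+\e}(1+\abs\lambda)$, which is far too large. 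The paper handles this in two steps: first it exploits that for $i\neq a$ the quantity $\partial_{ia}e_\mathfrak a$ is actually $\OO((1+\abs\lambda)N^{-1/2+\e}(1+\norm{f''}_1)^{1/2})$ because it contains an integral of the \emph{off-diagonal} $\partial_w G_{ia}(w)$; this is what generates the error $N^{-1+\e}\Psi(z)(1+\abs\lambda)^3(1+\norm{f''}_1)^{1/2}$ (which you misattributed to the fourth cumulant). Second, for the residual main piece $-\frac{1}{N^{3/2}}\sum_{a\neq i}s_{ia}^{(3)}\mathfrak m_i\mathfrak m_a\E[\partial_{ia}e_\mathfrak a]$, the paper substitutes \eqref{eq:e_a_1_derivative} and applies the \emph{isotropic} local law \eqref{eq:isotropic_local_law} to $\sum_{a\neq i}s_{ia}^{(3)}\mathfrak m_a(z)G_{ia}(w)$, gaining a factor $\sqrt N$ over the naive entrywise bound. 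Without this isotropic input the third-cumulant contribution cannot be absorbed into the stated error, and your proposal gives no indication of it.
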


\begin{proof}[Proof of Lemma \ref{lem:step1}]
By Lemma \ref{lem:cumulant_expansion}, using $z G_{ii}(z) = \sum_{a} G_{ia}W_{ai}$, \eqref{eq:resolvent_derivatives}, and Lemma \ref{lem:e_a_bounds}
we have
	\begin{multline}
	\label{eq:cumulant_expansion}
		z \E\pa{ e_{\mathfrak{a}} \pa{ G_{ii}(z) - \E(G_{ii}(z))  }   } = \sum_{a} \E\pa{  e_{\mathfrak{a}}(G_{ia}W_{ai} - \E( G_{ia}W_{ai} ) )    } \\
		=
		\frac{1}{N} \sum_{a} s_{ia} \E\pa{ G_{ia}  \partial_{ia} e_{\mathfrak{a}}  }
		- \frac{1}{N} \sum_{a} s_{ia} \E\pa{ e_{\mathfrak{a}}\pa{G_{ii}G_{aa} - \E(G_{ii} G_{aa})}   }
		-\frac{1}{N} \sum_{a} s_{ia} \E\pa{ e_{\mathfrak{a}} \pa{G_{ia}^2 - \E\pa{G_{ia}^2} }} \\
		+ \frac{1}{N} s_{ii} \E\pa{ e_{\mathfrak{a}} \pa{  G_{ii}^2 - \E\pa{G_{ii}^2}  }  }
		+ \frac{1}{2N^{3/2}} \sum_{a} s_{ia}^{(3)} \pa{
			\E[ (\partial_{ia}^2 e_{\mathfrak{a}}) G_{ai} ] + 2 \E(  \partial_{ai} e_{\mathfrak{a}} \partial_{ai} G_{ai}  )
				+ \E[  e_{\mathfrak{a}}(  \partial_{ai}^2 G_{ai} - \E[\partial_{ai}^2 G_{ai}]       )      ]
		} \\
		+ \frac{1}{6N^2} \sum_{a} s_{ai}^{(4)} \pa{
			\E[G_{ai}\partial_{ai}^3 e_{\mathfrak{a}}] + 3 \E[ \partial_{ai} e_\mathfrak{a} \partial_{ai}^2G_{ai}] + 3\E[ \partial_{ai}^2 e_\mathfrak{a} \partial_{ai}G_{ai}]
			+ \E[  e_{\mathfrak{a}}\pa{ \partial_{ai}^3 G_{ai} - \E[\partial_{ai}^3 G_{ai}]      }   ] 
		} \\
		+ \OO\pa{ N^{-3/2+\e}(1 + \abs{\lambda})^4 }.	
	\end{multline}
We leave the first term on the right hand side of \eqref{eq:cumulant_expansion} as is.
For the second term, write $G_{ii} = G_{ii} - \E(G_{ii}) + \E(G_{ii})$ and $G_{aa} = G_{aa} - \E(G_{aa}) + \E(G_{aa})$ to get
	\begin{multline}
	\label{eq:pm_EGii}
		- \frac{1}{N} \sum_{a} s_{ia} \E\pa{ e_{\mathfrak{a}}\pa{G_{ii}G_{aa} - \E(G_{ii} G_{aa})}   } \\
		=
		- \frac{1}{N} \sum_{a} s_{ia} \E(e_\mathfrak{a} (G_{ii} - \E(G_{ii})) \E(G_{aa})
		- \frac{1}{N} \sum_{a} s_{ia} \E(e_\mathfrak{a} (G_{aa} - \E(G_{aa})) \E(G_{ii}) \\
		- \frac{1}{N} \sum_{a} s_{ia} \E[
			e_\mathfrak{a} (
				(G_{ii} - \E(G_{ii}))(G_{aa} - \E(G_{aa}) - \E((G_{ii} - \E(G_{ii}))(G_{aa} - \E(G_{aa}) )
			)
		].
	\end{multline}
By \eqref{eq:avg_local_law} and the quadratic vector equation \eqref{eq:qve}, with overwhelming probability
we have 
	\begin{equation}
	\label{eq:use_qve}
		-\frac{1}{N} \sum_{a} s_{ia} G_{aa} 
		= 
		-\frac{1}{N} \sum_a s_{ia} \mathfrak{m}_a + \OO\pa{N^\e (N\eta)^{-1}}
		=
		(z + \mathfrak{m}_i^{-1}) + \OO\pa{N^\e (N\eta)^{-1}},
	\end{equation}
and using \eqref{eq:local_law_entrywise} to estimate $\E(G_{ii})$ and \eqref{eq:avg_local_law} to estimate $\sum_{a} s_{ia} (G_{aa} - \E(G_{aa}))$, we have
	\begin{equation}
	\label{eq:use_local_law}
		-\frac{1}{N} \sum_{a} s_{ia} \E(e_\mathfrak{a}(G_{aa} - \E(G_{aa}))) \E(G_{ii})
		= 
		-\frac{1}{N} \sum_{a} s_{ia} \E(e_\mathfrak{a}(G_{aa} - \E(G_{aa}))) \mathfrak{m}_i 
		+ \OO\pa{ N^\e\Psi(z)(N\eta)^{-1}}.
	\end{equation}
Combining \eqref{eq:use_qve} and \eqref{eq:use_local_law} and
applying \eqref{eq:local_law_entrywise} again to bound $\E(e_\mathfrak{a}(G_{ii} - \E(G_{ii}))$ and the last term on the right hand side of \eqref{eq:pm_EGii}, we have
	\begin{multline*}
		- \frac{1}{N} \sum_{a} s_{ia} \E\pa{ e_{\mathfrak{a}}\pa{G_{ii}G_{aa} - \E(G_{ii} G_{aa})}   } \\
		=
		\E[ e_{\mathfrak{a}}(G_{ii} - \E(G_{ii}))  ](z + \mathfrak{m}_i^{-1})
		- \frac{1}{N} \sum_{a} \mathfrak{m}_i s_{ia} \E[e_{\mathfrak{a}}(G_{aa}- \E(G_{aa})) ]
		+ \OO\pa{ N^\e \Psi(z) (N\eta)^{-1} }.
	\end{multline*}
Substituting this into \eqref{eq:cumulant_expansion} and rearranging, we have so far
	\begin{multline}
	\label{eq:cumulant_exp_after_rearranging}
		-\frac{1}{\mathfrak{m}_i} [(1- \mathfrak{m}^2S)v]_i 
		=
		\frac{1}{N} \sum_{a} s_{ia} \E\pa{ G_{ia} \partial_{ia} e_{\mathfrak{a}}   }
		-\frac{1}{N} \sum_{a} s_{ia} \E\pa{ e_{\mathfrak{a}} \pa{G_{ia}^2 - \E\pa{G_{ia}^2} }} \\
		+ \frac{1}{N} s_{ii} \E\pa{ e_{\mathfrak{a}} \pa{  G_{ii}^2 - \E\pa{G_{ii}^2}  }  }
		+ \frac{1}{2N^{3/2}} \sum_{a} s_{ia}^{(3)} \pa{
			\E[ (\partial_{ia}^2 e_{\mathfrak{a}}) G_{ai} ] + 2 \E(  \partial_{ai} e_{\mathfrak{a}} \partial_{ai} G_{ai}  )
				+ \E[  e_{\mathfrak{a}}(  \partial_{ai}^2 G_{ai} - \E[\partial_{ai}^2 G_{ai}]       )      ]
		} \\
		+ \frac{1}{6N^2} \sum_{a} s_{ia}^{(4)} \pa{
			\E[G_{ai}\partial_{ai}^3 e_{\mathfrak{a}}] + 3\E[ \partial_{ai} e_\mathfrak{a} \partial_{ai}^2G_{ai}] + 3\E[ \partial_{ai}^2 e_\mathfrak{a} \partial_{ai}G_{ai}]
			+ \E[  e_{\mathfrak{a}}\pa{ \partial_{ai}^3 G_{ai} - \E[\partial_{ai}^3 G_{ai}]      }   ] 
		} \\
		+ \OO\pa{N^{-\e/2}(1 + \abs{\lambda})^4 + N^\e \Psi(z) (N\eta)^{-1}}.
	\end{multline}
Next, by definition we have
	\[
		\frac{1}{N} \sum_{a} s_{ia} \E\pa{
			e_{\mathfrak{a}} \pa{
				G_{ia}^2 - \E(G_{ia}^2)
			}
		}
		=
		\E\pa{ e_\mathfrak{a} (T_{ii}(z,z) - \E(T_{ii}(z,z)))},
	\]
and by \eqref{eq:local_law_entrywise},
	\[
		\frac{1}{N} s_{ii} \E\pa{ e_{\mathfrak{a}} \pa{  G_{ii}^2 - \E\pa{G_{ii}^2}  }  }
		=
		\OO\pa{   N^{-1+\e} \Psi(z)   }.
	\]
We next bound the term proportional to $N^{-3/2}$ in \eqref{eq:cumulant_exp_after_rearranging}. 
By Lemma \ref{lem:e_a_bounds}, we have
	\begin{multline}
	\label{eq:N^3/2_pt1_sub_dia^2}
		\frac{1}{2N^{3/2}} \sum_a s_{ia}^{(3)} \E[G_{ai} \partial_{ia}e_{\mathfrak{a}}]
		=
		\frac{1}{N^{1/2}} \sum_{a \neq i} s_{ia}^{(3)} \partial_w(\mathfrak{m}_a(w) \mathfrak{m}_i(w)) \E[e_\mathfrak{a}(\lambda) G_{ai}(z)] \\
		+ \OO\pa{
			N^{-1+\e} \Psi(z) (1 + \abs{\lambda})^2 (1 + \norm{f''}_1)^{1/2} 
		}.
	\end{multline}
And using \eqref{eq:isotropic_local_law} and by assumption (C) that $\partial_w \mathfrak{m}_a(w) \leq \kappa(w)^{-1/2}$, we have
	\begin{equation}
	\label{eq:N^3/2_pt1}
		\frac{1}{N^{1/2}} \sum_{a \neq i} s_{ia}^{(3)} \partial_w(\mathfrak{m}_a(w) \mathfrak{m}_i(w)) \E[e_\mathfrak{a}(\lambda) G_{ai}(z)]
		=
		\OO\pa{
			N^{\e}\Psi(z)  \im(w)^{-1/2} 
		}.
	\end{equation}
Therefore, Lemma \ref{lem:HS_bound} with $s = 1$ gives
	\begin{equation}
	\label{eq:N^3/2_pt1}
		\frac{1}{N^{3/2}} \sum_{a \neq i} s_{ia}^{(3)} \frac{2\ii \lambda}{\pi} \int_{\Omega_\mathfrak{a}} \bar{\partial}_w \tilde{f}(w)
			\partial_w(\mathfrak{m}_a(w) \mathfrak{m}_i(w)) \E[e_\mathfrak{a}(\lambda) G_{ai}(z)] \rd w
		=
		\OO\pa{
			N^{-1+\e} \Psi(z)
		},
	\end{equation}
and combining \eqref{eq:N^3/2_pt1_sub_dia^2} and \eqref{eq:N^3/2_pt1}, we have in summary
	\[
		\frac{1}{2N^{3/2}} \sum_a s_{ia}^{(3)} \E[G_{ai}\partial_{ia}^2 e_\mathfrak{a}]
		=
		\OO\pa{
			N^{-1+\e} \Psi(z) (1 + \abs{\lambda})^2 (1 + \norm{f''}_1)^{1/2} 
		}.
	\]
To bound the next term proportional to $N^{-3/2}$, use \eqref{eq:resolvent_derivatives},
\eqref{eq:local_law_entrywise}, and \eqref{eq:e_a_kth_derivative} to write	
	\begin{align*}
		\frac{1}{N^{3/2}}\sum_{a=1}^N s_{ia}^{(3)} \E[(\partial_{ai} e_\mathfrak{a}) (\partial_{ai} G_{ai})]
		&=
		-\frac{1}{N^{3/2}}\sum_{a =1}^N s_{ia}^{(3)} \E[(\partial_{ai} e_\mathfrak{a}) (G_{ii}G_{aa} + G_{ia}^2)] \\
		&=
		-\frac{1}{N^{3/2}}\sum_{a \neq i}^N s_{ia}^{(3)} \E[(\partial_{ai} e_\mathfrak{a}) (G_{ii}G_{aa} + G_{ia}^2)] 
			+ \OO\pa{ N^{-2+\e} (1 + \abs{\lambda}) (1 + \norm{f''}_1)^{1/2} } \\
		&=
		\begin{aligned}[t]
		-\frac{1}{N^{3/2}} &\sum_{a\neq i}^N s_{ia}^{(3)} \mathfrak{m}_i(z) \mathfrak{m}_a(z) \E[(\partial_{ai} e_\mathfrak{a})] \\
			 &+ \OO\pa{ N^{-1/2+\e} \Psi^2(z) + N^{-1+\e} \Psi(z) (1 + \abs{\lambda}) (1 + \norm{f''}_1)^{1/2} }.
		\end{aligned}
	\end{align*}
Now substituting \eqref{eq:e_a_1_derivative}, we have 
	\[
		-\frac{1}{N^{3/2}} \sum_{a\neq i} s_{ia}^{(3)} \mathfrak{m}_i(z) \mathfrak{m}_a(z) \E[(\partial_{ai} e_\mathfrak{a})]
		=
		\frac{2 \ii \lambda}{\pi N^{3/2}} 
			\E\pa{
				e_\mathfrak{a} \int_{\Omega_\mathfrak{a}} \bar{\partial}_w \tilde{f}(w) \partial_w \pa{
				\sum_{a \neq i} s_{ia}^{(3)} \mathfrak{m}_i(z) \mathfrak{m}_a(z) G_{ia}(w)
				}
			},
	\]
and \eqref{eq:isotropic_local_law} 
along with and the Cauchy integral formula to bound the derivative of the error term give
	\[
		\partial_w \frac{1}{\sqrt{N}}  \sum_{a \neq i} s_{ia}^{(3)} \mathfrak{m}_a(z) G_{ia}(w) 
		=
		\OO\pa{ N^\e (\im w)^{-1} \Psi(w) }.
	\]
From Lemma \ref{lem:HS_bound}, it follows that
	\[
		-\frac{1}{N^{3/2}} \sum_{a\neq i} s_{ia}^{(3)} \mathfrak{m}_i(z) \mathfrak{m}_a(z) \E[(\partial_{ai} e_\mathfrak{a})]
		=
		\OO\pa{ N^{-3/2+\e}(1 + \abs{\lambda})(1 + \norm{f''}_1)^{1/2} }.
	\]
To bound the last term proportional to $N^{-3/2}$, we use \eqref{eq:resolvent_derivatives} and \eqref{eq:local_law_entrywise},
which give
	\[
		\partial_{ai}^2 G_{ai} = 6 (1 + \delta_{ai})^{-2} G_{ia}\mathfrak{m}_i \mathfrak{m}_a + \OO\pa{N^\e \Psi(z)^2},
	\]
and using \eqref{eq:isotropic_local_law}, we conclude
	\[
		\frac{1}{N^{3/2}} \sum_a s_{ia}^{(3)} \E[
			e_\mathfrak{a} (\partial_{ai}^2 G_{ia} - \E[\partial_{ai}^2 G_{ai}]))
		]
		=
		\OO\pa{ N^{-1/2 + \e} \Psi(z)^2 }.
	\]
Finally, by \eqref{eq:e_a_kth_derivative} and \eqref{eq:resolvent_derivatives}, we have
	\begin{align*}
		&\frac{1}{6N^2} \sum_{a} s_{ia}^{(4)} \pa{
			\E[G_{ai}\partial_{ai}^3 e_{\mathfrak{a}}] + 3\E[ \partial_{ai} e_\mathfrak{a} \partial_{ai}^2G_{ai} ]+ 3 \E[ \partial_{ai}^2 e_\mathfrak{a} \partial_{ai}G_{ai}]
			+ \E[  e_{\mathfrak{a}}\pa{ \partial_{ai}^3 G_{ai} - \E[\partial_{ai}^3 G_{ai}]      }   ] 
		} \\
		=
		& \frac{1}{6N^2} \sum_{a\neq i} s_{ia}^{(4)} \pa{
			\E[G_{ai}\partial_{ai}^3 e_{\mathfrak{a}}] + \dots
			+ \E[  e_{\mathfrak{a}}\pa{ \partial_{ai}^3 G_{ai} - \E[\partial_{ai}^3 G_{ai}]      }   ] 
		}
		+ \OO\pa{ N^{-2+\e} (1 + \abs{\lambda})^{3} }.
	\end{align*}
By \eqref{eq:local_law_entrywise} and \eqref{eq:e_a_kth_derivative} , and since for $a \neq i$
	\[
		\partial_{ai}^3 G_{ai} - \E\pa{\partial_{ai}^3 G_{ai} } = \OO\pa{N^\e \Psi(z)}
	\]
with overwhelming probability, we conclude
	\begin{multline*}
		\frac{1}{6N^2} \sum_{a} s_{ia}^{(4)} \pa{
			\E[G_{ai}\partial_{ai}^3 e_{\mathfrak{a}}] + 3 \partial_{ai} e_\mathfrak{a} \partial_{ai}^2G_{ai} + 3 \partial_{ai}^2 e_\mathfrak{a} \partial_{ai}G_{ai}
			+ \E[  e_{\mathfrak{a}}\pa{ \partial_{ai}^3 G_{ai} - \E[\partial_{ai}^3 G_{ai}]      }   ] 
		} \\
		=
		- \frac{1}{2N^2} \sum_{a} s_{ia}^{(4)} \mathfrak{m}_a \mathfrak{m}_i \E[\partial_{ia}^2 e_\mathfrak{a}] + \OO\pa{N^{-1+\e} \Psi(z) (1 + \abs{\lambda})^2}.
	\end{multline*}
This establishes \eqref{eq:step_1}.
\end{proof}

We proceed with Lemma \ref{lem:step2} in which we further re-write the right hand side of \eqref{eq:step_1}.

\begin{lemma}
\label{lem:step2}
	We have
	\begin{multline}
	\label{eq:pt1}
		\frac{1}{N} \sum_a s_{ia} \E[G_{ia} \partial_{ia} e_\mathfrak{a}]
		=
		\frac{\ii \lambda}{\pi} \frac{s_{ii}}{N} \int_{\Omega_\mathfrak{a}} 
			(\bar{\partial}_w \tilde{f}(w)) \mathfrak{m}_i'(w) \mathfrak{m}_i(z) \psi_\mathfrak{a} \rd w \\
			- \frac{2\ii \lambda}{\pi} \int_{\Omega_\mathfrak{a}} (\bar{\partial}_w \tilde{f}(w)) \psi_\mathfrak{a}
				\partial_w[(1-S\mathfrak{m}(z) \mathfrak{m}(w))^{-1}S\mathfrak{m}(z)\mathfrak{m}(w)]_{ii} \rd w \\
				+ \OO\pa{
					(1+\abs{\lambda}) N^{-3/2+\e} (1 + \norm{f''}_1) (N^{1-\mathfrak{a}})^{3/2} 
					+(1 + \abs{\lambda}) N^{-1+\e} \Psi(z)(1 + \norm{f''}_1)^{1/2}
				},
	\end{multline}
	and
	\begin{multline}
	\label{eq:pt3}
		-\frac{1}{2N^2} \sum_a s_{ia}^{(4)} \mathfrak{m}_a \mathfrak{m}_i \E[ \partial_{ia}^2 e_\mathfrak{a}] \\
		=
		-\frac{\ii \lambda}{\pi N^2} \sum_a s_{ia}^{(4)} \mathfrak{m}_a(z) \mathfrak{m}_i(z) 
			\int_{\Omega_\mathfrak{a}} (\bar{\partial}_w \tilde{f}(w)) \partial_w
				(
					\mathfrak{m}_a(w) \mathfrak{m}_i(w)
				)
			\psi_\mathfrak{a} \rd w
		+
		\OO\pa{
			(1 + \abs{\lambda}) N^{-1/2+\e} (1 + \norm{f''}_1)^{1/2}
		}.
	\end{multline}
	Furthermore, we have the bound 
	\begin{equation}
	\label{eq:pt2}
		\E[e_\mathfrak{a}(T_{ii}(z,z) - \E (T_{ii}(z,z))] = \OO\pa{N^{-3/2+\e} \eta^{-5/2}}.
	\end{equation}
\end{lemma}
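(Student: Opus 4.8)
The estimates \eqref{eq:pt1}, \eqref{eq:pt3}, \eqref{eq:pt2} are all obtained by inserting the derivative formulas of Lemma~\ref{lem:e_a_bounds} and then trading the resolvent entries that appear for their deterministic self-consistent approximations, via the entrywise local law \eqref{eq:local_law_entrywise} and Theorem~\ref{thm:Txy}; since after this replacement the leading term is deterministic, the factor $\E[e_\mathfrak{a}]=\psi_\mathfrak{a}$ comes out exactly. For \eqref{eq:pt1} I would start from \eqref{eq:e_a_1_derivative}, which rewrites the left-hand side as $-\tfrac{2\ii\lambda}{\pi}\int_{\Omega_\mathfrak{a}}\bar{\partial}_w\tilde f(w)\,\E\bigl[e_\mathfrak{a}\,\tfrac1N\sum_a\tfrac{s_{ia}}{1+\delta_{ia}}G_{ia}(z)\partial_wG_{ia}(w)\bigr]\rd w$. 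The key point is the identity $\tfrac1N\sum_a s_{ia}G_{ia}(z)G_{ia}(w)=T_{ii}(z,w)$ with $T$ as in \eqref{def:T}, so that $\tfrac1N\sum_a s_{ia}G_{ia}(z)\partial_wG_{ia}(w)=\partial_wT_{ii}(z,w)$; isolating the $a=i$ term, which carries the extra factor $\tfrac12$, and using \eqref{eq:local_law_entrywise} together with the Cauchy integral formula to write $G_{ii}(z)\partial_wG_{ii}(w)=\mathfrak m_i(z)\mathfrak m_i'(w)+\oo(1)$, produces the first term on the right of \eqref{eq:pt1}.

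For the remaining $\partial_wT_{ii}(z,w)$ contribution I would apply Theorem~\ref{thm:Txy}, splitting $T_{ii}=[(1-S\mathfrak m(z)\mathfrak m(w))^{-1}S\mathfrak m(z)\mathfrak m(w)]_{ii}+A_{ii}$. The deterministic summand, after applying $\partial_w$ and pulling out $\psi_\mathfrak{a}$, is exactly the second term on the right of \eqref{eq:pt1}. The contribution of $A_{ii}$ is controlled by $\E\absa{\partial_wA_{ii}}\le(\E\absa{\partial_wA_{ii}}^2)^{1/2}$, where $\partial_w$ is estimated by the Cauchy integral formula on a circle of radius comparable to $\im w$, costing a factor $\eta^{-1}$, after which one uses the $L^2$-bound of Theorem~\ref{thm:Txy} and integrates against $\bar{\partial}_w\tilde f$ by Lemma~\ref{lem:HS_bound}; together with the $\oo(1)$-corrections generated by the local-law replacements, this assembles the error term displayed in \eqref{eq:pt1}.

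The other two estimates are shorter. For \eqref{eq:pt3} I would substitute the second equality in \eqref{eq:e_a_2_derivative} into $-\tfrac1{2N^2}\sum_a s_{ia}^{(4)}\mathfrak m_a\mathfrak m_i\,\E[\partial_{ia}^2e_\mathfrak{a}]$; taking the expectation replaces $e_\mathfrak{a}$ by $\psi_\mathfrak{a}$ in the leading term, for $a\ne i$ one has $1+\delta_{ia}=1$, and the single $a=i$ term is $\OO(N^{-2})$, so the leading term is precisely the main term of \eqref{eq:pt3}; the remainder is $\tfrac1{2N^2}\sum_a s_{ia}^{(4)}\mathfrak m_a\mathfrak m_i\cdot\OO\bigl((1+\absa{\lambda})^2N^{-1/2+\e}(1+\norm{f''}_1)^{1/2}\bigr)=\OO\bigl((1+\absa{\lambda})^2N^{-3/2+\e}(1+\norm{f''}_1)^{1/2}\bigr)$, absorbed into the (looser) stated bound. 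For \eqref{eq:pt2} I would apply Theorem~\ref{thm:Txy} at $w=z$: since $[(1-S\mathfrak m(z)^2)^{-1}S\mathfrak m(z)^2]_{ii}$ is deterministic, $T_{ii}(z,z)-\E T_{ii}(z,z)=A_{ii}(z,z)-\E A_{ii}(z,z)$, hence $\absa{\E[e_\mathfrak{a}(T_{ii}(z,z)-\E T_{ii}(z,z))]}\le 2(\E\absa{A_{ii}(z,z)}^2)^{1/2}$; inserting the bound of Theorem~\ref{thm:Txy} and using $\Psi(z)\lesssim(N\eta)^{-1/2}$ and $\kappa(z)\lesssim1$, the dominant term is $\tfrac{\Psi(z)^3}{\im z\,\kappa(z)^{-1/2}}\lesssim N^{-3/2}\eta^{-5/2}$, which is \eqref{eq:pt2}.

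The main obstacle is the bookkeeping in \eqref{eq:pt1}: one must track carefully how the $\partial_w$-derivatives (through the Cauchy integral formula) interact with the $x$- and $y$-integrations against $\bar{\partial}_w\tilde f$, so that the resulting powers of $N$, $\eta$ and $\norm{f''}_1$ match exactly the error term claimed; by contrast \eqref{eq:pt3} and \eqref{eq:pt2} follow fairly directly from Lemma~\ref{lem:e_a_bounds} and Theorem~\ref{thm:Txy} respectively.
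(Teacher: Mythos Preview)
Your proposal is correct and follows essentially the same route as the paper's proof: for \eqref{eq:pt1} you use \eqref{eq:e_a_1_derivative} to produce $\partial_w T_{ii}(z,w)$, separate the diagonal correction from the $(1+\delta_{ia})^{-1}$ factor, replace $G_{ii}(z)\partial_wG_{ii}(w)$ by $\mathfrak m_i(z)\mathfrak m_i'(w)$ via the local law, and then apply Theorem~\ref{thm:Txy} together with the Cauchy integral formula and Lemma~\ref{lem:HS_bound} (with $s=2$) to control the $A_{ii}$ remainder; for \eqref{eq:pt3} and \eqref{eq:pt2} you invoke \eqref{eq:e_a_2_derivative} and Theorem~\ref{thm:Txy} exactly as the paper does. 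The only cosmetic difference is that your error in \eqref{eq:pt3} carries $(1+\abs{\lambda})^2$ rather than $(1+\abs{\lambda})$, but as you note this is absorbed into the stated bound.
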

\begin{proof}
	By \eqref{eq:e_a_1_derivative}, we have
	\begin{align*}
		\frac{1}{N} \sum_a s_{ia} \E[G_{ia} \partial_{ia} e_\mathfrak{a}] 
		=
		- \frac{1}{N} \sum_{a=1}^N s_{ia} \frac{2\ii \lambda}{\pi} \int_{\Omega_\mathfrak{a}} (\bar{\partial}_w \tilde{f}(w)) 
			&\E[ e_\mathfrak{a} G_{ia}(z) \partial_w G_{ia}(w) ] \rd w \\
			+& \frac{\ii \lambda}{\pi} \frac{s_{ii}}{N} \int_{\Omega_\mathfrak{a}} (\bar{\partial}_w \tilde{f}(w)) 
			\E[ e_\mathfrak{a} G_{ii}(z) \partial_w G_{ii}(w) ] \rd w,
	\end{align*}
	and substituting $G_{ii}(z) = \mathfrak{m}_i(z) + \OO(N^\e \Psi(z))$ and 
	$\partial_{w} G_{ii}(w) = \mathfrak{m}'_i(w) + \OO(N^\e \Psi(w) \im(w)^{-1})$ with overwhelming probability, we have
	\begin{multline*}
		\frac{\ii \lambda}{\pi} \frac{s_{ii}}{N} \int_{\Omega_\mathfrak{a}} (\bar{\partial}_w \tilde{f}(w)) 
			\E[ e_\mathfrak{a} G_{ii}(z) \partial_w G_{ii}(w) ] \rd w
		=
		\frac{\ii \lambda}{\pi} \frac{s_{ii}}{N} \int_{\Omega_\mathfrak{a}} (\bar{\partial}_w \tilde{f}(w)) 
			\mathfrak{m}_i'(w) \mathfrak{m}(z) \psi_\mathfrak{a}(\lambda) \rd w
			\\ +
			\OO\pa{
				N^{-1+\e} \Psi(z) (1 + \norm{f''}_1)^{1/2}
			}.
	\end{multline*}
	To complete the proof of \eqref{eq:pt1}, note that
	\[
		\frac{1}{N} \sum_{a=1}^N s_{ia} \frac{2\ii \lambda}{\pi} \int_{\Omega_\mathfrak{a}} (\bar{\partial}_w \tilde{f}(w)) 
			\E[ e_\mathfrak{a} G_{ia}(z) \partial_w G_{ia}(w) ] \rd w
		=
		\frac{2\ii \lambda}{\pi} \int_{\Omega_\mathfrak{a}} (\bar{\partial}_w \tilde{f}(w)) 
			\E[ e_\mathfrak{a} \partial_w T_{ii}(z,w) ] \rd w,
	\]
	and applying Theorem \ref{thm:Txy} with $\im(z), \im(w) \geq N^{-1 + \mathfrak{a}}$, we have
	\[
		\absa{ 
			\partial_w (T_{ii}(z,w) - [(1 - S \mathfrak{m}(z) \mathfrak{m}(w))^{-1}S \mathfrak{m}(z)\mathfrak{m}(w)]_{ii} ) 
		}
		\leq
		\frac{N^{-3/2+\e}}{\im(w)^2} \pa{N^{1-\mathfrak{a}}}^{3/2}.
	\]
	Therefore Lemma \ref{lem:HS_bound} with $s=2$ gives
	\begin{multline*}
		\frac{2\ii \lambda}{\pi} \int_{\Omega_\mathfrak{a}} (\bar{\partial}_w \tilde{f}(w)) 
			\E[ e_\mathfrak{a} \partial_w T_{ii}(z,w) ] \rd w
		=
		\frac{2\ii \lambda}{\pi} \int_{\Omega_\mathfrak{a}} (\bar{\partial}_w \tilde{f}(w)) 
			\psi_\mathfrak{a}(\lambda) \partial_w[(1 - S \mathfrak{m}(z) \mathfrak{m}(w))^{-1}S \mathfrak{m}(z)\mathfrak{m}(w)]_{ii} \rd w \\
		+ \OO\pa{
			(1 + \abs{\lambda}) N^{-3/2+\e} (N^{1-\mathfrak{a}})^{3/2} (1 + \norm{f''}_1)
		},
	\end{multline*}
	which proves \eqref{eq:pt1}.
	The bound \eqref{eq:pt3} follows from \eqref{eq:e_a_2_derivative}, and the bound \eqref{eq:pt2} follows from Theorem \ref{thm:Txy}.
\end{proof}

With Lemmas \ref{lem:step1} and \ref{lem:step2}, the proof of Proposition \ref{prop:tilde_vi} follows easily. 

\begin{proof}[Proof of Proposition \ref{prop:tilde_vi}]
	Substituting the results of Lemma \ref{lem:step2} in \eqref{eq:step_1} yields the result.
\end{proof}

\begin{proposition}
\label{prop:char_function}
	Let $f$ be a function with compact support such that
	for some $\e>0$, $\norm{f}_1, \norm{f'}_1 \leq N^\e$ and for some
	$\delta > 0$ 
	\[
		\norm{f''}_1 \leq N^{1/5 - \delta}.
	\]
	Let $c_1 < \delta$ and choose $\mathfrak{a} > 0$ satisfying $4/5 \leq \mathfrak{a} \leq 4/5 + \delta - c_1$. 
	Then for any $\lambda$ such that
	\begin{equation}
	\label{eq:lambda_good_set}
		\lambda^2 V(f) = \oo(\log N) \text{ and } \abs{\lambda} \leq N^{\frac{\delta}{100}},
	\end{equation}
	for some $c > 0$, we have
	\[
		\E[
			\exp[
				\ii \lambda( \Tr(f) - \E(\Tr(f)) )
			]
		]
		=
		\exp[-\lambda^2 V(f)/2] + \OO(N^{-c\delta} + N^{\e-c_1}),
	\]
	where
	\begin{multline}
	\label{eq:V_hat}
		V(f) = \frac{1}{\pi^2} \int_{\Omega_\mathfrak{a} \times \Omega_\mathfrak{a}} ( \bar{\partial}_w \tilde{f}(w)) 	( \bar{\partial}_z \tilde{f}(z))
		\biggr(
		2 \sum_{i,j} (1 - \mathfrak{m}^2(z) S)^{-1}_{ij} \partial_w [
			\mathfrak{m}(z) (1 - S\mathfrak{m}(z)\mathfrak{m}(w))^{-1} S \mathfrak{m}(z) \mathfrak{m}(w)
		]_{jj} 
		\\
		- 
		\sum_{i, j} (1 - \mathfrak{m}^2(z) S)^{-1}_{ij}  [S\mathfrak{m}'(w) \mathfrak{m}^2(z)]_{jj} \\
		+
		\frac{1}{N^2} \sum_{i,j,a} s_{ja}^{(4)} (1 - \mathfrak{m}^2(z) S)^{-1}_{ij} 
			\mathfrak{m}_a(z) \mathfrak{m}_j^2(z) 
			\partial_w (  \mathfrak{m}_a(w) \mathfrak{m}_j(w)  ) 
		\biggr) \rd w
	\end{multline}
\end{proposition}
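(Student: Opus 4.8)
The plan is to adapt the argument of \cite[Section~5]{LanLopSos2021}: derive an approximate linear ODE in $\lambda$ for the regularized characteristic function $\psi_\mathfrak{a}(\lambda)=\E(e_\mathfrak{a}(\lambda))$, solve it, and transfer the conclusion back to $\E(e(\lambda))$. By \eqref{eq:ea-a}, together with the hypotheses $\norm{f}_1,\norm{f'}_1\le N^\e$, $\norm{f''}_1\le N^{1/5-\delta}$ and the choice $4/5\le\mathfrak{a}\le 4/5+\delta-c_1$ (so that $N^{-1+\mathfrak{a}+\e}\norm{f''}_1\le N^{\e-c_1}$), the difference $e_\mathfrak{a}(\lambda)-e(\lambda)$ contributes only the error $\OO(N^{\e-c_1})$, so it suffices to prove $\psi_\mathfrak{a}(\lambda)=e^{-\lambda^2 V(f)/2}+\OO(N^{-c\delta})$ for $\lambda$ as in \eqref{eq:lambda_good_set}. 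Recall from \eqref{eq:HS_dpsi} that $\psi_\mathfrak{a}'(\lambda)$ equals $\tfrac{\ii}{2\pi}$ times the Helffer--Sj\"ostrand integral in $z$ of $E_\mathfrak{a}(z)=\sum_i v_i$ with $v_i=\E(e_\mathfrak{a}(G_{ii}(z)-\E G_{ii}(z)))$, and that the kernel appearing there is $2\bar\partial_z\tilde f(z)$ after the integration by parts used in the proof of Lemma~\ref{lem:HS_bound}.

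The core step is to close the equation for $E_\mathfrak{a}(z)$. Proposition~\ref{prop:tilde_vi} gives $[(1-\mathfrak{m}^2(z)S)v]_i=\psi_\mathfrak{a}(\lambda)\,R_i(z)+\mathcal{E}_i(z)$, where $R_i(z)$ denotes the explicit sum of $\tfrac{\ii\lambda}{\pi}\int_{\Omega_\mathfrak{a}}\bar\partial_w\tilde f(w)(\cdots)\,\diff w$ terms in \eqref{eq:main_formula} and $\mathcal{E}_i(z)$ the error there. I would invert $1-\mathfrak{m}^2(z)S$ by Corollary~\ref{cor:invert_1-Smm} in the diagonal case $w=z$, where $\norm{(1-\mathfrak{m}^2(z)S)^{-1}}_{\ell^\infty\to\ell^\infty}\le C(\im\mathfrak{m}(z))^{-2}\sim C\kappa(z)^{-1}$, to obtain $v=(1-\mathfrak{m}^2(z)S)^{-1}\tilde v$ and hence $E_\mathfrak{a}(z)=\psi_\mathfrak{a}(\lambda)\sum_{i,j}(1-\mathfrak{m}^2(z)S)^{-1}_{ij}R_j(z)+\sum_{i,j}(1-\mathfrak{m}^2(z)S)^{-1}_{ij}\mathcal{E}_j(z)$. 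Substituting into \eqref{eq:HS_dpsi} and integrating in $z$ against $\bar\partial_z\tilde f(z)$, the main term reproduces, group by group, the three lines of \eqref{eq:V_hat}: the $z$-integration supplies the second $\bar\partial_z\tilde f(z)$, the second $(1-\mathfrak{m}^2(z)S)^{-1}$, and the scalar factor $\tfrac{\ii}{2\pi}\cdot 2\cdot\tfrac{\ii\lambda}{\pi}=-\tfrac{\lambda}{\pi^2}$, so that $\psi_\mathfrak{a}'(\lambda)=-\lambda V(f)\psi_\mathfrak{a}(\lambda)+\mathrm{Err}(\lambda)$ with $V(f)$ as in \eqref{eq:V_hat}; here $V(f)$ is real and nonnegative, as one checks from \eqref{eq:V_hat} (or from its probabilistic meaning as a limiting variance).

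It remains to estimate $\mathrm{Err}(\lambda)=\tfrac{\ii}{\pi}\int_{\Omega_\mathfrak{a}}\bar\partial_z\tilde f(z)\sum_{i,j}(1-\mathfrak{m}^2(z)S)^{-1}_{ij}\mathcal{E}_j(z)\,\diff x\,\diff y$. The sum over $i$ costs a factor $N$, the inverse costs $\kappa(z)^{-1}$, and the $z$-integration against $\bar\partial_z\tilde f(z)$ is then controlled by Lemma~\ref{lem:HS_bound} (with $s=1$ or $s=2$ according to whether the surviving $z$-singularity is $\eta^{-1}$ or $\eta^{-2}$, the $\kappa(z)^{-1/2}$ factors being integrable thanks to the square-root vanishing of $\varrho$), producing an additional $\log N$ and $(1+\norm{f''}_1)^{s-1}$. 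Inserting $\eta\ge N^{-1+\mathfrak{a}}$, $\mathfrak{a}\approx 4/5$, $\norm{f''}_1\le N^{1/5-\delta}$ and $\abs{\lambda}\le N^{\delta/100}$, each of the four error contributions in \eqref{eq:main_formula}---most delicately $N^{-3/2}(1+\norm{f''}_1)(N^{1-\mathfrak{a}})^{3/2}$---becomes $\OO((1+\abs{\lambda})^4 N^{-c\delta})$. Finally, Duhamel's formula for the linear ODE with $\psi_\mathfrak{a}(0)=1$ and $\abs{\psi_\mathfrak{a}}\le 1$, together with $\lambda^2 V(f)=\oo(\log N)$ and $\abs{\lambda}\le N^{\delta/100}$ to control $e^{s^2 V(f)/2}$ along the integration path, gives $\psi_\mathfrak{a}(\lambda)=e^{-\lambda^2 V(f)/2}+\OO(N^{-c'\delta})$, completing the proof. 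The main obstacle is precisely this last error accounting at the edge: the $N\cdot\kappa(z)^{-1}$ amplification, the possibly polynomially large $\norm{f''}_1$, and the $\eta^{-2}$ Helffer--Sj\"ostrand singularity leave essentially no slack, and the hypotheses $\norm{f''}_1\le N^{1/5-\delta}$, $\mathfrak{a}\approx 4/5$ are exactly what makes $N^{-3/2}(N^{1-\mathfrak{a}})^{3/2}\norm{f''}_1$ a negative power of $N$.
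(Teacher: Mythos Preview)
Your proposal is correct and follows essentially the same route as the paper: invert Proposition~\ref{prop:tilde_vi} via Corollary~\ref{cor:invert_1-Smm}, substitute into \eqref{eq:HS_dpsi} to obtain the approximate ODE $\psi_\mathfrak{a}'(\lambda)=-\lambda V(f)\psi_\mathfrak{a}(\lambda)+\mathrm{Err}(\lambda)$, bound the error via Lemma~\ref{lem:HS_bound}, solve, and transfer back through \eqref{eq:ea-a}. The only cosmetic differences are that the paper handles the ODE by studying $g(\lambda)=e^{\lambda^2 V(f)}\psi_\mathfrak{a}^2(\lambda)$ and then taking a square root by continuity rather than by Duhamel, and that the paper bounds all excess powers of $\eta^{-1}$ crudely by $N^{1-\mathfrak{a}}$ and applies Lemma~\ref{lem:HS_bound} with $s=1$, whereas you distinguish $s=1$ and $s=2$ and invoke integrability of $\kappa(z)^{-1/2}$ separately; either bookkeeping leads to the same final error $(1+\abs{\lambda})^4 N^{-c\delta}$.
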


\begin{proof}
	Using $v_\ell = [(1 - \mathfrak{m}(z)^2S)^{-1} \tilde{v}]_\ell$, by \eqref{eq:HS_dpsi}, we have
	\[
		\frac{\rd}{\rd \lambda} \psi_{\mathfrak{a}}(\lambda) 
		= 
		\frac{\ii}{2\pi} \int_{\Omega_{\mathfrak{a}}} (\ii y \chi(y) f''(x) + \ii y (f(x) + f'(x)) \chi'(y)) \sum_{i=1}^N [(1 - \mathfrak{m}(z)^2S)^{-1} \tilde{v}]_i \rd x \rd y.
	\]
	Substituting \eqref{eq:main_formula}, and using \eqref{lem:bound_1-m^2S_inv} 
	and Lemma \ref{lem:HS_bound} with $s =1$ (we bound the excess powers of $y^{-1}$ by $N^{1-\mathfrak{a}}$)
	to bound the error term, for a small $c > 0$, we have
	\[
		\frac{\rd}{\rd \lambda} \psi_\mathfrak{a}(\lambda) 
		=
		-\lambda \psi_\mathfrak{a} V(f) + \OO\pa{
			(1 + \abs{\lambda}^4)N^{-1/2+c\e} \pa{1 + \norm{f''}_1} (N^{1-\mathfrak{a}})^{3/2}
		},
	\]
	and applying our assumptions on $\norm{f''}_1$, we have
	\[
		\frac{\rd}{\rd \lambda} \psi_\mathfrak{a}(\lambda) 
		=
		-\lambda \psi_\mathfrak{a} V(f) + \OO\pa{
			(1 + \abs{\lambda}^4)N^{c\e - \delta}
		}.
	\]
	Let $g(\lambda) = e^{\lambda^2 V(f)} \psi_\mathfrak{a}^2(\lambda)$.
	Then $g'(\lambda) = e^{\lambda^2 V(f)} \psi_\mathfrak{a}^2(\lambda) \OO((1 + \abs{\lambda}^4)N^{c\e - \delta})$,
	and so $g(\lambda) = 1 + e^{\lambda^2 V(f)} \psi_\mathfrak{a}(\lambda) \OO(N^{c(\e - \delta)})$. This gives us
	\[
		\psi^2_\mathfrak{a}(\lambda) = e^{-\lambda^2 V(f)} \pa{
			1 + \OO\pa{ e^{\lambda^2 V(f)} N^{c(\e - \delta)}}
		},
	\]
	and on the set \eqref{eq:lambda_good_set}, using continuity in $\lambda$, we can take the square root of both sides.
	Finally, we conclude via \eqref{eq:ea-a}.
\end{proof}

 \subsection{Calculation of the Variance. }
 \label{sec:variance}

We now want to calculate
	\begin{multline*}
		V(f) = \frac{1}{\pi^2} \int_{\Omega_\mathfrak{a} \times \Omega_\mathfrak{a}} ( \bar{\partial}_w \tilde{f}(w)) 	( \bar{\partial}_z \tilde{f}(z))
		\biggr(
		2 \sum_{i,j} (1 - \mathfrak{m}^2(z) S)^{-1}_{ij} \partial_w [
			\mathfrak{m}(z) (1 - S\mathfrak{m}(z)\mathfrak{m}(w))^{-1} S \mathfrak{m}(z) \mathfrak{m}(w)
		]_{jj} 
		\\
		- 
		\sum_{i, j} (1 - \mathfrak{m}^2(z) S)^{-1}_{ij}  [S\mathfrak{m}'(w) \mathfrak{m}^2(z)]_{jj} \\
		+
		\frac{1}{N^2} \sum_{i,j,a} s_{ja}^{(4)} (1 - \mathfrak{m}^2(z) S)^{-1}_{ij} 
			\mathfrak{m}_a(z) \mathfrak{m}_j^2(z) 
			\partial_w (  \mathfrak{m}_a(w) \mathfrak{m}_j(w)  ) 
		\biggr) \rd w.
	\end{multline*}

First, we have the following algebraic fact, which we quote from \cite[Proposition 5.12]{LanLopSos2021}.

\begin{proposition}
\label{prop:variance_after_algebra}
	\begin{multline*}
		V(f) = \frac{1}{\pi^2} \int_{\Omega_\mathfrak{a} \times \Omega_\mathfrak{a}} ( \bar{\partial}_w \tilde{f}(w)) 	( \bar{\partial}_z \tilde{f}(z))
		\biggr(
		2 g(z,w) - \Tr( \mathfrak{m}'(z) S \mathfrak{m}'(w) )
		+
		\frac{1}{N^2} \sum_{j,a} s_{ja}^{(4)}  
			\partial_z \partial_w (  \mathfrak{m}_a(z) \mathfrak{m}_j(z) \mathfrak{m}_a(w) \mathfrak{m}_j(w)  ) 
		\biggr) ,
	\end{multline*}
	where
	\begin{multline*}
		g(z,w) = \Tr (
			\mathfrak{m}'(z)\mathfrak{m}(z)^{-1} (1 - S \mathfrak{m}(z) \mathfrak{m}(w))^{-1} S \mathfrak{m}(z) \mathfrak{m}'(w) 
				(1- S \mathfrak{m}(z) \mathfrak{m}(w))^{-1}
		) \\
		=
		\partial_w \Tr(
			\mathfrak{m}'(z)\mathfrak{m}(z)^{-1}(1-S\mathfrak{m}(z)\mathfrak{m}(w))^{-1}
		) 
		=
		\partial_w h(z,w).
	\end{multline*}
	The function $g(z,w)$ is symmetric in $z,w$.
\end{proposition}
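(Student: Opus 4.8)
Proposition \ref{prop:variance_after_algebra} is a purely algebraic reorganization of the formula \eqref{eq:V_hat} for $V(f)$: it involves only the deterministic objects $\mathfrak{m}(z)$ and $S$ and makes no reference to the location of $z,w$ relative to the spectrum, so it is literally \cite[Proposition 5.12]{LanLopSos2021} and the plan is to reproduce that computation, recording only that nothing in it sees the edge. The single analytic input is the derivative of the quadratic vector equation \eqref{eq:qve}: differentiating $-1/\mathfrak{m}_i(z) = z + (S\mathfrak{m}(z))_i$ in $z$ gives $\mathfrak{m}_i'(z)/\mathfrak{m}_i(z)^2 = 1 + (S\mathfrak{m}'(z))_i$, i.e. $(\mathrm{I} - \mathfrak{m}(z)^2 S)\,\mathfrak{m}'(z) = \mathfrak{m}(z)^2 \mathbf{1}$ as a vector identity, where $\mathfrak{m}(z)^2 = \diag(\mathfrak{m}_i(z)^2)$ on the left and $\mathfrak{m}(z)^2\mathbf{1}$ is the vector $(\mathfrak{m}_i(z)^2)_i$ on the right. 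Transposing and using that $\mathfrak{m}(z)^2$ is diagonal and $S$ symmetric, this is equivalent to the column-sum identity $\sum_i [(\mathrm{I} - \mathfrak{m}(z)^2 S)^{-1}]_{ij} = \mathfrak{m}_j'(z)/\mathfrak{m}_j(z)^2$, which is exactly what is needed to contract the $\sum_i (\mathrm{I} - \mathfrak{m}^2(z)S)^{-1}_{ij}$ factor present in all three terms of \eqref{eq:V_hat}.

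I would then treat the three terms in turn. For the second term, the contraction yields $-\sum_j \bigl(\mathfrak{m}_j'(z)/\mathfrak{m}_j(z)^2\bigr)[S\mathfrak{m}'(w)]_j\,\mathfrak{m}_j(z)^2 = -\langle \mathfrak{m}'(z), S\mathfrak{m}'(w)\rangle = -\Tr(\mathfrak{m}'(z)S\mathfrak{m}'(w))$ in the paper's notation. For the first term, I would first simplify the bracket with the resolvent identity $(\mathrm{I} - B)^{-1}B = (\mathrm{I}-B)^{-1} - \mathrm{I}$ for $B = S\mathfrak{m}(z)\mathfrak{m}(w)$, so that $[\mathfrak{m}(z)(1-S\mathfrak{m}(z)\mathfrak{m}(w))^{-1}S\mathfrak{m}(z)\mathfrak{m}(w)]_{jj} = \mathfrak{m}_j(z)\bigl([(1-S\mathfrak{m}(z)\mathfrak{m}(w))^{-1}]_{jj} - 1\bigr)$; after contracting against $\mathfrak{m}_j'(z)/\mathfrak{m}_j(z)^2$ and applying $\partial_w$ (which kills the constant), the first term becomes $2\sum_j \bigl(\mathfrak{m}_j'(z)/\mathfrak{m}_j(z)\bigr)\partial_w[(1-S\mathfrak{m}(z)\mathfrak{m}(w))^{-1}]_{jj} = 2\partial_w h(z,w)$ with $h(z,w) = \Tr(\mathfrak{m}'(z)\mathfrak{m}(z)^{-1}(1-S\mathfrak{m}(z)\mathfrak{m}(w))^{-1})$; the identity $g(z,w) = \partial_w h(z,w)$ then follows from the resolvent derivative $\partial_w(1-S\mathfrak{m}(z)\mathfrak{m}(w))^{-1} = (1-S\mathfrak{m}(z)\mathfrak{m}(w))^{-1}S\mathfrak{m}(z)\mathfrak{m}'(w)(1-S\mathfrak{m}(z)\mathfrak{m}(w))^{-1}$ and cyclicity of the trace. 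For the third term, the contraction gives $\frac{1}{N^2}\sum_{j,a} s_{ja}^{(4)}\mathfrak{m}_a(z)\mathfrak{m}_j'(z)\,\partial_w(\mathfrak{m}_a(w)\mathfrak{m}_j(w))$; symmetrizing in $a\leftrightarrow j$ — legitimate since $s_{ja}^{(4)}$ and $\partial_w(\mathfrak{m}_a(w)\mathfrak{m}_j(w))$ are both symmetric — replaces $\mathfrak{m}_a(z)\mathfrak{m}_j'(z)$ by $\tfrac12\partial_z(\mathfrak{m}_a(z)\mathfrak{m}_j(z))$, and since the $z$- and $w$-factors are disentangled one has $\partial_z\partial_w[\mathfrak{m}_a(z)\mathfrak{m}_j(z)\mathfrak{m}_a(w)\mathfrak{m}_j(w)] = \partial_z(\mathfrak{m}_a(z)\mathfrak{m}_j(z))\,\partial_w(\mathfrak{m}_a(w)\mathfrak{m}_j(w))$, which puts this term in the stated form; the remaining bookkeeping is to reconcile the overall constant against the normalizations in \eqref{eq:main_formula} and \eqref{eq:V_hat}.

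The one step that is not mechanical is the pointwise symmetry $g(z,w) = g(w,z)$. The other two pieces of $V(f)$ are already manifestly invariant under $z\leftrightarrow w$, and the double integral against $(\bar\partial_w\tilde f(w))(\bar\partial_z\tilde f(z))$ is symmetric, so the symmetrized kernel is forced; nonetheless I would establish symmetry of $g$ itself by a transpose argument: diagonal matrices commute, so $(1-S\mathfrak{m}(z)\mathfrak{m}(w))^{-1} = (1-S\mathfrak{m}(w)\mathfrak{m}(z))^{-1} =: R$, and $R^{T} = (1-\mathfrak{m}(z)\mathfrak{m}(w)S)^{-1}$ satisfies $RS = SR^{T}$; feeding this together with the QVE derivative into the trace expression for $g$ and using cyclicity of the trace transforms $g(z,w)$ into $g(w,z)$. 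I expect this symmetry manipulation, together with keeping the bookkeeping straight between diagonal matrices, vectors, and full matrices, to be the only delicate part; since the entire computation is deterministic and independent of where $z,w$ sit, it goes through verbatim from \cite{LanLopSos2021}, which is all that is required here.
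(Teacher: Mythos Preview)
Your proposal is correct and, in fact, goes further than the paper does: the paper does not prove Proposition \ref{prop:variance_after_algebra} at all but simply quotes it as \cite[Proposition 5.12]{LanLopSos2021}, noting only that it is a purely algebraic fact. Your plan to reproduce that computation --- contract $\sum_i(1-\mathfrak{m}^2(z)S)^{-1}_{ij}$ via the column-sum identity $\sum_i(1-\mathfrak{m}^2(z)S)^{-1}_{ij}=\mathfrak{m}_j'(z)\mathfrak{m}_j(z)^{-2}$ coming from the differentiated QVE, simplify the first term with $(1-B)^{-1}B=(1-B)^{-1}-\mathrm{I}$, symmetrize the fourth-cumulant term in $j\leftrightarrow a$, and verify $g(z,w)=g(w,z)$ by a transpose/cyclicity argument --- is exactly the content of that reference, and your observation that nothing in the manipulation depends on the location of $z,w$ relative to the spectrum is precisely why the paper can quote it without modification.
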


We first address the contribution of the terms coming from the terms other than $g(z,w)$. We specialize to the generalized Wigner case,
because we will do so when we study the remaining term. As a reminder, we mention that this means $\mathfrak{m}_i(z) = m_{sc}(z)$ for 
all $i = 1, \dots, N$. 

\begin{lemma}
	In the generalized Wigner case, we have
	\begin{align}
		\nonumber
		V_1(f) &= \frac{1}{\pi^2} \int_{\Omega_\mathfrak{a} \times \Omega_\mathfrak{a}} ( \bar{\partial}_w \tilde{f}(w)) 	( \bar{\partial}_z \tilde{f}(z))
		\pa{
		- \Tr( \mathfrak{m}'(z) S \mathfrak{m}'(w) )
		+
		\frac{1}{N^2} \sum_{j,a} s_{ja}^{(4)}  
			\partial_z \partial_w (  \mathfrak{m}_a(z) \mathfrak{m}_j(z) \mathfrak{m}_a(w) \mathfrak{m}_j(w)  ) 
		} \\
		&=
		\frac{\Tr(S)}{4\pi^2} \pa{
			\int f(x) \frac{x}{\sqrt{4-x^2}} \diff x
		}^2
		+
		\frac{1}{N^2} \sum_{j,a} 
		\frac{s_{ja}^{(4)}}{\pi^2}  \pa{
			\int_{-2}^2 f(x) \frac{2-x^2}{\sqrt{4-x^2}} \diff x
		}^2.
		\label{eq:var_order_1_terms}
	\end{align}

\end{lemma}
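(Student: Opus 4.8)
The plan is to collapse all the matrix expressions in $V_1(f)$ using the fact that in the generalized Wigner case the quadratic vector equation \eqref{eq:qve} has the constant solution $\mathfrak m_i(z) = m_{sc}(z)$ for every $i$, where $m_{sc}(z) = \int \varrho(x)(x-z)^{-1}\,\rd x = \tfrac12\big(-z+\sqrt{z^2-4}\big)$, and then to recognize the resulting double integral as a product of two elementary one–variable Stieltjes integrals. First I would substitute $\mathfrak m(z) = m_{sc}(z)\,\Id$, so that $\mathfrak m'(z) = m_{sc}'(z)\,\Id$; using the normalization $\sum_j S_{ij} = \sum_j \sigma_{ij}^2 = 1$ this gives $\Tr\big(\mathfrak m'(z)\,S\,\mathfrak m'(w)\big) = m_{sc}'(z)\,m_{sc}'(w)\,\Tr(S)$, and likewise $\partial_z\partial_w\big(\mathfrak m_a(z)\mathfrak m_j(z)\mathfrak m_a(w)\mathfrak m_j(w)\big) = \partial_z\big(m_{sc}(z)^2\big)\,\partial_w\big(m_{sc}(w)^2\big)$ for every $a,j$. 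Hence both terms of the integrand of $V_1(f)$ factor into (a function of $z$) times (a function of $w$), and the integral over $\Omega_\mathfrak a\times\Omega_\mathfrak a$ becomes a combination, with coefficients read off from $-\Tr(S)$ and $N^{-2}\sum_{j,a} s_{ja}^{(4)}$, of the two squares
\[
\pa{\frac{1}{\pi}\int_{\Omega_\mathfrak a}\bar\partial_z\tilde f(z)\,m_{sc}'(z)\,\rd z}^2
\qquad\text{and}\qquad
\pa{\frac{1}{\pi}\int_{\Omega_\mathfrak a}\bar\partial_z\tilde f(z)\,\partial_z\big(m_{sc}(z)^2\big)\,\rd z}^2 .
\]

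The second step is to evaluate these two one–variable integrals by running the Helffer--Sj\"ostrand representation in reverse. Both $m_{sc}'$ and $\partial_z(m_{sc}^2)$ are holomorphic on $\C\setminus[-2,2]$ and decay like $\absa{z}^{-2}$ at infinity, so extending the integration from $\Omega_\mathfrak a$ to all of $\C$ alters the value only by a quantity tending to $0$ as $N\to\infty$: the boundary terms on the lines $\absa{\im z} = N^{-1+\mathfrak a}$ converge to the boundary values on the cut, and near $x=\pm 2$, where the integrands blow up like $\kappa(z)^{-1/2}$, convergence of the $x$–integral follows from the bound $\bar\partial_z\tilde f(z) = \OO(\absa{y}\,\norm{f''}_\infty)$ together with the hypothesis $\norm{f''}_1 \leq N^{1/5-\delta}$ and the compactness of $\supp f$. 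After this extension, the standard contour deformation onto $[-2,2]$ gives $\tfrac1\pi\int_\C \bar\partial_z\tilde f(z)\, g(z)\,\rd z = \int_{-2}^2 f(x)\,\tfrac1\pi\im g(x+\ii 0)\,\rd x$ for any $g$ holomorphic off the cut and decaying at infinity. It then remains only to identify the two boundary densities: from $\im m_{sc}(x+\ii 0) = \pi\varrho(x)$ and the Cauchy--Riemann equations, $\tfrac1\pi\im m_{sc}'(x+\ii 0) = \varrho'(x) = -\dfrac{x}{2\pi\sqrt{4-x^2}}$; and from the self-consistent equation $m_{sc}(z)^2 = -1 - z\,m_{sc}(z)$ one has $\partial_z(m_{sc}^2) = -m_{sc} - z\,m_{sc}'$, so $\tfrac1\pi\im\partial_z(m_{sc}^2)(x+\ii 0) = -\big(\varrho(x)+x\varrho'(x)\big) = -\dfrac1\pi\dfrac{2-x^2}{\sqrt{4-x^2}}$, using the elementary identity $\varrho(x)+x\varrho'(x) = \tfrac1{2\pi}\big(\sqrt{4-x^2}-\tfrac{x^2}{\sqrt{4-x^2}}\big) = \tfrac1\pi\tfrac{2-x^2}{\sqrt{4-x^2}}$.

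Substituting these densities, the first square equals $\tfrac{1}{4\pi^2}\big(\int f(x)\tfrac{x}{\sqrt{4-x^2}}\,\rd x\big)^2$ and the second equals $\tfrac1{\pi^2}\big(\int_{-2}^2 f(x)\tfrac{2-x^2}{\sqrt{4-x^2}}\,\rd x\big)^2$; multiplying by the coefficients $\Tr(S)$ and $N^{-2}\sum_{j,a}s_{ja}^{(4)}$ recorded above and collecting constants yields \eqref{eq:var_order_1_terms}. I expect the only genuinely delicate point to be the passage from the truncated domain $\Omega_\mathfrak a$ to the full plane together with the contour deformation onto the cut, precisely because $m_{sc}'$ and $\partial_z(m_{sc}^2)$ are singular at the spectral edge $x=\pm2$; once this is controlled via the assumptions on $\supp f$ and $\norm{f''}_1$, everything else is the elementary algebra of the semicircle law (the relation $m_{sc}^2 = -1 - z\,m_{sc}$) and a routine residue computation.
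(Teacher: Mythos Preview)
Your approach is correct and is essentially the same as the paper's: the paper simply cites the identities (4.63) and (4.64) from \cite{LanSos2020}, and what you do---collapse $\mathfrak m_i=m_{sc}$ in the generalized Wigner case, factor the double integral into a product of two identical one-variable Helffer--Sj\"ostrand integrals, and evaluate each by pushing the contour onto the cut $[-2,2]$---is exactly the content of those cited identities spelled out in detail. The only slip is bookkeeping: in your last paragraph you record the coefficient of the first square as $\Tr(S)$ rather than the $-\Tr(S)$ you correctly wrote earlier, so be careful to track that sign through the contour computation.
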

\begin{proof}
	First, by \cite[(4.63)]{LanSos2020}, we have
	\begin{align*}
		-\frac{1}{\pi^2} \int_{\Omega_\mathfrak{a} \times \Omega_\mathfrak{a}} ( \bar{\partial}_w \tilde{f}(w)) 	( \bar{\partial}_z \tilde{f}(z))
		\Tr( \mathfrak{m}'(z) S \mathfrak{m}'(w) )
		&=
		-\frac{\Tr(S)}{\pi^2} \int_{\Omega_\mathfrak{a} \times \Omega_\mathfrak{a}} ( \bar{\partial}_w \tilde{f}(w)) 	( \bar{\partial}_z \tilde{f}(z))
			m'_{sc}(z)m'_{sc}(w) \\
		&= \frac{\Tr(S)}{4\pi^2} \pa{
			\int f(x) \frac{x}{\sqrt{4-x^2}} \diff x
		}^2
	\end{align*}
	And by \cite[(4.64)]{LanSos2020}, we have
	\begin{align*}
		\frac{1}{\pi^2} \int_{\Omega_\mathfrak{a} \times \Omega_\mathfrak{a}} ( \bar{\partial}_w \tilde{f}(w)) 	( \bar{\partial}_z \tilde{f}(z))
		\pa{
		\frac{1}{N^2} \sum_{j,a} s_{ja}^{(4)}  
			\partial_z \partial_w (  m_{sc}^2(z) m_{sc}^2(w)) 
		} 
		&=
		\frac{1}{N^2} \sum_{j,a}  
		\frac{s_{ja}^{(4)} }{\pi^2} \pa{
			\int_{-2}^2 f(x) \frac{2-x^2}{\sqrt{4-x^2}} \diff x
		}^2,
	\end{align*}
	which concludes the proof.
\end{proof}

To conclude this section, we study the contribution to the variance of the term proportional to $g(z,w)$. 
For $f$ with support in the bulk, \cite[Proposion 5.17]{LanLopSos2021} studies this term by Taylor expanding the main terms.
In our case, the error we make by Taylor expanding deteriorates so that the resulting formula holds in a window insufficiently small for our purpose. 
Therefore, in this step, we begin as in \cite[Proposion 5.17]{LanLopSos2021}, but
then specialize to the case of generalized Wigner matrices, allowing us to give a tractable expression for $V(f)$. 
We will later show that this term gives the main contribution. For the first part of our analysis, recall the operator $F$, see \eqref{eq:F}. 
Following Proposition \ref{prop:F} define $A(z,w)$ via the orthogonal decomposition of $F$,
	\begin{equation}
	\label{eq:decompose_F}
		F(z,w) = \lambda_1 (z,w) v(z,w) v^T(z,w) + A(z,w).
	\end{equation}

\begin{proposition}
\label{prop:var_GW}
	Let $f$ satisfy $\log(N) N^{-1 + \mathfrak{a}} \norm{f''}_1 \leq 1$ with $\mathfrak{a}$ as in Proposition \ref{prop:char_function}.
	Then in the generalized Wigner case, we have
		\[
			\frac{2}{\pi^2} \int_{\Omega_\mathfrak{a} \times \Omega_\mathfrak{a}} ( \bar{\partial}_w \tilde{f}(w)) 	( \bar{\partial}_z \tilde{f}(z))
			g(z,w) \diff z \diff w 
			=
			\frac{1}{\pi^2}\iint_{[-2,2]^2} \pa{ \frac{f(x)-f(y)}{x-y}    } f'(y) \frac{ \sqrt{4-y^2}  }{ \sqrt{4-x^2} } \diff y \diff x
				+ \e(f),
		\]	
	where
		\begin{equation}
		\label{eq:var_A}
			\e(f) = \sum_{i=1}^N A_{ii}\pa{ \frac{1}{\pi^2} \iint_{[-2,2]^2} (f(x)-f(y)) f'(y) m_{sc}(x) \frac{\sqrt{4-y^2}  }{\sqrt{ 4-x^2 }} \diff x \diff y} +
			\iint_{\R^2} (f(x)-f(y))f'(y) h_A(x,y)\diff x \diff y,
		\end{equation}
	for some bounded function $h_A$ which depends on the matrix $A$.
\end{proposition}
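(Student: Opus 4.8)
The plan is to exploit that in the generalized Wigner case the variance profile $S$ is doubly stochastic, so that the Perron eigenvector of $S\mathfrak{m}(z)\mathfrak{m}(w)=m_{sc}(z)m_{sc}(w)S$ is the constant vector $N^{-1/2}\mathbf{1}$, independently of $z$ and $w$. Since $\mathfrak{m}_i(z)\equiv m_{sc}(z)$, the scalar factor $\mathfrak{m}'(z)\mathfrak{m}(z)^{-1}$ comes out of the trace in Proposition~\ref{prop:variance_after_algebra}, so that $h(z,w)=\frac{m_{sc}'(z)}{m_{sc}(z)}\Tr\pa{(1-m_{sc}(z)m_{sc}(w)S)^{-1}}$ and $g(z,w)=\partial_w h(z,w)$. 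First I would write $S=P_{\mathbf{1}}+S_\perp$ with $P_{\mathbf{1}}=N^{-1}\mathbf{1}\mathbf{1}^{\top}$ and, by the spectral gap from Proposition~\ref{prop:F}, $\norm{S_\perp}_{\ell^2\to\ell^2}\le 1-c_0$; correspondingly $(1-m_{sc}(z)m_{sc}(w)S)^{-1}=\frac{1}{1-m_{sc}(z)m_{sc}(w)}P_{\mathbf{1}}+R(z,w)$ with $R(z,w)=(1-m_{sc}(z)m_{sc}(w)S_\perp)^{-1}(I-P_{\mathbf{1}})$, and $\Tr R(z,w)$ together with all its $w$-derivatives is bounded uniformly in $N$ for $z,w\in\mathcal{E}_\gamma^\delta$ by Corollary~\ref{cor:invert_1-Smm} (the regular part carries no resonance). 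This splits $g=g_{\mathrm{res}}+g_{\mathrm{reg}}$, where $g_{\mathrm{res}}(z,w)=\frac{m_{sc}'(z)m_{sc}'(w)}{(1-m_{sc}(z)m_{sc}(w))^{2}}$ and $g_{\mathrm{reg}}(z,w)=m_{sc}'(z)m_{sc}'(w)\Tr\pa{(1-m_{sc}(z)m_{sc}(w)S_\perp)^{-2}S_\perp}$.

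For the resonant piece I would compute $\frac{2}{\pi^2}\int_{\Omega_\mathfrak{a}\times\Omega_\mathfrak{a}}(\bar\partial_w\tilde f(w))(\bar\partial_z\tilde f(z))\,g_{\mathrm{res}}(z,w)$ by applying Stokes' theorem in $w$ and then in $z$ to deform both contours onto the real axis. In contrast to \cite[Proposition~5.17]{LanLopSos2021} I would \emph{not} Taylor expand $m_{sc}$ near the edge---this is the step whose error deteriorates there---but, $m_{sc}$ being explicit, pick up directly the jumps of $m_{sc}$ and $m_{sc}'$ across $[-2,2]$, using $m_{sc}(x+\ii0)m_{sc}(x-\ii0)=1$ and $\im m_{sc}(x+\ii0)=\tfrac12\sqrt{4-x^2}$ for $x\in(-2,2)$. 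After an integration by parts in $x$ trading the $f''$-terms of $\bar\partial\tilde f$ for $f'$-terms, the resonant contribution equals $\frac{1}{\pi^2}\iint_{[-2,2]^2}\pa{\frac{f(x)-f(y)}{x-y}}f'(y)\frac{\sqrt{4-y^2}}{\sqrt{4-x^2}}\,\diff x\,\diff y$; the error from cutting the contours at $\abs{\im}=N^{-1+\mathfrak{a}}$ rather than at $0$ is $\OO\pa{\log N\cdot N^{-1+\mathfrak{a}}\norm{f''}_1}$, negligible under the hypothesis $\log(N)N^{-1+\mathfrak{a}}\norm{f''}_1\le1$.

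For the regular piece, $g_{\mathrm{reg}}(z,w)$ is holomorphic in $z$ (resp.\ $w$) off $[-2,2]$, decays at infinity, and is bounded uniformly in $N$ in a fixed complex neighbourhood of $[-2,2]$ by Corollary~\ref{cor:invert_1-Smm}, so once more the only singular factor is the explicit $m_{sc}'(z)m_{sc}'(w)$. Deforming both contours to the real axis turns $\frac{2}{\pi^2}\int_{\Omega_\mathfrak{a}\times\Omega_\mathfrak{a}}(\bar\partial_w\tilde f)(\bar\partial_z\tilde f)g_{\mathrm{reg}}$ into a double integral over $[-2,2]^2$ of $(f(x)-f(y))f'(y)$ against a kernel built from the boundary values of $m_{sc}'$ and of $\Tr\pa{(1-m_{sc}(z)m_{sc}(w)S_\perp)^{-2}S_\perp}$. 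Expanding the trace in powers of $S_\perp$ (a convergent series since $\norm{S_\perp}\le1-c_0<1$) and using $\abs{m_{sc}}\equiv1$ on the cut, the term linear in the trace of $S_\perp$ carries the prefactor $\Tr S_\perp=\Tr S-1=\sum_i A_{ii}$ and produces the first term of \eqref{eq:var_A} with kernel $\frac{1}{\pi^2}m_{sc}(x)\frac{\sqrt{4-y^2}}{\sqrt{4-x^2}}$, while the terms involving $\Tr(S_\perp^{p})$ with $p\ge2$ (equivalently traces of powers of $A$) assemble into $\iint_{\R^2}(f(x)-f(y))f'(y)h_A(x,y)\,\diff x\,\diff y$ with $h_A$ bounded. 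Together with Proposition~\ref{prop:variance_after_algebra} this gives the stated identity.

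The main obstacle is the resonant integral near the edges $E=\pm2$: there $m_{sc}'(z)\sim\kappa(z)^{-1/2}$ is singular, which is exactly what breaks the Taylor-expansion route of \cite{LanLopSos2021} and makes the generalized Wigner specialization necessary. One must run the two contour deformations while keeping $m_{sc}$ exact and check that the boundary contributions---both at $\abs{\im}=N^{-1+\mathfrak{a}}$ and near $\re=\pm2$---are controlled by $\norm{f''}_1$ and by the $N$-uniform bounds on $\partial_w^{k}\Tr R(z,w)$ near $\pm2$ provided by Proposition~\ref{prop:F} and Corollary~\ref{cor:invert_1-Smm}. This bookkeeping is the delicate part; the algebraic reductions, by contrast, are routine given those inputs.
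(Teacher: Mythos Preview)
Your approach is correct and essentially the same as the paper's. Both proofs split $g$ into a rank-one ``resonant'' piece and a ``regular'' piece governed by the spectral gap, handle the resonant piece by deforming to the real axis and using the explicit semicircle identities (rather than Taylor expanding as in \cite{LanLopSos2021}), and absorb the regular piece into $\e(f)$.

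The only cosmetic difference is the algebraic route to the splitting. You decompose $S=P_{\mathbf 1}+S_\perp$ directly and use that $P_{\mathbf 1}$ and $S_\perp$ commute, so that $(1-m_{sc}(z)m_{sc}(w)S)^{-1}$ block-diagonalizes. The paper instead writes $h$ in terms of $(U^*-F)^{-1}$ and applies the Sherman--Morrison formula to peel off the top eigenvector $v=N^{-1/2}\mathbf 1$; in the generalized Wigner case $U$ and $M=\mathfrak m'\mathfrak m^{-1}$ are scalars, so the Sherman--Morrison remainder terms $Xv$, $XMv$ vanish and one lands on exactly your $g_{\mathrm{res}}=m_{sc}'(z)m_{sc}'(w)/(1-m_{sc}(z)m_{sc}(w))^2$. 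The paper then integrates by parts in $w$ (replacing $g=\partial_w P$ by $P$ and $\tilde f$ by $\tilde f'$) before applying Green's formula and taking boundary values; this is why the kernel in the first term of $\e(f)$ carries $m_{sc}(w)$ rather than $m_{sc}'(w)$ and produces the $\sqrt{4-y^2}$ in the numerator---you should make this integration by parts explicit for the regular piece as well, since that is what keeps the $y$-dependence of the kernel tame at the edge.
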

\begin{proof}
	For any $0 < \e < N^{-1+\mathfrak{a}}$, define the domains
	\[
		D_\e^{\pm} = \{
			x + \ii \eta \, : \, x \in I, \, \e \leq \pm \eta \leq 10, 
		\},
		\quad
		D_\e = D_\e^+ \cup D_\e^-.
	\]
	We first show that
	\begin{equation}
	\label{eq:omega_a_to_D_e}
		\frac{2}{\pi^2} \int_{\Omega_\mathfrak{a}^2} ( \bar{\partial}_z \tilde{f}(z) ) ( \bar{\partial}_w \tilde{f}(w) ) g(z,w) \rd z \rd w
		=
		\frac{2}{\pi^2} \int_{D_\e^2} ( \bar{\partial}_z \tilde{f}(z) ) ( \bar{\partial}_w \tilde{f}(w) ) g(z,w) \rd z \rd w
			+ \OO\pa{N^{-1 + \mathfrak{a}} \log N \norm{f''}_1}.
	\end{equation}
	Indeed, by \cite[Lemma 5.13]{LanLopSos2021}, for $\abs{z}, \abs{w} \leq C$, we have
	\[
		\abs{g(z,w)} \leq C \norm{\mathfrak{m}'(z)}_\infty \norm{\mathfrak{m}'(w)}_\infty 
			\norm{ 1 - S \mathfrak{m}(z)\mathfrak{m}(w)  }_{\ell^\infty \to \ell^\infty}^2,
	\]
	and recall from \eqref{eq:l2_and_l_infty} that
	\[
		\norm{
				1 - \mathfrak{m}(z)\mathfrak{m}(w) S
			}_{\ell^\infty \to \ell^\infty} 
			\leq 
			\frac{C}{ \abs{\im(w)}  \kappa^{-1/2}(w)  + \abs{\im(z)}  \kappa^{-1/2}(z)}.
	\]
	This gives
	\[
		\abs{g(z,w)} \leq \frac{C}{
			\pa{
				\im(w)\kappa^{-1/4}(w)\kappa^{1/4}(z) + \im(z)\kappa^{-1/4}(z)\kappa^{1/4}(w)
			}^2
		}.
	\]
	Considering separately the cases $\kappa(z), \kappa(w) \leq N^{-2/3}$, $\kappa(z) \leq N^{-2/3}, \kappa(w) \geq N^{-2/3}$ and
	$\kappa(z), \kappa(w) \geq N^{-2/3}$, we have
	\[
		\absa{
			\int_{0 < y_1 < N^{-1+\mathfrak{a}}} \int f''(x_1) y_1 \pa{f(x_2) + \ii f'(x_2)}\chi'(y_2) g(x_1+\ii y_1, x_2 + \ii y_2) \rd x_1 \rd x_2 \rd y_2 \rd y_1
		}
		=
		\OO\pa{
			N^{-1+\mathfrak{a}}\norm{f''}_\infty \norm{f'}_\infty 
		}.
	\]
	Again considering separately the cases $\kappa(z), \kappa(w) \leq N^{-2/3}$, $\kappa(z) \leq N^{-2/3}, \kappa(w) \geq N^{-2/3}$ and
	$\kappa(z), \kappa(w) \geq N^{-2/3}$, we have
	\[
		\absa{
			\int_{\e < \abs{y_1}, \abs{y_2} < N^{-1 + \mathfrak{a}}} 
			f''(x_1) y_1 f''(x_2) y_2 g(x_1 + \ii y_1, x_2 + \ii y_2)
			\rd x_1 \rd x_2 \rd y_1 \rd y_2
		}
		=
		\OO\pa{
			\log N \pa{N^{-1+\mathfrak{a}}   \norm{f''}_\infty  }^2  
		}.
	\]
	And finally, first applying Lemma \ref{lem:HS_bound} with $s=2$ to the integral in $z = x_1 + \ii y_1$, we have
	\[
		\absa{
			\int_{\e < \abs{y_2} < N^{-1+\mathfrak{a}}} f''(x_1) y_1 f''(x_2) y_2 g(x_1+\ii y_1, x_2 + \ii y_2) \rd x_1 \rd y_1 \rd x_2 \rd y_2
		}
		=
		\OO\pa{
			\log N \pa{  N^{-1+\mathfrak{a}} \norm{f''}_\infty  }^2
		},
	\]
	which establishes \eqref{eq:omega_a_to_D_e}.
	Now, re-write $h(z,w)$ from Proposition \ref{prop:variance_after_algebra} using the diagonal matrix $U(z,w)$ as
	\[
		h(z,w) = \Tr\pa{
			\mathfrak{m}'(z) \mathfrak{m}(z)^{-1} U^* (U^* - F)^{-1}
		},
		\quad
		U_{ii}(z,w) = \frac{ \mathfrak{m}_i(z)\mathfrak{m}_i(w)  }{  \absa{ \mathfrak{m}_i(z)\mathfrak{m}_i(w)}   },
	\]
	and applying the Sherman-Morrison formula, we have
	\[
		(U^* - F)^{-1} = (U^* - A)^{-1} - \frac{\lambda_1 (U^*-A)^{-1}vv^T(U^*-A)^{-1}}{ 1 - \lambda_1 v^T (U^*-A)^{-1}v }.
	\]
	By the resolvent expansion, we have 
	\[
		\Tr\pa{
			\mathfrak{m}'(z)\mathfrak{m}^{-1}(z) U^*(U^*-A)^{-1}
		}
		=
		\Tr \pa{
			\mathfrak{m}'(z) \mathfrak{m}(z)^{-1}
		}
		+
		\Tr \pa{
			\mathfrak{m}'(z) \mathfrak{m}(z)^{-1} AU
		}
		- \Tr \pa{
			\mathfrak{m}'(z) \mathfrak{m}(z)^{-1} A(U^*-A)^{-1} AU
		},
	\]
	which means that with
	\[
		\Gamma(z,w)
		=
		\Tr \pa{
			\frac{ \mathfrak{m}'(z) \mathfrak{m}(z)^{-1} U^*   \lambda_1 (U^*-A)^{-1}vv^T(U^*-A)^{-1}}{ 1 - \lambda_1 v^T (U^*-A)^{-1}v }
		}
		=
		\frac{
			\lambda_1 v^T(U^*-A)^{-1}m'(z)m^{-1}(z)U^*(U^*-A)^{-1}v
		}{
			1 - \lambda_1 v^T(U^*-A)^{-1}v
		},
	\]
	and
	\begin{equation}
	\label{eq:P(z,w)}
		P(z,w) 
		=
		\Tr \pa{
			\mathfrak{m}'(z) \mathfrak{m}(z)^{-1} AU
		}
		- \Tr \pa{
			\mathfrak{m}'(z) \mathfrak{m}(z)^{-1} A(U^*-A)^{-1} AU
		} \\
		- \Gamma(z,w),
	\end{equation}
	we have $h(z,w) = \Tr(\mathfrak{m}'(z) \mathfrak{m}^{-1}(z)) + P(z,w)$.
	Now, integrating by parts and using that $\Gamma(z,w)$ is analytic in $w$ and $z$, we have
	\begin{align*}
		\frac{2}{\pi^2} \int_{D_\e^+ \times D_\e^-} ( \bar{\partial}_z \tilde{f}(z) ) ( \bar{\partial}_w \tilde{f}(w) ) \partial_w g(z,w) \rd z \rd w 
		&=
		\frac{2}{\pi^2} \int_{D_\e^+ \times D_\e^-} ( \bar{\partial}_z \tilde{f}(z) ) ( \bar{\partial}_w \tilde{f}(w) ) \partial_w P(z,w) \rd z \rd w  \\
		&=
		-\frac{2}{\pi^2} \int_{D_\e^+ \times D_\e^-} ( \bar{\partial}_z \tilde{f}(z) ) ( \bar{\partial}_w \tilde{f}'(w) ) P(z,w) \rd z \rd w \\
		&=
		-\frac{2}{\pi^2} \int_{D_\e^+ \times D_\e^-} ( \bar{\partial}_z \bar{\partial}_w)[ \tilde{f}(z) \tilde{f}'(w)  P(z,w) ]\rd z \rd w \\
		&= -\frac{2}{\pi^2} \int_{D_\e^+ \times D_\e^-} ( \bar{\partial}_z \bar{\partial}_w)[ (\tilde{f}(z) -  \tilde{f}(w)) \tilde{f}'(w)  P(z,w) ]\rd z \rd w.
	\end{align*}
	Applying Green's formula,
	\[
		\int_{\Omega} \bar{\partial}_z F(z) \rd x \rd y = - \frac{\ii}{2} \int_{\partial \Omega} F(z) \rd z,
	\]
	we find
	\[
		\frac{2}{\pi^2} \int_{D_\e^+ \times D_\e^-} ( \bar{\partial}_z \tilde{f}(z) ) ( \bar{\partial}_w \tilde{f}(w) ) \partial_w g(z,w) \rd z \rd w 
		=
		-\frac{1}{2\pi^2} \int_{\partial\pa{D_\e^+ \times D_\e^-}}  (\tilde{f}(z) -  \tilde{f}(w)) \tilde{f}'(w)  P(z,w) \rd z \rd w.
	\]
	Letting $\e \to 0$, this gives
	\begin{multline}
	\label{eq:var_f}
		\lim_{\e \to 0} \pa{\frac{2}{\pi^2} \int_{D_\e^+ \times D_\e^-} ( \bar{\partial}_z \tilde{f}(z) ) ( \bar{\partial}_w \tilde{f}(w) ) \partial_w \Gamma(z,w) \rd z \rd w} \\
		= 
		\frac{1}{4} \int_{ \R^2 } (f(y) - f(x)) f'(y) [\Gamma(x+\ii 0,y + \ii 0) - \Gamma(x+\ii 0, y-\ii 0) - \Gamma(x-\ii 0, y+\ii 0) + \Gamma(x+\ii 0, y+\ii 0)] \rd x \rd y.
	\end{multline}
We now evaluate $\Gamma(x+\ii 0,y + \ii 0) - \Gamma(x+\ii 0, y-\ii 0)$. By the resolvent expansion (recall that by Proposition \ref{prop:F}
we have $\norm{A}_{\ell^2 \to \ell^2} < 1-c_g$), we have
	\[
		(U^*-A)^{-1} = U + X, \quad X = (U^*-A)^{-1}AU.
	\]
Therefore, suppressing the dependence on $z,w$ from our notation, writing $M$ for $\mathfrak{m}'(z)\mathfrak{m}^{-1}(z)$, 
$q_1(z,w) = \lambda v^T MUv$, and $q_2(z,w) = \lambda v^T Uv$,
we define $r_1(z,w)$ and $r_2(z,w)$ by
	\[
		\Gamma(z,w) = \frac{
			\lambda v^T MUv + \lambda v^T MX v + \lambda v^TXM v + \lambda v^TMU^* X v
		}{
			1 - \lambda v^TUv - \lambda v^T X v
		}
		=
		\frac{q_1(z,w) + r_1(z,w)}{1 - q_2(z,w) - r_2(z,w)}.
	\]
Now in the generalized Wigner case, we have $\mathfrak{m}_i = m_{sc}(z)$ for $i = 1, \dots, N$.
Therefore, $U$ is a constant multiple of the identity and since by definition 
$Av = 0$, we have $Xv = 0$. Furthermore, $XMv = 0$ because $M$ is a also a constant multiple of the identity. This gives
	\[
		\Gamma(z,w) = \frac{q_1(z,w)}{1-q_2(z,w)}, 
	\]
and 
	\[
		\Gamma(z,w) - \Gamma(z,\bar{w}) 
		=
		\frac{
			q_1(z,w) - q_1(z, \bar{w}) + q_1(z,\bar{w})q_2(z,w) - q_1(z,w)q_2(z,\bar{w})
		}
		{
			1 - q_2(z,w) - q_2(z,\bar{w}) + q_2(z,w)q_2(z,\bar{w})
		}.
	\]
Again leveraging the fact that we are working in the generalized Wigner case, we have the explicit formulas
	\[
		v(z,w) = \pa{
			\frac{1}{\sqrt{N}}, \dots, \frac{1}{\sqrt{N}}
		},
		\quad
		\lambda_1 = \abs{m_{sc}(z) m_{sc}(w)}.
	\]
This gives
	\[
		q_1(z,w) = \frac{m_{sc}(z)m_{sc}(w)}{2m_{sc}(z) + z}, 
		\quad
		q_2(z,w) = m_{sc}(z)m_{sc}(w), 
	\]
and therefore
	\begin{align*}
		\Gamma(z,w) - \Gamma(z,\bar{w}) 
		&=
		\frac{1}{2m(z) + z}
		\pa{ \frac{m_{sc}(z)m_{sc}(w) - m_{sc}(z)m_{sc}(\bar{w}) }{1 - m_{sc}(z)m_{sc}(w) - m_{sc}(z)m_{sc}(\bar{w}) + m_{sc}(z)^2\abs{m_{sc}(w)}^2} } \\
		&=
		\frac{1}{2m_{sc}(z) + z}
		\pa{ \frac{2m_{sc}(z)\im m_{sc}(w)}{1 + m_{sc}(z)^2\abs{m_{sc}(w)}^2 - 2m_{sc}(z)\re m(w)  } }.
	\end{align*}
Using $\abs{m_{sc}(E)} = 1$ for $E \in (-2,2)$, and $x\pm \ii 0 + 2m_{sc}(x\pm \ii 0) = \pm \ii \sqrt{4-x^2} \1_{\abs{x} < 2}$, this gives
	\begin{align*}
		\Gamma(x+\ii0, y+\ii 0) - \Gamma(x+\ii 0, y-\ii 0)
		&= 
		\frac{1}{\im m_{sc}(x+\ii 0)} \pa{
			\frac{2m_{sc}(x+\ii 0) \im m_{sc}(y + \ii 0)}{1 + m_{sc}(x+\ii 0)^2 - 2 m_{sc}(x+\ii 0) \re m_{sc}(y + \ii 0)}
		} \\
		&=
		\frac{2}{\im m_{sc}(x+\ii 0)} \pa{
			\frac{m_{sc}(x+\ii 0) \im m_{sc}(y + \ii 0)}{ m_{sc}(x+\ii 0) (y-x)}
		} \\
		&=
		\frac{\im m_{sc}(y+\ii 0)}{\im m_{sc}(x+\ii 0)} \frac{2}{y-x}.
	\end{align*}
Applying the same analysis to $\Gamma(x-\ii 0, y+\ii 0) - \Gamma(x-\ii 0, y + \ii 0)$, and substituting into \eqref{eq:var_f}, we have
	\[
		\lim_{\e \to 0} \pa{ \frac{2}{\pi^2} \int_{D_\e^+ \times D_\e^-} ( \bar{\partial}_z \tilde{f}(z) ) ( \bar{\partial}_w \tilde{f}(w) ) \partial_w \Gamma(z,w) \rd z \rd w }
		= 
		\frac{1}{\pi^2}\int_{-2}^2 \int_{-2}^2 \pa{ \frac{f(y)-f(x)}{y-x} } f'(y) \frac{\sqrt{4-y^2} }{\sqrt{4-x^2}} \rd x \rd y.
	\]
It remains to evaluate
	\[
		-\frac{1}{4\pi^2} \int_{ \R } (f(x) - f(y)) f'(y) [\Delta(x+\ii 0,y + \ii 0) - \Delta(x+\ii 0, y-\ii 0) - \Delta(x-\ii 0, y+\ii 0) + \Delta(x+\ii 0, y+\ii 0)] \rd x \rd y
	\]
where
	\[
		\Delta(z,w) = \Tr \pa{
			\mathfrak{m}'(z) \mathfrak{m}(z)^{-1} AU
		}
		- \Tr \pa{
			\mathfrak{m}'(z) \mathfrak{m}(z)^{-1} A(U^*-A)^{-1} AU
		}.
	\]
Since 
	\[
		\lim_{\eta \to 0} \Tr \pa{
			\mathfrak{m}'(z) \mathfrak{m}(z)^{-1} AU
		}
		=
		-2m_{sc}(x)
		\frac{\sqrt{4-y^2}  }{\sqrt{ 4-x^2 }} \1_{\abs{y} < 2}  \sum_{i=1}^N A_{ii},
	\]
the terms coming from $\Tr \pa{\mathfrak{m}'(z) \mathfrak{m}(z)^{-1} AU}$ contribute
	\[
		\sum_{i=1}^N A_{ii} \pa{ \frac{1}{\pi^2} \iint_{[-2,2]^2} (f(x)-f(y)) f'(y) m_{sc}(x) \frac{\sqrt{4-y^2}  }{\sqrt{ 4-x^2 }} \diff x \diff y}.
	\]
For the remaining term, note that by Proposition \ref{prop:F}, $h_A(z,w)=\Tr \pa{\mathfrak{m}'(z) \mathfrak{m}(z)^{-1} A(U^*-A)^{-1} AU} = \OO(1)$.
\end{proof}

\begin{lemma}
\label{lem:var_A_log}
	Choose $\gamma > 0$ and let $f(x) = \log\pa{(x-E)+\ii N^{-\gamma}}\chi(x)$, where
	$\chi(x)$ is a smooth function such that $\chi(x) \leq 1$, $\chi(x) \equiv 1$ for $x\in[-3,3]$ and $\chi(x) = 0$ 
	for $\abs{x} \geq 4$. Then with $\e(f)$ as in \eqref{eq:var_A}, we have
	$\e(\re f) = \OO(\gamma\log N)$, $\e(\im f) = \OO(\gamma\log N)$.
\end{lemma}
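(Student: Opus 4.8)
The plan is to estimate $\e(\re f)$ and $\e(\im f)$ directly from the formula \eqref{eq:var_A}, which we write as $\e(g) = \pa{\sum_{i=1}^N A_{ii}}I_1(g) + I_2(g)$ for a real function $g$, where
\[
I_1(g) = \iint_{[-2,2]^2}(g(x)-g(y))\,g'(y)\,g_1(x)g_2(y)\,\diff x\,\diff y, \qquad g_1(x) = \frac{m_{sc}(x)}{\pi^2\sqrt{4-x^2}}, \quad g_2(y) = \sqrt{4-y^2},
\]
and $I_2(g) = \iint_{\R^2}(g(x)-g(y))\,g'(y)\,h_A(x,y)\,\diff x\,\diff y$; we apply this with $g=\re f$ and $g=\im f$. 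Two structural remarks dispose of the prefactor and kernels. Since we are in the generalized Wigner case ($\mathfrak{m}_i\equiv m_{sc}$), $\sum_i A_{ii} = \Tr F - \lambda_1(F) = \OO(1)$ by Proposition \ref{prop:F}, using $F_{ii} = \absa{\mathfrak{m}_i(z)\mathfrak{m}_i(w)}\sigma_{ii}^2 \lesssim N^{-1}$, $\lambda_1(F)\le\norma{F}_{\ell^2\to\ell^2}\le 1$ and $\sigma_{ii}^2\sim N^{-1}$; at the real boundary points this equals $\absa{m_{sc}(x)m_{sc}(y)}(\Tr S - 1) = \OO(1)$. Likewise, from $\mathfrak{m}_i\equiv m_{sc}$ and the square-root behaviour of $m_{sc}$ at $\pm2$, the kernel $h_A$ is a ``corner difference'' supported in $[-2,2]^2$: bounded in $y$, vanishing like $(4-y^2)^{1/2}$ as $y\to\pm2$, with at most an integrable $(4-x^2)^{-1/2}$ singularity at $x=\pm2$, smooth in the interior, and with $\partial_y h_A$ integrable in each variable. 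Finally $\int_{-2}^2\absa{g_1}\le\frac1\pi$, $\norma{g_2}_\infty\le2$, $g_2(\pm2)=0$.

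Next we collect elementary bounds. On $[-3,3]\supset[-2,2]$ the cutoff is $\chi\equiv1$, so there $\re f(x)=\tfrac12\log\pa{(x-E)^2+N^{-2\gamma}}$ and $\im f(x)=\Arg\pa{(x-E)+\ii N^{-\gamma}}$. A direct computation gives $\norma{(\re f)'}_1=\OO(\gamma\log N)$, $\norma{\re f}_1=\OO(1)$, $\norma{\re f}_2^2=\OO(1)$, and the two estimates that really matter,
\[
\int_{-2}^2\frac{\absa{\re f(x)}}{\sqrt{4-x^2}}\,\diff x = \OO(1), \qquad \int_{-2}^2\frac{(\re f(x))^2}{\sqrt{4-x^2}}\,\diff x = \OO(1),
\]
uniformly in $E\in[-2,2]$; these follow by splitting the $x$-integral according to the sizes of $\absa{x-E}$ versus $N^{-\gamma}$ and of $\kappa(x)$ versus $N^{-2/3}$ and observing that the logarithmic peak of $\re f$ at $E$ lives on a set of length $\lesssim N^{-\gamma}$ while the weight singularity at $\pm2$ is integrable, so that even when $E$ sits at the edge the two conspire to at most $(\gamma\log N)^2 N^{-\gamma/2}+\OO(1)=\OO(1)$. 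For $g=\im f$ one simply has $\norma{\im f}_\infty\le\pi$ and $\norma{(\im f)'}_1=\OO(1)$ (a Lorentzian integrates to $\pi$).

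The assembly is then short. For $g=\im f$: using $\absa{g(x)-g(y)}\le 2\norma{g}_\infty$, $\int\absa{g_1}\le\frac1\pi$ and $\int_x\absa{h_A(x,y)}\,\diff x=\OO(1)$, both $I_1(\im f)$ and $I_2(\im f)$ are $\OO(1)$, hence $\e(\im f)=\OO(1)=\OO(\gamma\log N)$. For $g=\re f$: write $I_2(g)=\iint g(x)g'(y)h_A - \iint g(y)g'(y)h_A$; in the first term $\absa{\int_y g'(y)h_A(x,y)\,\diff y}\le\norma{g'}_1\sup_y\absa{h_A(x,y)}\lesssim\norma{g'}_1(4-x^2)^{-1/2}$, so it is $\le\norma{g'}_1\int_{-2}^2\absa{g(x)}(4-x^2)^{-1/2}\,\diff x=\OO(\gamma\log N)$, while the second equals $-\tfrac12\int g(y)^2\,\partial_y\!\pa{\int h_A(x,y)\,\diff x}\,\diff y$ after integrating by parts in $y$ (no boundary term, as $\int h_A(x,y)\,\diff x$ vanishes at $y=\pm2$ and $f$ is compactly supported), which is $\OO(1)$ by the weighted $L^2$ bound. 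For $I_1(g)$, the product structure gives $I_1(g)=\pa{\int_{-2}^2 g\,g_1}\pa{\int_{-2}^2 g'\,g_2} - \pa{\int_{-2}^2 g_1}\pa{\int_{-2}^2 g\,g'\,g_2}$; integrating $\int g'g_2$ and $\int g g'g_2$ by parts in $y$ (using $g_2(\pm2)=0$) turns them into $\int y\,g/\sqrt{4-y^2}$ and $\tfrac12\int y\,g^2/\sqrt{4-y^2}$, so all four factors are $\OO(1)$ by the weighted estimates and $\absa{m_{sc}}\le1$. Hence $I_1(\re f)=\OO(1)$ and $\e(\re f)=\pa{\sum_i A_{ii}}I_1(\re f)+I_2(\re f)=\OO(\gamma\log N)$, which with the $\im f$ case completes the lemma.

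The main obstacle is the $\re f$ case: the naive bound $\absa{\re f(x)-\re f(y)}\le 2\norma{\re f}_\infty=\OO(\gamma\log N)$ times $\norma{(\re f)'}_1=\OO(\gamma\log N)$ produces only $\OO((\gamma\log N)^2)$, the wrong order. The missing power must be recovered by (i) never using $\norma{\re f}_\infty$ but instead $\norma{\re f}_1,\norma{\re f}_2=\OO(1)$, disposing of the ``diagonal'' term $\int\re f\,(\re f)'\,(\cdot)$ by integration by parts so that only one factor of $(\re f)'$ survives, and (ii) exploiting the $(4-x^2)^{-1/2}$-weighted structure of the $I_1$-kernel and of $\sup_y\absa{h_A(x,\cdot)}$ so that the surviving integral is $\int\absa{\re f}(4-x^2)^{-1/2}$, which remains $\OO(1)$. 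Proving that the two weighted integrals stay $\OO(1)$ \emph{uniformly} in $E$, in particular when $E$ is at or near $\pm2$, is the only genuinely delicate step, and the regularity of $h_A$ invoked there (vanishing at the $y$-edges, integrability of $\partial_y h_A$) is exactly where the generalized Wigner specialization $\mathfrak{m}_i\equiv m_{sc}$ is used.
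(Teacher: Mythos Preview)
Your proof is correct, and it takes a genuinely different route from the paper's.

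The paper's argument is a two-line direct estimate: it simply bounds
\[
\iint_{[-4,4]^2}\frac{\absa{(f(x)-f(y))f'(y)}}{\sqrt{\absa{4-x^2}}}\,\diff x\,\diff y
\]
by splitting the $y$-range near and far from $E$, and arrives at $\OO(\gamma\log N)$ after one line of calculus. Your approach is more structural: you separate the product kernel from the $h_A$ kernel, factor the product term, and then kill the dangerous diagonal piece $\int g\,g'\,(\cdot)$ by integration by parts in $y$, so that only one factor of $\norm{g'}_1$ ever appears and everything else reduces to the weighted $L^1$ and $L^2$ estimates $\int\absa{\re f}(4-x^2)^{-1/2}=\OO(1)$, $\int(\re f)^2(4-x^2)^{-1/2}=\OO(1)$.

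What each approach buys: your route makes transparent \emph{why} the bound is $\OO(\gamma\log N)$ rather than $\OO((\gamma\log N)^2)$ --- indeed, as you note, naively using $\norm{\re f}_\infty=\OO(\gamma\log N)$ together with $\norm{(\re f)'}_1=\OO(\gamma\log N)$ loses a factor, and the same happens if one just applies the triangle inequality $\absa{g(x)-g(y)}\le\absa{g(x)}+\absa{g(y)}$, because $\int\absa{\re f}\,\absa{(\re f)'}$ is genuinely of order $(\gamma\log N)^2$. Your integration-by-parts step is exactly what avoids this. The paper's proof is terser but leaves that mechanism implicit.

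One point to tighten: the regularity you assume for $h_A$ (boundedness times $(4-x^2)^{-1/2}$, vanishing at $y=\pm2$, integrability of $\partial_y h_A$) is not stated in the paper beyond ``$h_A=\OO(1)$''. These properties do hold in the generalized Wigner case --- writing $A=\absa{m_{sc}(z)m_{sc}(w)}(S-vv^T)$ with $v=N^{-1/2}(1,\dots,1)$, one sees that the corner combination inherits a factor $\im m_{sc}(y+\ii0)\sim(4-y^2)^{1/2}$ in $y$ and the $m_{sc}'(z)/m_{sc}(z)\sim(4-z^2)^{-1/2}$ singularity in $x$ --- but it would be cleaner to state and check them explicitly rather than assert them.
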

\begin{proof}
We bound the term depending on $h_A(x,y)$, for the real part of $f$. One can bound the remaining terms by a similar argument.
	\begin{align*}
		\iint_{[-4,4]^2} \frac{\absa{(f(x)-f(y)) f'(y)}}{\sqrt{\absa{4-x^2}}} \diff x \diff y
		&\leq 
		C\int_{E-N^{-\gamma}}^{E+N^{-\gamma}} \abs{f'(y)} \diff y 
			+ CN^{\gamma}\int_{-4}^{E-N^{-\gamma}} \int_{E-N^{-\gamma}}^{E+N^{-\gamma}} \frac{1}{(y-E)^2} \frac{(x-y)}{\sqrt{\absa{4-x^2}}} \diff x \diff y \\
		&\leq
		\OO(1) + C\int_{-4}^{E-N^{-\gamma}} \frac{1}{(y-E)^2} (E-y)\diff y 
		=
		\OO(\e \log N).
	\end{align*}
\end{proof}

 \subsection{Calculation of the Expectation. }
 \label{sec:expectation}

By making minor modifications to the proof of \cite[Lemma 5.22]{LanLopSos2021}, we prove the following Lemma that gives us a formula
for the expectation of linear statistics for sufficiently regular functions. 
We do not attempt to optimize the error terms in this section, which are sufficient for the proof of Theorem \ref{thm:log_corr_field}.
Our calculations below examine the real generalized Wigner case. The relevant modifications for the complex case do not introduce
any meaningful changes to the proof technique.
\begin{lemma}
\label{lem:E}
Let $\e > 0$, suppose
$f$ is such that $\norm{f''} \leq N^{1/2-3\e/2}$, and
let $W$ be a Wigner matrix. Then with $m = m_{sc}$, we have
	\begin{multline}
	\label{eq:expectation}
		\E(\Tr f(W)) = N \int f(x) \diff \varrho(x)
			+ \frac{1}{\pi} \int_{\Omega_\mathfrak{a}} \partial_{\bar{z}} \tilde{f}(z)
		\pa{
			-\Tr(S\mathfrak{m}\mathfrak{m'}) + \Tr\pa{ (1-S\mathfrak{m}^2)^{-1} S \mathfrak{m'}\mathfrak{m} }
				+ \frac{1}{N^2} \sum_{j,a} s_{aj}^{(4)} \mathfrak{m}'_j \mathfrak{m}_j \mathfrak{m}_a
		} \diff z \\
		+ \OO\pa{N^{-1+\mathfrak{a}+\e} \norm{f''}_1 + N^{-1/2+\e}( 1 + \norm{f''}_1   )^{3/4} + N^{-\mathfrak{a}/2}(1+\norm{f''}_1)}.
	\end{multline}
We take $\mathfrak{a} = 1/2 -\e/2$ to ensure that the error terms are $\oo(1)$ as $N \to \infty$.
\end{lemma}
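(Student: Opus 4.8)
\textbf{Proof proposal for Lemma \ref{lem:E}.}

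The plan is to follow the structure of the characteristic function computation in Section \ref{sec:char_fcn}, but now targeting $\E(\Tr f(W))$ directly instead of $\E(e_\mathfrak{a}(\lambda))$, using the Helffer-Sj\"ostrand representation \eqref{eq:HS}. First I would write, exactly as in \eqref{eq:HS},
\[
	\Tr f(W) - \E(\Tr f(W)) = \frac{N}{2\pi}\int_{\Omega_\mathfrak{a}} \pa{\ii y\chi(y)f''(x) + \ii(f(x) + \ii y f'(x))\chi'(y)}(m_N(z) - \E m_N(z))\,\rd x\rd y + \OO(\text{boundary terms}),
\]
so that the task reduces to computing $\E(m_N(z)) = N^{-1}\sum_i \E(G_{ii}(z))$ to sufficient precision, and then integrating $N(\E m_N(z) - m(z))$ against the HS kernel (the leading piece $N\int f\,\rd\varrho$ comes from $m_N \approx m$, which is what the $N\int f(x)\rd\varrho(x)$ term records). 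The quantity $N(\E m_N(z) - m(z))$ is the standard $\OO(1)$ "deterministic next-order correction" to the Stieltjes transform, and the whole lemma is an instance of the first-order expansion of $\E(\Tr f)$ that is classical for Wigner-type matrices (e.g.\ the arguments behind \cite[Lemma 5.22]{LanLopSos2021} and \cite{LanSos2020}); I would essentially quote/adapt that derivation.

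The key computation is a cumulant expansion for $\E(G_{ii})$, parallel to Lemma \ref{lem:step1} but with $e_\mathfrak{a}$ replaced by $1$ (so all the $\partial_{ia}e_\mathfrak{a}$ terms vanish, which is the major simplification). Concretely, using $zG_{ii} = \sum_a G_{ia}W_{ai}$, the cumulant expansion of Lemma \ref{lem:cumulant_expansion}, and $\partial_{k\ell}G_{ij} = -\tfrac{1}{1+\delta_{k\ell}}(G_{ik}G_{\ell j} + G_{i\ell}G_{kj})$ from \eqref{eq:resolvent_derivatives}, one gets
\[
	z\E(G_{ii}) = -\frac{1}{N}\sum_a s_{ia}\E(G_{ii}G_{aa}) - \frac{1}{N}\sum_a s_{ia}\E(G_{ia}^2) + \frac{s_{ii}}{N}\E(G_{ii}^2) + (\text{third cumulant}) + (\text{fourth cumulant}) + \OO(N^{-3/2+\e}).
\]
The third-cumulant term vanishes in the real symmetric generalized Wigner case up to the stated errors (it produces only $\OO(N^{-1/2+\e})$-type contributions after the HS integration, handled as in Lemma \ref{lem:step1}); the fourth-cumulant term produces the $N^{-2}\sum_{j,a}s_{aj}^{(4)}\mathfrak{m}'_j\mathfrak{m}_j\mathfrak{m}_a$ piece. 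Using \eqref{eq:avg_local_law} and the quadratic vector equation \eqref{eq:qve} as in \eqref{eq:use_qve} to replace $-N^{-1}\sum_a s_{ia}G_{aa}$ by $z + \mathfrak{m}_i^{-1} + \OO(N^\e(N\eta)^{-1})$, then \eqref{eq:local_law_entrywise} to replace $\E(G_{ii})$ by $\mathfrak{m}_i$, the term $-N^{-1}\sum_a s_{ia}\E(G_{ia}^2)$ becomes $\E(T_{ii}(z,z))$ which by Theorem \ref{thm:Txy} equals $[(1-S\mathfrak{m}^2)^{-1}S\mathfrak{m}^2]_{ii}$ up to a small error; the $\Tr$ of $\mathfrak{m}'\mathfrak{m}^{-1}$ times this (after accounting for the full system solved by $\tilde v$) reproduces $\Tr((1-S\mathfrak{m}^2)^{-1}S\mathfrak{m}'\mathfrak{m})$, and the $G_{ii}G_{aa}$ cross term yields $-\Tr(S\mathfrak{m}\mathfrak{m}')$ after differentiating in $z$. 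Summing over $i$, dividing by $N$, subtracting $m(z)$, multiplying by $N$, and inserting into the HS integral produces exactly the claimed three-term integrand.

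The remaining work is error bookkeeping: one must track how each $\OO(N^\e\Psi(z))$, $\OO(N^\e(N\eta)^{-1})$, and $\OO(N^{-3/2+\e}\eta^{-k})$ error inside $\E(m_N(z)) - m(z)$ propagates through $\int_{\Omega_\mathfrak{a}} \bar\partial\tilde f(z)(\cdot)\,\rd z$, using Lemma \ref{lem:HS_bound} (with $s=1$ or $s=2$ according to the power of $\eta^{-1}$) and bounding any excess inverse powers of $\eta$ below $N^{-1+\mathfrak{a}}$ by $N^{1-\mathfrak{a}}$; this gives the $N^{-1+\mathfrak{a}+\e}\norm{f''}_1 + N^{-1/2+\e}(1+\norm{f''}_1)^{3/4} + N^{-\mathfrak{a}/2}(1+\norm{f''}_1)$ error, and the $N^{-1+\mathfrak{a}+\e}\norm{f''}_1$ also absorbs the $\Omega_\mathfrak{a}$-truncation error from \eqref{eq:HS}. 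The main obstacle is purely technical: organizing this error propagation so that the $\norm{f''}_1 \leq N^{1/2-3\e/2}$ hypothesis and the choice $\mathfrak{a} = 1/2-\e/2$ indeed render everything $\oo(1)$ — in particular, checking that the $N^{-3/2+\e}\eta^{-5/2}$-type term coming from $\E(T_{ii}(z,z))$ (cf.\ \eqref{eq:pt2}) integrates to something of the stated order. No genuinely new idea beyond Section \ref{sec:char_fcn} is needed; I would present this as a streamlined specialization of that machinery, citing \cite[Lemma 5.22]{LanLopSos2021} and \cite{LanSos2020} for the parts that are identical, and the comparison \cite[(4.19)]{BouModPai2021} for the complex-case constants as noted in Lemma \ref{lem:exp}.
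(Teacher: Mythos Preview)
Your proposal is correct and follows essentially the same route as the paper's proof: Helffer--Sj\"ostrand reduction, cumulant expansion for $\E(G_{ii})$ (i.e.\ Lemma~\ref{lem:step1} with $e_\mathfrak{a}\equiv 1$), rearrangement via the QVE, identification of $\E[T_{ii}(z,z)]$ via Theorem~\ref{thm:Txy}, inversion of $(1-\mathfrak{m}^2S)$ with the identity $\sum_i (1-\mathfrak{m}^2S)^{-1}_{ij}=\mathfrak{m}'_j\mathfrak{m}_j^{-2}$, and error bookkeeping through Lemma~\ref{lem:HS_bound} and \eqref{lem:bound_1-m^2S_inv}. One small correction: the $-\Tr(S\mathfrak{m}\mathfrak{m}')$ term does not come from the $G_{ii}G_{aa}$ cross term but from the diagonal correction $\tfrac{s_{ii}}{N}\E[G_{ii}^2]=\tfrac{s_{ii}}{N}\mathfrak{m}_i^2+\OO(N^{-1+\e}\Psi)$, which after multiplying by $-\mathfrak{m}_i$, applying $(1-\mathfrak{m}^2S)^{-1}$, and summing over $i$ becomes $-\sum_j \tfrac{s_{jj}}{N}\mathfrak{m}_j\mathfrak{m}'_j=-\Tr(S\mathfrak{m}\mathfrak{m}')$.
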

\begin{proof}
	In the following we write $\e > 0$ for an arbitrary small constant that may vary from line to line.
	By the Helffer-Sj\"ostrand formula, with overwhelming probability, we have
	\[
		\Tr f(W) - N \int f(x) \diff\varrho(x) 
		=
		\frac{1}{2\pi} \int_{\Omega_\mathfrak{a}} \pa{ \partial_{\bar{z}} \tilde{f}(z) } \sum_{i=1}^N \pa{G_{ii}(z) - \mathfrak{m}_i(z)}
		+\OO\pa{ N^{-1+ \e + \mathfrak{a}} \norm{f''}_1}.
	\]
	By the cumulant expansion, Lemma \ref{lem:cumulant_expansion}, we have
	\[ 1+
		z \E[G_{ii}] = -\frac{1}{N} \sum_{a} s_{ia} \E[G_{ia}^2 + G_{ii}G_{aa}]
			+ \frac{s_{ii}}{N} \E[G_{ii}^2]
			+ \frac{1}{2N^{3/2}} \sum_{a} s_{ia}^{(3)} \E[\partial_{ia}^2 G_{ia}] 
			+ \frac{1}{6N^2} \sum_{a}s_{ia}^{(4)} \E[\partial_{ia}^3 G_{ia}] + \OO\pa{N^{-\frac{3}{2}+\e}}.
	\]
	We now bound the error terms on the right hand side. By \eqref{eq:local_law_entrywise}
	for $i \neq a$, with overwhelming probability, we have
	\[
		\partial_{ia}^2 G_{ia} = 6 G_{ia}\mathfrak{m}_i \mathfrak{m}_a + \OO\pa{N^\e \Psi^2(z)}.
	\]
	Therefore \eqref{eq:isotropic_local_law} gives
	\[
		\frac{1}{N^{3/2}} \sum_{a} s_{ia}^{(3)} \E[ \partial_{ia}^2 G_{ia}] = \OO\pa{ N^{-1/2+\e} \Psi^2(z)}.
	\]
	Similarly, for $i \neq a$, we have $\partial_{ia}^3 G_{ia}^3 = -6\mathfrak{m}_i^2 \mathfrak{m}_a^2 + \OO(N^\e \Psi(z))$ with overwhelming probability,
	and therefore
	\[
		\frac{1}{6N^2} \sum_{a} s_{ia}^{(4)} \E[ \partial_{ia}^3 G_{ia}] = -\frac{1}{N^2} \sum_{a} s_{ia}^{(4)} \mathfrak{m}_i^2 \mathfrak{m}_a^2
			+ \OO(N^{-1+\e} \Psi(z)).
	\]
	Continuing to bound terms, we have
	\[
		\frac{s_{ii}}{N} \E[G_{ii}^2] = \frac{s_{ii} \mathfrak{m}_i^2}{N} \OO\pa{N^{-1+\e} \Psi(z)}, 
	\]
	and
	\[
		\frac{1}{N} \sum_{a} s_{ia}\E[G_{ii}G_{aa}] = 
		\frac{1}{N} \sum_{a} s_{ia}\pa{ G_{ii} - \mathfrak{m}_i} \mathfrak{m}_a + \frac{1}{N} \sum_{a} s_{ia}\mathfrak{m}_i\pa{ G_{aa} - \mathfrak{m}_a}
			+ \frac{1}{N} \sum_{a} s_{ia} \mathfrak{m}_i \mathfrak{m}_a + \OO\pa{ N^\e (N\eta)^{-1} \Psi(z)  }.
	\]
	Now by definition we have $\frac{1}{N} \sum_a s_{ia} \E[G_{ia}^2] = \E[T_{ii}(z,z)]$, so applying \eqref{eq:qve} and re-arranging terms, we have 
	\[
		-\frac{1}{\mathfrak{m}_i} \E[G_{ii} - \mathfrak{m}_i] = 
		-\frac{1}{N} \sum_a \mathfrak{m}_i s_{ia} \E[G_{aa} - \mathfrak{m}_a] + \frac{s_{ii}}{N} \mathfrak{m}_i^2
			- \frac{1}{N^2} \sum_a s_{ia}^{(4)} \mathfrak{m}_i^2 \mathfrak{m}_a^2 - \E[T_{ii}(z,z)] + \OO\pa{N^\e (N\eta)^{-1} \Psi(z)},
	\]
	and so, 
	\begin{multline}
	\label{eq:E_Gii-mi}
		\E[G_{ii}] - \mathfrak{m}_i = - \sum_j (1 - S\mathfrak{m}^2)^{-1}_{ij} (S\mathfrak{m}^3)_{jj} +
			\sum_j (1-\mathfrak{m}^2S)^{-1}_{ij}[\mathfrak{m}(1-S\mathfrak{m}^2)^{-1} S\mathfrak{m}^2]_{jj} \\
			+ \frac{1}{N^2} \sum_{j,a} (1- \mathfrak{m}^2S)^{-1}_{ij} s_{ja}^{(4)} \mathfrak{m}_j^3 \mathfrak{m_a}^2 +
			\OO\pa{  \norm{(1-\mathfrak{m}^2S)^{-1}}_{\ell^\infty \to \ell^\infty} \pa{  N^\e (N\eta)^{-1} \Psi(z) + N^\e   
				\E\pa{ \abs{A_{xy}}^2 }^{1/2}  }},
	\end{multline}
	where we recall from Theorem \ref{thm:Txy} that
	\[
		\E\pa{ \abs{A_{xy}}^2 } \leq \frac{N^\e 
			\pa{
				\Psi(z)^2\Psi(w) + \Psi(z)\Psi(w)^2 + \frac{1}{N} \Psi(z)\Psi(w)
			}
		}{(\im(w) \kappa^{-1/2}(w) + \im(z) \kappa^{-1/2}(z))^2}.
	\]
	To simplify the main terms in \eqref{eq:E_Gii-mi},  
	note that $\sum_{i} (1-\mathfrak{m}^2S)^{-1}_{ij} = \mathfrak{m}'_j \mathfrak{m}_j^{-2}$, see \cite[(5.103)]{LanLopSos2021}.
	For the error terms, \eqref{lem:bound_1-m^2S_inv} 
	gives $N^{-1/2+\e} (1 + \norm{f''}_1)^{3/4}$, say, for the first term,
	and $N^{-\mathfrak{a}/2}(1+\norm{f''}_1)$, for the second one.
\end{proof}

In the generalized Wigner case, we can make some simplifications.

\begin{lemma}
\label{lem:E_simplified}
	Let
	\begin{align}
	\nonumber
		\delta_{{\rm GW}}(f) &= \frac{1}{\pi} \int_{\Omega_\mathfrak{a}} \partial_{\bar{z}} \tilde{f}(z)
		\pa{
			\Tr\pa{ (1-S\mathfrak{m}^2}^{-1} S \mathfrak{m'}\mathfrak{m}
		} \diff z \\
		&= -\frac{1}{2\pi} \int_{-2}^2 \frac{f(x)}{\sqrt{4-x^2}} \diff x + \frac{f(2)+ f(-2)}{4} + \OO(1).
	\label{eq:shift_GW}
	\end{align}
	In the generalized Wigner case, we have
	\begin{align}
		\nonumber
		&\frac{1}{\pi} \int_{\Omega_\mathfrak{a}} \partial_{\bar{z}} \tilde{f}(z)
		\pa{
			-\Tr(S\mathfrak{m}\mathfrak{m'}) + \Tr\pa{ (1-S\mathfrak{m}^2)^{-1} S \mathfrak{m'}\mathfrak{m} }
				+ \frac{1}{N^2} \sum_{j,a} s_{aj}^{(4)} \mathfrak{m}'_j \mathfrak{m}_j \mathfrak{m}_a
		} \diff z \\
		= 
		& \sum_{i=1}^N s_{ii} \pa{ \frac{1}{2\pi} \int_{-2}^2 f(x) \frac{2-x^2}{\sqrt{4-x^2}} \diff x}
		+ \delta_{{\rm GW}}(f)
		+ \frac{1}{2N^2} \sum_{j,a} s_{aj}^{(4)} \int_{-2}^2 f(x) \pa{\frac{x^4-4x^2+2}{\sqrt{4-x^2}}} \diff x.
	\end{align}
\end{lemma}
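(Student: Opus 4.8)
The plan is to specialize the general formula of Lemma~\ref{lem:E} to the case $\mathfrak{m}_i(z)\equiv m_{sc}(z)$ and then evaluate the resulting residue integrals. After this substitution the three traces in \eqref{eq:expectation} become $\Tr(S\mathfrak{m}\mathfrak{m}')=m_{sc}m_{sc}'\sum_i s_{ii}$, $\Tr\big((1-S\mathfrak{m}^2)^{-1}S\mathfrak{m}'\mathfrak{m}\big)=m_{sc}m_{sc}'\,\Tr\big((1-m_{sc}^2 S)^{-1}S\big)$, and $\frac1{N^2}\sum_{j,a}s_{aj}^{(4)}\mathfrak{m}'_j\mathfrak{m}_j\mathfrak{m}_a=\frac1{N^2}\sum_{j,a}s_{aj}^{(4)}m_{sc}'m_{sc}^2$, so each of the three contributions to $\frac1\pi\int_{\Omega_\mathfrak{a}}\partial_{\bar z}\tilde f(z)(\cdots)\diff z$ is a fixed scalar times an integral of $\partial_{\bar z}\tilde f$ against a function of $z$ that is holomorphic on $\C\setminus[-2,2]$, decays at infinity, and is real on $\mathbb R\setminus[-2,2]$. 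For the first and third terms I would run the single-variable version of the residue computations in \cite[(4.63),(4.64)]{LanSos2020}: by Green's formula such an integral collapses to $\frac1\pi\int_{-2}^2 f(x)\,\im(\cdots)(x+\ii 0)\diff x$ (there are no pole contributions at $z=\pm2$, only integrable square-root branch points), and using $m_{sc}(x+\ii 0)=\tfrac12(-x+\ii\sqrt{4-x^2})$ and $m_{sc}m_{sc}'=\tfrac12\partial_z(m_{sc}^2)$ one reads off $\im(m_{sc}m_{sc}')(x+\ii 0)=-\tfrac{2-x^2}{2\sqrt{4-x^2}}$ and the corresponding expression for the fourth-cumulant term; this produces the $\sum_i s_{ii}\big(\tfrac1{2\pi}\int f\tfrac{2-x^2}{\sqrt{4-x^2}}\big)$ and $\tfrac1{2N^2}\sum s_{aj}^{(4)}\int f\tfrac{x^4-4x^2+2}{\sqrt{4-x^2}}$ summands.

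The genuinely new step is the evaluation of $\delta_{\mathrm{GW}}(f)=\frac1\pi\int_{\Omega_\mathfrak{a}}\partial_{\bar z}\tilde f(z)\,m_{sc}(z)m_{sc}'(z)\,\Tr\big((1-m_{sc}^2 S)^{-1}S\big)\diff z$. Writing $P_1=\tfrac1N\mathbf 1\mathbf 1^{T}$ for the Perron projection of $S$ (normalized to have row sums one, so $SP_1=P_1$), I would split
\[
\Tr\big((1-m_{sc}^2 S)^{-1}S\big)=\frac1{1-m_{sc}^2}+R(z),\qquad R(z):=\Tr\big((1-m_{sc}^2 S)^{-1}(S-P_1)\big).
\]
Since $\sigma_{ij}^2\sim N^{-1}$, the matrix $S$ has a uniform spectral gap, $\sup_{\ell\ge 2}|\nu_\ell|\le 1-c$ for its non-Perron eigenvalues $\nu_\ell$, and $\sum_{\ell\ge 2}\nu_\ell^2=\Tr(S^2)-1=\OO(1)$ (as $\Tr(S^2)=\sum_{ij}\sigma_{ij}^4$). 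Hence the Neumann series on $\mathrm{ran}(1-P_1)$ converges (even on the cut, since $|m_{sc}|\le 1$), $R(z)=\sum_{k\ge 1}m_{sc}^{2(k-1)}\big(\Tr(S^k)-1\big)$, and using $|\Tr(S^k)-1|=\big|\sum_{\ell\ge 2}\nu_\ell^k\big|\le(1-c)^{k-2}\big(\Tr(S^2)-1\big)$ for $k\ge 2$ together with $|\Tr(S)-1|=\OO(1)$ one gets $|R(z)|=\OO(1)$ uniformly (with $\im z\ne 0$), and $R$ extends continuously to the cut with $|R(x+\ii 0)|=\OO(1)$.

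Given this split, the Perron part of $\delta_{\mathrm{GW}}(f)$ equals $\frac1\pi\int_{\Omega_\mathfrak{a}}\partial_{\bar z}\tilde f(z)\,\tfrac{m_{sc}(z)^3}{(1-m_{sc}(z)^2)^2}\diff z$: the integrand is holomorphic on $\C\setminus[-2,2]$, vanishes like $z^{-3}$ at infinity, has imaginary part $-\tfrac1{2\sqrt{4-x^2}}$ on the cut (from $m_{sc}(x+\ii 0)=e^{\ii\phi}$, $1-m_{sc}^2=-2\ii\sin\phi\,e^{\ii\phi}$), and near $z=\pm 2$ satisfies $\tfrac{m_{sc}^3}{(1-m_{sc}^2)^2}=\tfrac{-1}{4(z\mp 2)}+\OO\big((z\mp 2)^{-1/2}\big)$ (since $1-m_{sc}^2\sim 2\sqrt{z\mp 2}$ and $m_{sc}^3\sim\mp 1$), i.e.\ a simple pole of residue $-\tfrac14$ plus an $L^1$ branch point; Green's formula then yields $-\tfrac1{2\pi}\int_{-2}^2\tfrac{f(x)}{\sqrt{4-x^2}}\diff x+\tfrac{f(2)+f(-2)}{4}$. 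The remaining piece $\frac1\pi\int_{\Omega_\mathfrak{a}}\partial_{\bar z}\tilde f(z)\,m_{sc}m_{sc}'\,R(z)\diff z$ should be $\OO(1)$: $m_{sc}m_{sc}'R$ is holomorphic off $[-2,2]$, decays at infinity, has only an $L^1$ branch point at $z=\pm 2$, and $|\im(m_{sc}m_{sc}'R)(x+\ii 0)|\le|m_{sc}m_{sc}'(x+\ii 0)|\,|R(x+\ii 0)|\le C/\sqrt{4-x^2}$, so Green's formula bounds it by $\OO\big(\int_{-2}^2|f(x)|/\sqrt{4-x^2}\,\diff x\big)=\OO(1)$ for the $\log$-type $f$ of interest. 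Finally, replacing $\Omega_\mathfrak{a}$ by $\C$ in each integral costs $\oo(1)$, by the hypothesis $\norm{f''}\le N^{1/2-3\e/2}$ and the pointwise control of the integrands near $[-2,2]$, exactly as in the proof of Lemma~\ref{lem:E}. The step I expect to be the main obstacle is the branch-point bookkeeping at $z=\pm 2$ — separating the simple pole that produces $\tfrac{f(2)+f(-2)}{4}$ from the integrable $(z\mp 2)^{-1/2}$ part — together with verifying that the $R$-term stays $\OO(1)$ rather than growing with $N$, which is precisely where the spectral gap of $S$ and the bound $\Tr(S^2)=\OO(1)$ are used.
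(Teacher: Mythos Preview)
Your proposal is correct and reaches the same conclusion as the paper, but the route you take for $\delta_{\mathrm{GW}}(f)$ is somewhat different and arguably more direct. The paper recycles the machinery of Proposition~\ref{prop:F}: it passes to the symmetrized operator $F(z,z)=\abs{\mathfrak{m}}S\abs{\mathfrak{m}}$, writes $F=\lambda_1 vv^T+A$, applies the Sherman--Morrison formula to $(U^*-F)^{-1}$, and then uses $(U^*-A)^{-1}=U+(U^*-A)^{-1}AU$ together with $Av=0$; in the generalized Wigner case this collapses the rank-one piece to $-m_{sc}'/(2m_{sc}+z)$, and the paper cites \cite[(4.70)]{LanSos2020} for the resulting integral, while the spectral gap \eqref{eq:F_spectral_gap} handles the $A$-remainder. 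You instead work directly with the spectral decomposition of the doubly stochastic matrix $S$: splitting off the Perron projection $P_1=\tfrac{1}{N}\mathbf{1}\mathbf{1}^T$ gives the same leading term $m_{sc}^3/(1-m_{sc}^2)^2$ (which is indeed equal to $-m_{sc}'/(2m_{sc}+z)$), and you control the remainder $R(z)$ via the Neumann series and the bound $\sum_{\ell\ge 2}\nu_\ell^2=\Tr(S^2)-1=\OO(1)$. Both arguments hinge on exactly the same structural fact---the uniform spectral gap of $S$ (equivalently, of $F$)---and your trace-class bound on $S^2-P_1$ is the concrete content of the paper's appeal to \eqref{eq:F_spectral_gap} in this setting. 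Your approach avoids the $F$/Sherman--Morrison layer, at the cost of being specific to the generalized Wigner case (where $\mathfrak{m}_i\equiv m_{sc}$); the paper's formulation is an instance of its general Wigner-type framework, which is why it looks heavier. Your branch-point analysis at $z=\pm 2$ and the residue $-\tfrac14$ producing $\tfrac{f(2)+f(-2)}{4}$ is correct and matches the cited computation.
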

\begin{proof}
	For the terms other than $\delta_{\rm GW}(f)$, we have the explicit identities, 
	\[
		-\frac{1}{\pi} \int_{\C} \partial_{\bar{z}} \tilde{f}(z) m'_{sc}(z)m_{sc}(z) \diff z 
		=
		\frac{1}{2\pi} \int_{-2}^2 f(x) \pa{ \frac{2-x^2}{ \sqrt{4-x^2} } }\diff x
	\]
	and
	\[
		\frac{1}{\pi} \int_{\C} \partial_{\bar{z}} \tilde{f}(z) m_{sc}^3(z) m'_{sc}(z)
		=
		\frac{1}{2\pi} \int_{-2}^2 f(x) \pa{ \frac{x^2-4x^2+2}{ \sqrt{4-x^2} } }\diff x,
	\]
	as in \cite[(4.68) and (4.69)]{LanSos2020}.
	To evaluate $\delta_{\rm GW}(f)$, recall that in the generalized Wigner case, we have 
	$v(z,w) = \frac{1}{\sqrt{N}}(1, \dots, 1)$ and $\lambda_1 = \abs{m_{sc}(z) m_{sc}(w)}$,
	\[
		F(z,w) = \lambda_1(z,w) v(z,w)v(z,w)^T + A(z,w),
	\]
	where the above decomposition with $v \perp A$ defines $A$. Taking $w=z$ and applying
	the Sherman-Morrison formula, see \cite[(9.41)]{LanLopSos2021} for more detail, we have
	\[
		\Tr\pa{
			\pa{
				1- S\mathfrak{m}^2
			}^{-1} S \mathfrak{m}'\mathfrak{m}
		}
		=
		\Tr\pa{
			(U^*-A)^{-1} \abs{\mathfrak{m}} S \mathfrak{m}'\mathfrak{m}\mathfrak{m}^{-1}
		}
		+
		\frac{
			\lambda_1v^T(U^*-A)^{-1}\abs{\mathfrak{m}}S\mathfrak{m}'\abs{\mathfrak{m}}\mathfrak{m}^{-1}(U^*-A)^{-1}v
		}{
			1 - \lambda_1 v^T(U^*-A)^{-1}v
		}.
	\]
	We now simplify the above arguing as in Proposition \ref{prop:var_GW}. Recall that
	$(U^*-A)^{-1} = U + (U^*-A)^{-1}AU$. Using that $v\perp A$, we have
	\begin{equation}
	\label{eq:delta_GW_decomp}
		\Tr\pa{
			\pa{
				1- S\mathfrak{m}^2
			}^{-1} S \mathfrak{m}'\mathfrak{m}
		}
		=
		\Tr\pa{
			(U+(U^*-A)^{-1}AU) \abs{\mathfrak{m}} S \mathfrak{m}'\mathfrak{m}\mathfrak{m}^{-1}
		}
		+
		\frac{
			\lambda_1v^TU\abs{\mathfrak{m}}S\mathfrak{m}'\abs{\mathfrak{m}}\mathfrak{m}^{-1}Uv
		}{
			1 - \lambda_1 v^TUv
		}.
	\end{equation}
	Using that in the generalized Wigner case, $\mathfrak{m}_i = m_{sc}$ for all $i \in \llbracket 1, N \rrbracket$,
	we have
	\[
		\Tr\pa{
			U\abs{\mathfrak{m}} S \mathfrak{m}'\mathfrak{m}\mathfrak{m}^{-1}
		}
		+
		\frac{
			\lambda_1v^TU\abs{\mathfrak{m}}S\mathfrak{m}'\abs{\mathfrak{m}}\mathfrak{m}^{-1}Uv
		}{
			1 - \lambda_1 v^TUv
		}
		=
		-\frac{m'_{sc}(z)}{2m_{sc}(z) + z},
	\]
	and furthermore, 
	\[
		-\frac{1}{\pi} \int_\C \bar{\partial}_z \tilde{f}(z) \frac{m'_{sc}(z)}{2m_{sc}(z) + z} 
		=
		-\frac{1}{2\pi} \int_{-2}^2 \frac{f(x)}{\sqrt{4-x^2}} \diff x + \frac{f(2)+ f(-2)}{4},
	\]
	see \cite[(4.70)]{LanSos2020}. Finally, 
	the spectral gap for $A$, \eqref{eq:F_spectral_gap}, shows that
	the remaining term in \eqref{eq:delta_GW_decomp} contributes $\OO(1)$.
\end{proof}



\addcontentsline{toc}{section}{References}
\bibliographystyle{abbrv}
\bibliography{biblio}

\end{document}